\algnewcommand\algorithmicgoto{\textbf{Go to }line }
\algnewcommand\algorithmicdownto{\textbf{ down to }}
\algnewcommand\algorithmicto{\textbf{ to }}
\algnewcommand\algorithmicand{\textbf{ and }}
\algnewcommand\algorithmicnot{\textbf{not }}
\algnewcommand\algorithmicexit{\textbf{exit algorithm}}
\newtheoremstyle{thstyle}{6pt}{3pt}{}{}{\bf}{.}{.5em}{}
\theoremstyle{thstyle}
\newtheorem{myalgorithm}{Algorithm}
\newtheorem{mylemma}{Lemma}
\newtheorem{mydefinition}{Definition}
\title{{\sc\large TECHNICAL REPORT}\\\vspace{25pt}Fundamental lemmas for the determination of optimal control strategies for a class of single machine family scheduling problems}
\author{Davide Giglio\vspace{6pt}\\
{\small Department of Informatics, Bioengineering, Robotics and Systems Engineering (DIBRIS)}\vspace{-2pt}\\
{\small University of Genova}\vspace{-2pt}\\
{\small Via Opera Pia 13, 16145 -- Genova, Italy}\vspace{-2pt}\\
{\small davide.giglio@unige.it}%
\vspace{20pt}}
\date{\includegraphics[scale=.2]{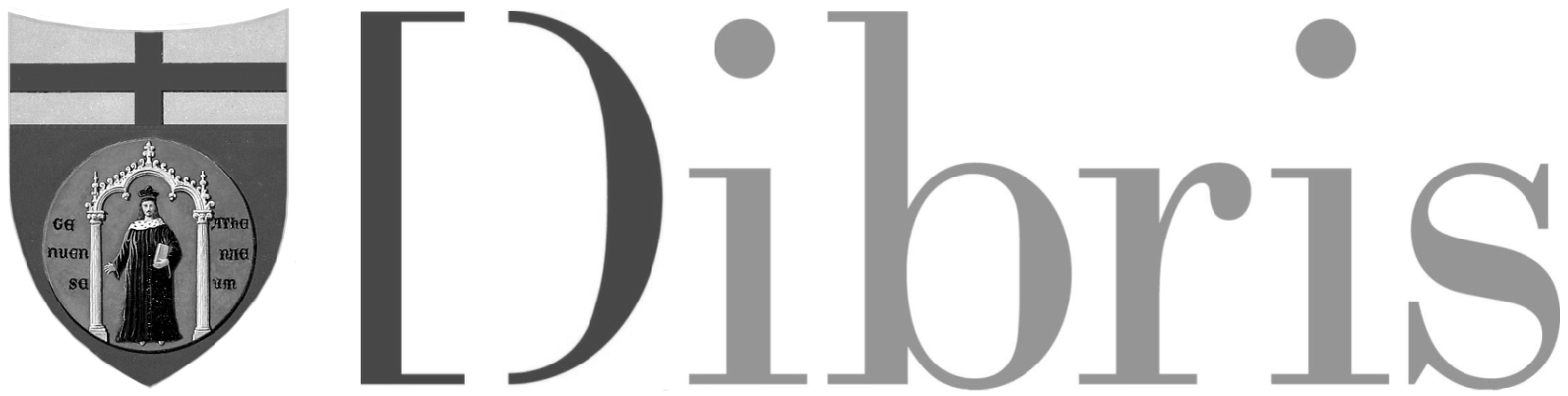}}
\begin{document}

%
%
%


\pagestyle{fancy}

\maketitle

\vspace{20pt}

\begin{abstract}
Four lemmas, which constitute the theoretical foundation necessary to determine optimal control strategies for a class of single machine family scheduling problems, are presented in this technical report. The scheduling problem is characterized by the presence of sequence-dependent batch setup and controllable processing times; moreover, the generalized due-date model is adopted in the problem. The lemmas are employed within a constructive procedure (proposed by the Author and based on the application of dynamic programming) that allows determining the decisions which optimally solve the scheduling problem as functions of the system state. Two complete examples of single machine family scheduling problem are included in the technical report with the aim of illustrating the application of the fundamental lemmas in the proposed approach.
\end{abstract}

\newpage

\section{Introduction} \label{sec:introduction}

In~\cite{Aicardi2008}, a class of single machine family scheduling problems (mainly characterized by multiclass jobs, generalized due-dates, and controllable processing times) has been formalized as an optimal control problem. Its solution consists of optimal control strategies which are functions of the system state, and therefore they are able to provide the optimal decisions for any actual machine behavior (the single machine is assumed to be unreliable and then perturbations, such as breakdowns, generic unavailabilities, and slowdowns, may affect the nominal behavior of the system). However, the scheduling problem in~\cite{Aicardi2008} has been solved under the assumption that, for each class of jobs, any unitary tardiness cost is greater than the unitary cost related to the deviation from the nominal service time. In order to remove such a strong hypothesis and to extend the scheduling model by adding setup times and, especially, setup costs, new fundamental lemmas have been defined. They are employed within the constructive procedure proposed in~\cite{GiglioJOSH} that solves, from a control-theoretic perspective, a single machine scheduling problem with sequence-dependent batch setup and controllable processing times.

This technical report is organized as follows. Some preliminary definitions are reported in section~\ref{sec:definitions}. The four new lemmas are presented in sections~\ref{sec:lemmas}, together with their complete proofs. Nine numerical examples aiming at illustrating how lemmas~\ref{lem:xopt} and~\ref{lem:h(t)} work are in section~\ref{sec:examples}. Finally, sections~\ref{sec:app1} and~\ref{sec:app2} present two complete example which explain the application of the procedure proposed in~\cite{GiglioJOSH} to two single machine family scheduling problems (the latter with setup).

\section{Definitions} \label{sec:definitions}

\begin{mydefinition} \label{def:f(x)}
Consider a function $f(x)$ which is continuous, nondecreasing, and piece-wise linear function of the independent variable $x$. Let $f(x)$ be characterized by $M \geq 1$ changes of slopes and let $\gamma_{i}$, $i = 1, \ldots, M$, be the values of the horizontal axis at which the slope changes ($\gamma_{i+1} > \gamma_{i}$, $\forall \, i = 1, \ldots, M-1$). In this connection, let $\mu_{0}$ be the slope in interval $(-\infty,\gamma_{1})$, $\mu_{i}$ be the slope in interval $[\gamma_{i},\gamma_{i+1})$, $i = 1, \ldots, M-1$, and $\mu_{M}$ be the slope in interval $[\gamma_{M},+\infty)$ ($\mu_{i+1} \neq \mu_{i}$, $\forall \, i = 0, \ldots, M-1$). Moreover, it is assumed $f(x) = 0$ for any $x \leq \gamma_{1}$; then, $\mu_{0} = 0$. An example of function $f(x)$ following this definition is in figure~\ref{fig:f(x)}.
\end{mydefinition}

\begin{figure}[h]
\centering
\psfrag{f(x)}[cl][Bl][1][0]{$f(x)$}
\psfrag{x}[bc][Bl][1][0]{$x$}
\psfrag{G1}[Bc][Bl][1][0]{$\gamma_{1}$}
\psfrag{G2}[Bc][Bl][1][0]{$\gamma_{2}$}
\psfrag{G3}[Bc][Bl][1][0]{$\gamma_{3}$}
\psfrag{G4}[Bc][Bl][1][0]{$\gamma_{4}$}
\psfrag{G5}[Bc][Bl][1][0]{$\gamma_{5}$}
\psfrag{G6}[Bc][Bl][1][0]{$\gamma_{6}$}
\psfrag{G7}[Bc][Bl][1][0]{$\gamma_{7}$}
\psfrag{G8}[Bc][Bl][1][0]{$\gamma_{8}$}
\psfrag{G9}[Bc][Bl][1][0]{$\gamma_{9}$}
\psfrag{m0}[bc][Bl][.8][0]{$\mu_{0}$}
\psfrag{m1}[cr][Bl][.8][0]{$\mu_{1}$}
\psfrag{m2}[cr][Bl][.8][0]{$\mu_{2}$}
\psfrag{m3}[cr][Bl][.8][0]{$\mu_{3}$}
\psfrag{m4}[cr][Bl][.8][0]{$\mu_{4}$}
\psfrag{m5}[cr][Bl][.8][0]{$\mu_{5}$}
\psfrag{m6}[cr][Bl][.8][0]{$\mu_{6}$}
\psfrag{m7}[cr][Bl][.8][0]{$\mu_{7}$}
\psfrag{m8}[cr][Bl][.8][0]{$\mu_{8}$}
\psfrag{m9}[cr][Bl][.8][0]{$\mu_{9}$}
\includegraphics[scale=.35]{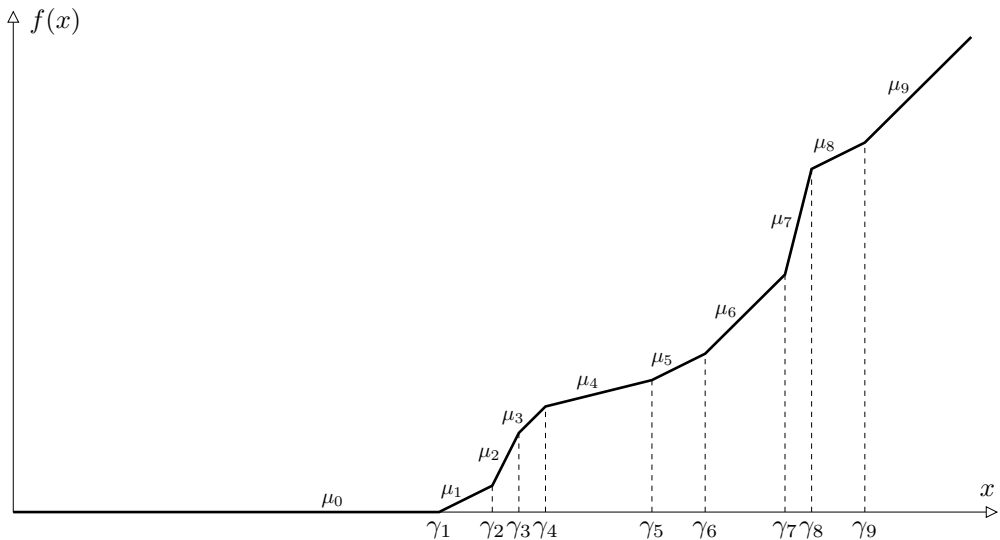}
\caption{Example of function $f(x)$.}
\label{fig:f(x)}
\end{figure}

\vspace{12pt}

\begin{mydefinition} \label{def:f(x+t)}
With reference to $f(x)$, as defined by definition~\ref{def:f(x)}, let $f(x+t)$ be a continuous, nondecreasing, and piece-wise linear function of the independent variable $x$, parameterized by the real value $t$. An example of function $f(x+t)$ following this definition is in figure~\ref{fig:f(x+t)}.
\end{mydefinition}

\begin{figure}[h]
\centering
\psfrag{f(x)}[cl][Bl][1][0]{$f(x+t)$}
\psfrag{x}[bc][Bl][1][0]{$x$}
\psfrag{G1}[Bl][Bl][1][-60]{$\gamma_{1} - t$}
\psfrag{G2}[Bl][Bl][1][-60]{$\gamma_{2} - t$}
\psfrag{G3}[Bl][Bl][1][-60]{$\gamma_{3} - t$}
\psfrag{G4}[Bl][Bl][1][-60]{$\gamma_{4} - t$}
\psfrag{G5}[Bl][Bl][1][-60]{$\gamma_{5} - t$}
\psfrag{G6}[Bl][Bl][1][-60]{$\gamma_{6} - t$}
\psfrag{G7}[Bl][Bl][1][-60]{$\gamma_{7} - t$}
\psfrag{G8}[Bl][Bl][1][-60]{$\gamma_{8} - t$}
\psfrag{G9}[Bl][Bl][1][-60]{$\gamma_{9} - t$}
\psfrag{m0}[bc][Bl][.8][0]{$\mu_{0}$}
\psfrag{m1}[cr][Bl][.8][0]{$\mu_{1}$}
\psfrag{m2}[cr][Bl][.8][0]{$\mu_{2}$}
\psfrag{m3}[cr][Bl][.8][0]{$\mu_{3}$}
\psfrag{m4}[cr][Bl][.8][0]{$\mu_{4}$}
\psfrag{m5}[cr][Bl][.8][0]{$\mu_{5}$}
\psfrag{m6}[cr][Bl][.8][0]{$\mu_{6}$}
\psfrag{m7}[cr][Bl][.8][0]{$\mu_{7}$}
\psfrag{m8}[cr][Bl][.8][0]{$\mu_{8}$}
\psfrag{m9}[cr][Bl][.8][0]{$\mu_{9}$}
\includegraphics[scale=.35]{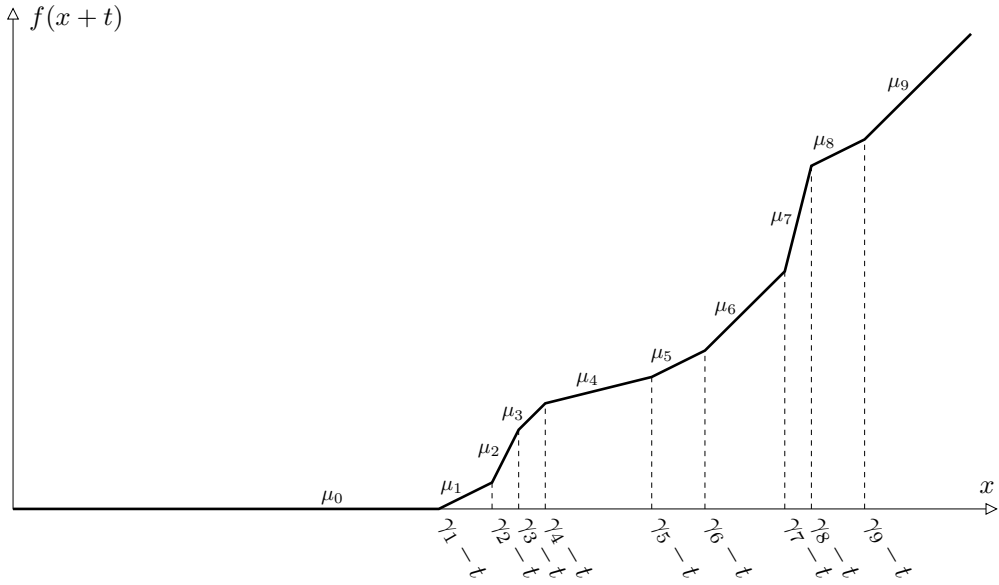}
\vspace{18pt}
\caption{Example of function $f(x+t)$.}
\label{fig:f(x+t)}
\end{figure}

\vspace{12pt}

\begin{mydefinition} \label{def:g(x)}
Consider a function $g(x)$ which is noncontinuous, nonincreasing, and piece-wise linear function of the independent variable $x$. Let $g(x)$ be defined as
\begin{equation}
g(x) = \left\{ \begin{array}{ll}
- \nu (x - x_{2}) & x \in [x_{1},x_{2})\\
0 & x \notin [x_{1},x_{2})
\end{array} \right.
\end{equation}
 An example of function $g(x)$ following this definition is in figure~\ref{fig:g(x)}.
 \end{mydefinition}

\begin{figure}[h]
\centering
\psfrag{g(x)}[cl][Bl][1][0]{$g(x)$}
\psfrag{x}[bc][Bl][1][0]{$x$}
\psfrag{G1}[Bc][Bl][1][0]{$x_{1}$}
\psfrag{G2}[Bc][Bl][1][0]{$x_{2}$}
\psfrag{G3}[Bc][Bl][1][0]{$\gamma_{3}$}
\psfrag{G4}[Bc][Bl][1][0]{$\gamma_{4}$}
\psfrag{G5}[Bc][Bl][1][0]{$\gamma_{5}$}
\psfrag{G6}[Bc][Bl][1][0]{$\gamma_{6}$}
\psfrag{G7}[Bc][Bl][1][0]{$\gamma_{7}$}
\psfrag{G8}[Bc][Bl][1][0]{$\gamma_{8}$}
\psfrag{G9}[Bc][Bl][1][0]{$\gamma_{9}$}
\psfrag{m0}[cl][Bl][.8][0]{$-\nu$}
\psfrag{m1}[cr][Bl][.8][0]{$\mu_{1}$}
\psfrag{m2}[cr][Bl][.8][0]{$\mu_{2}$}
\psfrag{m3}[cr][Bl][.8][0]{$\mu_{3}$}
\psfrag{m4}[cr][Bl][.8][0]{$\mu_{4}$}
\psfrag{m5}[cr][Bl][.8][0]{$\mu_{5}$}
\psfrag{m6}[cr][Bl][.8][0]{$\mu_{6}$}
\psfrag{m7}[cr][Bl][.8][0]{$\mu_{7}$}
\psfrag{m8}[cr][Bl][.8][0]{$\mu_{8}$}
\psfrag{m9}[cr][Bl][.8][0]{$\mu_{9}$}
\includegraphics[scale=.35]{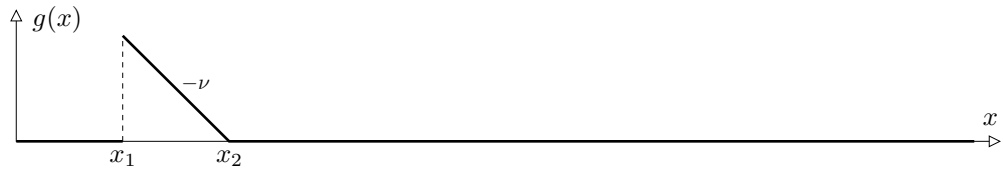}
\caption{Example of function $g(x)$.}
\label{fig:g(x)}
\end{figure}

\section{Lemmas} \label{sec:lemmas}

In connection with functions $f(x)$ and $g(x)$ as defined by definitions~\ref{def:f(x)} and~\ref{def:g(x)}, let:
\begin{itemize}
\item $A$ be the set of indices $i$, $i \in \{ 1, \ldots, M \}$, such that $\mu_{i-1} < \nu$ and $\mu_{i} \geq \nu$; in this connection, let $\lvert A \rvert$ be the cardinality of set $A$ and, if $\lvert A \rvert > 0$, let $a_{j}$, $j = 1, \ldots, \lvert A \rvert$, be the generic element of set $A$; thus, $\gamma_{a_{j}}$, $j = 1, \ldots, \lvert A \rvert$, are the value of the horizontal axis at which the slope of $f(x)$ changes from a value less than $\nu$ to a value greater than or equal to $\nu$;
\item $B$ be the set of indices $i$, $i \in \{ 1, \ldots, M \}$, such that $\mu_{i-1} \geq \nu$ and $\mu_{i} < \nu$; in this connection, let $\lvert B \rvert$ be the cardinality of set $B$ and, if $\lvert B \rvert > 0$ let $b_{j}$, $j = 1, \ldots, \lvert B \rvert$, be the generic element of set $B$; thus, $\gamma_{b_{j}}$, $j = 1, \ldots, \lvert B \rvert$, are the value of the horizontal axis at which the slope of $f(x)$ changes from a value greater than or equal to $\nu$ to a value less than $\nu$.
\end{itemize}

Since it has been assumed $\mu_{0} = 0$, then $a_{j} < b_{j}$ $\forall \, j = 1, \ldots, \lvert B \rvert$ and $b_{j} < a_{j+1}$ $\forall \, j = 1, \ldots, \lvert A \rvert - 1$. Moreover, $\lvert A \rvert - \lvert B \rvert \leq 1$ being $\lvert A \rvert = \lvert B \rvert$ if $\mu_{M} < \nu$ and $\lvert A \rvert = \lvert B \rvert + 1$ if $\mu_{M} \geq \nu$.

\vspace{12pt}

\begin{mylemma} \label{lem:xopt}
Let $f(x+t)$ be a continuous nondecreasing piece-wise linear function of $x$, parameterized by $t$, as defined by definition~\ref{def:f(x+t)}, and let $g(x)$ be a noncontinuous function of $x$, as defined by definition~\ref{def:g(x)}.

In case $\lvert B \rvert \geq 1$, let $\Omega$ be the set of time instants $\{ \omega_{1} , \ldots, \omega_{j}, \ldots, \omega_{\lvert B \rvert} \}$ in which any value $\omega_{j}$, $j = 1, \ldots, \lvert B \rvert$, is obtained by executing algorithm~\ref{alg:tstar}. Each value $\omega_{j}$, $j = 1, \ldots, \lvert B \rvert$, is either finite or nonfinite. Let $T$ be the set of time instants $\{ t^{\star}_{1} , \ldots, t^{\star}_{q}, \ldots, t^{\star}_{Q} \}$ which is obtained from $\Omega$ by removing all nonfinite values from it, that is
\begin{equation} \label{equ:Tq}
T = \Omega \setminus \big\{ \omega_{j} \, : \, \omega_{j} = +\infty \, , \, j = 1, \ldots, \lvert B \rvert \big\}
\end{equation}
Let $Q$ be the cardinality of set $T$; it is obviously $1 \leq Q \leq \lvert B \rvert$. In case $\lvert B \rvert = 0$, it is $T = \emptyset$ and $Q = 0$.

Then, the function of $t$
\begin{equation} \label{equ:xopt}
x^{\circ}(t) = \arg \min_{\substack{x\\x_{1} \leq x \leq x_{2}}} \big\{ f(x+t) + g(x) \big\}
\end{equation}
is a nonincreasing, possibly noncontinuous, piece-wise linear function of $t$ defined as
\begin{subequations}
\begin{equation} \label{equ:xopt_1}
\text{if $Q = 0$} \, : \quad x^{\circ}(t) = x_{\mathrm{e}}(t)
\end{equation}
\begin{equation} \label{equ:xopt_2}
\text{if $Q = 1$} \, : \quad x^{\circ}(t) = \left\{ \begin{array}{ll}
x_{\mathrm{s}}(t) & t < t^{\star}_{1}\\
x_{\mathrm{e}}(t) & t \geq t^{\star}_{1}\\
\end{array} \right.
\end{equation}
\begin{equation} \label{equ:xopt_3}
\text{if $Q > 1$} \, : \quad x^{\circ}(t) = \left\{ \begin{array}{ll}
x_{\mathrm{s}}(t) & t < t^{\star}_{1}\\
x_{q}(t) & t^{\star}_{q} \leq t < t^{\star}_{q+1} , \quad q = 1, \ldots, Q - 1\\
x_{\mathrm{e}}(t) & t \geq t^{\star}_{Q}
\end{array} \right.
\end{equation}
\end{subequations}
where $x_{\mathrm{s}}(t)$, $x_{q}(t)$, and $x_{\mathrm{e}}(t)$ are the following functions of $t$:
\begin{itemize}
\item $x_{\mathrm{s}}(t)$ is a continuous nonincreasing piece-wise linear functions of $t$ defined as:
\begin{subequations} \label{equ:xs}
\begin{equation} \label{equ:xs_1}
\left. \begin{array}{l}
\text{if $t^{\star}_{1} > \gamma_{a_{1}} - x_{1}$}
\end{array} \right. \rightarrow \quad x_{\mathrm{s}}(t) = \left\{ \begin{array}{ll}
x_{2} & t < \gamma_{a_{1}} - x_{2}\\
-t + \gamma_{a_{1}} & \gamma_{a_{1}} - x_{2} \leq t < \gamma_{a_{1}} - x_{1}\\
x_{1} & \gamma_{a_{1}} - x_{1} \leq t < t^{\star}_{1}
\end{array} \right.
\end{equation}
\begin{equation} \label{equ:xs_2}
\left. \begin{array}{l}
\text{if $t^{\star}_{1} \leq \gamma_{a_{1}} - x_{1}$}
\end{array} \right. \rightarrow \quad x_{\mathrm{s}}(t) = \left\{ \begin{array}{ll}
x_{2} & t < \gamma_{a_{1}} - x_{2}\\
-t + \gamma_{a_{1}} & \gamma_{a_{1}} - x_{2} \leq t < t^{\star}_{1}
\end{array} \right.
\end{equation}
\end{subequations}
\item $x_{q}(t)$ is a continuous nonincreasing piece-wise linear functions of $t$ defined as:
\begin{subequations} \label{equ:xj}
\begin{equation} \label{equ:xj_1}
\left. \begin{array}{l}
\text{if $t^{\star}_{q} < \gamma_{a_{l(q)+1}} - x_{2}$}\\
\text{and $t^{\star}_{q+1} > \gamma_{a_{l(q)+1}} - x_{1}$}
\end{array} \right. \hspace{-0.1cm} \rightarrow \  x_{q}(t) = \left\{ \begin{array}{ll}
x_{2} & t^{\star}_{q} \leq t < \gamma_{a_{l(q)+1}} - x_{2}\\
-t + \gamma_{a_{l(q)+1}} & \gamma_{a_{l(q)+1}} - x_{2} \leq t < \gamma_{a_{l(q)+1}} - x_{1}\\
x_{1} & \gamma_{a_{l(q)+1}} - x_{1} \leq t < t^{\star}_{q+1}
\end{array} \right.
\end{equation}
\begin{equation} \label{equ:xj_2}
\left. \begin{array}{l}
\text{if $t^{\star}_{q} \geq \gamma_{a_{l(q)+1}} - x_{2}$}\\
\text{and $t^{\star}_{q+1} > \gamma_{a_{l(q)+1}} - x_{1}$}
\end{array} \right. \hspace{-0.1cm} \rightarrow \  x_{q}(t) = \left\{ \begin{array}{ll}
-t + \gamma_{a_{l(q)+1}} & t^{\star}_{q} \leq t < \gamma_{a_{l(q)+1}} - x_{1}\\
x_{1} & \gamma_{a_{l(q)+1}} - x_{1} \leq t < t^{\star}_{q+1}
\end{array} \right.
\end{equation}
\begin{equation} \label{equ:xj_3}
\left. \begin{array}{l}
\text{if $t^{\star}_{q} < \gamma_{a_{l(q)+1}} - x_{2}$}\\
\text{and $t^{\star}_{q+1} \leq \gamma_{a_{l(q)+1}} - x_{1}$}
\end{array} \right. \hspace{-0.1cm} \rightarrow \  x_{q}(t) = \left\{ \begin{array}{ll}
x_{2} & t^{\star}_{q} \leq t < \gamma_{a_{l(q)+1}} - x_{2}\\
-t + \gamma_{a_{l(q)+1}} & \gamma_{a_{l(q)+1}} - x_{2} \leq t < t^{\star}_{q+1}
\end{array} \right.
\end{equation}
\begin{equation} \label{equ:xj_4}
\left. \begin{array}{l}
\text{if $t^{\star}_{q} \geq \gamma_{a_{l(q)+1}} - x_{2}$}\\
\text{and $t^{\star}_{q+1} \leq \gamma_{a_{l(q)+1}} - x_{1}$}
\end{array} \right. \hspace{-0.1cm} \rightarrow \  x_{q}(t) = -t + \gamma_{a_{l(q)+1}}
\end{equation}
\end{subequations}
\item $x_{\mathrm{e}}(t)$ is a continuous nonincreasing piece-wise linear functions of $t$ defined as:
\begin{subequations} \label{equ:xe}
\begin{equation} \label{equ:xe_1}
\left. \begin{array}{l}
\text{if $l(Q) < \lvert A \rvert$}\\
\text{and $t^{\star}_{Q} < \gamma_{a_{l(Q)+1}} - x_{2}$} 
\end{array} \right. \hspace{-0.1cm} \rightarrow \  x_{\mathrm{e}}(t) = \left\{ \begin{array}{ll}
x_{2} & t^{\star}_{Q} \leq t < \gamma_{a_{l(Q)+1}} - x_{2}\\
-t + \gamma_{a_{l(Q)+1}} & \gamma_{a_{l(Q)+1}} - x_{2} \leq t < \gamma_{a_{l(Q)+1}} - x_{1}\\
x_{1} & t \geq \gamma_{a_{l(Q)+1}} - x_{1}
\end{array} \right.
\end{equation}
\begin{equation} \label{equ:xe_2}
\left. \begin{array}{l}
\text{if $l(Q) < \lvert A \rvert$}\\
\text{and $t^{\star}_{Q} \geq \gamma_{a_{l(Q)+1}} - x_{2}$} 
\end{array} \right. \hspace{-0.1cm} \rightarrow \  x_{\mathrm{e}}(t) = \left\{ \begin{array}{ll}
-t + \gamma_{a_{l(Q)+1}} & t^{\star}_{Q} \leq t < \gamma_{a_{l(Q)+1}} - x_{1}\\
x_{1} & t \geq \gamma_{a_{l(Q)+1}} - x_{1}
\end{array} \right.
\end{equation}
\begin{equation} \label{equ:xe_3}
\left. \begin{array}{l}
\text{if $l(Q) = \lvert A \rvert$} 
\end{array} \right. \qquad\qquad \rightarrow \  x_{\mathrm{e}}(t) = x_{2}
\end{equation}
\end{subequations}
having assumed (for notational convenience) $t^{\star}_{Q} = -\infty$ when $Q = 0$.
\end{itemize}
In~\eqref{equ:xj} and~\eqref{equ:xe}, $l(q)$, $q = 1, \ldots, Q$, is a mapping function which  provides the index $j \in \{ 1, \ldots, \lvert B \rvert \}$ of the value $t^{\star}_{q}$ in the set $\Omega$, that is, $l(q) = j \Leftrightarrow \omega_{j} = t^{\star}_{q}$. In this connection, it is always $l(Q) = \lvert B \rvert$ and, in case $\lvert A \rvert > \lvert B \rvert$, it turns out $l(Q)+1 = \lvert A \rvert$. Moreover, it is assumed, for notational convenience, $l(0) = 0$.

\vspace{12pt}

\begin{myalgorithm} \label{alg:tstar}
Determination of the time instant $\omega_{j}$, $j = 1, \ldots, \lvert B \rvert$, at which, in case $\omega_{j} < +\infty$, the function $x^{\circ}(t)$ jumps in an upward direction.
\begin{multicols}{2}

{\it SECTION A -- INITIALIZATION}

\begin{algorithmic}[1]
\State $\gamma_{0} = -\infty$
\State $h \geq 0 \, : \, \gamma_{h} \leq \gamma_{b_{j}} - (x_{2} - x_{1}) < \gamma_{h+1}$ \label{alg:h}
\State $i = b_{j}$
\State $\gamma_{M+1} = +\infty$
\State $k \leq M \, : \, \gamma_{k} < \gamma_{b_{j}} + (x_{2} - x_{1}) \leq \gamma_{k+1}$ \label{alg:k}
\If{$j = \lvert B \rvert \algorithmicand \lvert A \rvert = \lvert B \rvert$}
\State $a_{j+1} = M+1$
\EndIf
\For{$p = h \algorithmicto k$}
\State $\tilde{\mu}_{p} = \mu_{p} - \nu$
\EndFor
\State $\tau = \gamma_{b_{j}} - (x_{2} - x_{1})$
\State $\theta = \gamma_{b_{j}}$
\State $d = \max \{ 0 , \tilde{\mu}_{h} ( \gamma_{h+1} - \tau ) \}$
\If{$h < b_{j}-1$}
\For{$p = h+1 \algorithmicto b_{j}-1$}
\State $d = \max \{ 0 , d + \tilde{\mu}_{p} ( \gamma_{p+1} - \gamma_{p} ) \}$
\EndFor
\EndIf
\State $\lambda = h$
\State $\xi = i$
\algstore{bkbreak}
\end{algorithmic}

{\it SECTION B -- FIRST LOOP}

\begin{algorithmic}[1]
\algrestore{bkbreak}
\While{$h < b_{j} \algorithmicand i < a_{j+1}$}
\State $\psi = \min \{ \gamma_{h+1} - \tau , \gamma_{i+1} - \theta \}$ \label{alg:psi}

\If{$\gamma_{h+1} - \tau \leq \gamma_{i+1} - \theta$}
\State $\lambda = h + 1$
\EndIf
\If{$\gamma_{h+1} - \tau \geq \gamma_{i+1} - \theta$}
\State $\xi = i+1$
\EndIf

\State $\delta = \max \{ 0 , \tilde{\mu}_{\lambda} [ \gamma_{\lambda+1} - (\tau + \psi) ] \}$
\If{$\lambda < b_{j}-1$}
\For{$p = \lambda+1 \algorithmicto b_{j}-1$}
\State $\delta = \max \{ 0 , \delta + \tilde{\mu}_{p} ( \gamma_{p+1} - \gamma_{p} ) \}$
\EndFor
\EndIf
\If{$\xi = b_{j}$}
\State $\delta = \delta + \tilde{\mu}_{\xi} [ (\theta + \psi) - \gamma_{\xi} ]$
\ElsIf{$\xi = a_{j+1}$}
\State $\delta = \delta + \sum_{p=b_{j}}^{\xi-1} \tilde{\mu}_{p} ( \gamma_{p+1} - \gamma_{p} )$
\Else
\State $\delta = \delta + \sum_{p=b_{j}}^{\xi-1} \tilde{\mu}_{p} ( \gamma_{p+1} - \gamma_{p} ) +$
\Statex $\qquad\qquad\qquad\qquad\qquad\qquad + \tilde{\mu}_{\xi} [ (\theta + \psi) - \gamma_{\xi} ]$
\EndIf

\If{$\delta \leq 0$} \label{alg:conddrhodelta}
\State $a_{0} = 0$
\State $r \geq 1 \, : \, a_{r-1} \leq h < a_{r}$
\If{$r \leq j$}
\For{$q = r \algorithmicto j$}
\State $\chi = \tilde{\mu}_{h} ( \gamma_{h+1} - \tau )$
\If{$h < a_{q}-1$}
\State $\chi = \chi + \sum_{p=h+1}^{a_{q}-1} \tilde{\mu}_{p} ( \gamma_{p+1} - \gamma_{p} )$
\EndIf
\If{$q = r$}
\State $m = \chi$
\Else
\State $m = \min \{ m, \chi \}$
\EndIf
\EndFor
\If{$m \leq 0$}
\State $\omega_{j} = \tau - x_{1} - \frac{d}{\tilde{\mu}_{i}}$
\ElsIf{$-\frac{d-m}{\tilde{\mu}_{i}} \leq \frac{m}{\tilde{\mu}_{h}}$}
\State $\omega_{j} = \tau - x_{1} + \frac{d}{\tilde{\mu}_{h} - \tilde{\mu}_{i}}$
\State \algorithmicexit
\Else
\State $\omega_{j} = \tau - x_{1} - \frac{d-m}{\tilde{\mu}_{i}}$
\State \algorithmicexit
\EndIf
\Else
\State $\omega_{j} = \tau - x_{1} + \frac{d}{\tilde{\mu}_{h} - \tilde{\mu}_{i}}$
\State \algorithmicexit
\EndIf
\Else

\State $h = \lambda$
\State $i = \xi$
\State $\tau = \tau + \psi$
\State $\theta = \theta + \psi$
\State $d = \delta$
\EndIf

\EndWhile
\algstore{bkbreak}
\end{algorithmic}

{\it SECTION C -- SECOND LOOP}

\begin{algorithmic}[1]
\algrestore{bkbreak}
\While{$h < b_{j}$} \label{alg:secondloopinizio}
\State $\psi = \gamma_{h+1} - \tau$
\State $\lambda = h+1$

\If{$\lambda < b_{j}$}
\State $\delta = \max \{ 0 , \tilde{\mu}_{\lambda} [ \gamma_{\lambda+1} - (\tau + \psi) ] \}$
\If{$\lambda < b_{j}-1$}
\For{$p = \lambda+1 \algorithmicto b_{j}-1$}
\State $\delta = \max \{ 0 , \delta + \tilde{\mu}_{p} ( \gamma_{p+1} - \gamma_{p} ) \}$
\EndFor
\EndIf
\Else
\State $\delta = 0$
\EndIf
\State $\delta = \delta + \sum_{p=b_{j}}^{a_{j+1}-1} \tilde{\mu}_{p} ( \gamma_{p+1} - \gamma_{p} )$ \label{alg:delta}

\If{$\delta \leq 0$} \label{alg:deltamin0inizio}
\State $\tau = \tau + \frac{d}{\tilde{\mu}_{h}}$
\State $\theta = \tau + (x_{2} - x_{1})$
\State $k \leq M \, : \, \gamma_{k} < \theta \leq \gamma_{k+1}$
\State $r = a_{j+1}$
\State $\phi = 0$
\While{$r \leq k$}
\If{$r < k$}
\State $\phi = \phi + \tilde{\mu}_{r} ( \gamma_{r+1} - \gamma_{r} )$
\Else
\State $\phi = \phi + \tilde{\mu}_{r} ( \theta - \gamma_{r} )$
\EndIf
\If{$\phi < 0$}
\State $\omega_{j} = +\infty$
\State \algorithmicexit
\Else
\State $r = r +1$
\EndIf
\EndWhile
\State $\omega_{j} = \tau - x_{1}$
\State \algorithmicexit \label{alg:deltamin0fine}
\Else \label{alg:deltamax0inizio}
\State $h = \lambda$
\State $\tau = \tau + \psi$
\State $d = \delta$
\EndIf \label{alg:deltamax0fine}
\EndWhile \label{alg:secondloopfine}
\end{algorithmic}

\end{multicols}
\end{myalgorithm}

\end{mylemma}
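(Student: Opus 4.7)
My approach is to reduce the continuous minimisation in $x$ to a comparison among finitely many candidate minimisers, analyse how the winning candidate varies with $t$, and identify the resulting transition times with the output of Algorithm~\ref{alg:tstar}. The piecewise-linear formulas \eqref{equ:xs}--\eqref{equ:xe} then follow by tracking which candidate is active on each sub-interval of $t$.

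\textbf{Step 1: reduction to discrete candidates.} For fixed $t$, $F_t(x):=f(x+t)+g(x)$ is piecewise linear in $x$ on $[x_1,x_2]$ with slope $\mu_i-\nu$ on the slice $\gamma_i-t\leq x<\gamma_{i+1}-t$; it is continuous on $[x_1,x_2]$ because $\lim_{x\to x_2^-}g(x)=0=g(x_2)$. Its slope changes from negative to nonnegative precisely at the points $\gamma_{a_j}-t$ (by the defining property of the index set $A$), which are therefore interior local minima, while the points $\gamma_{b_j}-t$ are local maxima. Hence $x^\circ(t)$ lies in the candidate set
\[
\mathcal{C}(t)=\{x_1,x_2\}\cup\bigl\{\gamma_{a_j}-t:\gamma_{a_j}-t\in(x_1,x_2),\ j\in\{1,\dots,|A|\}\bigr\}.
\]
At an interior candidate, $F_t(\gamma_{a_j}-t)=f(\gamma_{a_j})+\nu(x_2-\gamma_{a_j}+t)$ is affine in $t$ with slope $\nu$; at the endpoints, $F_t(x_1)=f(x_1+t)+\nu(x_2-x_1)$ and $F_t(x_2)=f(x_2+t)$ are piecewise linear in $t$ with slopes drawn from $\{\mu_i\}$.

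\textbf{Step 2: piecewise structure of $x^\circ(t)$.} Because all candidate values are nondecreasing piecewise-linear functions of $t$, $\min_x F_t(x)$ is their lower envelope and $x^\circ(t)$ is a selection from the associated argmin. On a sub-interval of $t$ where an interior candidate $\gamma_{a_j}-t$ is active one has $x^\circ(t)=\gamma_{a_j}-t$, producing the linear pieces $-t+\gamma_{a_{l(q)+1}}$ of \eqref{equ:xs}--\eqref{equ:xe}; where the active candidate is an endpoint, $x^\circ(t)$ is constant at $x_1$ or $x_2$. An interior candidate is geometrically available only while $\gamma_{a_j}-x_2\leq t\leq \gamma_{a_j}-x_1$, which accounts for the three saturation sub-cases in each of \eqref{equ:xs}, \eqref{equ:xj}, \eqref{equ:xe}. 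The transition instants $t^\star_q$ are the $t$-values at which two value curves cross so that $x^\circ$ switches from a candidate associated with $\gamma_{a_j}$ to the next one associated with $\gamma_{a_{j+1}}$; if $f$ rises too steeply on $[\gamma_{b_j},\gamma_{a_{j+1}}]$ the switch never happens and one records $\omega_j=+\infty$.

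\textbf{Step 3 and main obstacle: correctness of Algorithm~\ref{alg:tstar}.} The central task is to show that the $\omega_j$ produced by the algorithm are precisely these transition times. I read the algorithm as maintaining, during the first loop, two competing candidate positions $\tau$ (originating near $\gamma_{a_j}$) and $\theta=\tau+(x_2-x_1)$ (originating near $\gamma_{a_{j+1}}$), together with $d=F_t(\theta)-F_t(\tau)$ evaluated at the current $t$. Each advance $\psi=\min\{\gamma_{h+1}-\tau,\gamma_{i+1}-\theta\}$ slides both candidates until the next slope change of $f$ is encountered on either side, updating $d$ through the slope-differences $\tilde\mu_p=\mu_p-\nu$; the test $\delta\leq 0$ detects the slice in which the gap closes, after which $\omega_j$ is obtained by inverting the affine update in closed form, with the auxiliary $m$ accounting for the possibility that the true minimum on the left-hand descent is attained at an intermediate $\gamma_{a_r}$ (preventing an overshoot that would yield the wrong transition). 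Section~C takes over when the left candidate would exit $[x_1,x_2]$ through $x_1$ before equalisation: the algorithm then compares the value at $x_1$ with the sliding right candidate, scanning the rising portion of $f$ beyond $\gamma_{a_{j+1}}$ and returning $+\infty$ iff the partial sum $\phi$ remains nonnegative throughout. The main obstacle is verifying, at each assignment in both loops, the invariants $d=F_t(\theta)-F_t(\tau)$ and (in the $\delta\leq 0$ branch) $m=\min_{r\leq q\leq j}\{F_t(\gamma_{a_q}-t)-F_t(\tau)\}$; once these are established the explicit formulas produced by the algorithm follow directly from $d=0$ (or $d=m$), and the finite $\omega_j$'s ordered ascendingly yield the set $T$ and the piecewise description of $x^\circ(t)$ claimed in the lemma.
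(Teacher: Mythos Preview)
Your Steps~1--2 are essentially the paper's own argument, recast as a lower-envelope picture: the paper proceeds by case analysis on the shape of $F_t$ in $[x_1,x_2]$ and reaches the same candidate set (rules~1--4 in Part~2 of the proof). So far so good.

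Step~3, however, rests on a misreading of Algorithm~\ref{alg:tstar} that would prevent the invariant verification from going through. The variables $\tau$ and $\theta$ are \emph{not} ``competing candidate positions'' originating near $\gamma_{a_j}$ and $\gamma_{a_{j+1}}$. They are the two endpoints of a sliding window of fixed width $x_2-x_1$ in the argument of $f$: the initialisation sets $\tau=\gamma_{b_j}-(x_2-x_1)$ and $\theta=\gamma_{b_j}$, which correspond to the images of $x_1$ and $x_2$ under $x\mapsto x+t$ at the starting value $t=\gamma_{b_j}-x_2$. As the loop advances, $\tau$ and $\theta$ move rightward in lockstep (i.e.\ $t$ increases), and the quantity $d$ tracks the gap between the value at the right endpoint $x_2$ and the current minimum over the left portion of the window---roughly $F_t(x_2)-\min_{x\in[x_1,\gamma_{b_j}-t]}F_t(x)$, with the nested $\max\{0,\cdot\}$ operations in lines~14--18 implementing this running minimum. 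Your proposed invariant $d=F_t(\theta)-F_t(\tau)$ conflates the $x$-domain and the $(x+t)$-domain and does not match what the algorithm computes.

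Two further concrete errors: Section~C is entered when the \emph{right} endpoint $\theta$ has crossed $\gamma_{a_{j+1}}$ (i.e.\ the first loop exits because $i\geq a_{j+1}$), not when a left candidate exits through $x_1$; in that section the algorithm checks whether the local minimum at $\gamma_{a_{j+1}}$ can become the global minimum before being undercut by something to its right. And the $+\infty$ return is triggered when the running sum $\phi$ becomes \emph{negative} (meaning a lower point exists in $(\gamma_{a_{j+1}},\theta]$), not when $\phi$ stays nonnegative---you have the sign test backwards. Once you re-anchor $\tau,\theta$ as window endpoints and track the correct gap quantity, the closed-form solves in lines~59--68 become the natural affine inversions of $d\mapsto 0$ on the final slice, and the argument can be completed along the lines the paper follows in Part~7.
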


\vspace{12pt}

\begin{proof}
The function $x^{\circ}(t)$ can be obtained by analyzing the shape of the function $f(x+t) + g(x)$ in the interval $[x_{1},x_{2}]$, with $t$ moving from $-\infty$ to $+\infty$. The proof consists of seven parts:
\begin{enumerate}
\item in the first part, it is proven that, when $\lvert B \rvert = 0$, $x^{\circ}(t)$ has the structure provided by~\eqref{equ:xopt_1}, with $x_{\mathrm{e}}(t)$ provided by~\eqref{equ:xe_1} (if $l(Q) < \lvert A \rvert$) or~\eqref{equ:xe_3} (if $l(Q) = \lvert A \rvert$);

\item in the second part, it is proven that, when $\gamma_{b_{j}} - \gamma_{a_{j}} > (x_{2}-x_{1})$, $\forall \, j = 1, \ldots, \lvert B \rvert$, $\lvert B \rvert > 0$, and $\gamma_{a_{j+1}} - \gamma_{b_{j}} > (x_{2}-x_{1})$, $\forall \, j = 1, \ldots, \lvert A \rvert - 1$, $\lvert A \rvert > 1$, $x^{\circ}(t)$ has the structure provided by~\eqref{equ:xopt_2} or~\eqref{equ:xopt_3}, with $x_{\mathrm{s}}(t)$ provided by~\eqref{equ:xs_1}, $x_{q}(t)$, $q = 1, \ldots, Q-1$, $Q > 1$, provided by~\eqref{equ:xj_1}, and $x_{\mathrm{e}}(t)$ provided by~\eqref{equ:xe_1} (if $l(Q) < \lvert A \rvert$) or~\eqref{equ:xe_3} (if $l(Q) = \lvert A \rvert$);

\item in the third part, it is shown that the number of jump discontinuities in $x^{\circ}(t)$ may be less than $\lvert B \rvert$, that is, they are $Q \leq \lvert B \rvert$, and the conditions for which a jump discontinuity does not exist in connection with a specific abscissa $\gamma_{b_{j}} - t$, $j \in \{ 1, \ldots, \lvert B \rvert \}$ are provided;

\item in the fourth part, it is proven that, even if the assumptions considered in the second part do not hold for some $j \in \{ 1, \ldots, \lvert B \rvert \}$ or $j \in \{ 1, \ldots, \lvert A \rvert - 1\}$, it is sufficient that $t^{\star}_{1} > \gamma_{a_{1}} - x_{1}$, or $t^{\star}_{q} < \gamma_{a_{l(q)+1}} - x_{2}$ and $t^{\star}_{q+1} > \gamma_{a_{l(q)+1}} - x_{1}$, $q \in \{ 1, \ldots, Q-1 \}$, $Q > 1$, or $t^{\star}_{Q} < \gamma_{a_{l(Q)+1}} - x_{2}$, to guarantee that $x_{\mathrm{s}}(t)$ has the structure provided by~\eqref{equ:xs_1}, $x_{q}(t)$ has the structure provided by~\eqref{equ:xj_1}, and $x_{\mathrm{e}}(t)$ has the structure provided by~\eqref{equ:xe_1} (if $l(Q) < \lvert A \rvert$), respectively;

\item in the fifth part, it is proven that, if $t^{\star}_{q} \geq \gamma_{a_{l(q)+1}} - x_{2}$, $q \in \{ 1, \ldots, Q-1 \}$ or $t^{\star}_{Q} \geq \gamma_{a_{l(Q)+1}} - x_{2}$, then there is at $t = t^{\star}_{q}$ a discontinuity in $x^{\circ}(t)$ at which it jumps upwardly from $x_{1}$ to $-t + \gamma_{a_{l(q)+1}} \leq x_{2}$, $q \in \{ 1, \ldots, Q \}$, that is, $x_{q}(t)$ in~\eqref{equ:xopt_3} has the structure of~\eqref{equ:xj_2} or~\eqref{equ:xj_4} or $x_{\mathrm{e}}(t)$ in~\eqref{equ:xopt_2} and~\eqref{equ:xopt_3} has the structure of~\eqref{equ:xe_2};

\item in the sixth part, it is proven that, if $t^{\star}_{1} \leq \gamma_{a_{1}} - x_{1}$ or $t^{\star}_{q+1} \leq \gamma_{a_{l(q)+1}} - x_{1}$, $q \in \{ 1, \ldots, Q-1 \}$, then there is at $t = t^{\star}_{q+1}$ a discontinuity in $x^{\circ}(t)$ at which it jumps upwardly from $-t + \gamma_{a_{l(q)+1}} \geq x_{1}$ to $x_{2}$, $q \in \{ 1, \ldots, Q \}$, that is, $x_{\mathrm{s}}(t)$ in~\eqref{equ:xopt_2} and~\eqref{equ:xopt_3} has the structure of~\eqref{equ:xs_2} or $x_{q}(t)$ in~\eqref{equ:xopt_3} has the structure of~\eqref{equ:xj_3} or~\eqref{equ:xj_4};

\item in the seventh and last part, algorithm~\ref{alg:tstar}, which allows determining time instants $\omega_{j}$, $j = 1, \ldots, \lvert B \rvert$, is described.
\end{enumerate}

\vspace{12pt}

\underline{First part}

Consider the case $\lvert B \rvert = 0$, which implies $\Omega = \emptyset$ and then $T = \emptyset$ and $Q = 0$. Moreover, $l(0) = 0$. If $\lvert A \rvert = 0$ as well, all the slopes of $f(x+t)$ are less than $\nu$; then, $f(x+t) + g(x)$ is a strictly decreasing function of $x$, $\forall \, t$. In this case, the minimum of the function $f(x+t) + g(x)$, with respect to $[x_{1},x_{2}]$, is always obtained at $x_{2}$. Thus, in this case, $x^{\circ}(t)$ has the structure provided by~\eqref{equ:xopt_1}, with $x_{\mathrm{e}}(t)$ provided by~\eqref{equ:xe_3}, being $l(Q) = \lvert A \rvert$.

If $\lvert A \rvert > 0$, it is definitely $\lvert A \rvert = 1 = l(Q)+1$. In this case, the slopes of $f(x+t)$ are less than $\nu$ in the interval $(-\infty,\gamma_{a_{l(Q)+1}})$ and greater than or equal to $\nu$ in $[\gamma_{a_{l(Q)+1}},+\infty)$. This case is very similar to that considered in lemma~1 of~\cite{Aicardi2008}. When $t$ is such that $x_{2} < \gamma_{a_{l(Q)+1}} - t$ (that is, $t < \gamma_{a_{l(Q)+1}} - x_{2}$), the minimum with respect to $x$, $x_{1} \leq x \leq x_{2}$, of $f(x+t) + g(x)$ is obtained at $x_{2}$. When $t$ is such that $x_{1} < \gamma_{a_{l(Q)+1}} - t \leq x_{2}$ (that is, $\gamma_{a_{l(Q)+1}} - x_{2} \leq t < \gamma_{a_{l(Q)+1}} - x_{1}$), the function $f(x+t) + g(x)$ is strictly decreasing in $[x_{1},\gamma_{a_{l(Q)+1}} - t]$ and nondecreasing in $[\gamma_{a_{l(Q)+1}} - t , x_{2}]$; then, it has a minimum, with respect to $[x_{1},x_{2}]$, in $\gamma_{a_{j}} - t$; when $t$ increases in the interval $[\gamma_{a_{l(Q)+1}} - x_{2} , \gamma_{a_{l(Q)+1}} - x_{1})$, the minimum decreases (with unitary speed) from $x_{2}$ to $x_{1}$. Finally, when $t$ is such that $x_{1} \geq \gamma_{a_{l(Q)+1}} - t$ (that is, $t \geq \gamma_{a_{l(Q)+1}} - x_{1}$), the minimum is obtained at $x_{1}$. Thus, in this case, $x^{\circ}(t)$ has the structure provided by~\eqref{equ:xopt_1}, with $x_{\mathrm{e}}(t)$ provided by~\eqref{equ:xe_1}, being $l(Q) < \lvert A \rvert$. Note that, since $t^{\star}_{Q} = -\infty$ when $Q = 0$, it is $t^{\star}_{Q} < \gamma_{a_{l(Q)+1}} - x_{2}$ for sure.

\vspace{12pt}

\underline{Second part}

Consider the case $\lvert A \rvert > 1$ and $\lvert B \rvert > 0$, and assume $\gamma_{b_{j}} - \gamma_{a_{j}} > (x_{2}-x_{1})$, $\forall \, j = 1, \ldots, \lvert B \rvert$, and $\gamma_{a_{j+1}} - \gamma_{b_{j}} > (x_{2}-x_{1})$, $\forall j = 1, \ldots, \lvert A \rvert - 1$. Under such hypotheses, the function $x^{\circ}(t)$ is defined as follows. 

\begin{enumerate}
\item \label{rul:shape1} 
When $t$ is such that the slopes of $f(x+t)$ in the interval $[x_{1},x_{2}]$ are less than $\nu$, that is, $\forall \, t < \gamma_{a_{1}} - x_{2}$, $\forall \, t \in [ \gamma_{b_{j}} - x_{1} , \gamma_{a_{j+1}} - x_{2})$, $j = 1, \ldots, \lvert A \rvert - 1$, and $\forall \, t \geq \gamma_{b_{\lvert B \rvert}} - x_{1}$ if $\lvert A \rvert = \lvert B \rvert$, the minimum of the function $f(x+t) + g(x)$, with respect to $[x_{1},x_{2}]$, is obtained at $x_{2}$, since $f(x+t) + g(x)$ is strictly decreasing in $[x_{1},x_{2}]$.

\item \label{rul:shape2} 
When $t$ is such that $\gamma_{a_{j}} - t \in [x_{1},x_{2}]$, $j = 1, \ldots, \lvert A \rvert$, that is, $\forall \, t \in [ \gamma_{a_{j}} - x_{2} , \gamma_{a_{j}} - x_{1} )$, $j = 1, \ldots, \lvert A \rvert$, the function $f(x+t) + g(x)$ is strictly decreasing in $[x_{1},\gamma_{a_{j}} - t]$ and nondecreasing in $[\gamma_{a_{j}} - t , x_{2}]$; then, it has a minimum, with respect to $[x_{1},x_{2}]$, in $\gamma_{a_{j}} - t$; when $t$ increases in the interval $[\gamma_{a_{j}} - x_{2} , \gamma_{a_{j}} - x_{1})$, the minimum decreases (with unitary speed) from $x_{2}$ to $x_{1}$.

\item \label{rul:shape3} 
When $t$ is such that the slope of $f(x+t)$ in the interval $[x_{1},x_{2}]$ is grater than or equal to $\nu$, that is, $\forall \, t \in [ \gamma_{a_{j}} - x_{1} , \gamma_{b_{j}} - x_{2})$, $j = 1, \ldots, \lvert B \rvert$, and $\forall \, t \geq \gamma_{a_{\lvert A \rvert}} - x_{1}$ if $\lvert A \rvert > \lvert B \rvert$, the minimum of the function $f(x+t) + g(x)$, with respect to $[x_{1},x_{2}]$, is obtained at $x_{1}$, since $f(x+t) + g(x)$ is nondecreasing in $[x_{1},x_{2}]$.

\item \label{rul:shape4} 
When $t$ is such that $\gamma_{b_{j}} - t \in [x_{1},x_{2}]$, $j = 1, \ldots, \lvert B \rvert$, that is, $\forall \, t \in [ \gamma_{b_{j}} - x_{2} , \gamma_{b_{j}} - x_{1} )$, $j = 1, \ldots, \lvert B \rvert$, the function $f(x+t) + g(x)$ is nondecreasing in $[x_{1},\gamma_{b_{j}} - t]$ and strictly decreasing in $[\gamma_{b_{j}} - t , x_{2}]$; then, it has a maximum, with respect to $[x_{1},x_{2}]$, in $x = \gamma_{b_{j}} - t$, and the minimum is obtained either at $x_{1}$ or $x_{2}$, depending on the values $f(x_{1}+t) + g(x_{1})$ and $f(x_{2}+t) + g(x_{2})$, $t \in [ \gamma_{b_{j}} - x_{2} , \gamma_{b_{j}} - x_{1} )$ (the minimum is obtained at $x^{\circ} = x_{1}$ if $f(x_{1}+t) + g(x_{1}) < f(x_{2}+t) + g(x_{2})$ and at $x^{\circ} = x_{2}$ otherwise). In this connection, note that:
\begin{itemize}
\item when $t = \gamma_{b_{j}} - x_{2}$, it is certainly $f(x_{1}+t) + g(x_{1}) \leq f(x_{2}+t) + g(x_{2})$;
\item when $t$ increases in the interval $(\gamma_{b_{j}} - x_{2} , \gamma_{b_{j}} - x_{1})$, the value of $f(x_{1}+t) + g(x_{1})$ increases or remains constant and the value of $f(x_{2}+t) + g(x_{2})$ decreases;
\item when $t = \gamma_{b_{j}} - x_{1}$, it is certainly $f(x_{1}+t) + g(x_{1}) > f(x_{2}+t) + g(x_{2})$.
\end{itemize}
This means that it certainly exists $\omega_{j} \in [ \gamma_{b_{j}} - x_{2} , \gamma_{b_{j}} - x_{1} )$ such that $f(x_{1}+t) + g(x_{1}) \leq f(x_{2}+t) + g(x_{2})$ $\forall \, t \in [ \gamma_{b_{j}} - x_{2}, \omega_{j})$, $f(x_{1}+\omega_{j}) + g(x_{1}) = f(x_{2}+\omega_{j}) + g(x_{2})$, and $f(x_{1}+t) + g(x_{1}) > f(x_{2}+t) + g(x_{2})$ $\forall \, t \in ( \omega_{j} , \gamma_{b_{j}} - x_{1} )$; then, the minimum is obtained at $x_{1}$ $\forall \, t \in [ \gamma_{b_{j}} - x_{2}, \omega_{j})$, ``jumps'' from $x_{1}$ to $x_{2}$ at $\omega_{j}$, and is obtained at $x_{2}$ $\forall \, t \in [ \omega_{j} , \gamma_{b_{j}} - x_{1} )$.
\end{enumerate}
Thus, according to the previous ``rules'', since $\mu_{0} = 0$ the function $x^{\circ}(t)$ is $x_{2}$ at the beginning (rule~\ref{rul:shape1}), decreases with slope $-1$ in the interval $[ \gamma_{a_{1}} - x_{2} , \gamma_{a_{1}} - x_{1} )$ (rule~\ref{rul:shape2}), is equal to $x_{1}$ from $\gamma_{a_{1}} - x_{1}$ to $\omega_{1}$, at which it jumps to $x_{2}$ (rules~\ref{rul:shape3} and~\ref{rul:shape4}); $x^{\circ}(t)$ remains equal to $x_{2}$ from $\omega_{1}$ up to $\gamma_{a_{2}} - x_{2}$ (rules~\ref{rul:shape4} and~\ref{rul:shape1}), then it descreases with slope $-1$ in the interval $[ \gamma_{a_{2}} - x_{2} , \gamma_{a_{2}} - x_{1} )$ (rule~\ref{rul:shape2}), is equal to $x_{1}$ from $\gamma_{a_{2}} - x_{1}$ to $\omega_{2}$, at which jumps to $x_{2}$ (rules~\ref{rul:shape3} and~\ref{rul:shape4}), and so on. In its last part, the function $x^{\circ}(t)$ is $x_{1}$ if $\lvert A \rvert > \lvert B \rvert$ or $x_{2}$ if $\lvert A \rvert = \lvert B \rvert$, in accordance with rules~\ref{rul:shape3} and~\ref{rul:shape1}.

\begin{figure}[h]
\centering
\psfrag{f(x)}[cl][Bl][1][0]{$f(x+t) + g(x)$ when $t = \gamma_{b_{j}} - x_{2}$}
\psfrag{x}[bc][Bl][1][0]{$x$}
\psfrag{G2}[Bl][Bl][1][-60]{$x_{1}$}
\psfrag{G3}[Bl][Bl][1][-60]{$\gamma_{b_{j} - 3} - t$}
\psfrag{G4}[Bl][Bl][1][-60]{$\gamma_{b_{j} - 2} - t$}
\psfrag{G5}[Bl][Bl][1][-60]{$\gamma_{b_{j} - 1} - t$}
\psfrag{G6}[Bl][Bl][1][-60]{$\gamma_{b_{j}} - t \equiv x_{2}$}
\psfrag{G7}[Bl][Bl][1][-60]{$\gamma_{b_{j} + 1} - t$}
\psfrag{G8}[Bl][Bl][1][-60]{$\gamma_{b_{j} + 2} - t$}
\psfrag{G9}[Bl][Bl][1][-60]{$\gamma_{b_{j} + 3} - t$}
\psfrag{G10}[Bl][Bl][1][-60]{$\gamma_{b_{j} + 4} - t$}
\psfrag{G12}[Bl][Bl][1][-60]{$2 x_{2} - x_{1}$}
\psfrag{m0}[cr][Bl][.8][-30]{$\mu_{b_{j} - 4} - \nu$}
\psfrag{m1}[cr][Bl][.8][-30]{$\mu_{b_{j} - 3} - \nu$}
\psfrag{m2}[cr][Bl][.8][-30]{$\mu_{b_{j} - 2} - \nu$}
\psfrag{m3}[cr][Bl][.8][-30]{$\mu_{b_{j} - 1} - \nu$}
\psfrag{m4}[cl][Bl][.8][30]{$\mu_{b_{j}} - \nu$}
\psfrag{m5}[cl][Bl][.8][30]{$\mu_{b_{j} + 1} - \nu$}
\psfrag{m6}[cl][Bl][.8][30]{$\mu_{b_{j} + 2} - \nu$}
\psfrag{m7}[cl][Bl][.8][30]{$\mu_{b_{j} + 3} - \nu$}
\psfrag{m8}[cl][Bl][.8][30]{$\mu_{b_{j} + 4} - \nu$}
\psfrag{D}[bc][Bl][1][90]{$d$}
\includegraphics[scale=.4]{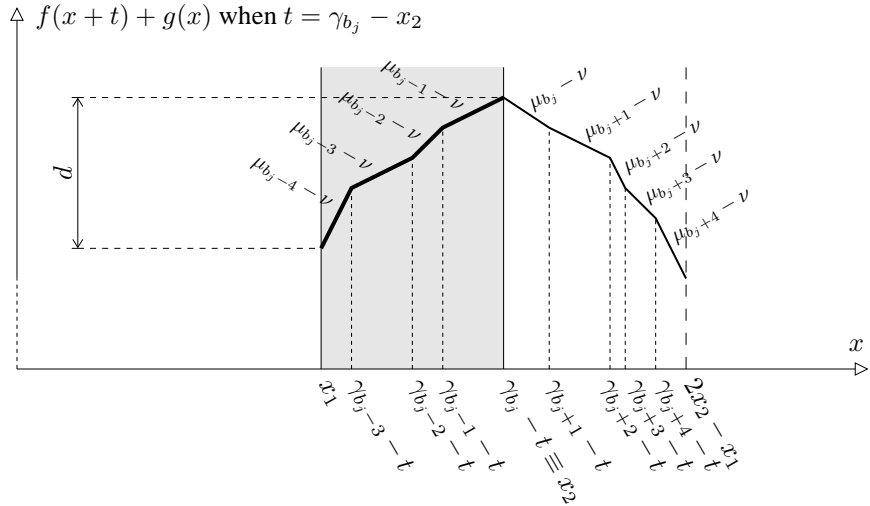}
\vspace{36pt}
\caption{Example of function $f(x+t) + g(x)$, when $t = \gamma_{b_{j}} - x_{2}$.}
\label{fig:det_tstar_1}
\end{figure}

Time instants $\omega_{j} \in [ \gamma_{b_{j}} - x_{2} , \gamma_{b_{j}} - x_{1} )$, $j = 1, \ldots, \lvert B \rvert$, are those for which it results $f(x_{1}+\omega_{j}) + g(x_{1}) = f(x_{2}+\omega_{j}) + g(x_{2})$. They can be determined through a simple procedure which analyzes the values within the interval $[x_{1},x_{2}]$ of the piece-wise linear function $f(x+t) + g(x)$, during its leftward movement (when $t$ increases from $t = \gamma_{b_{j}} - x_{2}$ up to $t = \gamma_{b_{j}} - x_{1}$). Consider figure~\ref{fig:det_tstar_1} which illustrates an example of function $f(x+t) + g(x)$ when $t = \gamma_{b_{j}} - x_{2}$ (only the part which belongs to the interval $[ x_{2} - (x_{2}-x_{1}) , x_{2} + (x_{2}-x_{1}) ] \equiv [x_{1},2 x_{2} - x_{1}]$ is reported, and note also that the vertical value is not meaningful in the search of the abscissa at which the minimum is obtained). Let $d = f(x_{2} + t) + g(x_{2}) - f(x_{1} + t) + g(x_{1})$. Without considering the upward movement of the function (which is not important for the determination of $\omega_{j}$), when $t$ increases the function moves leftward and $d$ is reduced. As an example, in figure~\ref{fig:det_tstar_2} the same function $f(x+t) + g(x)$ is illustrated when $t = \gamma_{b_{j}-3} - x_{1}$. It is evident that $\omega_{j}$ is the time instant at which $d$ is null.

\begin{figure}[h]
\centering
\psfrag{f(x)}[cl][Bl][1][0]{$f(x+t) + g(x)$ when $t = \gamma_{b_{j}-3} - x_{1}$}
\psfrag{x}[bc][Bl][1][0]{$x$}
\psfrag{G2}[Bl][Bl][1][-60]{$x_{1}$}
\psfrag{G3}[Bl][Bl][1][-60]{$\gamma_{b_{j} - 3} - t \equiv x_{1}$}
\psfrag{G4}[Bl][Bl][1][-60]{$\gamma_{b_{j} - 2} - t$}
\psfrag{G5}[Bl][Bl][1][-60]{$\gamma_{b_{j} - 1} - t$}
\psfrag{G6}[Bl][Bl][1][-60]{$\gamma_{b_{j}} - t \equiv x_{2}$}
\psfrag{G7}[Bl][Bl][1][-60]{$\gamma_{b_{j} + 1} - t$}
\psfrag{G8}[Bl][Bl][1][-60]{$\gamma_{b_{j} + 2} - t$}
\psfrag{G9}[Bl][Bl][1][-60]{$\gamma_{b_{j} + 3} - t$}
\psfrag{G10}[Bl][Bl][1][-60]{$\gamma_{b_{j} + 4} - t$}
\psfrag{G11}[Bl][Bl][1][-60]{$x_{2}$}
\psfrag{G12}[Bl][Bl][1][-60]{$2 x_{2} - x_{1}$}
\psfrag{m0}[cr][Bl][.8][-30]{$\mu_{b_{j} - 4} - \nu$}
\psfrag{m1}[cr][Bl][.8][-30]{$\mu_{b_{j} - 3} - \nu$}
\psfrag{m2}[cr][Bl][.8][-30]{$\mu_{b_{j} - 2} - \nu$}
\psfrag{m3}[cr][Bl][.8][-30]{$\mu_{b_{j} - 1} - \nu$}
\psfrag{m4}[cl][Bl][.8][30]{$\mu_{b_{j}} - \nu$}
\psfrag{m5}[cl][Bl][.8][30]{$\mu_{b_{j} + 1} - \nu$}
\psfrag{m6}[cl][Bl][.8][30]{$\mu_{b_{j} + 2} - \nu$}
\psfrag{m7}[cl][Bl][.8][30]{$\mu_{b_{j} + 3} - \nu$}
\psfrag{m8}[cl][Bl][.8][30]{$\mu_{b_{j} + 4} - \nu$}
\psfrag{D}[bc][Bl][1][90]{$d$}
\includegraphics[scale=.4]{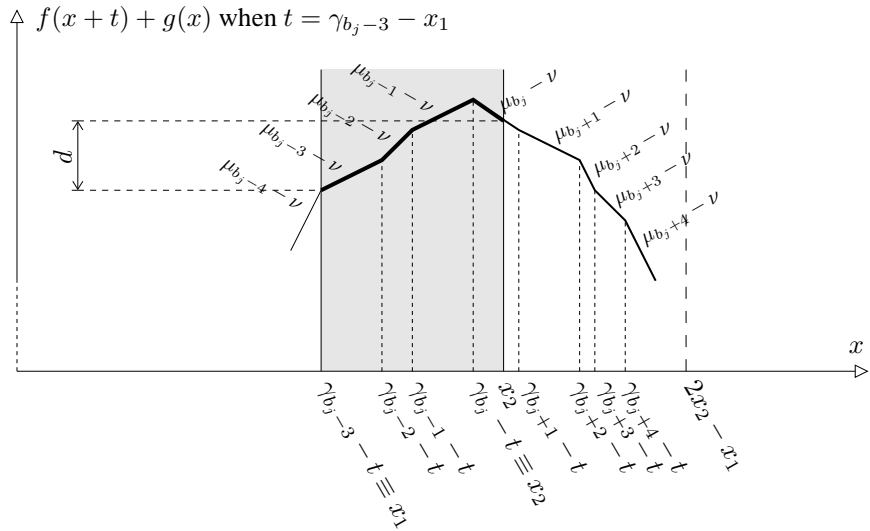}
\vspace{48pt}
\caption{Example of function $f(x+t) + g(x)$, when $t = \gamma_{b_{j}-3} - x_{1}$.}
\label{fig:det_tstar_2}
\end{figure}

On the basis of such considerations, with the considered assumptions, to compute $\omega_{j}$ it is possible to use the following algorithm (which is not formally described, but the reader can refer to the description of algorithm~\ref{alg:tstar}, which generalizes the following one).
 
 \begin{multicols}{2}
{\it SECTION A -- INITIALIZATION}

\begin{algorithmic}[1]
\State $h \in \{Êa_{j}, \ldots, b_{j}-1 \} \, :$
\Statex $\qquad\qquad\qquad \gamma_{h} \leq \gamma_{b_{j}} - (x_{2} - x_{1}) < \gamma_{h+1}$
\State $i = b_{j}$
\If{$j = \lvert B \rvert \algorithmicand \lvert A \rvert = \lvert B \rvert$}
\State $a_{j+1} = M+1$
\State $\gamma_{M+1} = +\infty$
\EndIf
\State $k \in \{Êb_{j}, \ldots, a_{j+1}-1 \} \, :$
\Statex $\qquad\qquad\qquad \gamma_{k} < \gamma_{b_{j}} + (x_{2} - x_{1}) \leq \gamma_{k+1}$
\For{$p = h \algorithmicto k$}
\State $\tilde{\mu}_{p} = \mu_{p} - \nu$
\EndFor
\State $\tau = \gamma_{b_{j}} - (x_{2} - x_{1})$
\State $\theta = \gamma_{b_{j}}$
\State $d = \tilde{\mu}_{h} ( \gamma_{h+1} - \tau )$
\If{$h < b_{j}-1$}
\State $d = d + \sum_{p=h+1}^{b_{j}-1} \tilde{\mu}_{p} ( \gamma_{p+1} - \gamma_{p} )$
\EndIf
\State $\lambda = h$
\State $\xi = i$
\algstore{bkbreak}
\end{algorithmic}

{\it SECTION B -- LOOP}

\begin{algorithmic}[1]
\algrestore{bkbreak}
\While{$h < b_{j} \algorithmicand i < k+1$}
\State $\psi = \min \{ \gamma_{h+1} - \tau , \gamma_{i+1} - \theta \}$
\If{$\gamma_{h+1} - \tau \leq \gamma_{i+1} - \theta$}
\State $\lambda = h + 1$
\EndIf
\If{$\gamma_{h+1} - \tau \geq \gamma_{i+1} - \theta$}
\State $\xi = i + 1$
\EndIf
\State $\delta = \tilde{\mu}_{\lambda} [ \gamma_{\lambda+1} - (\tau+\psi) ]$
\If{$\lambda < \xi - 1$}
\State $\delta = \delta + \sum_{p=\lambda+1}^{\xi-1} \tilde{\mu}_{p} ( \gamma_{p+1} - \gamma_{p} )$
\EndIf
\State $\delta = \delta + \tilde{\mu}_{\xi} [ (\theta+\psi) - \gamma_{\xi} ]$
\If{$\delta \leq 0$}
\State $\omega_{j} = \tau - x_{1} + \frac{d}{\tilde{\mu}_{h} - \tilde{\mu}_{i}}$
\State \algorithmicexit
\Else
\State $h = \lambda$
\State $i = \xi$
\State $\tau = \tau + \psi$
\State $\theta = \theta + \psi$
\State $d = \delta$
\EndIf
\EndWhile
\Statex
\end{algorithmic}
\end{multicols}

This algorithm provides, for any $j = 1, \ldots, \lvert B \rvert$, the time instant $\omega_{j}$ at which a jump discontinuity in $x^{\circ}(t)$ occurs. Since $\omega_{j} < +\infty$ $\forall \, j = 1, \ldots, \lvert B \rvert$, then $T = \Omega$, being $\Omega = \{ \omega_{1} , \ldots, \omega_{j}, \ldots, \omega_{\lvert B \rvert} \}$. Moreover, $Q = \lvert B \rvert$, $t^{\star}_{q} = \omega_{q}$, and $l(q) = q$, $\forall \, q = 1, \ldots , Q$. Then, it is possible to write $t^{\star}_{q} \in [\gamma_{b_{l(q)}} - x_{2}, \gamma_{b_{l(q)}} - x_{1})$, $\gamma_{b_{l(q)}} - \gamma_{a_{l(q)}} > (x_{2}-x_{1})$, $\forall \, q = 1, \ldots, Q$, and $\gamma_{a_{l(q)+1}} - \gamma_{b_{l(q)}} > (x_{2}-x_{1})$, $\forall \, q = 1, \ldots, Q$ (if $l(Q) < \lvert A \rvert$) or $\forall \, q = 1, \ldots, Q-1$ (if $l(Q) = \lvert A \rvert$), which imply $t^{\star}_{q} > \gamma_{a_{l(q)}} - x_{1}$ $\forall \, q = 1, \ldots, Q$ and $t^{\star}_{q} < \gamma_{a_{l(q)+1}} - x_{2}$ $\forall \, q = 1, \ldots, Q$ (if $l(Q) < \lvert A \rvert$) or $\forall \, q = 1, \ldots, Q-1$ (if $l(Q) = \lvert A \rvert$).

Then, $x^{\circ}(t)$ has the structure provided by~\eqref{equ:xopt_2} or~\eqref{equ:xopt_3}, with $x_{\mathrm{s}}(t)$ provided by~\eqref{equ:xs_1}, $x_{q}(t)$, $q = 1, \ldots, Q-1$, $Q > 1$, provided by~\eqref{equ:xj_1}, and $x_{\mathrm{e}}(t)$ provided by~\eqref{equ:xe_1} (if $l(Q) < \lvert A \rvert$) or~\eqref{equ:xe_3} (if $l(Q) = \lvert A \rvert$).

\vspace{12pt}

\underline{Third part}

It has been shown in the second part of the proof that, under the assumptions $\gamma_{b_{j}} - \gamma_{a_{j}} > (x_{2}-x_{1})$, $\forall \, j = 1, \ldots, \lvert B \rvert$, $\lvert B \rvert > 0$, and $\gamma_{a_{j+1}} - \gamma_{b_{j}} > (x_{2}-x_{1})$, $\forall j = 1, \ldots, \lvert A \rvert - 1$, $\lvert A \rvert > 1$, there exists, for each value $b_{j}$, $j = 1, \ldots, \lvert B \rvert$, a finite value $\omega_{j} \in [ \gamma_{b_{j}} - x_{2} , \gamma_{b_{j}} - x_{1} )$ at which $x^{\circ}(t)$ jumps to $x_{2}$. In other words, there are $\lvert B \rvert$ points of discontinuity in the function $x^{\circ}(t)$. In presence of a narrower intervals, this is not necessarily true.

As a matter of fact, in connection with two consecutive time intervals $[\gamma_{b_{j}}-t,\gamma_{a_{j+1}}-t )$ and $[\gamma_{a_{j+1}}-t,\gamma_{b_{j+1}}-t )$ which are such that $\gamma_{b_{j+1}} - \gamma_{b_{j}} < (x_{2}-x_{1})$, when, for any $t \in [ \gamma_{a_{j+1}} - x_{2} , \min \{ \gamma_{b_{j}} - x_{1} , \gamma_{a_{j+2}} - x_{2} \} )$, at least one of the two conditions $f(x_{1}+t) + g(x_{1}) < f(\gamma_{a_{j+1}}) + g(\gamma_{a_{j+1}}-t)$ and $f(x_{2}+t) + g(x_{2}) < f(\gamma_{a_{j+1}}) + g(\gamma_{a_{j+1}}-t)$ is satisfied, then the local minimum at $\gamma_{a_{j+1}} - t$ is never the absolute minimum in the interval $[x_{1},x_{2}]$. Then, in this case, the presence of the abscissa $\gamma_{b_{j}}$, at which the slope of $f(x+t)$ changes from a value greater than or equal to $\nu$ to a value less than $\nu$, does not cause the function $x^{\circ}(t)$ to jump in an upward direction.

\begin{figure}[h]
\centering
\psfrag{f1(x)}[cl][Bl][.8][0]{$f(x+t) + g(x)$ when $t = \gamma_{b_{j}} - x_{2}$}
\psfrag{f2(x)}[cl][Bl][.8][0]{$f(x+t) + g(x)$ when $t = \gamma_{a_{j+1}} - x_{2}$}
\psfrag{f3(x)}[cl][Bl][.8][0]{$f(x+t) + g(x)$ when $t : f(x_{1}+t)+g(x_{1}) = f(x_{2}+t)+g(x_{2})$}
\psfrag{x}[bc][Bl][.8][0]{$x$}
\psfrag{G1}[Bl][Bl][.8][-45]{$x_{1}$}
\psfrag{G2}[Bl][Bl][.8][-45]{$x_{2} \equiv \gamma_{b_{j}} - t$}
\psfrag{G3}[Bl][Bl][.8][-45]{$\gamma_{a_{j+1}} - t$}
\psfrag{G4}[Bl][Bl][.8][-45]{$\gamma_{b_{j+1}} - t$}
\psfrag{G5}[Bl][Bl][.8][-45]{$\gamma_{a_{j+2}} - t$}
\psfrag{G6}[Bl][Bl][.8][-45]{$\gamma_{b_{j}} - t$}
\psfrag{G7}[Bl][Bl][.8][-45]{$x_{2} \equiv \gamma_{a_{j+1}} - t$}
\psfrag{G8}[Bl][Bl][.8][-45]{$x_{2}$}
\psfrag{A}[cc][Bl][1][0]{(a)}
\psfrag{B}[cc][Bl][1][0]{(b)}
\psfrag{C}[cc][Bl][1][0]{(c)}
\includegraphics[scale=.25]{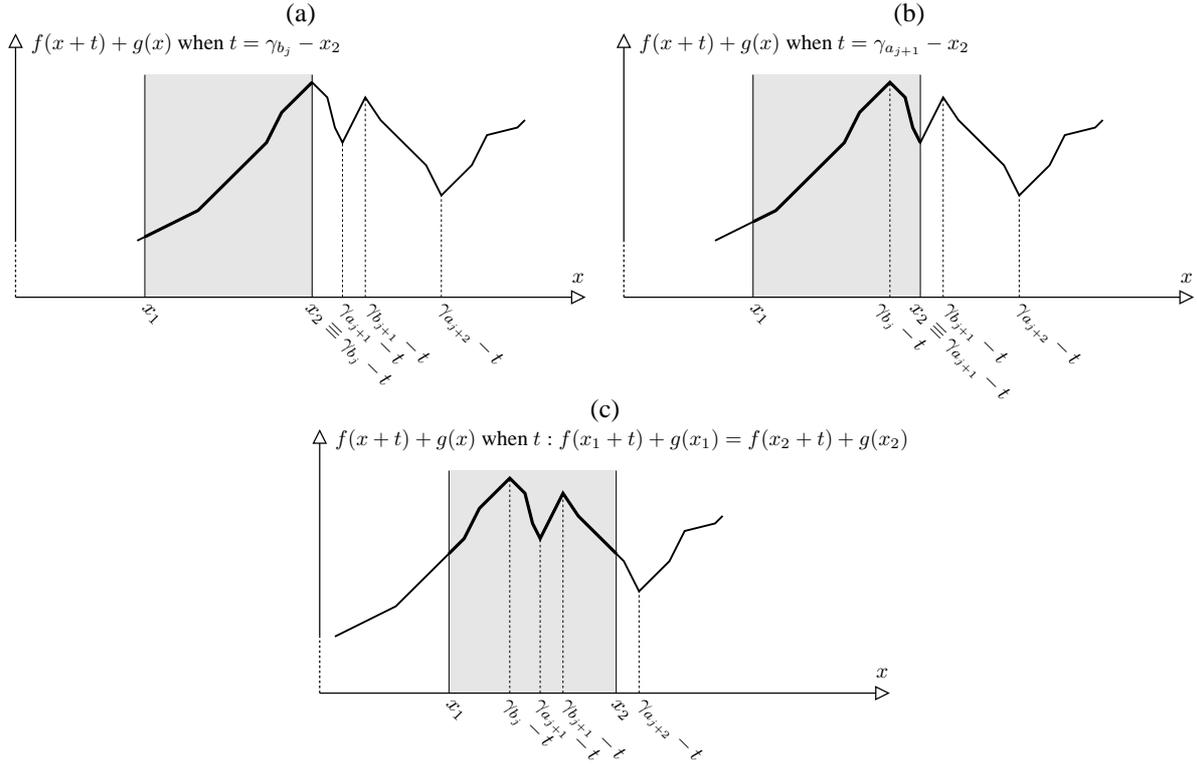}
\vspace{18pt}
\caption{Example of function $f(x+t) + g(x)$, (a) when $t = \gamma_{b_{j}} - x_{2}$, (b) when $t = \gamma_{a_{j+1}} - x_{2}$, and (c) when $t : f(x_{1}+t)+g(x_{1}) = f(x_{2}+t)+g(x_{2})$.}
\label{fig:det_tstar_7}
\end{figure}

To show this, consider the example of function $f(x+t) + g(x)$ illustrated in figure~\ref{fig:det_tstar_7}(a), when $t = \gamma_{b_{j}} - x_{2}$, in which it is $\gamma_{b_{j+1}} - \gamma_{b_{j}} < (x_{2}-x_{1})$. When $t = \gamma_{b_{j}} - x_{2}$, the minimum with respect to $[x_{1},x_{2}]$ is obtained at $x_{1}$. If $f(x_{1}+t) + g(x_{1}) < f(\gamma_{a_{j+1}}) + g(\gamma_{a_{j+1}}-t)$ when $t = \gamma_{a_{j+1}} - x_{2}$ (see figure~\ref{fig:det_tstar_7}(b)), that is, if $f(\gamma_{a_{j+1}} - x_{2} + x_{1}) + g(x_{1}) < f(\gamma_{a_{j+1}})$, then the minimum certainly remains at $x_{1}$ when $t$ increases in the interval $[ \gamma_{b_{j}} - x_{2}, \gamma_{a_{j+1}} - x_{2})$, since $f(x+t) + g(x)$ is strictly decreasing in $[\gamma_{b_{j}}-t,\gamma_{a_{j+1}}-t )$.

When $t$ increases in the interval $[ \gamma_{a_{j+1}} - x_{2}, \gamma_{b_{j+1}} - x_{2})$, if $f(x_{1}+t) + g(x_{1}) < f(\gamma_{a_{j+1}}) + g(\gamma_{a_{j+1}}-t)$ for all $t$ in such an interval, then the minimum is once more at $x_{1}$, since $f(x+t) + g(x)$ is nondecreasing in $[\gamma_{a_{j+1}}-t,\gamma_{b_{j+1}}-t )$. When $t$ increases in the interval $[ \gamma_{b_{j+1}} - x_{2}, \gamma_{a_{j+2}} - x_{2})$, if at least one of the two conditions $f(x_{1}+t) + g(x_{1}) < f(\gamma_{a_{j+1}}) + g(\gamma_{a_{j+1}}-t)$ and $f(x_{2}+t) + g(x_{2}) < f(\gamma_{a_{j+1}}) + g(\gamma_{a_{j+1}}-t)$ is satisfied, then the minimum is at $x_{1}$ (if $f(x_{1}+t) + g(x_{1}) < f(x_{2}+t) + g(x_{2})$) or at $x_{2}$ or $\gamma_{a_{j+2}} - t$ (if $f(x_{1}+t) + g(x_{1}) \geq f(x_{2}+t) + g(x_{2})$). More specifically, the minimum jumps from $x_{1}$ to $x_{2}$ (or to $\gamma_{a_{j+2}} - t$) when $t$ is such that $f(x_{1}+t) + g(x_{1}) = f(x_{2}+t) + g(x_{2})$ (see figure~\ref{fig:det_tstar_7}(c)); however, such a jump in an upward direction has to be associated with abscissa $\gamma_{b_{j+1}}$ and not with $\gamma_{b_{j}}$.

Note that, assumption $\gamma_{b_{j+1}} - \gamma_{b_{j}} < (x_{2}-x_{1})$ is a necessary condition, because in case $\gamma_{b_{j+1}} - \gamma_{b_{j}} \geq (x_{2}-x_{1})$ there is definitely a time instant $t$ at which the local minimum at $\gamma_{a_{j+1}} - t$ is the absolute minimum in the interval $[x_{1},x_{2}]$.

Algorithm~\ref{alg:tstar} determines in the section C (``second loop'') if the local minimum at $\gamma_{a_{j+1}} - t$ is the absolute minimum in the interval $[x_{1},x_{2}]$. Such a part of the algorithm (rows~\ref{alg:secondloopinizio}$\div$~\ref{alg:delta} and~\ref{alg:deltamax0inizio}$\div$~\ref{alg:secondloopfine}) moves the function $f(x+t) + g(x)$ in a leftward direction (by increasing the time variable $\tau$) until that the value of the function at the local minimum $\gamma_{a_{j+1}} - t$ is lower than or equal to the value of the function at $x_{1}$ or, equivalently, until that $f(x_{1}+t) + g(x_{1}) \geq f(\gamma_{a_{j+1}}) + g(\gamma_{a_{j+1}}-t)$. When this happens, it results $\delta \leq 0$. At that point, the algorithm determines (at rows~\ref{alg:deltamin0inizio}$\div$~\ref{alg:deltamin0fine}) if there is a value of the function $f(x+t) + g(x)$ in $(\gamma_{a_{j+1}} - t,x_{2}] \subset [x_{1},x_{2}]$ which is lower than the value of the function at the local minimum $\gamma_{a_{j+1}} - t$ or, equivalently, if it exists $t$ such that $f(x_{2}+t) + g(x_{2}) < f(\gamma_{a_{j+1}}) + g(\gamma_{a_{j+1}}-t)$. In the algorithm, such a lower value exists when $\phi < 0$; in this case, $\omega_{j}$ is set to the nonfinite value $+\infty$ and the algorithm ends since there is no more the possibility that the local minimum becomes an absolute minimum (since $f(x+t) + g(x)$ is strictly decreasing in $[ \gamma_{b_{j+1}} - t , \gamma_{a_{j+2}} - t )$).

In conclusion, it has been shown in this part of the proof that some of the abscissae $\gamma_{b_{j}} - t$, $j \in \{ 1, \ldots, \lvert B \rvert \}$, may not cause a jump discontinuity in $x^{\circ}(t)$. Then, $x^{\circ}(t)$ has a number of discontinuities (at which it jumps in an upward direction) equal to $Q \leq \lvert B \rvert$. In accordance with the notation adopted in algorithm~\ref{alg:tstar}, values $\omega_{j} < +\infty$, $j \in \{ 1, \ldots, \lvert B \rvert \}$, are those actually corresponding to jump discontinuities. These $Q$ values are denoted as $t^{\star}_{q}$, $q = 1, \ldots, Q$. The link between values $\omega_{j}$ and $t^{\star}_{q}$ is represented by the mapping function $l(q) = j$, $j \in \{ 1, \ldots, \lvert B \rvert \}$, $q = 1, \ldots, Q$ (that is, $l(q) = j \Leftrightarrow \omega_{j} = t^{\star}_{q}$).

From now on, only the $Q$ abscissae $\gamma_{b_{l(q)}} - t$, $q = 1, \ldots, Q$, and the $Q+1$ abscissae $\gamma_{a_{1}}$ and $\gamma_{a_{l(q)+1}} - t$, $q = 1, \ldots, Q$, will be taken into account, to prove that, when $Q \geq 1$, $x^{\circ}(t)$ has the structure provided by~\eqref{equ:xopt_2} or~\eqref{equ:xopt_3}, with $x_{\mathrm{s}}(t)$ provided by one of the~\eqref{equ:xs}, $x_{q}(t)$, $q = 1, \ldots, Q-1$, $Q > 1$, provided by one of the~\eqref{equ:xj}, and $x_{\mathrm{e}}(t)$ provided by one of the~\eqref{equ:xe}.

\vspace{12pt}

\underline{Fourth part}

Consider the case $Q \geq 1$, and assume that $\gamma_{a_{l(q)+1}} - \gamma_{b_{l(q)}} < (x_{2}-x_{1})$ and $\gamma_{b_{l(q)+1}} - \gamma_{a_{l(q)+1}} < (x_{2}-x_{1})$ for some $q \in \{ 1, \ldots, Q \}$. An example of such a case is illustrated in figure~\ref{fig:det_tstar_10}. Note that, in accordance with the considerations made in the third part of the proof, regarding the time instants which actually produce a jump discontinuity in $x^{\circ}(t)$, such assumptions imply $l(q)+1 = l(q+1)$ and $l(q)+2 = l(q+1) + 1 = l(q+2)$.

\begin{figure}[h]
\centering
\psfrag{f1(x)}[cl][Bl][.8][0]{$f(x+t) + g(x)$ when $t = t^{\star}_{q}$}
\psfrag{f2(x)}[cl][Bl][.8][0]{$f(x+t) + g(x)$ when $t = t^{\star}_{q+1}$}
\psfrag{x}[bc][Bl][.8][0]{$x$}
\psfrag{G1}[Bl][Bl][.8][-45]{$x_{1}$}
\psfrag{G2}[Bl][Bl][.8][-45]{$\gamma_{b_{l(q)}} - t$}
\psfrag{G3}[Bl][Bl][.8][-45]{$x_{2}$}
\psfrag{G4}[Bl][Bl][.8][-45]{$\gamma_{a_{l(q)+1}} - t$}
\psfrag{G5}[Bl][Bl][.8][-45]{$\gamma_{b_{l(q)+1}} - t$}
\psfrag{G6}[Bl][Bl][.8][-45]{$\gamma_{a_{l(q)+2}} - t$}
\psfrag{A}[cc][Bl][1][0]{(a)}
\psfrag{B}[cc][Bl][1][0]{(b)}
\includegraphics[scale=.25]{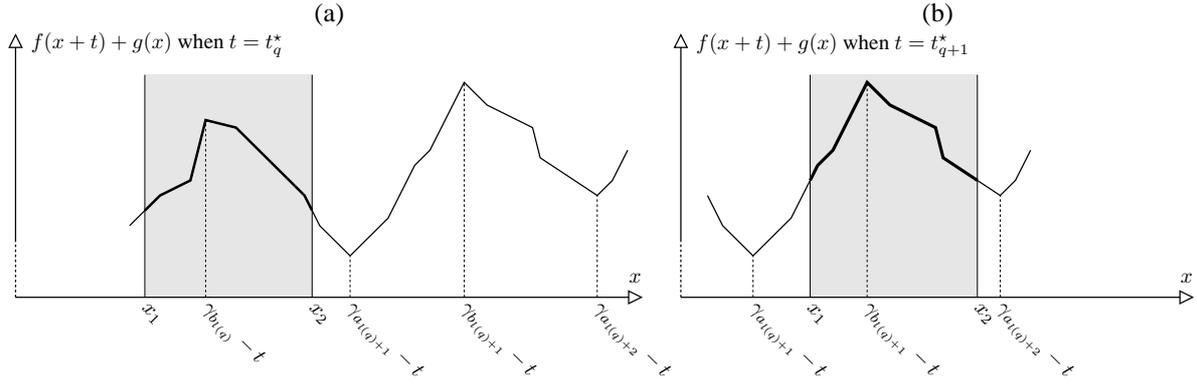}
\vspace{18pt}
\caption{Example of function $f(x+t) + g(x)$, (a) when $t = t^{\star}_{q}$ and (b) when $t = t^{\star}_{q+1}$.}
\label{fig:det_tstar_10}
\end{figure}

The condition $t^{\star}_{q} < \gamma_{a_{l(q)+1}} - x_{2}$ means that a discontinuity occurs at $t = t^{\star}_{q}$, at which $x^{\circ}(t)$ jumps to $x_{2}$, from either $x_{1}$ or $-t^{\star}_{q} + \gamma_{a_{l(q)}}$ (depending if $t^{\star}_{q} > \gamma_{a_{l(q)}} - x_{1}$ or $t^{\star}_{q} \leq \gamma_{a_{l(q)}} - x_{1}$, respectively). In figure~\ref{fig:det_tstar_10}, $f(x+t) + g(x)$ when $t = t^{\star}_{q}$ is illustrated. In accordance with the rules discussed in the second part of the proof, when $t$ increases from $t^{\star}_{q}$ to $\gamma_{a_{l(q)+1}} - x_{2}$, the minimum remains at $x_{2}$. Moreover, when $t$ increases from $\gamma_{a_{l(q)+1}} - x_{2}$ on, the minimum decreases with unitary speed from $x_{2}$ towards $x_{1}$.

The condition $t^{\star}_{q+1} > \gamma_{a_{l(q)+1}} - x_{1}$ means that the minimum, which is decreasing, reaches $x_{1}$ when $t = \gamma_{a_{l(q)+1}} - x_{1}$, and remains at $x_{1}$ in the interval $[\gamma_{a_{l(q)+1}} - x_{1},t^{\star}_{q+1})$. At $t = t^{\star}_{q+1}$ a discontinuity occurs, at which $x^{\circ}(t)$ jumps from $x_{1}$, to either $x_{2}$ or $-t^{\star}_{q+1} + \gamma_{a_{l(q)+2}}$ (depending if $t^{\star}_{q+1} < \gamma_{a_{l(q)+2}} - x_{2}$ or $t^{\star}_{q+1} \geq \gamma_{a_{l(q)+2}} - x_{2}$, respectively).

Then, in case $t^{\star}_{q} < \gamma_{a_{l(q)+1}} - x_{2}$ and $t^{\star}_{q+1} > \gamma_{a_{l(q)+1}} - x_{1}$, the function $x^{\circ}(t)$ between time instants $t^{\star}_{q}$ and $t^{\star}_{q+1}$ has the structure provided by~\eqref{equ:xj_1}.

When $q = 0$, the function $f(x+t)+g(x)$ is strictly decreasing in $(-\infty,\gamma_{a_{1}}-t)$; then, the minimum is obtained at $x_{2}$ for all $t < \gamma_{a_{1}} - x_{2}$. When $t$ increases from $\gamma_{a_{1}} - x_{2}$ on, the minimum decreases with unitary speed from $x_{2}$ towards $x_{1}$. The condition $t^{\star}_{1} > \gamma_{a_{1}} - x_{1}$ means that the minimum, which is decreasing, reaches $x_{1}$ when $t = \gamma_{a_{1}} - x_{1}$, and remains at $x_{1}$ in the interval $[\gamma_{a_{1}} - x_{1},t^{\star}_{1})$. At $t = t^{\star}_{1}$ a discontinuity occurs, at which $x^{\circ}(t)$ jumps from $x_{1}$, to either $x_{2}$ or $-t^{\star}_{1} + \gamma_{a_{2}}$ (depending if $t^{\star}_{1} < \gamma_{a_{2}} - x_{2}$ or $t^{\star}_{1} \geq \gamma_{a_{2}} - x_{2}$, respectively). Then, in case $t^{\star}_{1} > \gamma_{a_{1}} - x_{1}$, the function $x^{\circ}(t)$ before time instant $t^{\star}_{1}$ has the structure provided by~\eqref{equ:xs_1}.

When $q = Q$, if $t^{\star}_{Q} < \gamma_{a_{l(Q)+1}} - x_{2}$, then a discontinuity occurs at $t = t^{\star}_{Q}$, at which $x^{\circ}(t)$ jumps to $x_{2}$, from either $x_{1}$ or $-t^{\star}_{Q} + \gamma_{a_{l(Q)}}$ (depending if $t^{\star}_{Q} > \gamma_{a_{l(Q)}} - x_{1}$ or $t^{\star}_{Q} \leq \gamma_{a_{l(Q)}} - x_{1}$, respectively). In accordance with the previous considerations, the minimum remains at $x_{2}$ in the interval $[ t^{\star}_{Q} , \gamma_{a_{l(Q)+1}} - x_{2} )$, decreases with unitary speed in the interval $[ \gamma_{a_{l(Q)+1}} - x_{2} , \gamma_{a_{l(Q)+1}} - x_{1} )$, and remains at $x_{1}$ from $t = \gamma_{a_{l(Q)+1}} - x_{1}$ on, since $f(x+t)+g(x)$ is nondecreasing in $[\gamma_{a_{l(Q)+1}} - x_{1},+\infty)$. Then, in case $t^{\star}_{Q} < \gamma_{a_{l(Q)+1}} - x_{2}$, the function $x^{\circ}(t)$ after time instant $t^{\star}_{Q}$ has the structure provided by~\eqref{equ:xe_1}.

\vspace{12pt}

\underline{Fifth part}

Consider the case $Q \geq 1$, and assume that $\gamma_{a_{l(q)+1}} - \gamma_{b_{l(q)}} < (x_{2}-x_{1})$ and $\gamma_{a_{l(q)+1)}} - \gamma_{a_{l(q)}} > (x_{2}-x_{1})$ for some $q \in \{ 1, \ldots, Q-1 \}$. If $t^{\star}_{q} \geq \gamma_{a_{l(q)+1}} - x_{2}$, then $f(x_{1}+t)+g(x_{1}) \leq f(x_{2}+t)+g(x_{2})$ when $t = \gamma_{a_{l(q)+1}} - x_{2}$, that is, $f(\gamma_{a_{l(q)+1}} - x_{2}+x_{1})+g(x_{1}) \leq f(\gamma_{a_{l(q)+1}})$, as in the case illustrated in figure~\ref{fig:det_tstar_14}(a). When $t$ increases from $\gamma_{a_{l(q)+1}} - x_{2}$ on, the local minimum at $\gamma_{a_{l(q)+1}} - t$ decreases with unitary speed from $x_{2}$ towards $x_{1}$. Thus, $t^{\star}_{q}$, which corresponds to the finite value $\omega_{l(q)}$, is the time instant at which $f(x_{1}+t)+g(x_{1}) = f(\gamma_{a_{l(q)+1}}) + g(\gamma_{a_{l(q)+1}} - t)$, as it is illustrated in figure~\ref{fig:det_tstar_14}(b). At $t^{\star}_{q}$, the minimum within $[x_{1},x_{2}]$ jumps from $x_{1}$ to $-t^{\star}_{q} + \gamma_{a_{l(q)+1}}$. Then, $x_{q}(t)$ in~\eqref{equ:xopt_3} has the structure of~\eqref{equ:xj_2} or~\eqref{equ:xj_4} (depending on the value $t^{\star}_{q+1}$, as discussed in the following part of the proof).

\begin{figure}[h]
\centering
\psfrag{f1(x)}[cl][Bl][.8][0]{$f(x+t) + g(x)$ when $t = \gamma_{a_{l(q)+1}} - x_{2}$}
\psfrag{f2(x)}[cl][Bl][.8][0]{$f(x+t) + g(x)$ when $t = t^{\star}_{q}$}
\psfrag{x}[bc][Bl][.8][0]{$x$}
\psfrag{Gx1}[Bl][Bl][.8][-45]{$x_{1}$}
\psfrag{G1}[Bl][Bl][.8][-45]{$\gamma_{b_{l(q)}} - t$}
\psfrag{Gx2}[Bl][Bl][.8][-45]{$x_{2} \equiv \gamma_{a_{l(q)+1}} - t$}
\psfrag{G2}[Bl][Bl][.8][-45]{$\gamma_{b_{l(q+1)}} - t$}
\psfrag{Gx3}[Bl][Bl][.8][-45]{$x_{1}$}
\psfrag{G3}[Bl][Bl][.8][-45]{$\gamma_{b_{l(q)}} - t$}
\psfrag{G4}[Bl][Bl][.8][-45]{$\gamma_{a_{l(q)+1}} - t$}
\psfrag{Gx4}[Bl][Bl][.8][-45]{$x_{2}$}
\psfrag{G5}[Bl][Bl][.8][-45]{$\gamma_{b_{l(q+1)}} - t$}
\psfrag{A}[cc][Bl][1][0]{(a)}
\psfrag{B}[cc][Bl][1][0]{(b)}
\includegraphics[scale=.25]{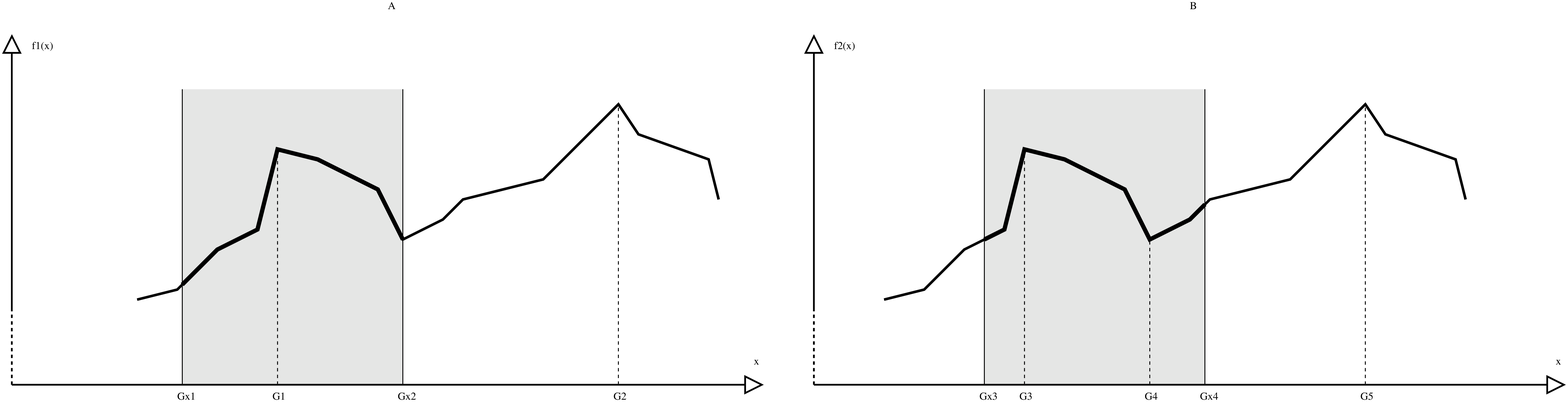}
\vspace{30pt}
\caption{Example of function $f(x+t) + g(x)$, (a) when $t = \gamma_{a_{l(q)+1}} - x_{2}$ and (b) when $t = t^{\star}_{q}$.}
\label{fig:det_tstar_14}
\end{figure}

Consider now the same case in which $\gamma_{a_{l(q)+1}} - \gamma_{b_{l(q)}} < (x_{2}-x_{1})$, for some $q \in \{ 1, \ldots, Q-1 \}$, but without any assumption about the interval $[\gamma_{a_{l(q)}} , \gamma_{a_{l(q)+1}} )$. In this case, when $t = \gamma_{a_{l(q)+1}} - x_{2}$, one or more local minima are present in the interval $[x_{1},x_{2}]$, as in the cases illustrated in figure~\ref{fig:det_tstar_16}(a). In accordance with the considerations made in the third part of the proof, regarding the time instants which actually produce a jump discontinuity in $x^{\circ}(t)$, if $t^{\star}_{q} \geq \gamma_{a_{l(q)+1}} - x_{2}$, then, when $t = \gamma_{a_{l(q)+1}} - x_{2}$, the global minimum in $[x_{1},x_{2}]$ is at abscissa $\gamma_{a_{l(q-1)+1}} - t$ (see again figure~\ref{fig:det_tstar_16}(a)). As before, when $t$ increases from $\gamma_{a_{l(q)+1}} - x_{2}$ on, the local minimum at $\gamma_{a_{l(q)+1}} - t$ decreases with unitary speed from $x_{2}$ towards $x_{1}$, and $t^{\star}_{q}$ is the time instant at which $f(x_{1}+t)+g(x_{1}) = f(\gamma_{a_{l(q)+1}}) + g(\gamma_{a_{l(q)+1}} - t)$, as it is illustrated in figure~\ref{fig:det_tstar_14}(b). At $t^{\star}_{q}$, the minimum within $[x_{1},x_{2}]$ jumps from $x_{1}$ to $-t^{\star}_{q} + \gamma_{a_{l(q)+1}}$. Then, $x_{q}(t)$ in~\eqref{equ:xopt_3} has the structure of~\eqref{equ:xj_2} or~\eqref{equ:xj_4} (depending on the value $t^{\star}_{q+1}$).

\begin{figure}[h]
\centering
\psfrag{f1(x)}[cl][Bl][.8][0]{$f(x+t) + g(x)$ when $t = \gamma_{a_{l(q)+1}} - x_{2}$}
\psfrag{f2(x)}[cl][Bl][.8][0]{$f(x+t) + g(x)$ when $t = t^{\star}_{q}$}
\psfrag{x}[bc][Bl][.8][0]{$x$}
\psfrag{Gx1}[Bl][Bl][.8][-45]{$x_{1}$}
\psfrag{G1}[Bl][Bl][.8][-45]{$\gamma_{a_{l(q-1)+1}} - t$}
\psfrag{G21}[Bl][Bl][.8][-45]{$\gamma_{a_{l(q)}} - t$}
\psfrag{G2}[Bl][Bl][.8][-45]{$\gamma_{b_{l(q)}} - t$}
\psfrag{Gx2}[Bl][Bl][.8][-45]{$x_{2} \equiv \gamma_{a_{l(q)+1}} - t$}
\psfrag{G3}[Bl][Bl][.8][-45]{$\gamma_{b_{l(q+1)}} - t$}
\psfrag{G4}[Bl][Bl][.8][-45]{$\gamma_{a_{l(q-1)+1}} - t$}
\psfrag{Gx3}[Bl][Bl][.8][-45]{$x_{1}$}
\psfrag{G5}[Bl][Bl][.8][-45]{$\gamma_{a_{l(q)}} - t$}
\psfrag{G56}[Bl][Bl][.8][-45]{$\gamma_{b_{l(q)}} - t$}
\psfrag{G6}[Bl][Bl][.8][-45]{$\gamma_{a_{l(q)+1}} - t$}
\psfrag{Gx4}[Bl][Bl][.8][-45]{$x_{2}$}
\psfrag{G7}[Bl][Bl][.8][-45]{$\gamma_{b_{l(q+1)}} - t$}
\psfrag{A}[cc][Bl][1][0]{(a)}
\psfrag{B}[cc][Bl][1][0]{(b)}
\includegraphics[scale=.25]{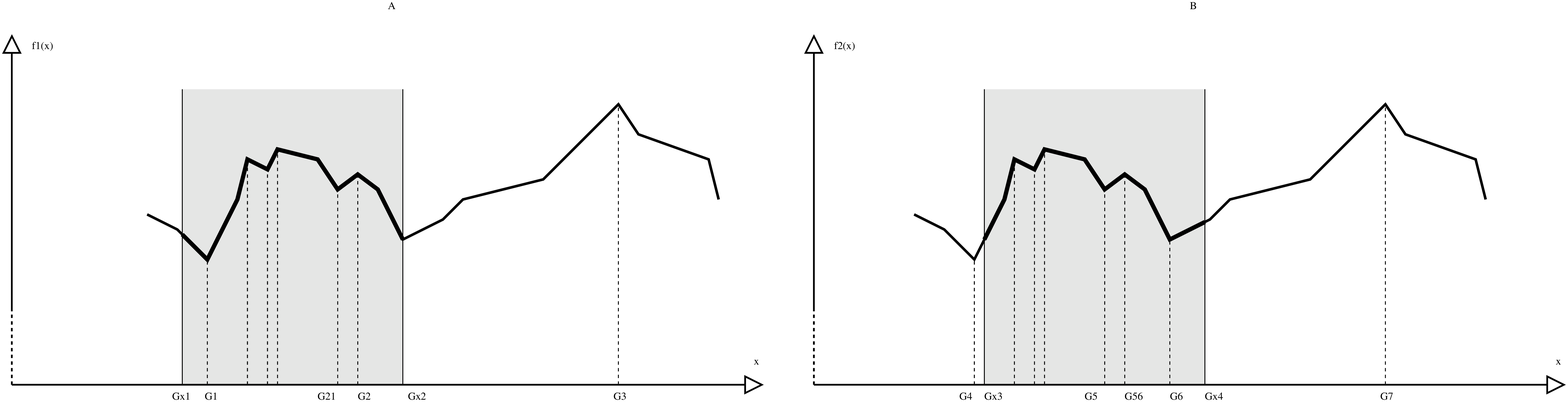}
\vspace{30pt}
\caption{Example of function $f(x+t) + g(x)$, (a) when $t = \gamma_{a_{l(q)+1}} - x_{2}$ and (b) when $t = t^{\star}_{q}$.}
\label{fig:det_tstar_16}
\end{figure}

The same considerations can be made when $q = Q$, in the case $l(Q) < \lvert A \rvert$. If $t^{\star}_{Q} \geq \gamma_{a_{l(Q)+1}} - x_{2}$, at $t^{\star}_{Q}$ the minimum within $[x_{1},x_{2}]$ jumps from $x_{1}$ to $-t^{\star}_{Q} + \gamma_{a_{l(Q)+1}}$, and then $x_{\mathrm{e}}(t)$ in~\eqref{equ:xopt_2} or~\eqref{equ:xopt_3} has the structure of~\eqref{equ:xe_2}.

\underline{Sixth part}

Consider the case $Q \geq 1$, and assume that $\gamma_{b_{l(q+1)}} - \gamma_{a_{l(q)+1}} < (x_{2}-x_{1})$ and $\gamma_{a_{l(q+1)+1}} - \gamma_{a_{l(q+1)}} > (x_{2}-x_{1})$ for some $q \in \{ 1, \ldots, Q-1 \}$. When $t = \gamma_{b_{l(q+1)}} - x_{2}$, the minimum within $[x_{1},x_{2}]$ (which is decreasing with unitary speed since $t$ was equal to $\gamma_{a_{l(q)+1}} - x_{2}$) is at $\gamma_{a_{l(q)+1}} - t$, as in the case illustrated in figure~\ref{fig:det_tstar_17}(a). If $t^{\star}_{q+1} \leq \gamma_{a_{l(q)+1}} - x_{1}$, then the minimum jumps to $x_{2}$ before than (or exactly when) it reaches $x_{1}$, that is, the minimum jumps from $\gamma_{a_{l(q)+1}} - t \geq x_{1}$ to $x_{2}$. $t^{\star}_{q+1}$ is the time instant at which $f(\gamma_{a_{l(q)+1}}) + g(\gamma_{a_{l(q)+1}} - t) = f(x_{2}+t)+g(x_{2}) $, as it is illustrated in figure~\ref{fig:det_tstar_14}(b). Then, $x_{q}(t)$ in~\eqref{equ:xopt_3} has the structure of~\eqref{equ:xj_3} or~\eqref{equ:xj_4} (depending on the value $t^{\star}_{q}$, as discussed in the previous part of the proof).

\begin{figure}[h]
\centering
\psfrag{f1(x)}[cl][Bl][.8][0]{$f(x+t) + g(x)$ when $t = \gamma_{b_{l(q+1)}} - x_{2}$}
\psfrag{f2(x)}[cl][Bl][.8][0]{$f(x+t) + g(x)$ when $t = t^{\star}_{q+1}$}
\psfrag{x}[bc][Bl][.8][0]{$x$}
\psfrag{Gx1}[Bl][Bl][.8][-45]{$x_{1}$}
\psfrag{G1}[Bl][Bl][.8][-45]{$\gamma_{b_{l(q)}} - t$}
\psfrag{G2}[Bl][Bl][.8][-45]{$\gamma_{a_{l(q)+1}} - t$}
\psfrag{Gx2}[Bl][Bl][.8][-45]{$x_{2} \equiv \gamma_{b_{l(q+1)}} - t$}
\psfrag{G3}[Bl][Bl][.8][-45]{$\gamma_{a_{l(q+1)+1}} - t$}
\psfrag{G4}[Bl][Bl][.8][-45]{$\gamma_{b_{l(q)}} - t$}
\psfrag{Gx3}[Bl][Bl][.8][-45]{$x_{1}$}
\psfrag{G5}[Bl][Bl][.8][-45]{$\gamma_{a_{l(q)+1}} - t$}
\psfrag{G6}[Bl][Bl][.8][-45]{$\gamma_{b_{l(q+1)}} - t$}
\psfrag{Gx4}[Bl][Bl][.8][-45]{$x_{2}$}
\psfrag{G7}[Bl][Bl][.8][-45]{$\gamma_{a_{l(q+1)+1}} - t$}
\psfrag{A}[cc][Bl][1][0]{(a)}
\psfrag{B}[cc][Bl][1][0]{(b)}
\includegraphics[scale=.25]{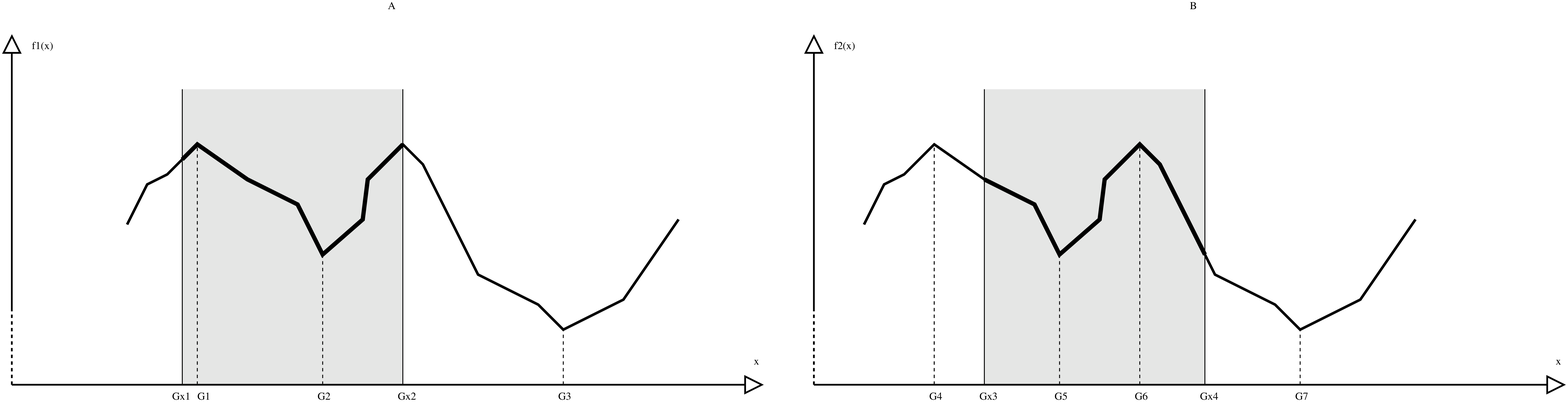}
\vspace{30pt}
\caption{Example of function $f(x+t) + g(x)$, (a) when $t = \gamma_{b_{l(q+1)}} - x_{2}$ and (b) when $t = t^{\star}_{q+1}$.}
\label{fig:det_tstar_17}
\end{figure}

\begin{figure}[h]
\centering
\psfrag{f1(x)}[cl][Bl][.8][0]{$f(x+t) + g(x)$ when $t = \gamma_{b_{l(q+1)}} - x_{2}$}
\psfrag{f2(x)}[cl][Bl][.8][0]{$f(x+t) + g(x)$ when $t = t^{\star}_{q+1}$}
\psfrag{x}[bc][Bl][.8][0]{$x$}
\psfrag{Gx1}[Bl][Bl][.8][-45]{$x_{1}$}
\psfrag{G1}[Bl][Bl][.8][-45]{$\gamma_{b_{l(q)}} - t$}
\psfrag{G2}[Bl][Bl][.8][-45]{$\gamma_{a_{l(q)+1}} - t$}
\psfrag{Gx2}[Bl][Bl][.8][-45]{$x_{2} \equiv \gamma_{b_{l(q+1)}} - t$}
\psfrag{G3}[Bl][Bl][.8][-45]{$\gamma_{a_{l(q+1)+1}} - t$}
\psfrag{Gx3}[Bl][Bl][.8][-45]{$x_{1}$}
\psfrag{G4}[Bl][Bl][.8][-45]{$\gamma_{b_{l(q)}} - t$}
\psfrag{G5}[Bl][Bl][.8][-45]{$\gamma_{a_{l(q)+1}} - t$}
\psfrag{G6}[Bl][Bl][.8][-45]{$\gamma_{b_{l(q+1)}} - t$}
\psfrag{Gx4}[Bl][Bl][.8][-45]{$x_{2}$}
\psfrag{G7}[Bl][Bl][.8][-45]{$\gamma_{a_{l(q+1)+1}} - t$}
\psfrag{A}[cc][Bl][1][0]{(a)}
\psfrag{B}[cc][Bl][1][0]{(b)}
\includegraphics[scale=.25]{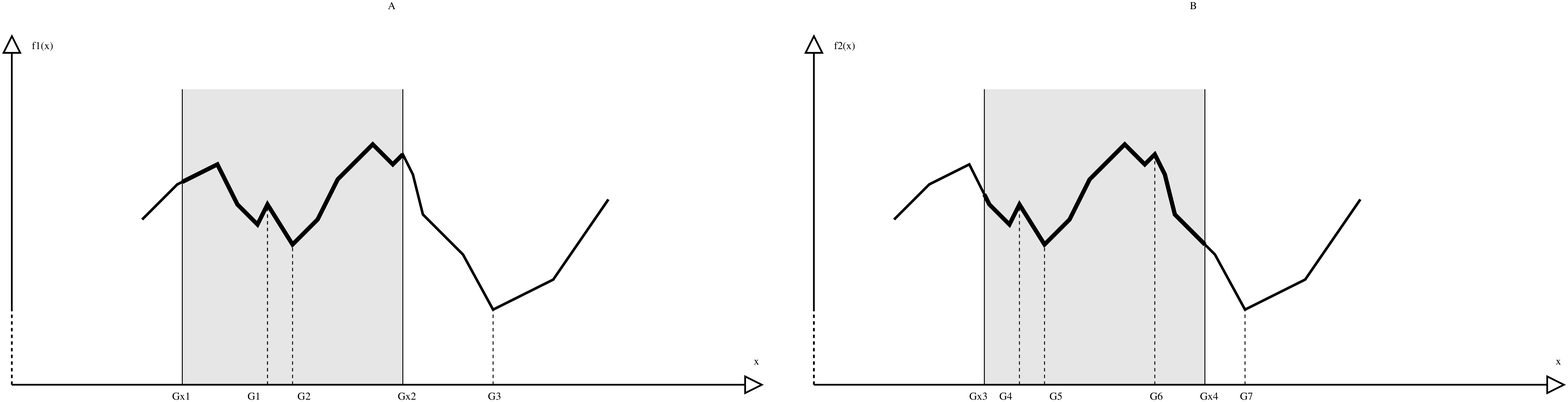}
\vspace{30pt}
\caption{Example of function $f(x+t) + g(x)$, (a) when $t = \gamma_{b_{l(q+1)}} - x_{2}$ and (b) when $t = t^{\star}_{q+1}$.}
\label{fig:det_tstar_18}
\end{figure}

Consider now the same case in which $\gamma_{b_{l(q+1)}} - \gamma_{a_{l(q)+1}} < (x_{2}-x_{1})$, for some $q \in \{ 1, \ldots, Q-1 \}$, but without any assumption about the interval $[ \gamma_{a_{l(q+1)}} , \gamma_{a_{l(q+1)+1}} )$. In this case, when $t = \gamma_{b_{l(q+1)}} - x_{2}$, one or more local minima are present in the interval $[x_{1},x_{2}]$, as in the cases illustrated in figure~\ref{fig:det_tstar_18}(a). In accordance with the considerations made in the third part of the proof, regarding the time instants which actually produce a jump discontinuity in $x^{\circ}(t)$, the global minimum in $[x_{1},x_{2}]$ is at abscissa $\gamma_{a_{l(q)+1}} - t$, as before. Then, also in this case, if $t^{\star}_{q+1} \leq \gamma_{a_{l(q)+1}} - x_{1}$, then the minimum jumps, at $t^{\star}_{q+1}$, from $\gamma_{a_{l(q)+1}} - t \geq x_{1}$ to $x_{2}$. In conclusion, $x_{q}(t)$ in~\eqref{equ:xopt_3} has the structure of~\eqref{equ:xj_3} or~\eqref{equ:xj_4} (depending on the value $t^{\star}_{q}$).

The same considerations can be made in connection with time instant $t^{\star}_{1}$, when $\gamma_{b_{l(1)}} - \gamma_{a_{1}} < (x_{2}-x_{1})$. In this case, if $t^{\star}_{1} \leq \gamma_{a_{1}} - x_{1}$, then the minimum jumps from $\gamma_{a_{1}} - t \geq x_{1}$ to $x_{2}$, and then $x_{\mathrm{s}}(t)$ in~\eqref{equ:xopt_2} or~\eqref{equ:xopt_3} has the structure of~\eqref{equ:xs_2}. 

\vspace{12pt}

\underline{Seventh part}

The algorithm which computes the value $\omega_{j}$, in correspondence with abscissa $\gamma_{b_{j}}$, considers the function $f(x)+g(x)$ and the ``window'' $[\gamma_{b_{j}} - (x_{2} - x_{1}), \gamma_{b_{j}}]$, which is moved rightward to find the instant at which the minimum of the function within the window ``jumps'' from the left bound to the right bound, as discussed in the previous parts of the proof. Note that, considering the function $f(x)+g(x)$ and the window $[\gamma_{b_{j}} - (x_{2} - x_{1}), \gamma_{b_{j}}]$ is equivalent to consider the function $f(x+t)+g(x)$ with $t=\gamma_{b_{j}}-x_{2}$ and the window $[x_{1},x_{2}]$.
 
Basically, to determine the time instant at which the minimum within the window jumps in an upward direction, if it exists (as discussed in the third part of the proof), the algorithm moves the window rightward until the difference $d$ between the value of $f(x)+g(x)$ at the right bound $\theta$ of the window (or at the local minimum which is the nearest to the right bound) and its value at left bound $\tau$ of the same window (or at the current global minimum within $[\tau,\theta]$) becomes null. Since $f(x)+g(x)$ is a piece-wise linear function, the window is repeatedly moved of intervals whose lengths $\psi$ correspond to the lengths on the abscissae axis of the segments of the function. At each step, the new difference $\delta$ is computed and, if $\delta$ turns out to be null or negative, then the minimum has jumped to the right bound; this also means that the time instant $\omega_{j}$ is within the last rightward movement, that is, $\omega_{j} \in [\tau-x_{1},\tau-x_{1}+\psi)$.

In the ``Section A -- Initialization'' part of the algorithm, the segments of the piece-wise linear function $f(x)+g(x)$ which are included in the interval $[\gamma_{b_{j}} - (x_{2} - x_{1}), \gamma_{b_{j}}]$, and those of the interval $[\gamma_{b_{j}} , \gamma_{b_{j}} + (x_{2} - x_{1})]$ that could ``enter'' the window when it moves rightward, are determined (rows 1$\div$5); the slopes of $f(x)+g(x)$ are computed for any of those segments (rows 6$\div$11); the initial values of the left and right bounds $\tau$ and $\theta$ are set (rows 12$\div$13), and the initial value of $d$ is calculated (rows 14$\div$19). Note that, the $\min$ operator in the determination of $d$ is necessary to compute $d$ when the minimum within $[\gamma_{b_{j}} - (x_{2} - x_{1}), \gamma_{b_{j}}]$ (that is, at the beginning) is not obtained at the left bound but is obtained at an abscissa greater than $\gamma_{b_{j}} - (x_{2} - x_{1})$ (as for example, in the cases illustrated in figures~\ref{fig:det_tstar_17}(a) and~\ref{fig:det_tstar_18}(a)).

In the ``Section B -- First loop'' of the algorithm, the while loop allows moving, segment-by-segment, the window $[\tau,\theta]$ leftward. At each step of the while loop, the length $\psi$ of the next rightward movement is determined (row 23) and the new difference $\delta$ is computed (rows 30$\div$42). If $\delta \leq 0$, then $\omega_{j}$ can be determined (through one of the equation at rows 59, 61, 64 and 68); otherwise all values and indexes are updated (rows 72$\div$76) and another step of the loop is executed. It is worth noting that several equations to compute $\omega_{j}$ must be provided because of the possible presence of local minima within the moving window $[\tau,\theta]$; in this connection, values $\chi$ (rows 48$\div$51) are the relative value at the local minima (relative with respect to the value at the left bound of the window), and $m$ (rows 52$\div$56) is the relative value at the global minimum; note also that all local minima, if present, are before $\gamma_{b_{j}} \in [\tau,\theta]$.

In the first loop, the window is moved until its right bound reaches abscissa $\gamma_{a_{j+1}}$. This means that, if $\omega_{j}$ is determined within the first loop, then the new minimum is definitely obtained at $x_{2}$, since $f(x)+g(x)$ is strictly decreasing in $[\gamma_{b_{j}},\gamma_{a_{j+1}})$. In case the minimum did not jump during the first loop (or, equivalently, if $\omega_{j}$ has not been determined during the first loop), then the algorithm executes another loop in which, again, the window is moved rightward; the difference with respect to the first loop is that now the local minimum of $f(x)+g(x)$ at $\gamma_{a_{j+1}}$ is within the window.

In the ``Section C -- Second loop'' of the algorithm, as before, the while loop allows moving, segment-by-segment, the window $[\tau,\theta]$ leftward and, at each step of the while loop, the length $\psi$ of the next rightward movement is determined (row 80) and the new difference $\delta$ is computed (rows 82$\div$92).  If $\delta \leq 0$, then a nonfinite or a finite value of $\omega_{j}$ is determined (respectively at rows 106 or 112); otherwise all values and indexes are updated (rows 115$\div$117) and another step of the loop is executed. In this second loop, the window is moved until its left bound reaches abscissa $\gamma_{b_{j}}$, but the algorithm certainly exits before then.

It is important to observe that when it results $\delta \leq 0$, it is necessary to analyze the shape of $f(x) + g(x)$ in the last part of the window, that is, from $\gamma_{a_{j+1}}$ to $\theta$; as a matter of fact, it is possible that, when the value of $f(x)+g(x)$ at the abscissa $\gamma_{a_{j+1}}$ becomes lower than or equal to all the values in $[\tau,\gamma_{a_{j+1}})$, it is not the global minimum in $[\tau,\theta]$ because a lower value is obtained in $(\gamma_{a_{j+1}},\theta]$ (such a lower value exists when $\phi$, determined at rows 98$\div$104, is negative); this is the case in which the presence of a local maximum at $\gamma_{b_{j}}$ do not produce a jump discontinuity in $x^{\circ}(t)$, as discussed in the third part of the proof. In this case, $\omega_{j}$ is conventionally set to $+\infty$.

This concludes the proof.
\end{proof}

Lemma~\ref{lem:xopt} is still valid when $f(x) = c \neq 0$ for any $x \leq \gamma_{1}$. Moreover, Lemma~\ref{lem:xopt} can be easily extended to consider the more general case in which the slope of function $f(x)$ is not null at the beginning, that is, $\mu_{0} \neq 0$.

\vspace{12pt}

\begin{mylemma} \label{lem:h(t)}
With reference to the functions $f(x+t)$ and $g(x)$, as considered in Lemma~\ref{lem:xopt}, and to the function $x^{\circ}(t) = \arg \min_{x} \{ f(x+t) + g(x) \}$, $x_{1} \leq x \leq x_{2}$, provided by Lemma~\ref{lem:xopt} itself, the function
\begin{equation}
h(t) = f \big( x^{\circ}(t) + t \big) + g \big( x^{\circ}(t) \big)
\end{equation}
is a continuous, nondecreasing, and piece-wise linear function of the independent variable $t$, that can be obtained by $f(x+t)$ and $x^{\circ}(t)$ as follows:
\begin{itemize}
\item $h(t)$ is equal to $f(x_{2}+t)$ for all $t$ in which $x^{\circ}(t) = x_{2}$;
\item $h(t)$ is a linear segment with slope $\nu$ for all $t$ in which $x^{\circ}(t)$ decreases with slope $-1$; the vertical alignments of such segments are such that $h(t)$ is a continuous function;
\item $h(t)$ is equal to $f(x_{1}+t) + \nu ( x_{2} - x_{1} )$ for all $t$ in which $x^{\circ}(t) = x_{1}$.
\end{itemize}
Then:
\begin{subequations}
\begin{equation} \label{equ:h_1}
\text{if $Q = 0$} \, : \quad h(t) = \left\{ \begin{array}{ll}
\vspace{3pt} f(x_{2}+t) & \forall \, t \, : \, x^{\circ}(t) = x_{2}\\
\vspace{3pt} \nu t + \big[ f(\gamma_{a_{1}}) - \nu (\gamma_{a_{1}} - x_{2}) \big] & \forall \, t \, : \, x^{\circ}(t) \neq \{ x_{1} , x_{2} \}\\
f(x_{1}+t) + \nu ( x_{2} - x_{1} ) & \forall \, t \, : \, x^{\circ}(t) = x_{1}\\
\end{array} \right.
\end{equation}
\begin{equation} \label{equ:h_2}
\text{if $Q = 1$} \, : \quad h(t) = \left\{ \begin{array}{ll}
\vspace{3pt} f(x_{2}+t) & \forall \, t \, : \, x^{\circ}(t) = x_{2}\\
\vspace{3pt} \nu t + \big[ f(\gamma_{a_{1}}) - \nu (\gamma_{a_{1}} - x_{2}) \big] & \forall \, t < t^{\star}_{1} : x^{\circ}(t) \neq \{ x_{1} , x_{2} \}\\
\vspace{3pt} \nu t + \big[ f(\gamma_{a_{l(Q)+1}}) - \nu (\gamma_{a_{l(Q)+1}} - x_{2}) \big] & \forall \, t \geq t^{\star}_{1} : x^{\circ}(t) \neq \{ x_{1} , x_{2} \}\\
f(x_{1}+t) + \nu ( x_{2} - x_{1} ) & \forall \, t \, : \, x^{\circ}(t) = x_{1}\\
\end{array} \right.
\end{equation}
\begin{equation} \label{equ:h_3}
\text{if $Q > 1$} \, : \quad h(t) = \left\{ \begin{array}{ll}
\vspace{3pt} f(x_{2}+t) & \forall \, t \, : \, x^{\circ}(t) = x_{2}\\
\vspace{3pt} \nu t + \big[ f(\gamma_{a_{1}}) - \nu (\gamma_{a_{1}} - x_{2}) \big] & \forall \, t < t^{\star}_{1} : x^{\circ}(t) \neq \{ x_{1} , x_{2} \}\\
\nu t + \big[ f(\gamma_{a_{l(q)+1}}) - \nu (\gamma_{a_{l(q)+1}} - x_{2}) \big] & \forall \, t \in [ t^{\star}_{q} , t^{\star}_{q+1} ) : x^{\circ}(t) \neq \{ x_{1} , x_{2} \}\\
\vspace{3pt} & \qquad\qquad\qquad q = 1, \ldots, Q-1\\
\vspace{3pt} \nu t + \big[ f(\gamma_{a_{l(Q)+1}}) - \nu (\gamma_{a_{l(Q)+1}} - x_{2}) \big] & \forall \, t \geq t^{\star}_{Q} : x^{\circ}(t) \neq \{ x_{1} , x_{2} \}\\
f(x_{1}+t) + \nu ( x_{2} - x_{1} ) & \forall \, t \, : \, x^{\circ}(t) = x_{1}\\
\end{array} \right.
\end{equation}
\end{subequations}

\end{mylemma}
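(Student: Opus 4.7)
The strategy is to substitute the piecewise description of $x^{\circ}(t)$ supplied by Lemma~\ref{lem:xopt} into the definition of $h(t)$, and to verify the three qualitative properties (continuity, monotonicity, piecewise linearity) afterwards by a case analysis that mirrors the case split in~\eqref{equ:xopt_1}--\eqref{equ:xopt_3}. In each region where $x^{\circ}(t)$ takes one of three simple forms---$x_{2}$, $x_{1}$, or $-t+\gamma_{a_{l(q)+1}}$ (with the convention $\gamma_{a_{l(0)+1}}\equiv\gamma_{a_{1}}$ for the leading segment)---the expression $f(x^{\circ}(t)+t)+g(x^{\circ}(t))$ collapses to one of the three building blocks appearing in~\eqref{equ:h_1}--\eqref{equ:h_3}.

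Specifically, I would compute each regime separately. When $x^{\circ}(t)=x_{2}$, Definition~\ref{def:g(x)} gives $g(x_{2})=0$, so $h(t)=f(x_{2}+t)$. When $x^{\circ}(t)=x_{1}$, the same definition yields $g(x_{1})=\nu(x_{2}-x_{1})$, so $h(t)=f(x_{1}+t)+\nu(x_{2}-x_{1})$. On a segment where $x^{\circ}(t)=-t+\gamma_{a_{l(q)+1}}$, the argument of $f$ becomes the constant $\gamma_{a_{l(q)+1}}$, while $g(x^{\circ}(t))=-\nu(-t+\gamma_{a_{l(q)+1}}-x_{2})=\nu t-\nu(\gamma_{a_{l(q)+1}}-x_{2})$; adding these reproduces the linear segment of slope $\nu$ with the intercept displayed in the lemma. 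Piecewise linearity follows at once, since $f$ is piecewise linear by Definition~\ref{def:f(x)} and each regime expresses $h(t)$ as $f$ composed with an affine function of $t$, plus a constant.

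Monotonicity admits two arguments, and I would record both. Segment by segment, either $h(t)$ has the form $f(\cdot+t)$ (plus a constant), nondecreasing because $f$ is, or it has the nonnegative slope $\nu$; once continuity is established this yields global nondecrease. A conceptually cleaner sanity check is that $h(t)=\min_{x\in[x_{1},x_{2}]}\{f(x+t)+g(x)\}$ is the pointwise minimum of a family of functions each of which is nondecreasing in $t$ (for fixed $x$, $f(x+t)$ is nondecreasing and $g(x)$ is a $t$-independent constant), and is therefore itself nondecreasing.

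The main obstacle, and the one deserving the most care, is continuity across the jump points $t^{\star}_{q}$ at which $x^{\circ}(t)$ itself is discontinuous. At such a point, $x^{\circ}(t)$ may jump from $x_{1}$ to $x_{2}$, from $x_{1}$ to $-t^{\star}_{q}+\gamma_{a_{l(q)+1}}$, or from $-t^{\star}_{q}+\gamma_{a_{l(q)+1}}$ to $x_{2}$, depending on which subcase of Lemma~\ref{lem:xopt} applies. For the first case, the defining property of $\omega_{l(q)}=t^{\star}_{q}$ used throughout the second and third parts of the proof of Lemma~\ref{lem:xopt} is precisely $f(x_{1}+t^{\star}_{q})+g(x_{1})=f(x_{2}+t^{\star}_{q})+g(x_{2})$, which is exactly the assertion that the two expressions for $h(t^{\star}_{q})$ from the two neighbouring regimes coincide. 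For the remaining two cases, I would invoke the analogous equalities derived in the fifth and sixth parts of the proof of Lemma~\ref{lem:xopt}, namely $f(x_{1}+t^{\star}_{q})+g(x_{1})=f(\gamma_{a_{l(q)+1}})+g(\gamma_{a_{l(q)+1}}-t^{\star}_{q})$ and $f(\gamma_{a_{l(q)+1}})+g(\gamma_{a_{l(q)+1}}-t^{\star}_{q})=f(x_{2}+t^{\star}_{q})+g(x_{2})$, respectively. Across the remaining non-jump transition points of $x^{\circ}(t)$---for instance $t=\gamma_{a_{l(q)+1}}-x_{2}$ (transition $x_{2}\to-t+\gamma_{a_{l(q)+1}}$) or $t=\gamma_{a_{l(q)+1}}-x_{1}$ (transition $-t+\gamma_{a_{l(q)+1}}\to x_{1}$)---continuity follows by direct substitution of the common abscissa into both neighbouring formulas, which I would tabulate briefly to close the proof.
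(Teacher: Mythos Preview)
Your proposal is correct and follows essentially the same approach as the paper: direct substitution of the piecewise description of $x^{\circ}(t)$ from Lemma~\ref{lem:xopt} into $h(t)$, treating separately the regimes $x^{\circ}(t)=x_{2}$, $x^{\circ}(t)=x_{1}$, and $x^{\circ}(t)=-t+\gamma_{a_{l(q)+1}}$. Your version is somewhat more thorough than the paper's---you explicitly handle continuity at the jump instants $t^{\star}_{q}$ by invoking the defining equalities from the proof of Lemma~\ref{lem:xopt}, and you give a clean global monotonicity argument via the pointwise-minimum characterisation $h(t)=\min_{x\in[x_{1},x_{2}]}\{f(x+t)+g(x)\}$---whereas the paper simply computes the endpoint values on each interval and records the connecting line, leaving continuity and monotonicity implicit.
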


\begin{proof}
When $x = x_{2}$, it is $g(x) = 0$ (see figure~\ref{fig:g(x)}); then, when $x^{\circ}(t) = x_{2}$, function $h(t) = f(x_{2}+t)$. Instead, when $x = x_{1}$, it is $g(x) = \nu (x_{2} - x_{1})$ (see again figure~\ref{fig:g(x)}); then, when $x^{\circ}(t) = x_{1}$, function $h(t) = f(x_{1}+t) + \nu (x_{2} - x_{1})$. 

When $\gamma_{a_{1}} - x_{2} < t < \gamma_{a_{1}} - x_{1}$, with regards to~\eqref{equ:xs_1}, $x^{\circ}(t)$ passes linearly (with unitary speed) from the value $x_{2}$ at $t = \gamma_{a_{1}} - x_{2}$ to the value $x_{1}$ at $t = \gamma_{a_{1}} - x_{1}$; then, in the same interval, function $h(t)$ passes, with the same dynamics (that is, linearly), from the value $f(x_{2} + \gamma_{a_{1}} - x_{2}) = f(\gamma_{a_{1}})$ (at $t = \gamma_{a_{1}} - x_{2}$) to the value $f(x_{1} + \gamma_{a_{1}} - x_{1}) + \nu ( x_{2} - x_{1} ) = f(\gamma_{a_{1}}) + \nu ( x_{2} - x_{1} )$ (at $t = \gamma_{a_{1}} - x_{1}$); the segment which joins such values belongs to the line $\nu t + [ f(\gamma_{a_{1}}) - \nu (\gamma_{a_{1}} - x_{2}) ]$. In the same way, when $\gamma_{a_{1}} - x_{2} < t < t^{\star}_{1}$, with regards to~\eqref{equ:xs_2}, $x^{\circ}(t)$ passes linearly (with unitary speed) from the value $x_{2}$ at $t = \gamma_{a_{1}} - x_{2}$ to the value $-t^{\star}_{1}+\gamma_{a_{1}}$ at $t = t^{\star}_{1}$; then, in the same interval, function $h(t)$ passes linearly from the value $f(\gamma_{a_{1}})$ (at $t = \gamma_{a_{1}} - x_{2}$) to the value $f(\gamma_{a_{1}}) + \nu ( x_{2} + t^{\star}_{1} - \gamma_{a_{1}} )$ (at $t = t^{\star}_{1}$); the segment which joins such values belongs again to the line $\nu t + [ f(\gamma_{a_{1}}) - \nu (\gamma_{a_{1}} - x_{2}) ]$. This proves that, when $t : x^{\circ}(t) \neq \{ x_{1} , x_{2} \}$ in~\eqref{equ:h_1}, and when $t < t^{\star}_{1} : x^{\circ}(t) \neq \{ x_{1} , x_{2} \}$ in~\eqref{equ:h_2} and~\eqref{equ:h_3}, function $h(t) = \nu t + [ f(\gamma_{a_{1}}) - \nu (\gamma_{a_{1}} - x_{2}) ]$.

In analogous way, when $\gamma_{a_{l(q)+1}} - x_{2} < t < \gamma_{a_{l(q)+1}} - x_{1}$, $q = 1, \ldots, Q$, with regards to~\eqref{equ:xj_1} or~\eqref{equ:xe_1}, $x^{\circ}(t)$ passes linearly (with unitary speed) from the value $x_{2}$ at $t = \gamma_{a_{l(q)+1}} - x_{2}$ to the value $x_{1}$ at $t = \gamma_{a_{l(q)+1}} - x_{1}$, function $h(t)$ passes linearly from the value $f(\gamma_{a_{l(q)+1}})$ (at $t = \gamma_{a_{l(q)+1}} - x_{2}$) to the value $f(\gamma_{a_{l(q)+1}}) + \nu ( x_{2} - x_{1} )$ (at $t = \gamma_{a_{l(q)+1}} - x_{1}$), and the segment which joins such values belongs to the line $\nu t + [ f(\gamma_{a_{l(q)+1}}) - \nu (\gamma_{a_{l(q)+1}} - x_{2}) ]$. In the same way, when $t^{\star}_{q} < t < \gamma_{a_{l(q)+1}} - x_{1}$, $q = 1, \ldots, Q$, with regards to~\eqref{equ:xj_2} or~\eqref{equ:xe_2}, when $\gamma_{a_{l(q)+1}} - x_{2} < t < t^{\star}_{q+1}$, $q = 1, \ldots, Q-1$, with regards to~\eqref{equ:xj_3}, and when $t^{\star}_{q} < t < t^{\star}_{q+1}$, $q = 1, \ldots, Q-1$, with regards to~\eqref{equ:xj_4}, function $h(t)$ passes linearly from two values which are connected through the line $\nu t + [ f(\gamma_{a_{l(q)+1}}) - \nu (\gamma_{a_{l(q)+1}} - x_{2}) ]$. This proves that, when $t \geq t^{\star}_{1} : x^{\circ}(t) \neq \{ x_{1} , x_{2} \}$ in~\eqref{equ:h_2}, when $t \in [ t^{\star}_{q} , t^{\star}_{q+1} ) : x^{\circ}(t) \neq \{ x_{1} , x_{2} \}$, $q = 1, \ldots, Q-1$, in~\eqref{equ:h_3}, and when $t \geq t^{\star}_{Q} : x^{\circ}(t) \neq \{ x_{1} , x_{2} \}$ in~\eqref{equ:h_3}, function $h(t) = \nu t + [ f(\gamma_{a_{l(q)+1}}) - \nu (\gamma_{a_{l(q)+1}} - x_{2}) ]$, $q = 1, \ldots, Q$.
\end{proof}

Note that, when $l(Q) = \lvert A \rvert$ there isn't any $t \geq t^{\star}_{Q}$ such that $x^{\circ}(t) \neq \{ x_{1} , x_{2} \}$ (since $x_{\mathrm{e}}(t) = x_{2}$ for any $t \geq t^{\star}_{Q}$, in accordance with~\eqref{equ:xe_3}). Then, in this case, the term $\nu t + [ f(\gamma_{a_{1}}) - \nu (\gamma_{a_{1}} - x_{2}) ]$ in~\eqref{equ:h_1} and the term $\nu t + [ f(\gamma_{a_{l(Q)+1}}) - \nu (\gamma_{a_{l(Q)+1}} - x_{2}) ]$ in~\eqref{equ:h_2} and~\eqref{equ:h_3} have not to be considered as a part of the function $h(t)$.

\vspace{12pt}

\begin{mylemma} \label{lem:sum}
Let $f_{1}(x+t)$ and $f_{2}(x+t)$ be two continuous nondecreasing piece-wise linear functions of $x$, parameterized by $t$, as defined by definition~\ref{def:f(x+t)}. The sum function
\begin{equation}
s(x+t) = f_{1}(x+t) + f_{2}(x+t)
\end{equation}
is still a continuous nondecreasing piece-wise linear functions of $x$, parameterized by $t$, which is in accordance with definition~\ref{def:f(x+t)}.
\end{mylemma}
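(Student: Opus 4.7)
The plan is to verify, one by one, the properties required by Definitions~\ref{def:f(x)} and~\ref{def:f(x+t)} for the sum function $s(x+t)$, using the corresponding properties of $f_{1}(x+t)$ and $f_{2}(x+t)$. Continuity of $s$ is immediate from continuity of $f_{1}$ and $f_{2}$, and $s$ is nondecreasing because its right-derivative is the pointwise sum of the (nonnegative) right-derivatives of $f_{1}$ and $f_{2}$. The real content therefore lies in exhibiting an explicit breakpoint/slope structure for $s$ that is compatible with Definition~\ref{def:f(x)}.

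My approach is a standard merge argument. Denote the breakpoints of $f_{1}$ and $f_{2}$ (in the shifted variable $x+t$) by $\gamma_{1}^{(1)} < \cdots < \gamma_{M_{1}}^{(1)}$ and $\gamma_{1}^{(2)} < \cdots < \gamma_{M_{2}}^{(2)}$, with associated slope sequences $\mu_{0}^{(1)}, \ldots, \mu_{M_{1}}^{(1)}$ and $\mu_{0}^{(2)}, \ldots, \mu_{M_{2}}^{(2)}$, where $\mu_{0}^{(1)} = \mu_{0}^{(2)} = 0$ by Definition~\ref{def:f(x)}. First I would form the sorted union of the two breakpoint sets, obtaining candidate abscissae $\tilde{\gamma}_{1} < \cdots < \tilde{\gamma}_{\tilde{M}}$ with $\tilde{M} \leq M_{1} + M_{2}$. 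On each open interval delimited by consecutive candidate abscissae, both $f_{1}$ and $f_{2}$ are linear, and hence $s$ is linear with slope equal to the sum of the respective $f_{1}$- and $f_{2}$-slopes on that interval. Since all breakpoints of $s(x+t)$ are of the form $\tilde{\gamma}_{i} - t$, the parameter $t$ enters the result exactly as required by Definition~\ref{def:f(x+t)}.

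Next I would reduce this candidate structure to the canonical form of Definition~\ref{def:f(x)}. If at some $\tilde{\gamma}_{i}$ the slope jumps of $f_{1}$ and $f_{2}$ (one of which may be zero, when $\tilde{\gamma}_{i}$ is a breakpoint of only one of the two functions) exactly cancel, then no genuine slope change occurs there and that abscissa is simply suppressed from the list; after finitely many such suppressions one obtains the true breakpoints $\gamma_{1}^{(s)} < \cdots < \gamma_{M_{s}}^{(s)}$ together with slopes $\mu_{0}^{(s)}, \ldots, \mu_{M_{s}}^{(s)}$ satisfying $\mu_{i+1}^{(s)} \neq \mu_{i}^{(s)}$ by construction. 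For the initial segment one has $\mu_{0}^{(s)} = \mu_{0}^{(1)} + \mu_{0}^{(2)} = 0$, and for $x+t \leq \min\{\gamma_{1}^{(1)}, \gamma_{1}^{(2)}\}$ both summands vanish, so $s(x+t) = 0$ there, which locates $\gamma_{1}^{(s)} = \min\{\gamma_{1}^{(1)}, \gamma_{1}^{(2)}\}$. Finally, to see that $M_{s} \geq 1$, I would exploit the fact that for each $k \in \{1, 2\}$ the slope $\mu_{1}^{(k)}$ is nonzero (since $\mu_{1}^{(k)} \neq \mu_{0}^{(k)} = 0$) and, by nonnegativity, strictly positive; hence $s$ has slope $\geq \mu_{1}^{(k)} > 0$ on its first nontrivial interval, so it cannot remain identically zero and at least one slope change must occur.

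The only real obstacle is the bookkeeping associated with the merge-and-suppress step: one has to check that slope cancellation can never produce an invalid configuration and that the suppression preserves monotonicity together with the vanishing condition to the left of $\gamma_{1}^{(s)}$. Everything else — continuity, nondecreasingness, and piecewise linearity between merged abscissae — is immediate from the corresponding properties of $f_{1}$ and $f_{2}$, so the proof is essentially a careful verification rather than a technically hard argument.
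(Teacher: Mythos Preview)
Your proposal is correct and follows essentially the same approach as the paper --- verifying continuity, nondecreasingness via nonnegative slopes, piecewise linearity, and the vanishing initial slope --- only you carry out the merge-and-suppress bookkeeping explicitly (including the check that $M_{s} \geq 1$), whereas the paper dispatches all of this in three sentences as ``evident.'' Your extra care in locating $\gamma_{1}^{(s)} = \min\{\gamma_{1}^{(1)}, \gamma_{1}^{(2)}\}$ and confirming that it survives suppression is a nice touch that the paper omits entirely.
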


\begin{proof}
It is evident that the sum of two continuous piece-wise linear functions of the same argument is a continuous piece-wise linear function of that argument as well; moreover, since all slopes in $f_{1}(x+t)$ and $f_{2}(x+t)$ are nonnegative, the slope in a generic segment of $s(x+t)$ is nonnegative as well, because it is the sum of two specific (nonnegative) slopes of $f_{1}(x+t)$ and $f_{2}(x+t)$; finally, since the initial slope of both $f_{1}(x+t)$ and $f_{2}(x+t)$ is null, also $s(x+t)$ has initial slope null. Then, $s(x+t)$ is a continuous nondecreasing piece-wise linear functions of $x$, parameterized by $t$ (which is in accordance with definition~\ref{def:f(x+t)}). This concludes the proof.
\end{proof}

\vspace{12pt}

\begin{mylemma} \label{lem:min}
Let $f_{1}(x)$ and $f_{2}(x)$ be two continuous nondecreasing piece-wise linear functions of $x$, as defined by definition~\ref{def:f(x)}. The min function
\begin{equation}
m(x) = \min \big\{ f_{1}(x) , f_{2}(x) \big\}
\end{equation}
is still a continuous nondecreasing piece-wise linear functions of $x$, which is in accordance with definition~\ref{def:f(x)}.
\end{mylemma}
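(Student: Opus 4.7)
The plan is to verify that $m(x) = \min\{f_1(x), f_2(x)\}$ satisfies, one by one, all the structural requirements collected in Definition~\ref{def:f(x)}: continuity, monotonicity, piecewise linearity, null initial slope with $m(x) = 0$ for $x$ small enough, and finitely many slope changes with consecutive slopes distinct.

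First, I would dispense with the easy properties. Continuity of $m(x)$ follows from the continuity of $f_1$ and $f_2$ via the identity $\min\{a,b\} = \tfrac{1}{2}(a+b) - \tfrac{1}{2}|a-b|$. Monotonicity is pointwise: for $x' \leq x''$, both $f_i(x') \leq f_i(x'')$ hold, hence $m(x') \leq \min\{f_1(x''), f_2(x'')\} = m(x'')$. For the null initial portion, let $\gamma_1^{(1)}$ and $\gamma_1^{(2)}$ denote the first slope-change points of $f_1$ and $f_2$ respectively. Each $f_i$ is nondecreasing and vanishes on $(-\infty, \gamma_1^{(i)}]$, so $f_i(x) \geq 0$ everywhere, and at $x \leq \max\{\gamma_1^{(1)}, \gamma_1^{(2)}\}$ at least one of $f_1(x), f_2(x)$ equals zero, forcing $m(x) = 0$. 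This identifies a first breakpoint of $m$ and pins its initial slope at zero.

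Next, I would establish piecewise linearity. Let $\Gamma_1$ and $\Gamma_2$ be the (finite) sets of slope-change abscissae of $f_1$ and $f_2$. On any open interval $I$ delimited by two consecutive points of $\Gamma_1 \cup \Gamma_2$, both $f_1$ and $f_2$ are affine; as affine functions, they either coincide on $I$, or differ everywhere on $I$ (with a single crossing at most), or are parallel and noncoincident. In every case, the set $C_I = \{x \in I : f_1(x) = f_2(x)\}$ is either empty, a single point, or all of $I$. Hence the union $\Gamma = \Gamma_1 \cup \Gamma_2 \cup \bigcup_I C_I$ is a finite set of abscissae, and between two consecutive points of $\Gamma$ one of the two functions uniformly dominates the other while both remain affine; therefore $m$ coincides there with the smaller affine function and is itself affine.

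Finally, to put $m$ in the canonical form of Definition~\ref{def:f(x)}, I would merge any consecutive linear pieces that happen to share the same slope, so that the resulting list of breakpoints satisfies $\mu_{i+1} \neq \mu_i$. The only subtle bookkeeping point — and the part I expect to be the mildest obstacle — is handling the degenerate case in which $f_1$ and $f_2$ coincide on a whole interval between two elements of $\Gamma_1 \cup \Gamma_2$: in that situation the corresponding ``crossing'' contributes no genuine breakpoint of $m$, and is simply absorbed by the merging step. With this cleanup, $m(x)$ meets every clause of Definition~\ref{def:f(x)}, which completes the proof.
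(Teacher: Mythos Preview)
Your proof is correct and follows the same outline as the paper's---verify continuity, monotonicity, piecewise linearity, and the null initial slope---though the paper dispatches each property in a single clause as ``evident,'' whereas you supply explicit constructions (the $\min$ formula for continuity, the breakpoint-and-crossing partition for piecewise linearity, the $\max\{\gamma_1^{(1)},\gamma_1^{(2)}\}$ argument for the zero plateau). One minor wording slip: when $C_I = I$ your union $\Gamma$ as written is not literally finite, but you already flag and absorb this degenerate case in your cleanup step, so the argument stands.
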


\begin{proof}
It is evident that the minimum of two continuous piece-wise linear functions of the same argument is a continuous piece-wise linear function of that argument as well; moreover, since all slopes in $f_{1}(x)$ and $f_{2}(x)$ are nonnegative, the slope in a generic segment of $m(x)$ is nonnegative as well, because it corresponds to the slope of one segment of $f_{1}(x)$ or one segment of $f_{2}(x)$; finally, since the initial slope of both $f_{1}(x)$ and $f_{2}(x)$ is null, also $m(x)$ has initial slope null. Then, $m(x)$ is a continuous nondecreasing piece-wise linear functions of $x$ (which is in accordance with definition~\ref{def:f(x)}). This concludes the proof.
\end{proof}


\newpage

\section{Examples} \label{sec:examples}

\subsection{Example 1}

Consider the following functions $f(x+t)$ and $g(x)$ (depicted in the same graphic).

\begin{figure}[h]
\centering
\psfrag{f(x)}[cl][Bl][.8][0]{$f(x+t)$}
\psfrag{g(x)}[cl][Bl][.8][0]{$g(x)$}
\psfrag{x}[bc][Bl][.8][0]{$x$}
\psfrag{X1}[tc][Bl][.7][0]{$4$}
\psfrag{X2}[tc][Bl][.7][0]{$8$}
\psfrag{G1}[tc][Bl][.7][0]{$16$}
\psfrag{G2}[tc][Bl][.7][0]{$18$}
\psfrag{G3}[tc][Bl][.7][0]{$19$}
\psfrag{G4}[tc][Bl][.7][0]{$20$}
\psfrag{G5}[tc][Bl][.7][0]{$24$}
\psfrag{G6}[tc][Bl][.7][0]{$26$}
\psfrag{G7}[tc][Bl][.7][0]{$29$}
\psfrag{G8}[tc][Bl][.7][0]{$30$}
\psfrag{G9}[tc][Bl][.7][0]{$32$}
\psfrag{n}[cl][Bl][.6][0]{$-1$}
\psfrag{m1}[Bc][Bl][.6][0]{$0.5$}
\psfrag{m2}[cr][Bl][.6][0]{$2$}
\psfrag{m3}[cr][Bl][.6][0]{$1$}
\psfrag{m4}[Bc][Bl][.6][0]{$0.25$}
\psfrag{m5}[Bc][Bl][.6][0]{$0.5$}
\psfrag{m6}[cr][Bl][.6][0]{$1$}
\psfrag{m7}[cr][Bl][.6][0]{$4$}
\psfrag{m8}[Bc][Bl][.6][0]{$0.5$}
\psfrag{m9}[cr][Bl][.6][0]{$1$}
\includegraphics[scale=.25]{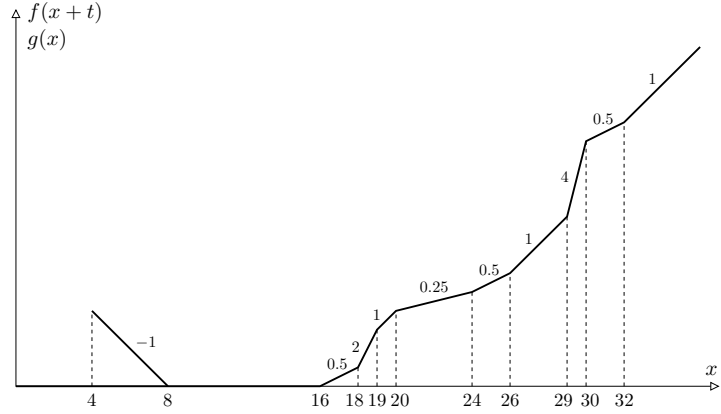}
\caption{Example 1 -- Functions $f(x+t)$ and $g(x)$.}
\label{fig:es1_fg}
\end{figure}

Algorithm~\ref{alg:tstar} provides $\omega_{1} = 13.\overline{3}$ and $\omega_{2} = 25.\overline{6}$. Then, by applying lemma~\ref{lem:xopt} (taking into account $f(x+t)$, instead of $f(x)$, and $g(x)$) the following function $x^{\circ}(t)$ is obtained.
\begin{equation*}
x^{\circ}(t) = \left\{ \begin{array}{ll}
x_{\mathrm{s}}(t) & t < 13.\overline{3}\\
x_{1}(t) & 13.\overline{3} \leq t < 25.\overline{6}\\
x_{\mathrm{e}}(t) & t \geq 25.\overline{6}
\end{array} \right.
\end{equation*}
with
\begin{equation*}
x_{\mathrm{s}}(t) = \left\{ \begin{array}{ll}
8 & t < 10\\
-t + 18 & 10 \leq t < 13.\overline{3}
\end{array} \right. \qquad x_{1}(t) = \left\{ \begin{array}{ll}
8 & 13.\overline{3} \leq t < 18\\
-t + 26 & 18 \leq t < 22\\
4 & 22 \leq t < 25.\overline{6}
\end{array} \right.
\end{equation*}
\begin{equation*}
x_{\mathrm{e}}(t) = \left\{ \begin{array}{ll}
-t + 32 & 25.\overline{6} \leq t < 28\\
4 & t \geq 28
\end{array} \right.
\end{equation*}

Note that, $T = \{ 13.\overline{3} , 25.\overline{6} \}$, that is, $t^{\star}_{1} = 13.\overline{3}$ and $t^{\star}_{2} = 25.\overline{6}$, and $Q = 2$. Since $T = \Omega$, the mapping function is basically $l(1) = 1$ and $l(2) = 2$. Moreover, $t^{\star}_{1} \leq \gamma_{a_{1}} - x_{1} = 14$ (then, $x_{\mathrm{s}}(t)$ has the structure of~\eqref{equ:xs_2}), $t^{\star}_{1} < \gamma_{a_{2}} - x_{2} = 18$ and $t^{\star}_{2} > \gamma_{a_{2}} - x_{1} = 22$ (then, $x_{1}(t)$ has the structure of~\eqref{equ:xj_1}), and $t^{\star}_{2} \geq \gamma_{a_{3}} - x_{2} = 24$ (then, $x_{\mathrm{e}}(t)$ has the structure of~\eqref{equ:xe_2}). The graphical representation of $x^{\circ}(t)$ is the following.

\begin{figure}[h]
\centering
\psfrag{xopt(t)}[cl][Bl][.8][0]{$x^{\circ}(t)$}
\psfrag{x}[bc][Bl][.8][0]{$t$}
\psfrag{XS}[bc][Bl][.7][0]{$x_{\mathrm{s}}(t)$}
\psfrag{X1}[bc][Bl][.7][0]{$x_{1}(t)$}
\psfrag{XE}[bc][Bl][.7][0]{$x_{\mathrm{e}}(t)$}
\psfrag{Y1}[cr][Bl][.7][0]{$4$}
\psfrag{Y2}[cr][Bl][.7][0]{$8$}
\psfrag{T1}[bc][Bl][.7][0]{$10$}
\psfrag{T2}[bc][Bl][.7][0]{$13.\overline{3}$}
\psfrag{T3}[bc][Bl][.7][0]{$18$}
\psfrag{T4}[bc][Bl][.7][0]{$22$}
\psfrag{T5}[bc][Bl][.7][0]{$25.\overline{6}$}
\psfrag{T6}[bc][Bl][.7][0]{$28$}
\includegraphics[scale=.25]{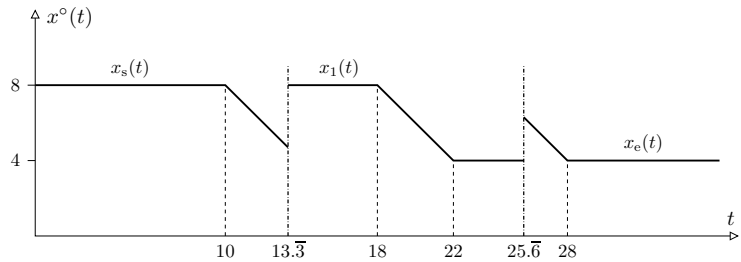}
\caption{Example 1 -- Functions $x^{\circ}(t)$.}
\label{fig:es1_xopt}
\end{figure}

By applying lemma~\ref{lem:h(t)} the following function $h(t) = f \big( x^{\circ}(t) + t \big) + g \big( x^{\circ}(t) \big)$ is obtained.

\begin{figure}[h]
\centering
\psfrag{h(t)}[cl][Bl][.8][0]{$h(t)$}
\psfrag{x}[bc][Bl][.8][0]{$t$}
\psfrag{m1}[Bc][Bl][.6][0]{$0.5$}
\psfrag{m2}[cr][Bl][.6][0]{$1$}
\psfrag{m3}[Bc][Bl][.6][0]{$0.25$}
\psfrag{m4}[Bc][Bl][.6][0]{$0.5$}
\psfrag{m5}[cr][Bl][.6][0]{$1$}
\psfrag{m6}[cr][Bl][.6][0]{$4$}
\psfrag{m7}[cr][Bl][.6][0]{$1$}
\psfrag{T1}[bc][Bl][.7][0]{$8$}
\psfrag{T2}[bc][Bl][.7][0]{$10$}
\psfrag{T3}[bc][Bl][.7][0]{$13.\overline{3}$}
\psfrag{T4}[bc][Bl][.7][0]{$16$}
\psfrag{T5}[bc][Bl][.7][0]{$18$}
\psfrag{T6}[br][Bl][.7][0]{$25$}
\psfrag{T7}[bl][Bl][.7][0]{$25.\overline{6}$}
\includegraphics[scale=.25]{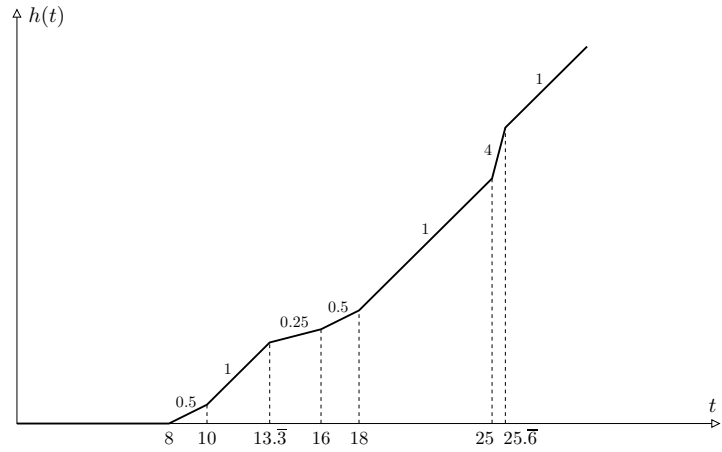}
\caption{Example 1 -- Function $h(t) = f \big( x^{\circ}(t) + t \big) + g \big( x^{\circ}(t) \big)$.}
\label{fig:es1_h}
\end{figure}

\vspace{10pt}

In accordance with lemma~\ref{lem:h(t)} and, in particular, with~\eqref{equ:h_3}, function $h(t)$ is
\begin{equation*}
h(t) = \left\{ \begin{array}{ll}
\vspace{3pt} f(8+t) & \forall \, t \, : \, x^{\circ}(t) = 8\\
\vspace{3pt} t-9 & \forall \, t < 13.\overline{3} : x^{\circ}(t) \neq \{ 4,8 \} \quad (\Rightarrow 10 < t < 13.\overline{3})\\
\vspace{3pt} t-12 & \forall \, t \in [ 13.\overline{3} , 25.\overline{6} ) : x^{\circ}(t) \neq \{ 4,8 \} \quad (\Rightarrow 18 < t < 22)\\
\vspace{3pt} t - 10 & \forall \, t \geq 25.\overline{6} : x^{\circ}(t) \neq \{ 4,8 \} \quad (\Rightarrow 25.\overline{6} < t < 28)\\
f(4+t) + 4 & \forall \, t \, : \, x^{\circ}(t) = 4\\
\end{array} \right.
\end{equation*}

\clearpage

\subsection{Example 2}

Consider the following functions $f(x+t)$ and $g(x)$ (depicted in the same graphic).

\begin{figure}[h]
\centering
\psfrag{f(x)}[cl][Bl][.8][0]{$f(x+t)$}
\psfrag{g(x)}[cl][Bl][.8][0]{$g(x)$}
\psfrag{x}[bc][Bl][.8][0]{$x$}
\psfrag{X1}[tc][Bl][.7][0]{$4$}
\psfrag{X2}[tc][Bl][.7][0]{$8$}
\psfrag{G1}[tc][Bl][.7][0]{$16$}
\psfrag{G2}[tc][Bl][.7][0]{$17$}
\psfrag{G3}[tc][Bl][.7][0]{$19$}
\psfrag{G4}[tc][Bl][.7][0]{$20$}
\psfrag{G5}[tc][Bl][.7][0]{$26$}
\psfrag{G6}[tc][Bl][.7][0]{$31$}
\psfrag{G7}[tc][Bl][.7][0]{$35$}
\psfrag{n}[cl][Bl][.6][0]{$-1$}
\psfrag{m1}[cr][Bl][.6][0]{$2$}
\psfrag{m2}[Bc][Bl][.6][0]{$0.5$}
\psfrag{m3}[cr][Bl][.6][0]{$2$}
\psfrag{m4}[Bc][Bl][.6][0]{$0.5$}
\psfrag{m5}[cr][Bl][.6][0]{$1$}
\psfrag{m6}[Bc][Bl][.6][0]{$0.5$}
\psfrag{m7}[cr][Bl][.6][0]{$2$}
\includegraphics[scale=.25]{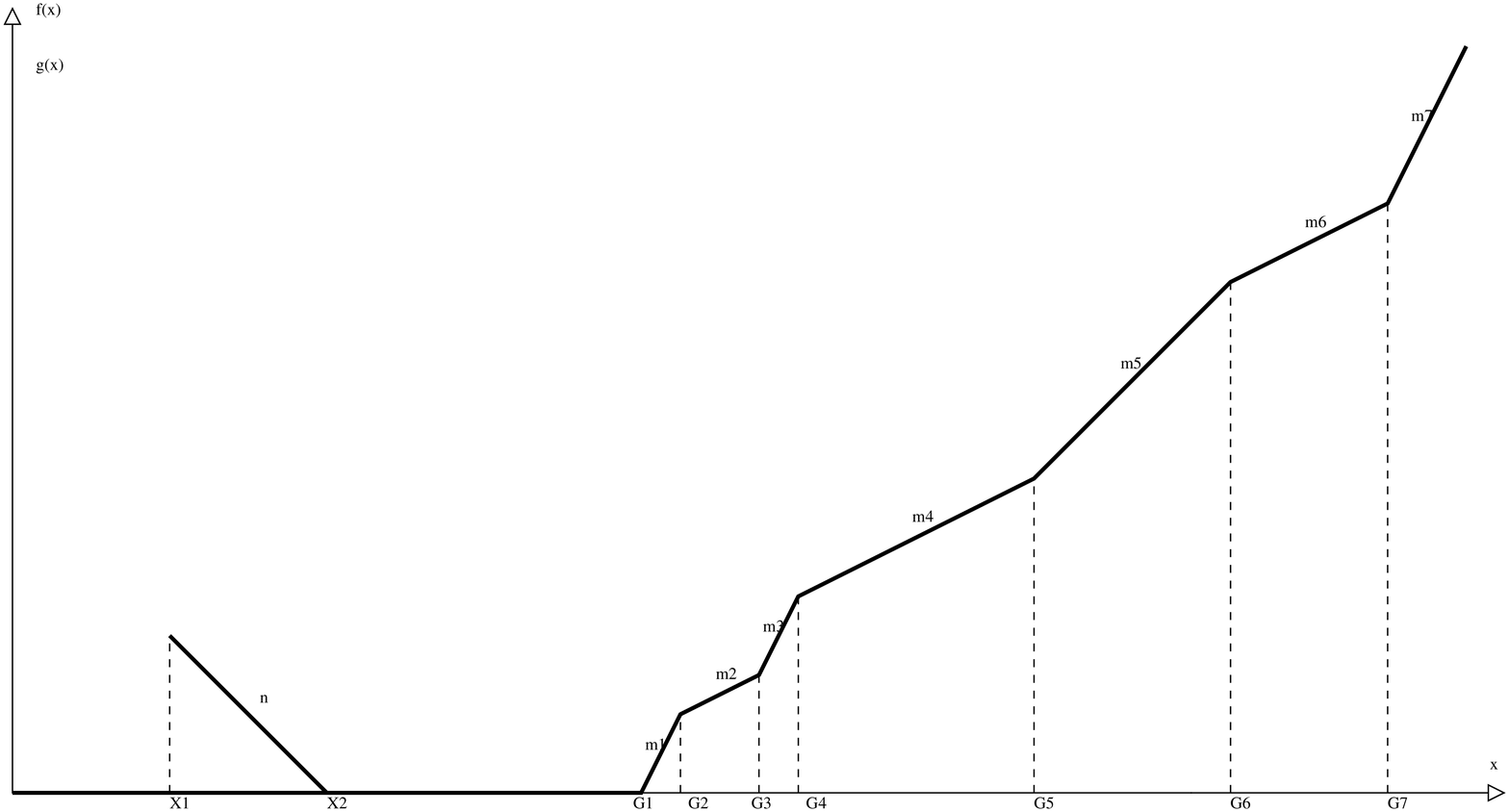}
\caption{Example 2 -- Functions $f(x+t)$ and $g(x)$.}
\label{fig:es2_fg}
\end{figure}

Algorithm~\ref{alg:tstar} provides $\omega_{1} = 11$, $\omega_{2} = 14$, and $\omega_{3} = 23$. Then, by applying lemma~\ref{lem:xopt} (taking into account $f(x+t)$, instead of $f(x)$, and $g(x)$) the following function $x^{\circ}(t)$ is obtained.
\begin{equation*}
x^{\circ}(t) = \left\{ \begin{array}{ll}
x_{\mathrm{s}}(t) & t < 11\\
x_{1}(t) & 11 \leq t < 14\\
x_{2}(t) & 14 \leq t < 23\\
x_{\mathrm{e}}(t) & t \geq 23
\end{array} \right.
\end{equation*}
with
\begin{equation*}
x_{\mathrm{s}}(t) = \left\{ \begin{array}{ll}
8 & t < 8\\
-t + 16 & 8 \leq t < 11
\end{array} \right. \qquad x_{1}(t) = -t + 19
\end{equation*}
\begin{equation*}
x_{2}(t) = \left\{ \begin{array}{ll}
8 & 14 \leq t < 18\\
-t + 26 & 18 \leq t < 22\\
4 & 22 \leq t < 23
\end{array} \right. \qquad x_{\mathrm{e}}(t) = \left\{ \begin{array}{ll}
8 & 23 \leq t < 27\\
-t + 35 & 27 \leq t < 31\\
4 & t \geq 31
\end{array} \right.
\end{equation*}

Note that, $T = \{ 11, 14 , 23 \}$, that is, $t^{\star}_{1} = 11$, $t^{\star}_{2} = 14$, and $t^{\star}_{3} = 23$, and $Q = 3$. Since $T = \Omega$, the mapping function is basically $l(1) = 1$, $l(2) = 2$, and $l(3) = 3$. Moreover, $t^{\star}_{1} \leq \gamma_{a_{1}} - x_{1} = 12$ (then, $x_{\mathrm{s}}(t)$ has the structure of~\eqref{equ:xs_2}), $t^{\star}_{1} \geq \gamma_{a_{2}} - x_{2} = 11$ and $t^{\star}_{2} \leq \gamma_{a_{2}} - x_{1} = 15$ (then, $x_{1}(t)$ has the structure of~\eqref{equ:xj_4}), $t^{\star}_{2} < \gamma_{a_{3}} - x_{2} = 18$ and $t^{\star}_{3} > \gamma_{a_{3}} - x_{1} = 22$ (then, $x_{2}(t)$ has the structure of~\eqref{equ:xj_1}), and $t^{\star}_{3} < \gamma_{a_{4}} - x_{2} = 27$ (then, $x_{\mathrm{e}}(t)$ has the structure of~\eqref{equ:xe_1}). The graphical representation of $x^{\circ}(t)$ is the following.

\begin{figure}[h]
\centering
\psfrag{xopt(t)}[cl][Bl][.8][0]{$x^{\circ}(t)$}
\psfrag{x}[bc][Bl][.8][0]{$t$}
\psfrag{XS}[bc][Bl][.7][0]{$x_{\mathrm{s}}(t)$}
\psfrag{X1}[bc][Bl][.7][0]{$x_{1}(t)$}
\psfrag{X2}[bc][Bl][.7][0]{$x_{2}(t)$}
\psfrag{XE}[bc][Bl][.7][0]{$x_{\mathrm{e}}(t)$}
\psfrag{Y1}[cr][Bl][.7][0]{$4$}
\psfrag{Y2}[cr][Bl][.7][0]{$8$}
\psfrag{T1}[tc][Bl][.7][0]{$8$}
\psfrag{T2}[tc][Bl][.7][0]{$11$}
\psfrag{T3}[tc][Bl][.7][0]{$14$}
\psfrag{T4}[tc][Bl][.7][0]{$18$}
\psfrag{T5}[tc][Bl][.7][0]{$22$}
\psfrag{T6}[tc][Bl][.7][0]{$23$}
\psfrag{T7}[tc][Bl][.7][0]{$27$}
\psfrag{T8}[tc][Bl][.7][0]{$31$}
\includegraphics[scale=.25]{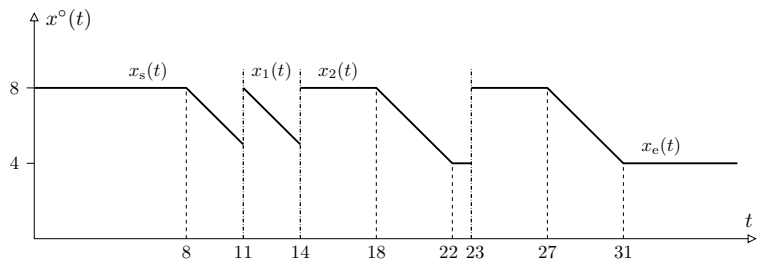}
\caption{Example 2 -- Functions $x^{\circ}(t)$.}
\label{fig:es2_xopt}
\end{figure}

By applying lemma~\ref{lem:h(t)} the following function $h(t) = f \big( x^{\circ}(t) + t \big) + g \big( x^{\circ}(t) \big)$ is obtained.

\begin{figure}[h]
\centering
\psfrag{h(t)}[cl][Bl][.8][0]{$h(t)$}
\psfrag{x}[bc][Bl][.8][0]{$t$}
\psfrag{m1}[cr][Bl][.6][0]{$1$}
\psfrag{m2}[Bc][Bl][.6][0]{$0.5$}
\psfrag{m3}[cr][Bl][.6][0]{$1$}
\psfrag{m4}[Bc][Bl][.6][0]{$0.5$}
\psfrag{m5}[cr][Bl][.6][0]{$1$}
\psfrag{m6}[cr][Bl][.6][0]{$2$}
\psfrag{G1}[tc][Bl][.7][0]{$8$}
\psfrag{G2}[tc][Bl][.7][0]{$11$}
\psfrag{G3}[tc][Bl][.7][0]{$14$}
\psfrag{G4}[tc][Bl][.7][0]{$18$}
\psfrag{G5}[tc][Bl][.7][0]{$22$}
\psfrag{G6}[tc][Bl][.7][0]{$23$}
\psfrag{G7}[tc][Bl][.7][0]{$27$}
\psfrag{G8}[tc][Bl][.7][0]{$31$}
\includegraphics[scale=.25]{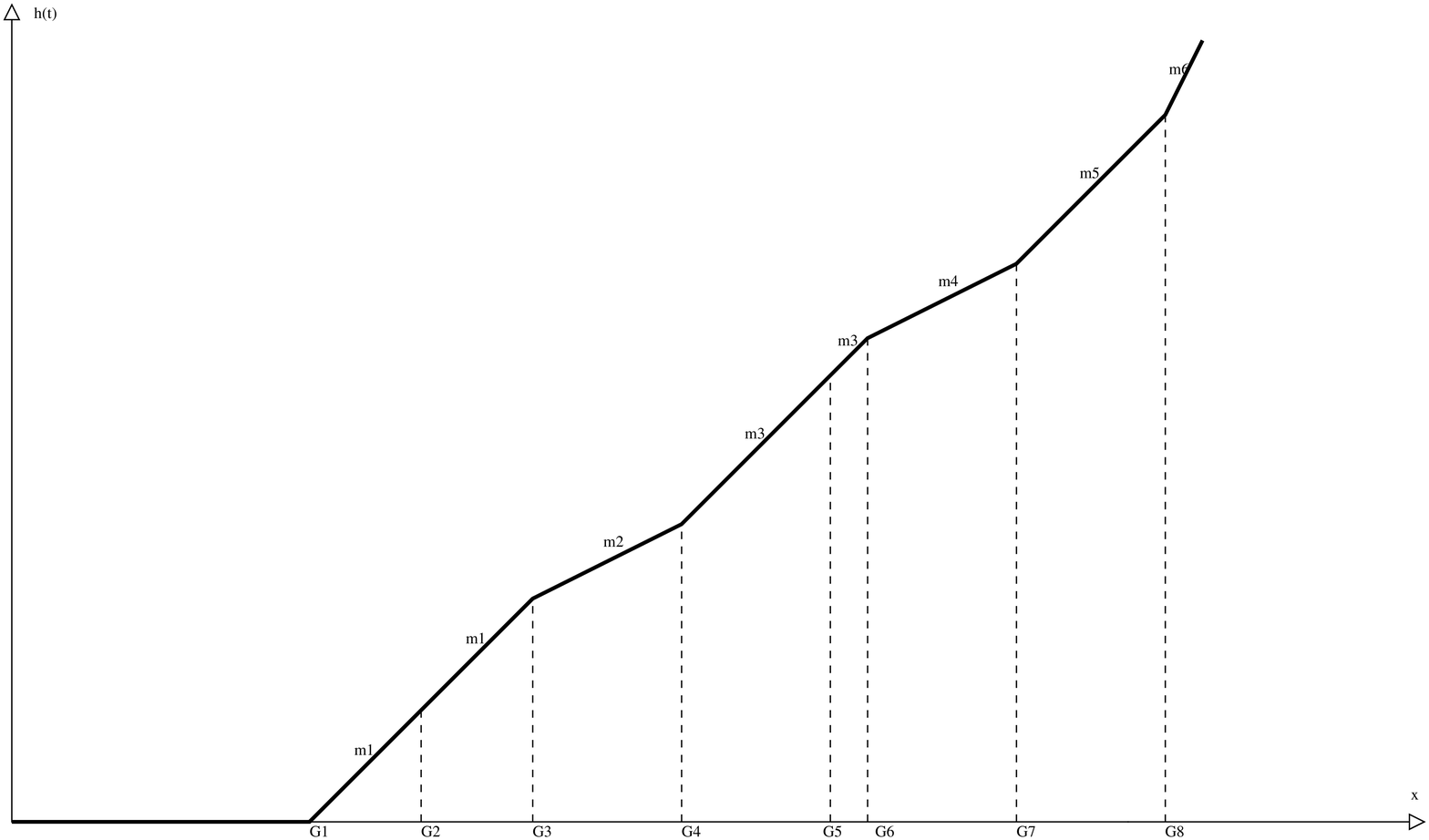}
\caption{Example 2 -- Function $h(t) = f \big( x^{\circ}(t) + t \big) + g \big( x^{\circ}(t) \big)$.}
\label{fig:es2_h}
\end{figure}

In accordance with lemma~\ref{lem:h(t)} and, in particular, with~\eqref{equ:h_3}, function $h(t)$ is
\begin{equation*}
h(t) = \left\{ \begin{array}{ll}
\vspace{3pt} f(8+t) & \forall \, t \, : \, x^{\circ}(t) = 8\\
\vspace{3pt} t-8 & \forall \, t < 11 : x^{\circ}(t) \neq \{ 4,8 \} \quad (\Rightarrow 8 < t < 11)\\
\vspace{3pt} t-8 & \forall \, t \in [ 11 , 14 ) : x^{\circ}(t) \neq \{ 4,8 \} \quad (\Rightarrow 11 < t < 14)\\
\vspace{3pt} t-10 & \forall \, t \in [ 14 , 23 ) : x^{\circ}(t) \neq \{ 4,8 \} \quad (\Rightarrow 18 < t < 22)\\
\vspace{3pt} t -12 & \forall \, t \geq 23 : x^{\circ}(t) \neq \{ 4,8 \} \quad (\Rightarrow 27 < t < 31)\\
f(4+t) + 4 & \forall \, t \, : \, x^{\circ}(t) = 4\\
\end{array} \right.
\end{equation*}

\clearpage

\subsection{Example 3}

Consider the following functions $f(x+t)$ and $g(x)$ (depicted in the same graphic).

\begin{figure}[h]
\centering
\psfrag{f(x)}[cl][Bl][.8][0]{$f(x+t)$}
\psfrag{g(x)}[cl][Bl][.8][0]{$g(x)$}
\psfrag{x}[bc][Bl][.8][0]{$x$}
\psfrag{X1}[tc][Bl][.7][0]{$4$}
\psfrag{X2}[tc][Bl][.7][0]{$8$}
\psfrag{G1}[tc][Bl][.7][0]{$16$}
\psfrag{G2}[tc][Bl][.7][0]{$17$}
\psfrag{G3}[tc][Bl][.7][0]{$19$}
\psfrag{G4}[tc][Bl][.7][0]{$20$}
\psfrag{G5}[tc][Bl][.7][0]{$26$}
\psfrag{G6}[tc][Bl][.7][0]{$31$}
\psfrag{G7}[tc][Bl][.7][0]{$35$}
\psfrag{n}[cl][Bl][.6][0]{$-1.5$}
\psfrag{m1}[cr][Bl][.6][0]{$2$}
\psfrag{m2}[Bc][Bl][.6][0]{$0.5$}
\psfrag{m3}[cr][Bl][.6][0]{$2$}
\psfrag{m4}[Bc][Bl][.6][0]{$0.5$}
\psfrag{m5}[cr][Bl][.6][0]{$1$}
\psfrag{m6}[Bc][Bl][.6][0]{$0.5$}
\psfrag{m7}[cr][Bl][.6][0]{$2$}
\includegraphics[scale=.25]{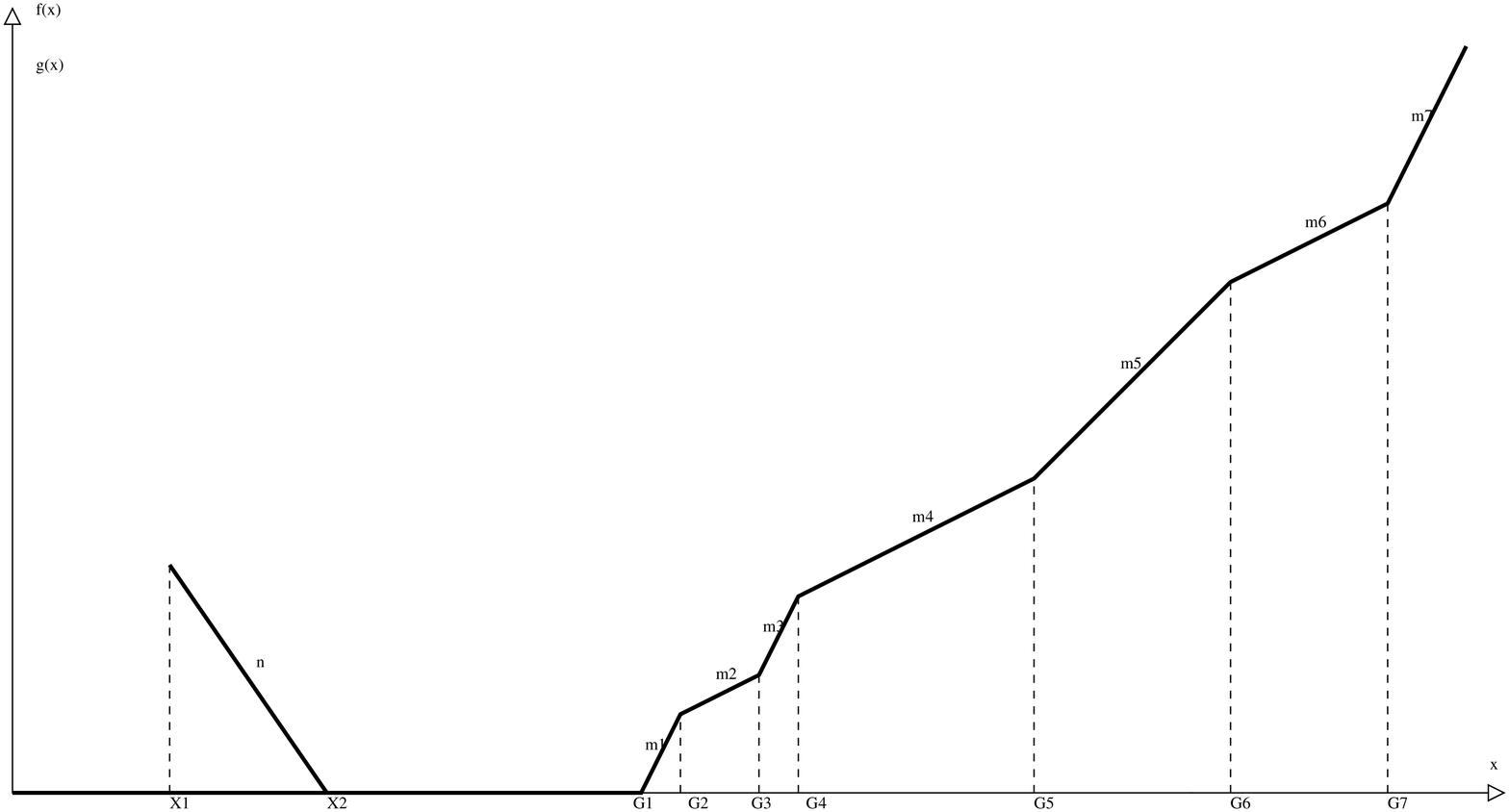}
\caption{Example 3 -- Functions $f(x+t)$ and $g(x)$.}
\label{fig:es3_fg}
\end{figure}

Algorithm~\ref{alg:tstar} provides $\omega_{1} = 9.5$ and $\omega_{2} = 12.5$. Then, by applying lemma~\ref{lem:xopt} (taking into account $f(x+t)$, instead of $f(x)$, and $g(x)$) the following function $x^{\circ}(t)$ is obtained.
\begin{equation*}
x^{\circ}(t) = \left\{ \begin{array}{ll}
x_{\mathrm{s}}(t) & t < 9.5\\
x_{1}(t) & 9.5 \leq t < 12.5\\
x_{\mathrm{e}}(t) & t \geq 12.5
\end{array} \right.
\end{equation*}
with
\begin{equation*}
x_{\mathrm{s}}(t) = \left\{ \begin{array}{ll}
8 & t < 8\\
-t + 16 & 8 \leq t < 9.5
\end{array} \right. \qquad x_{1}(t) = \left\{ \begin{array}{ll}
8 & 9.5 \leq t < 11\\
-t + 19 & 11 \leq t < 12.5
\end{array} \right.
\end{equation*}
\begin{equation*}
x_{\mathrm{e}}(t) = \left\{ \begin{array}{ll}
8 & 12.5 \leq t < 27\\
-t + 35 & 27 \leq t < 31\\
4 & t \geq 31
\end{array} \right.
\end{equation*}

Note that, $T = \{ 9.5 , 12.5 \}$, that is, $t^{\star}_{1} = 9.5$ and $t^{\star}_{2} = 12.5$, and $Q = 2$. Since $T = \Omega$, the mapping function is basically $l(1) = 1$ and $l(2) = 2$. Moreover, $t^{\star}_{1} \leq \gamma_{a_{1}} - x_{1} = 12$ (then, $x_{\mathrm{s}}(t)$ has the structure of~\eqref{equ:xs_2}), $t^{\star}_{1} < \gamma_{a_{2}} - x_{2} = 11$ and $t^{\star}_{2} \leq \gamma_{a_{2}} - x_{1} = 15$ (then, $x_{1}(t)$ has the structure of~\eqref{equ:xj_3}), and $t^{\star}_{2} < \gamma_{a_{3}} - x_{2} = 27$ (then, $x_{\mathrm{e}}(t)$ has the structure of~\eqref{equ:xe_1}). The graphical representation of $x^{\circ}(t)$ is the following.

\begin{figure}[h]
\centering
\psfrag{xopt(t)}[cl][Bl][.8][0]{$x^{\circ}(t)$}
\psfrag{x}[bc][Bl][.8][0]{$t$}
\psfrag{XS}[bc][Bl][.7][0]{$x_{\mathrm{s}}(t)$}
\psfrag{X1}[bc][Bl][.7][0]{$x_{1}(t)$}
\psfrag{X2}[bc][Bl][.7][0]{$x_{2}(t)$}
\psfrag{XE}[bc][Bl][.7][0]{$x_{\mathrm{e}}(t)$}
\psfrag{Y1}[cr][Bl][.7][0]{$4$}
\psfrag{Y2}[cr][Bl][.7][0]{$8$}
\psfrag{T1}[tc][Bl][.7][0]{$8$}
\psfrag{T2}[tc][Bl][.7][0]{$9.5$}
\psfrag{T3}[tc][Bl][.7][0]{$11$}
\psfrag{T4}[tc][Bl][.7][0]{$12.5$}
\psfrag{T5}[tc][Bl][.7][0]{$27$}
\psfrag{T6}[tc][Bl][.7][0]{$31$}
\includegraphics[scale=.25]{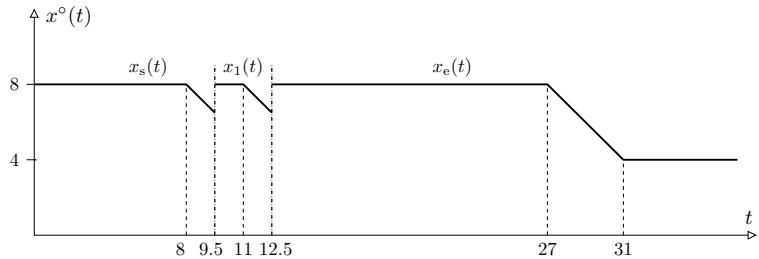}
\caption{Example 3 -- Functions $x^{\circ}(t)$.}
\label{fig:es3_xopt}
\end{figure}

By applying lemma~\ref{lem:h(t)} the following function $h(t) = f \big( x^{\circ}(t) + t \big) + g \big( x^{\circ}(t) \big)$ is obtained.

\begin{figure}[h]
\centering
\psfrag{h(t)}[cl][Bl][.8][0]{$h(t)$}
\psfrag{x}[bc][Bl][.8][0]{$t$}
\psfrag{m1}[cr][Bl][.6][0]{$1.5$}
\psfrag{m2}[Bc][Bl][.6][0]{$0.5$}
\psfrag{m3}[cr][Bl][.6][0]{$1$}
\psfrag{m4}[Bc][Bl][.6][0]{$0.5$}
\psfrag{m5}[cr][Bl][.6][0]{$1.5$}
\psfrag{m6}[cr][Bl][.6][0]{$2$}
\psfrag{G1}[tc][Bl][.7][0]{$8$}
\psfrag{G2}[tc][Bl][.7][0]{$9.5$}
\psfrag{G3}[tc][Bl][.7][0]{$11$}
\psfrag{G4}[tc][Bl][.7][0]{$12.5$}
\psfrag{G5}[tc][Bl][.7][0]{$18$}
\psfrag{G6}[tc][Bl][.7][0]{$23$}
\psfrag{G7}[tc][Bl][.7][0]{$27$}
\psfrag{G8}[tc][Bl][.7][0]{$31$}
\includegraphics[scale=.25]{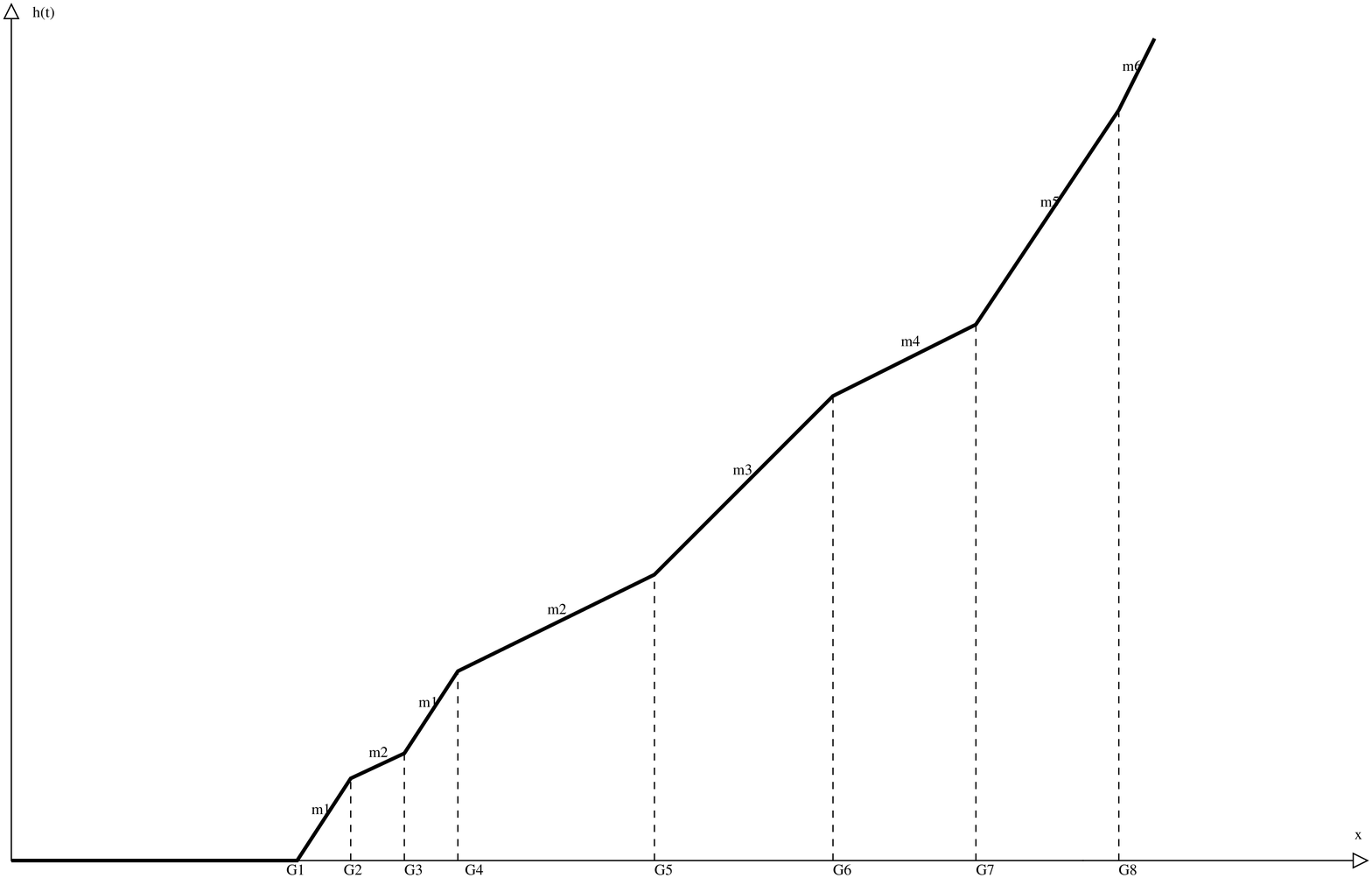}
\caption{Example 3 -- Function $h(t) = f \big( x^{\circ}(t) + t \big) + g \big( x^{\circ}(t) \big)$.}
\label{fig:es3_h}
\end{figure}

\vspace{10pt}

In accordance with lemma~\ref{lem:h(t)} and, in particular, with~\eqref{equ:h_3}, function $h(t)$ is
\begin{equation*}
h(t) = \left\{ \begin{array}{ll}
\vspace{3pt} f(8+t) & \forall \, t \, : \, x^{\circ}(t) = 8\\
\vspace{3pt} 1.5 \, t - 12 & \forall \, t < 9.5 : x^{\circ}(t) \neq \{ 4,8 \} \quad (\Rightarrow 8 < t < 9.5)\\
\vspace{3pt} 1.5 \, t - 13.5 & \forall \, t \in [ 9.5 , 12.5 ) : x^{\circ}(t) \neq \{ 4,8 \} \quad (\Rightarrow 11 < t < 12.5)\\
\vspace{3pt} 1.5 \, t - 25.5 & \forall \, t \geq 12.5 : x^{\circ}(t) \neq \{ 4,8 \} \quad (\Rightarrow 27 < t < 31)\\
f(4+t) + 6 & \forall \, t \, : \, x^{\circ}(t) = 4\\
\end{array} \right.
\end{equation*}

\clearpage

\subsection{Example 4}

Consider the following functions $f(x+t)$ and $g(x)$ (depicted in the same graphic).

\begin{figure}[h]
\centering
\psfrag{f(x)}[cl][Bl][.8][0]{$f(x+t)$}
\psfrag{g(x)}[cl][Bl][.8][0]{$g(x)$}
\psfrag{x}[bc][Bl][.8][0]{$x$}
\psfrag{X1}[tc][Bl][.7][0]{$3$}
\psfrag{X2}[tc][Bl][.7][0]{$8$}
\psfrag{G1}[tc][Bl][.7][0]{$14$}
\psfrag{G2}[tc][Bl][.7][0]{$16$}
\psfrag{G3}[tc][Bl][.7][0]{$17$}
\psfrag{G4}[tc][Bl][.7][0]{$21$}
\psfrag{G5}[tc][Bl][.7][0]{$23$}
\psfrag{G6}[tc][Bl][.7][0]{$24$}
\psfrag{G7}[tc][Bl][.7][0]{$25$}
\psfrag{G8}[tc][Bl][.7][0]{$26$}
\psfrag{G9}[tc][Bl][.7][0]{$28$}
\psfrag{G10}[tc][Bl][.7][0]{$30$}
\psfrag{G11}[tc][Bl][.7][0]{$34$}
\psfrag{G12}[tc][Bl][.7][0]{$38$}
\psfrag{n}[cl][Bl][.6][0]{$-1$}
\psfrag{m1}[Bc][Bl][.6][0]{$0.5$}
\psfrag{m2}[cr][Bl][.6][0]{$2$}
\psfrag{m3}[Bc][Bl][.6][0]{$0.25$}
\psfrag{m4}[cr][Bl][.6][0]{$1$}
\psfrag{m5}[cr][Bl][.6][0]{$3$}
\psfrag{m6}[cr][Bl][.6][0]{$4$}
\includegraphics[scale=.25]{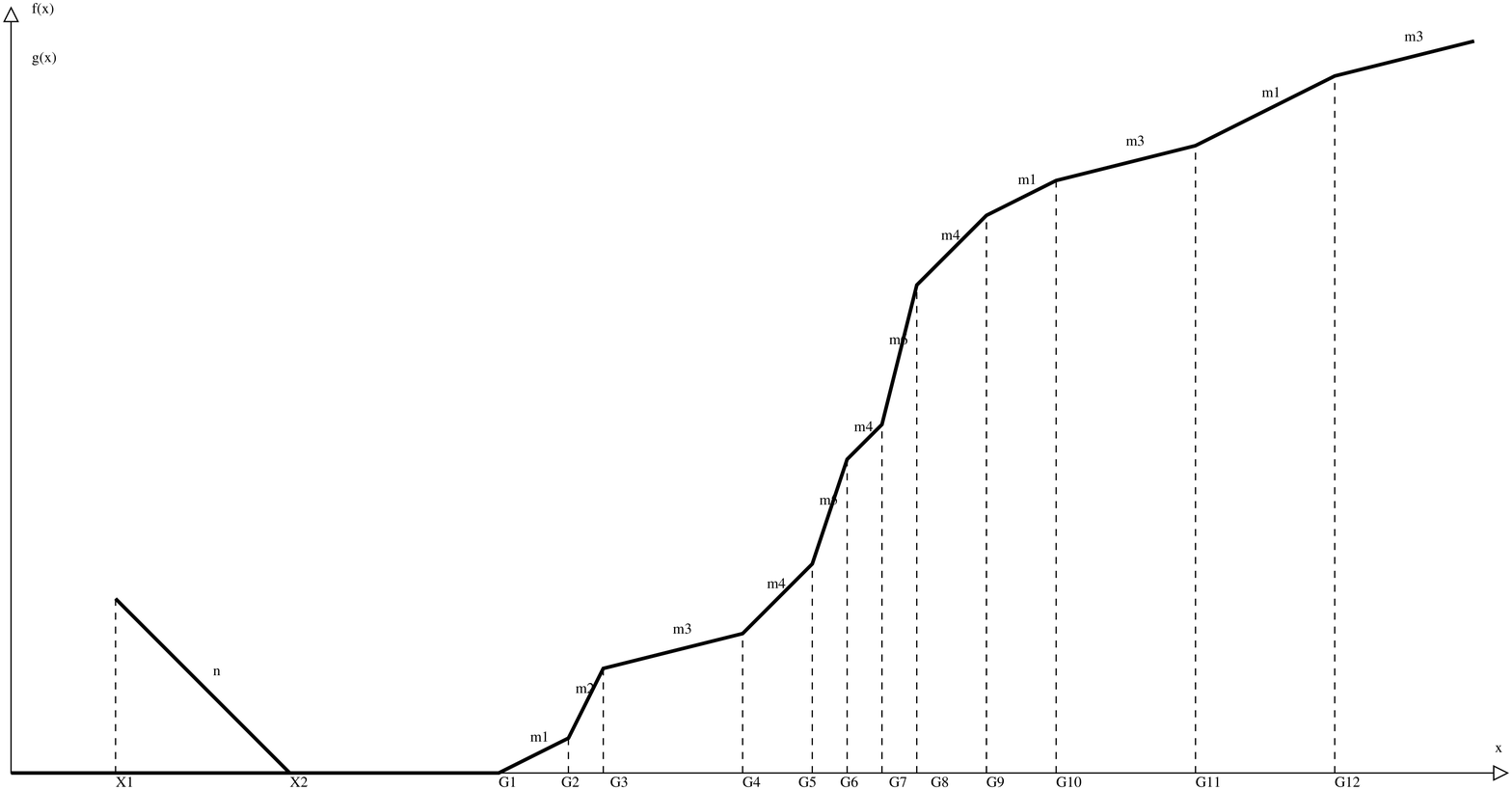}
\caption{Example 4 -- Functions $f(x+t)$ and $g(x)$.}
\label{fig:es4_fg}
\end{figure}

Algorithm~\ref{alg:tstar} provides $\omega_{1} = 10.\overline{3}$ and $\omega_{2} = 22.5\overline{3}$. Then, by applying lemma~\ref{lem:xopt} (taking into account $f(x+t)$, instead of $f(x)$, and $g(x)$) the following function $x^{\circ}(t)$ is obtained.
\begin{equation*}
x^{\circ}(t) = \left\{ \begin{array}{ll}
x_{\mathrm{s}}(t) & t < 10.\overline{3}\\
x_{1}(t) & 10.\overline{3} \leq t < 22.5\overline{3}\\
x_{\mathrm{e}}(t) & t \geq 22.5\overline{3}
\end{array} \right.
\end{equation*}
with
\begin{equation*}
x_{\mathrm{s}}(t) = \left\{ \begin{array}{ll}
8 & t < 8\\
-t + 16 & 8 \leq t < 10.\overline{3}
\end{array} \right. \qquad x_{1}(t) = \left\{ \begin{array}{ll}
8 & 10.\overline{3} \leq t < 13\\
-t + 21 & 13 \leq t < 18\\
3 & 18 \leq t < 22.5\overline{3}
\end{array} \right. \qquad x_{\mathrm{e}}(t) = 8
\end{equation*}

Note that, $T = \{ 10.\overline{3} , 22.5\overline{3} \}$, that is, $t^{\star}_{1} = 10.\overline{3}$ and $t^{\star}_{2} = 22.5\overline{3}$, and $Q = 2$. Since $T = \Omega$, the mapping function is basically $l(1) = 1$ and $l(2) = 2$. Moreover, $t^{\star}_{1} \leq \gamma_{a_{1}} - x_{1} = 13$ (then, $x_{\mathrm{s}}(t)$ has the structure of~\eqref{equ:xs_2}), $t^{\star}_{1} < \gamma_{a_{2}} - x_{2} = 13$ and $t^{\star}_{2} > \gamma_{a_{2}} - x_{1} = 18$ (then, $x_{1}(t)$ has the structure of~\eqref{equ:xj_1}); since $l(Q) = \lvert A \rvert = 2$, the function $x_{\mathrm{e}}(t)$ has the structure of~\eqref{equ:xe_3}. The graphical representation of $x^{\circ}(t)$ is the following.

\begin{figure}[h]
\centering
\psfrag{xopt(t)}[cl][Bl][.8][0]{$x^{\circ}(t)$}
\psfrag{x}[bc][Bl][.8][0]{$t$}
\psfrag{XS}[bc][Bl][.7][0]{$x_{\mathrm{s}}(t)$}
\psfrag{X1}[bc][Bl][.7][0]{$x_{1}(t)$}
\psfrag{X2}[bc][Bl][.7][0]{$x_{2}(t)$}
\psfrag{XE}[bc][Bl][.7][0]{$x_{\mathrm{e}}(t)$}
\psfrag{Y1}[cr][Bl][.7][0]{$3$}
\psfrag{Y2}[cr][Bl][.7][0]{$8$}
\psfrag{T1}[bc][Bl][.7][0]{$8$}
\psfrag{T2}[bc][Bl][.7][0]{$10.\overline{3}$}
\psfrag{T3}[bc][Bl][.7][0]{$13$}
\psfrag{T4}[bc][Bl][.7][0]{$18$}
\psfrag{T5}[bc][Bl][.7][0]{$22.5\overline{3}$}
\includegraphics[scale=.25]{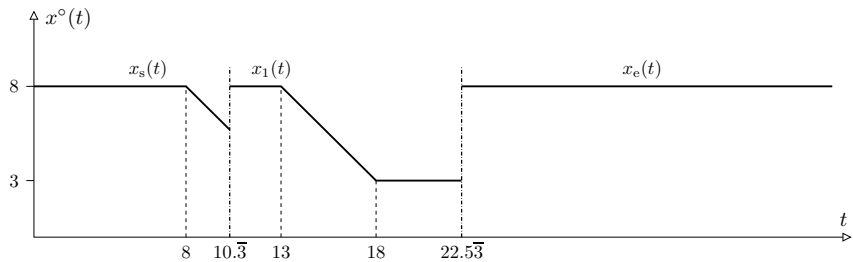}
\caption{Example 4 -- Functions $x^{\circ}(t)$.}
\label{fig:es4_xopt}
\end{figure}

By applying lemma~\ref{lem:h(t)} the following function $h(t) = f \big( x^{\circ}(t) + t \big) + g \big( x^{\circ}(t) \big)$ is obtained.

\begin{figure}[h]
\centering
\psfrag{h(t)}[cl][Bl][.8][0]{$h(t)$}
\psfrag{x}[bc][Bl][.8][0]{$t$}
\psfrag{m1}[Bc][Bl][.6][0]{$0.5$}
\psfrag{m2}[cr][Bl][.6][0]{$2$}
\psfrag{m3}[Bc][Bl][.6][0]{$0.25$}
\psfrag{m4}[cr][Bl][.6][0]{$1$}
\psfrag{m5}[cr][Bl][.6][0]{$3$}
\psfrag{m6}[cr][Bl][.6][0]{$4$}
\psfrag{G1}[bc][Bl][.7][0]{$6$}
\psfrag{G2}[bc][Bl][.7][0]{$8$}
\psfrag{G3}[bc][Bl][.7][0]{$10.\overline{3}$}
\psfrag{G4}[bc][Bl][.7][0]{$13$}
\psfrag{G5}[bc][Bl][.7][0]{$18$}
\psfrag{G6}[bc][Bl][.7][0]{$20$}
\psfrag{G7}[bc][Bl][.7][0]{$21$}
\psfrag{G8}[bc][Bl][.7][0]{$22$}
\psfrag{G9}[bc][Bl][.7][0]{$22.5\overline{3}$}
\psfrag{G10}[bc][Bl][.7][0]{$26$}
\psfrag{G11}[bc][Bl][.7][0]{$30$}
\includegraphics[scale=.25]{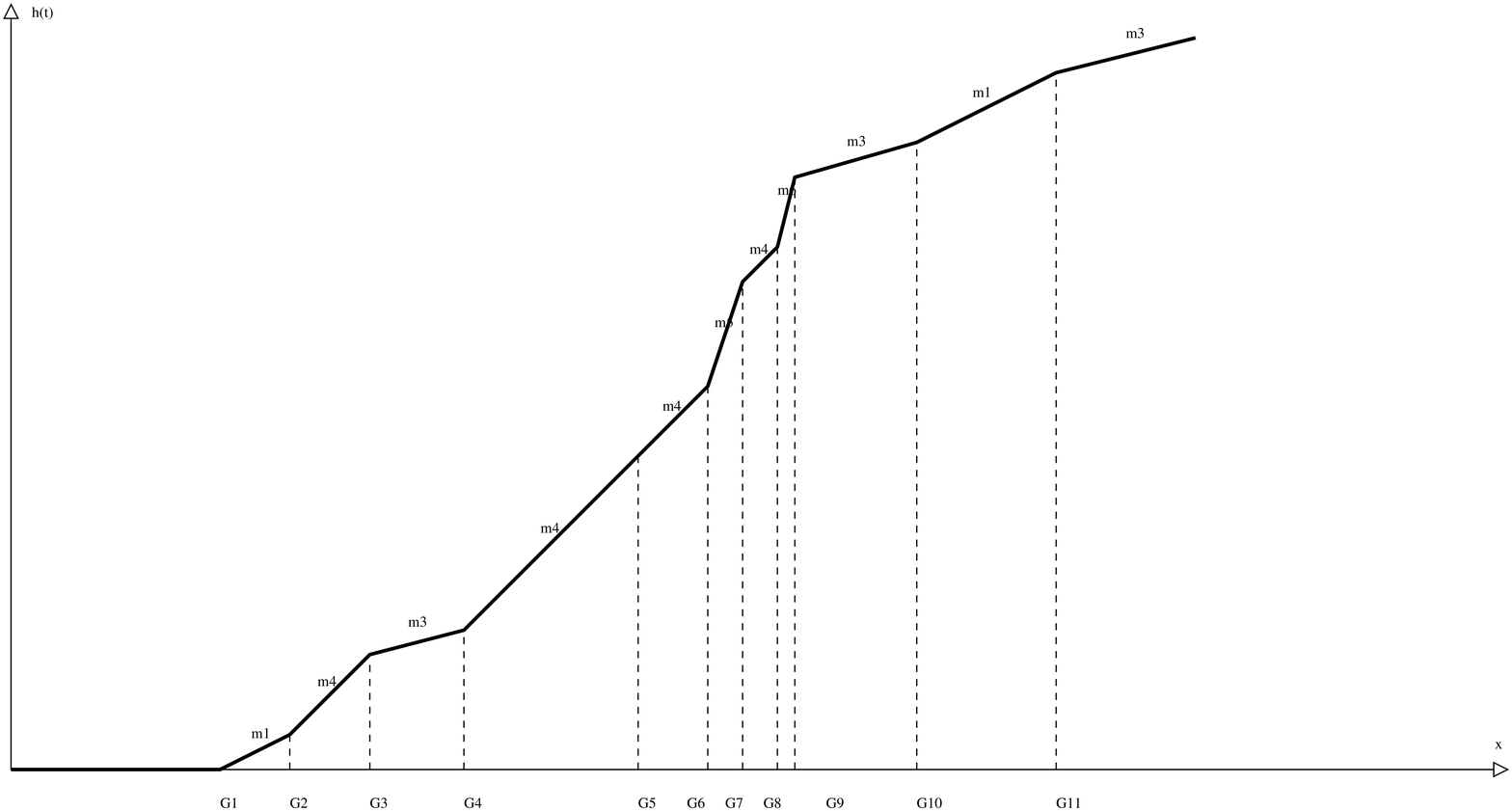}
\caption{Example 4 -- Function $h(t) = f \big( x^{\circ}(t) + t \big) + g \big( x^{\circ}(t) \big)$.}
\label{fig:es4_h}
\end{figure}

\vspace{36pt}

In accordance with lemma~\ref{lem:h(t)} and, in particular, with~\eqref{equ:h_3}, function $h(t)$ is
\begin{equation*}
h(t) = \left\{ \begin{array}{ll}
\vspace{3pt} f(8+t) & \forall \, t \, : \, x^{\circ}(t) = 8\\
\vspace{3pt} t-7 & \forall \, t < 10.\overline{3} : x^{\circ}(t) \neq \{ 3,8 \} \quad (\Rightarrow 8 < t < 10.\overline{3})\\
\vspace{3pt} t-9 & \forall \, t \in [ 10.\overline{3} , 22.5\overline{3} ) : x^{\circ}(t) \neq \{ 3,8 \} \quad (\Rightarrow 13 < t < 18)\\
f(3+t) + 5 & \forall \, t \, : \, x^{\circ}(t) = 3\\
\end{array} \right.
\end{equation*}

\clearpage

\subsection{Example 5}

Consider the following functions $f(x+t)$ and $g(x)$ (depicted in the same graphic).

\begin{figure}[h]
\centering
\psfrag{f(x)}[cl][Bl][.8][0]{$f(x+t)$}
\psfrag{g(x)}[cl][Bl][.8][0]{$g(x)$}
\psfrag{x}[bc][Bl][.8][0]{$x$}
\psfrag{X1}[tc][Bl][.7][0]{$4$}
\psfrag{X2}[tc][Bl][.7][0]{$8$}
\psfrag{G1}[tc][Bl][.7][0]{$17$}
\psfrag{G2}[tc][Bl][.7][0]{$18$}
\psfrag{G3}[tc][Bl][.7][0]{$19$}
\psfrag{G4}[tc][Bl][.7][0]{$23$}
\psfrag{G5}[tc][Bl][.7][0]{$24$}
\psfrag{G6}[tc][Bl][.7][0]{$25$}
\psfrag{n}[cl][Bl][.6][0]{$-0.5$}
\psfrag{m1}[cr][Bl][.6][0]{$1$}
\psfrag{m2}[Bc][Bl][.6][0]{$0$}
\psfrag{m3}[Bc][Bl][.6][0]{$0.25$}
\psfrag{m4}[cr][Bl][.6][0]{$2$}
\includegraphics[scale=.25]{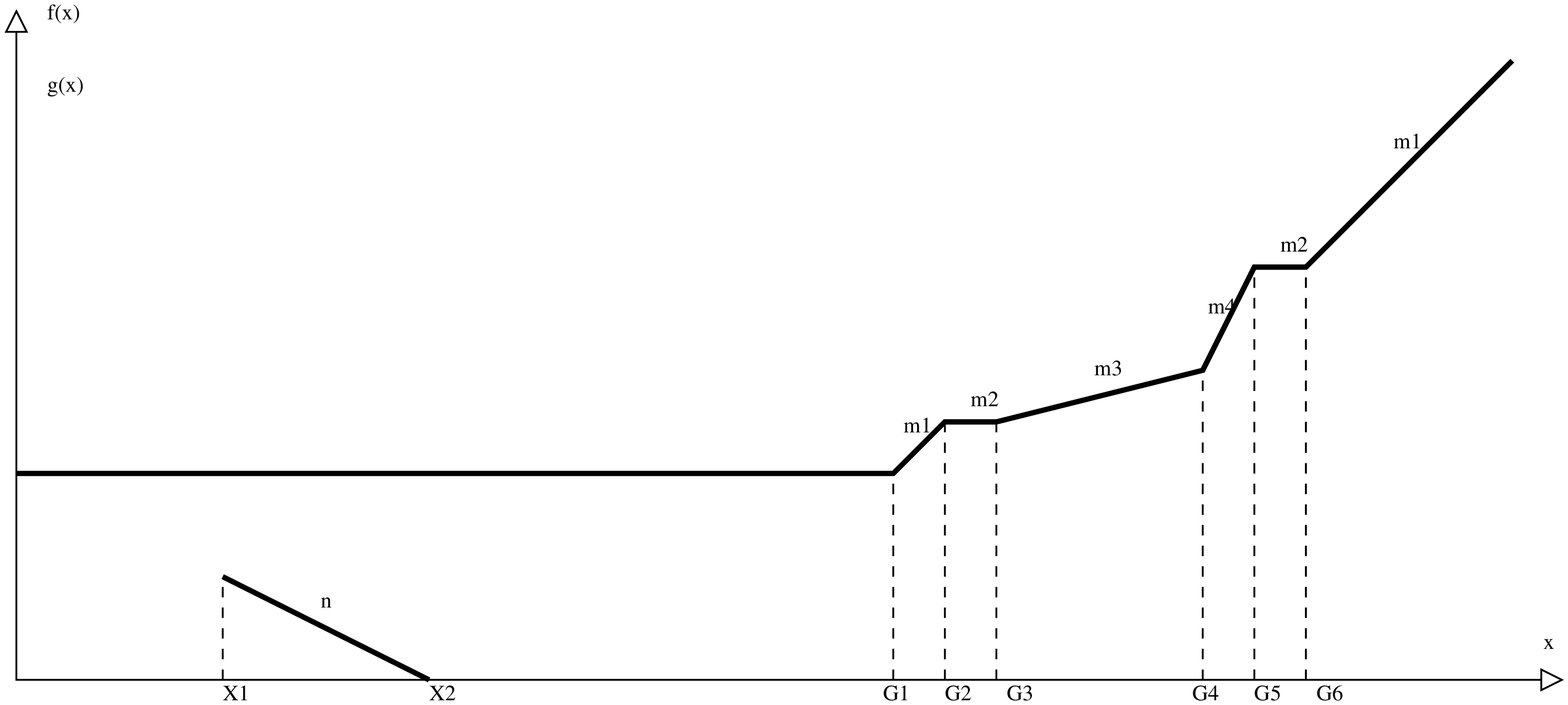}
\caption{Example 5 -- Functions $f(x+t)$ and $g(x)$.}
\label{fig:es5_fg}
\end{figure}

Algorithm~\ref{alg:tstar} provides $\omega_{1} = 11$ and $\omega_{2} = 19.\overline{6}$. Then, by applying lemma~\ref{lem:xopt} (taking into account $f(x+t)$, instead of $f(x)$, and $g(x)$) the following function $x^{\circ}(t)$ is obtained.
\begin{equation*}
x^{\circ}(t) = \left\{ \begin{array}{ll}
x_{\mathrm{s}}(t) & t < 11\\
x_{1}(t) & 11 \leq t < 19.\overline{6}\\
x_{\mathrm{e}}(t) & t \geq 19.\overline{6}
\end{array} \right.
\end{equation*}
with
\begin{equation*}
x_{\mathrm{s}}(t) = \left\{ \begin{array}{ll}
8 & t < 9\\
-t + 17 & 9 \leq t < 11
\end{array} \right. \qquad x_{1}(t) = \left\{ \begin{array}{ll}
8 & 11 \leq t < 15\\
-t + 23 & 15 \leq t < 19\\
4 & 19 \leq t < 19.\overline{6}
\end{array} \right.
\end{equation*}
\begin{equation*}
x_{\mathrm{e}}(t) = \left\{ \begin{array}{ll}
-t + 25 & 19.\overline{6} \leq t < 21\\
4 & t \geq 21
\end{array} \right.
\end{equation*}

Note that, $T = \{ 11 , 19.\overline{6} \}$, that is, $t^{\star}_{1} = 11$ and $t^{\star}_{2} = 19.\overline{6}$, and $Q = 2$. Since $T = \Omega$, the mapping function is basically $l(1) = 1$ and $l(2) = 2$. Moreover, $t^{\star}_{1} \leq \gamma_{a_{1}} - x_{1} = 13$ (then, $x_{\mathrm{s}}(t)$ has the structure of~\eqref{equ:xs_2}), $t^{\star}_{1} < \gamma_{a_{2}} - x_{2} = 15$ and $t^{\star}_{2} > \gamma_{a_{2}} - x_{1} = 19$ (then, $x_{1}(t)$ has the structure of~\eqref{equ:xj_1}), and $t^{\star}_{2} \geq \gamma_{a_{3}} - x_{2} = 17$ (then, $x_{\mathrm{e}}(t)$ has the structure of~\eqref{equ:xe_2}). The graphical representation of $x^{\circ}(t)$ is the following.

\begin{figure}[h]
\centering
\psfrag{xopt(t)}[cl][Bl][.8][0]{$x^{\circ}(t)$}
\psfrag{x}[bc][Bl][.8][0]{$t$}
\psfrag{XS}[bc][Bl][.7][0]{$x_{\mathrm{s}}(t)$}
\psfrag{X1}[bc][Bl][.7][0]{$x_{1}(t)$}
\psfrag{XE}[bc][Bl][.7][0]{$x_{\mathrm{e}}(t)$}
\psfrag{Y1}[cr][Bl][.7][0]{$4$}
\psfrag{Y2}[cr][Bl][.7][0]{$8$}
\psfrag{T1}[bc][Bl][.7][0]{$9$}
\psfrag{T2}[bc][Bl][.7][0]{$11$}
\psfrag{T3}[bc][Bl][.7][0]{$15$}
\psfrag{T4}[bc][Bl][.7][0]{$19$}
\psfrag{T5}[bc][Bl][.7][0]{$19.\overline{6}$}
\psfrag{T6}[bc][Bl][.7][0]{$21$}
\includegraphics[scale=.25]{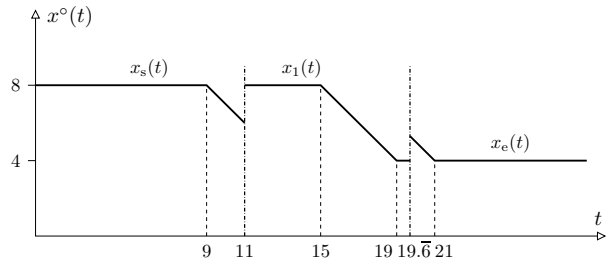}
\caption{Example 5 -- Functions $x^{\circ}(t)$.}
\label{fig:es5_xopt}
\end{figure}

By applying lemma~\ref{lem:h(t)} the following function $h(t) = f \big( x^{\circ}(t) + t \big) + g \big( x^{\circ}(t) \big)$ is obtained.

\begin{figure}[h]
\centering
\psfrag{h(t)}[cl][Bl][.8][0]{$h(t)$}
\psfrag{x}[bc][Bl][.8][0]{$t$}
\psfrag{m1}[cr][Bl][.6][0]{$1$}
\psfrag{m2}[Bc][Bl][.6][0]{$0.5$}
\psfrag{m3}[Bc][Bl][.6][0]{$0.25$}
\psfrag{m4}[cr][Bl][.6][0]{$2$}
\psfrag{G1}[bc][Bl][.7][0]{$9$}
\psfrag{G2}[bc][Bl][.7][0]{$11$}
\psfrag{G3}[bc][Bl][.7][0]{$15$}
\psfrag{G4}[bc][Bl][.7][0]{$19$}
\psfrag{G5}[bc][Bl][.7][0]{$19.\overline{6}$}
\psfrag{G6}[bc][Bl][.7][0]{$21$}
\includegraphics[scale=.25]{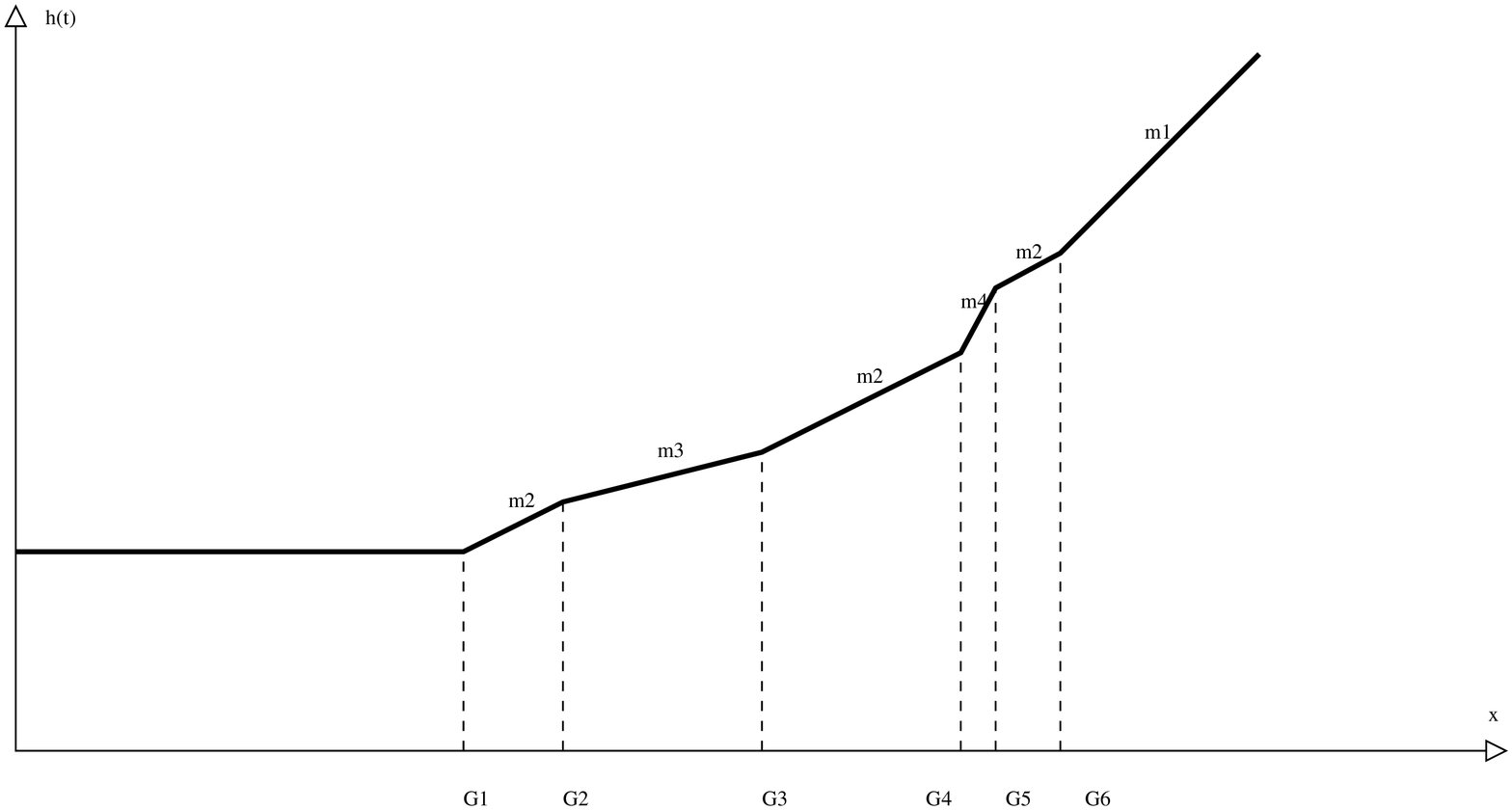}
\caption{Example 5 -- Function $h(t) = f \big( x^{\circ}(t) + t \big) + g \big( x^{\circ}(t) \big)$.}
\label{fig:es5_h}
\end{figure}

\vspace{60pt}

In accordance with lemma~\ref{lem:h(t)} and, in particular, with~\eqref{equ:h_3}, function $h(t)$ is
\begin{equation*}
h(t) = \left\{ \begin{array}{ll}
\vspace{3pt} f(8+t) & \forall \, t \, : \, x^{\circ}(t) = 8\\
\vspace{3pt} 0.5 \, t - 0.5 & \forall \, t < 11 : x^{\circ}(t) \neq \{ 4,8 \} \quad (\Rightarrow 9 < t < 11)\\
\vspace{3pt} 0.5 \, t - 1.5 & \forall \, t \in [ 11 , 19.\overline{6} ) : x^{\circ}(t) \neq \{ 4,8 \} \quad (\Rightarrow 15 < t < 19)\\
\vspace{3pt} 0.5 \, t - 0.5 & \forall \, t \geq 19.\overline{6} : x^{\circ}(t) \neq \{ 4,8 \} \quad (\Rightarrow 19.\overline{6} < t < 21)\\
f(4+t) + 2 & \forall \, t \, : \, x^{\circ}(t) = 4\\
\end{array} \right.
\end{equation*}

\clearpage

\subsection{Example 6}

Consider the following functions $f(x)$ and $g(x)$ (depicted in the same graphic).

\begin{figure}[h]
\centering
\psfrag{f(x)}[cl][Bl][.8][0]{$f(x)$}
\psfrag{g(x)}[cl][Bl][.8][0]{$g(x)$}
\psfrag{x}[bc][Bl][.8][0]{$x$}
\psfrag{X1}[tc][Bl][.7][0]{$4$}
\psfrag{X2}[tc][Bl][.7][0]{$8$}
\psfrag{G1}[tc][Bl][.7][0]{$20$}
\psfrag{G2}[tc][Bl][.7][0]{$24$}
\psfrag{G3}[tc][Bl][.7][0]{$25$}
\psfrag{G4}[tc][Bl][.7][0]{$26$}
\psfrag{G5}[tc][Bl][.7][0]{$36$}
\psfrag{n}[cl][Bl][.6][0]{$-1$}
\psfrag{m1}[cr][Bl][.6][0]{$3$}
\psfrag{m2}[Bc][Bl][.6][0]{$0$}
\psfrag{m3}[cr][Bl][.6][0]{$1.5$}
\psfrag{m4}[Bc][Bl][.6][0]{$0$}
\psfrag{m5}[cr][Bl][.6][0]{$1.5$}
\includegraphics[scale=.25]{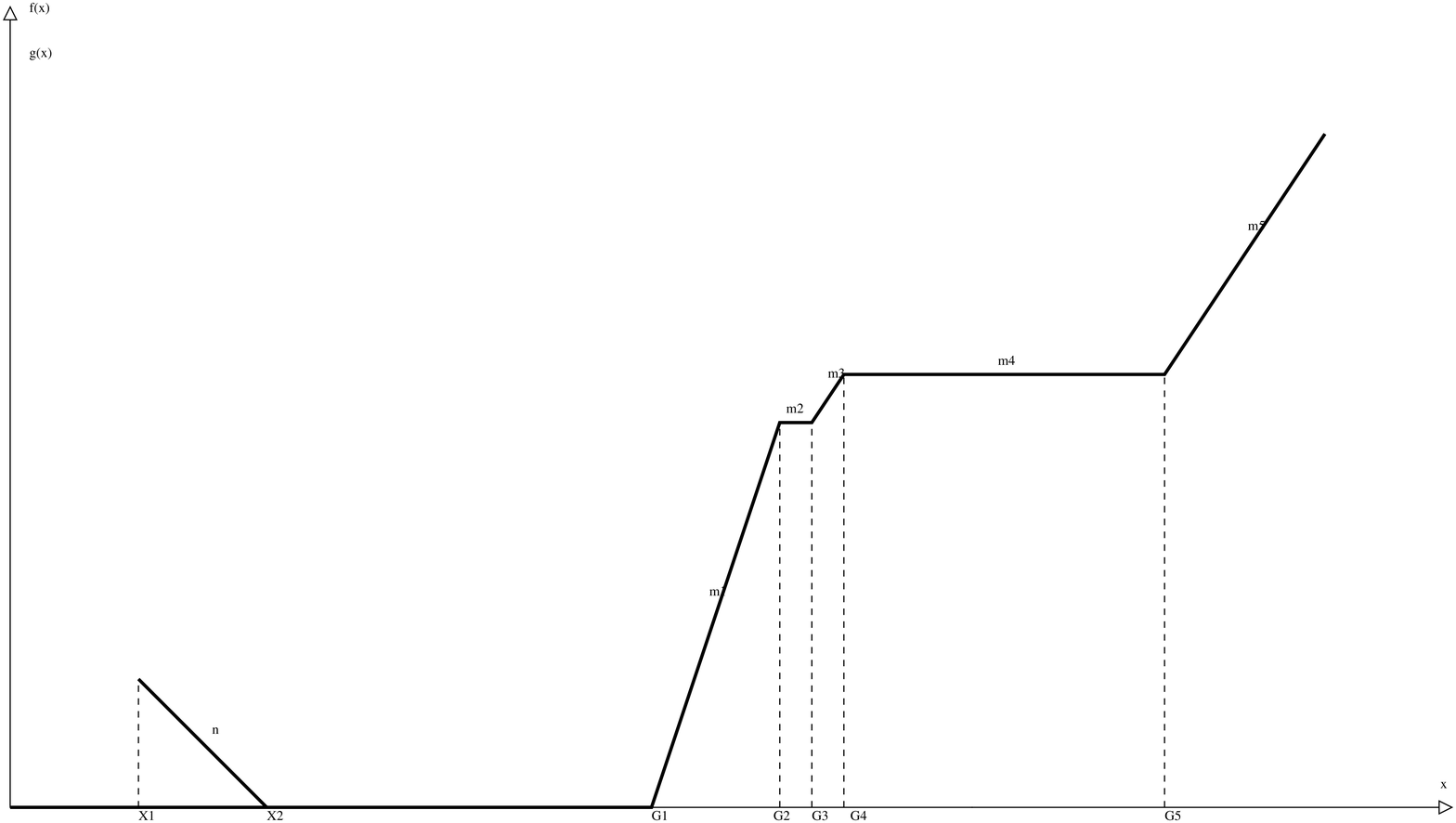}
\caption{Example 6 -- Functions $f(x)$ and $g(x)$.}
\label{fig:es6_fg}
\end{figure}

Algorithm~\ref{alg:tstar} provides $\omega_{1} = +\infty$ and $\omega_{2} = 19.1\overline{6}$. Then, by applying lemma~\ref{lem:xopt} (taking into account $f(x+t)$, instead of $f(x)$, and $g(x)$) the following function $x^{\circ}(t)$ is obtained.
\begin{equation*}
x^{\circ}(t) = \left\{ \begin{array}{ll}
x_{\mathrm{s}}(t) & t < 19.1\overline{6}\\
x_{\mathrm{e}}(t) & t \geq 19.1\overline{6}
\end{array} \right.
\end{equation*}
with
\begin{equation*}
x_{\mathrm{s}}(t) = \left\{ \begin{array}{ll}
8 & t < 12\\
-t + 20 & 12 \leq t < 16\\
4 & 16 \leq t < 19.1\overline{6}
\end{array} \right. \qquad x_{\mathrm{e}}(t) = \left\{ \begin{array}{ll}
8 & 19.1\overline{6} \leq t < 28\\
-t + 36 & 28 \leq t < 32\\
4 & t \geq 32
\end{array} \right.
\end{equation*}

Note that, $T = \{ 19.1\overline{6} \}$, that is, $t^{\star}_{1} = 19.1\overline{6}$, and $Q = 1$. The mapping function provides $l(1) = 2$. Moreover, $t^{\star}_{1} > \gamma_{a_{1}} - x_{1} = 16$ (then, $x_{\mathrm{s}}(t)$ has the structure of~\eqref{equ:xs_1}) and $t^{\star}_{1} < \gamma_{a_{3}} - x_{2} = 28$ (then, $x_{\mathrm{e}}(t)$ has the structure of~\eqref{equ:xe_1}). The graphical representation of $x^{\circ}(t)$ is the following.

\begin{figure}[h]
\centering
\psfrag{xopt(t)}[cl][Bl][.8][0]{$x^{\circ}(t)$}
\psfrag{x}[bc][Bl][.8][0]{$t$}
\psfrag{XS}[bc][Bl][.7][0]{$x_{\mathrm{s}}(t)$}
\psfrag{XE}[bc][Bl][.7][0]{$x_{\mathrm{e}}(t)$}
\psfrag{Y1}[cr][Bl][.7][0]{$4$}
\psfrag{Y2}[cr][Bl][.7][0]{$8$}
\psfrag{T1}[bc][Bl][.7][0]{$12$}
\psfrag{T2}[bc][Bl][.7][0]{$16$}
\psfrag{T3}[bc][Bl][.7][0]{$19.1\overline{6}$}
\psfrag{T4}[bc][Bl][.7][0]{$28$}
\psfrag{T5}[bc][Bl][.7][0]{$32$}
\includegraphics[scale=.25]{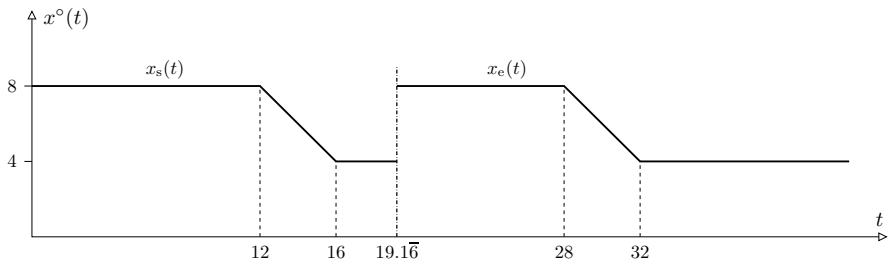}
\caption{Example 6 -- Functions $x^{\circ}(t)$.}
\label{fig:es6_xopt}
\end{figure}

By applying lemma~\ref{lem:h(t)} the following function $h(t) = f \big( x^{\circ}(t) + t \big) + g \big( x^{\circ}(t) \big)$ is obtained.

\begin{figure}[h]
\centering
\psfrag{h(t)}[cl][Bl][.8][0]{$h(t)$}
\psfrag{x}[bc][Bl][.8][0]{$t$}
\psfrag{m1}[cr][Bl][.6][0]{$1$}
\psfrag{m2}[cr][Bl][.6][0]{$3$}
\psfrag{m3}[Bc][Bl][.6][0]{$0$}
\psfrag{m4}[cr][Bl][.6][0]{$1$}
\psfrag{m5}[cr][Bl][.6][0]{$1.5$}
\psfrag{G1}[bc][Bl][.7][0]{$12$}
\psfrag{G2}[bc][Bl][.7][0]{$16$}
\psfrag{G3}[bc][Bl][.7][0]{$19.1\overline{6}$}
\psfrag{G4}[bc][Bl][.7][0]{$28$}
\psfrag{G5}[bc][Bl][.7][0]{$32$}
\includegraphics[scale=.25]{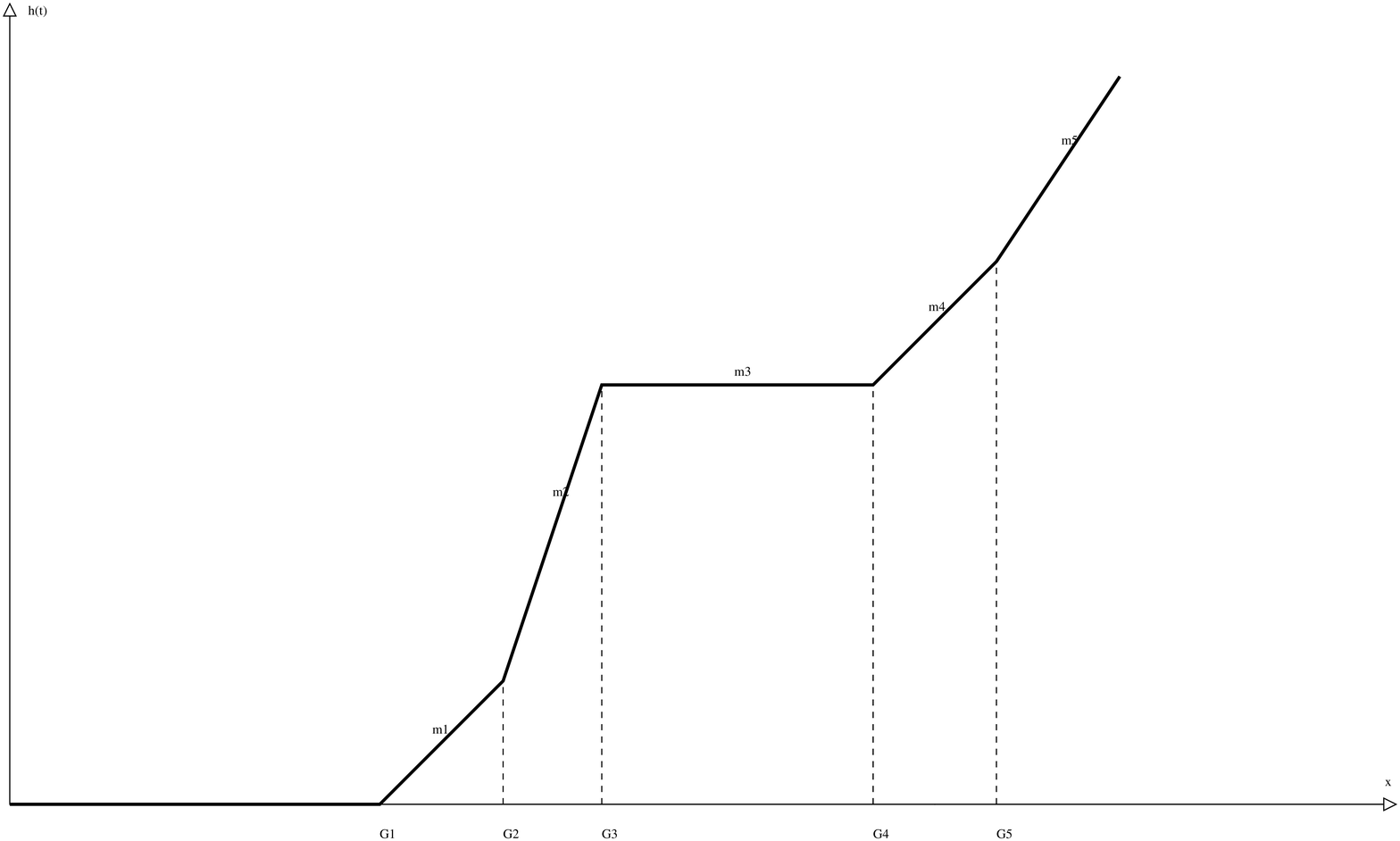}
\caption{Example 6 -- Function $h(t) = f \big( x^{\circ}(t) + t \big) + g \big( x^{\circ}(t) \big)$.}
\label{fig:es6_h}
\end{figure}

\vspace{36pt}

In accordance with lemma~\ref{lem:h(t)} and, in particular, with~\eqref{equ:h_2}, function $h(t)$ is
\begin{equation*}
h(t) = \left\{ \begin{array}{ll}
\vspace{3pt} f(8+t) & \forall \, t \, : \, x^{\circ}(t) = 8\\
\vspace{3pt} t-12 & \forall \, t < 19.1\overline{6} : x^{\circ}(t) \neq \{ 4,8 \} \quad (\Rightarrow 12 < t < 16)\\
\vspace{3pt} t - 14.5 & \forall \, t \geq 19.1\overline{6} : x^{\circ}(t) \neq \{ 4,8 \} \quad (\Rightarrow 28 < t < 32)\\
f(4+t) + 4 & \forall \, t \, : \, x^{\circ}(t) = 4\\
\end{array} \right.
\end{equation*}

\clearpage

\subsection{Example 7}

Consider the following functions $f(x)$ and $g(x)$ (depicted in the same graphic).

\begin{figure}[h]
\centering
\psfrag{f(x)}[cl][Bl][.8][0]{$f(x)$}
\psfrag{g(x)}[cl][Bl][.8][0]{$g(x)$}
\psfrag{x}[bc][Bl][.8][0]{$x$}
\psfrag{X1}[tc][Bl][.7][0]{$4$}
\psfrag{X2}[tc][Bl][.7][0]{$8$}
\psfrag{G1}[tc][Bl][.7][0]{$20$}
\psfrag{G2}[tc][Bl][.7][0]{$24$}
\psfrag{G3}[tc][Bl][.7][0]{$25$}
\psfrag{G4}[tc][Bl][.7][0]{$26$}
\psfrag{G5}[tc][Bl][.7][0]{$36$}
\psfrag{n}[cl][Bl][.6][0]{$-1$}
\psfrag{m1}[cr][Bl][.6][0]{$3$}
\psfrag{m2}[Bc][Bl][.6][0]{$0$}
\psfrag{m3}[cr][Bl][.6][0]{$1.5$}
\psfrag{m4}[cr][Bl][.6][0]{$0.9$}
\psfrag{m5}[cr][Bl][.6][0]{$1.5$}
\includegraphics[scale=.25]{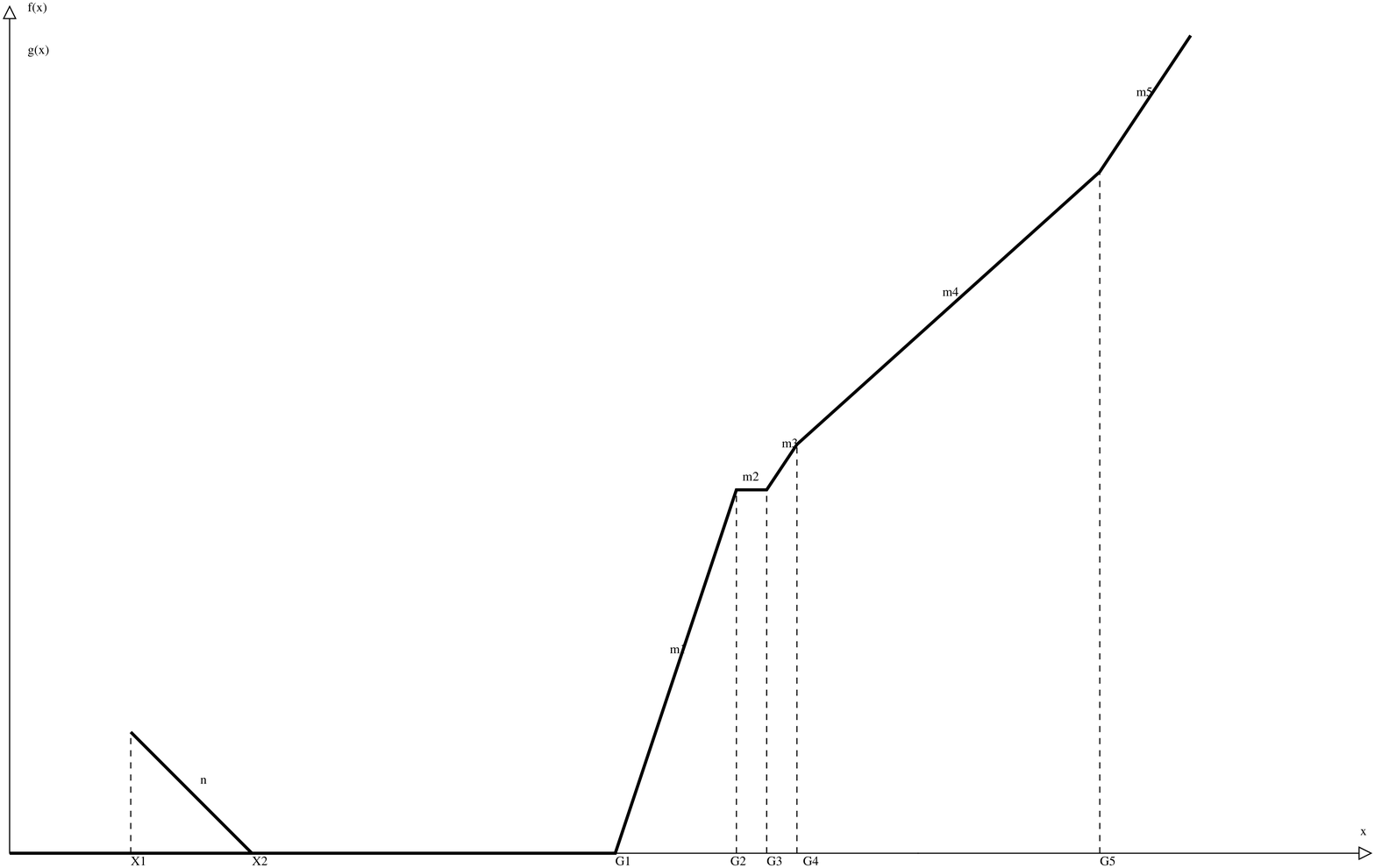}
\caption{Example 7 -- Functions $f(x)$ and $g(x)$.}
\label{fig:es7_fg}
\end{figure}

Algorithm~\ref{alg:tstar} provides $\omega_{1} = 19.5$ and $\omega_{2} = 21.\overline{3}$. Then, by applying lemma~\ref{lem:xopt} (taking into account $f(x+t)$, instead of $f(x)$, and $g(x)$) the following function $x^{\circ}(t)$ is obtained.
\begin{equation*}
x^{\circ}(t) = \left\{ \begin{array}{ll}
x_{\mathrm{s}}(t) & t < 19.5\\
x_{1}(t) & 19.5 \leq t < 21.\overline{3}\\
x_{\mathrm{e}}(t) & t \geq 21.\overline{3}
\end{array} \right.
\end{equation*}
with
\begin{equation*}
x_{\mathrm{s}}(t) = \left\{ \begin{array}{ll}
8 & t < 12\\
-t + 20 & 12 \leq t < 16\\
4 & 16 \leq t < 19.5
\end{array} \right. \qquad x_{1}(t) = \left\{ \begin{array}{ll}
-t + 25 & 19.5 \leq t < 21\\
4 & 21 \leq t < 21.\overline{3}
\end{array} \right.
\end{equation*}
\begin{equation*}
x_{\mathrm{e}}(t) = \left\{ \begin{array}{ll}
8 & 21.\overline{3} \leq t < 28\\
-t + 36 & 28 \leq t < 32\\
4 & t \geq 32
\end{array} \right.
\end{equation*}

Note that, $T = \{ 19.5 , 21.\overline{3} \}$, that is, $t^{\star}_{1} = 19.5$ and $t^{\star}_{2} = 21.\overline{3}$, and $Q = 2$. Since $T = \Omega$, the mapping function is basically $l(1) = 1$ and $l(2) = 2$. Moreover, $t^{\star}_{1} > \gamma_{a_{1}} - x_{1} = 16$ (then, $x_{\mathrm{s}}(t)$ has the structure of~\eqref{equ:xs_1}), $t^{\star}_{1} \geq \gamma_{a_{2}} - x_{2} = 17$ and $t^{\star}_{2} > \gamma_{a_{2}} - x_{1} = 21$ (then, $x_{1}(t)$ has the structure of~\eqref{equ:xj_2}), and $t^{\star}_{2} < \gamma_{a_{4}} - x_{2} = 28$ (then, $x_{\mathrm{e}}(t)$ has the structure of~\eqref{equ:xe_1}). The graphical representation of $x^{\circ}(t)$ is the following.

\begin{figure}[h]
\centering
\psfrag{xopt(t)}[cl][Bl][.8][0]{$x^{\circ}(t)$}
\psfrag{x}[bc][Bl][.8][0]{$t$}
\psfrag{XS}[bc][Bl][.7][0]{$x_{\mathrm{s}}(t)$}
\psfrag{X1}[cr][Bl][.7][0]{$x_{1}(t)$}
\psfrag{XE}[bc][Bl][.7][0]{$x_{\mathrm{e}}(t)$}
\psfrag{Y1}[cr][Bl][.7][0]{$4$}
\psfrag{Y2}[cr][Bl][.7][0]{$8$}
\psfrag{T1}[bc][Bl][.7][0]{$12$}
\psfrag{T2}[bc][Bl][.7][0]{$16$}
\psfrag{T3}[br][Bl][.7][0]{$19.5$}
\psfrag{T4}[br][Bl][.7][0]{$21$}
\psfrag{T5}[bl][Bl][.7][0]{$21.\overline{3}$}
\psfrag{T6}[bc][Bl][.7][0]{$28$}
\psfrag{T7}[bc][Bl][.7][0]{$32$}
\includegraphics[scale=.25]{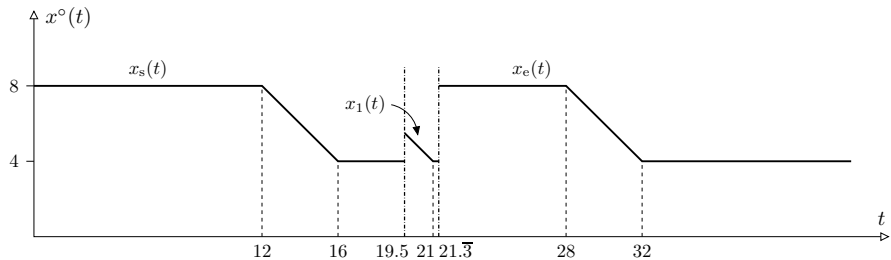}
\caption{Example 7 -- Functions $x^{\circ}(t)$.}
\label{fig:es7_xopt}
\end{figure}

By applying lemma~\ref{lem:h(t)} the following function $h(t) = f \big( x^{\circ}(t) + t \big) + g \big( x^{\circ}(t) \big)$ is obtained.

\begin{figure}[h]
\centering
\psfrag{h(t)}[cl][Bl][.8][0]{$h(t)$}
\psfrag{x}[bc][Bl][.8][0]{$t$}
\psfrag{m1}[cr][Bl][.6][0]{$1$}
\psfrag{m2}[cr][Bl][.6][0]{$3$}
\psfrag{m3}[cr][Bl][.6][0]{$1$}
\psfrag{m4}[cr][Bl][.6][0]{$1.5$}
\psfrag{m5}[cr][Bl][.6][0]{$0.9$}
\psfrag{m6}[cr][Bl][.6][0]{$1$}
\psfrag{m7}[cr][Bl][.6][0]{$1.5$}
\psfrag{G1}[bc][Bl][.7][0]{$12$}
\psfrag{G2}[bc][Bl][.7][0]{$16$}
\psfrag{G3}[br][Bl][.7][0]{$19.5$}
\psfrag{G4}[br][Bl][.7][0]{$21$}
\psfrag{G5}[bl][Bl][.7][0]{$21.\overline{3}$}
\psfrag{G6}[bc][Bl][.7][0]{$28$}
\psfrag{G7}[bc][Bl][.7][0]{$32$}
\includegraphics[scale=.25]{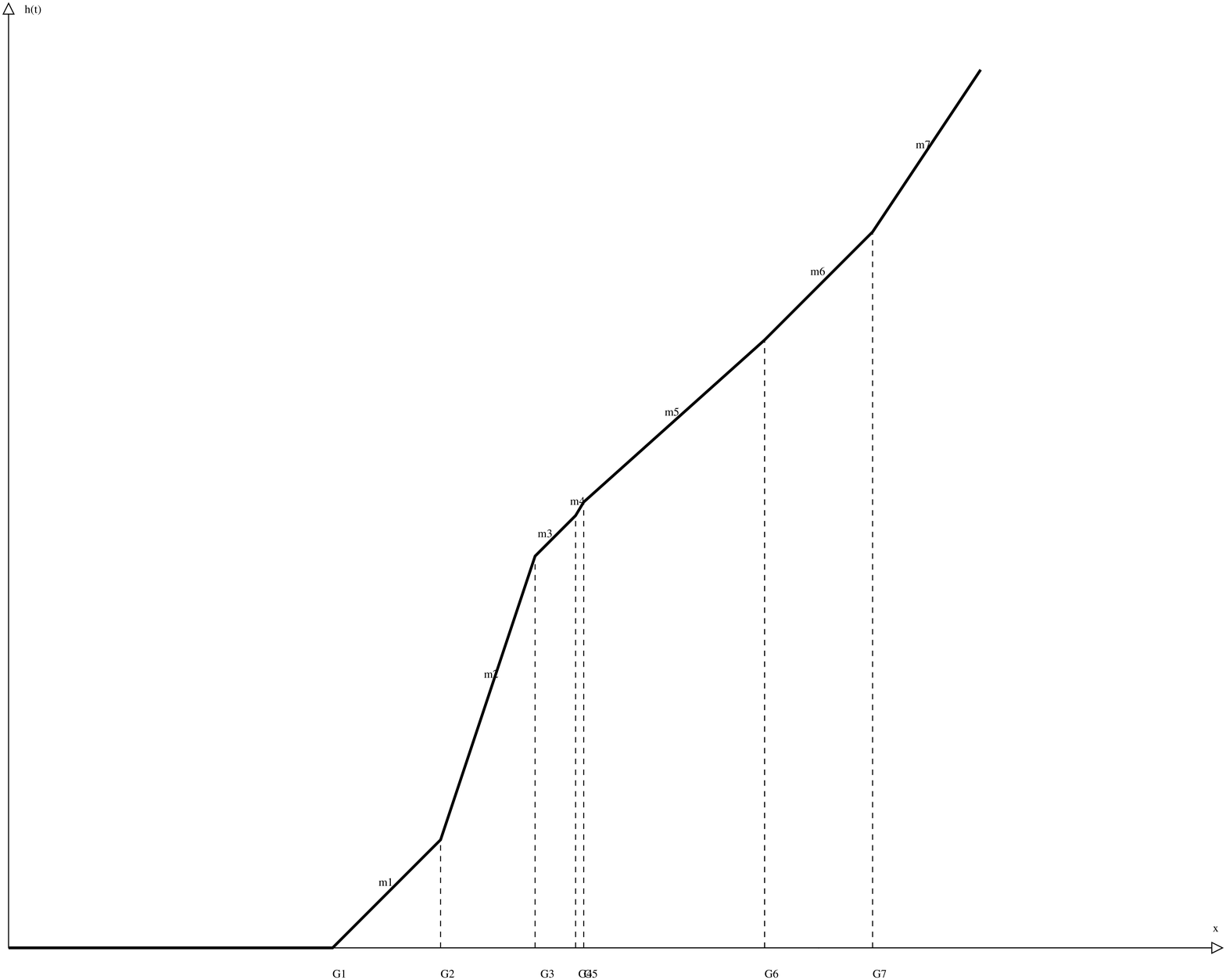}
\caption{Example 7 -- Function $h(t) = f \big( x^{\circ}(t) + t \big) + g \big( x^{\circ}(t) \big)$.}
\label{fig:es7_h}
\end{figure}

In accordance with lemma~\ref{lem:h(t)} and, in particular, with~\eqref{equ:h_3}, function $h(t)$ is
\begin{equation*}
h(t) = \left\{ \begin{array}{ll}
\vspace{3pt} f(8+t) & \forall \, t \, : \, x^{\circ}(t) = 8\\
\vspace{3pt} t-12 & \forall \, t < 19.5 : x^{\circ}(t) \neq \{ 4,8 \} \quad (\Rightarrow 12 < t < 16)\\
\vspace{3pt} t-5 & \forall \, t \in [ 19.5 , 21.\overline{3} ) : x^{\circ}(t) \neq \{ 4,8 \} \quad (\Rightarrow 19.5 < t < 21)\\
\vspace{3pt} t - 5.5 & \forall \, t \geq 21.\overline{3} : x^{\circ}(t) \neq \{ 4,8 \} \quad (\Rightarrow 28 < t < 32)\\
f(4+t) + 4 & \forall \, t \, : \, x^{\circ}(t) = 4\\
\end{array} \right.
\end{equation*}

\clearpage

\subsection{Example 8}

Consider the following functions $f(x)$ and $g(x)$ (depicted in the same graphic).

\begin{figure}[h]
\centering
\psfrag{f(x)}[cl][Bl][.8][0]{$f(x)$}
\psfrag{g(x)}[cl][Bl][.8][0]{$g(x)$}
\psfrag{x}[bc][Bl][.8][0]{$x$}
\psfrag{X1}[tc][Bl][.7][0]{$4$}
\psfrag{X2}[tc][Bl][.7][0]{$12$}
\psfrag{G1}[tc][Bl][.7][0]{$20$}
\psfrag{G2}[tc][Bl][.7][0]{$24$}
\psfrag{G3}[tc][Bl][.7][0]{$25$}
\psfrag{G4}[tc][Bl][.7][0]{$26$}
\psfrag{G5}[tc][Bl][.7][0]{$27$}
\psfrag{G6}[tc][Bl][.7][0]{$28$}
\psfrag{G7}[tc][Bl][.7][0]{$38$}
\psfrag{n}[cl][Bl][.6][0]{$-1$}
\psfrag{m1}[cr][Bl][.6][0]{$3$}
\psfrag{m2}[Bc][Bl][.6][0]{$0$}
\psfrag{m3}[cr][Bl][.6][0]{$3$}
\psfrag{m4}[Bc][Bl][.6][0]{$0$}
\psfrag{m5}[cr][Bl][.6][0]{$1.5$}
\psfrag{m6}[Bc][Bl][.6][0]{$0$}
\psfrag{m7}[cr][Bl][.6][0]{$1.5$}
\includegraphics[scale=.25]{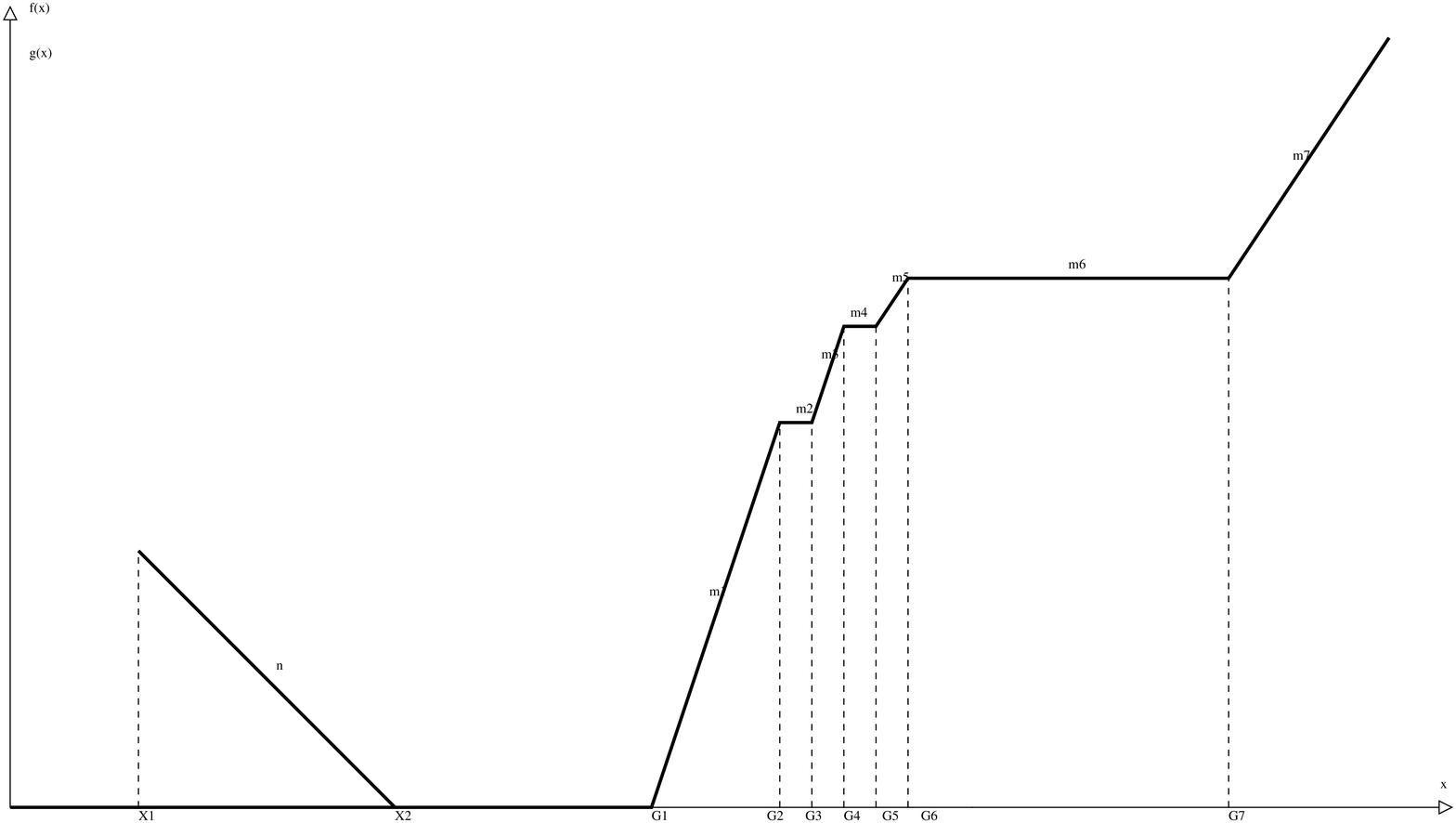}
\caption{Example 8 -- Functions $f(x)$ and $g(x)$.}
\label{fig:es8_fg}
\end{figure}

Algorithm~\ref{alg:tstar} provides $\omega_{1} = +\infty$, $\omega_{2} = +\infty$,  and $\omega_{3} = 18.8\overline{3}$. Then, by applying lemma~\ref{lem:xopt} (taking into account $f(x+t)$, instead of $f(x)$, and $g(x)$) the following function $x^{\circ}(t)$ is obtained.
\begin{equation*}
x^{\circ}(t) = \left\{ \begin{array}{ll}
x_{\mathrm{s}}(t) & t < 18.8\overline{3}\\
x_{\mathrm{e}}(t) & t \geq 18.8\overline{3}
\end{array} \right.
\end{equation*}
with
\begin{equation*}
x_{\mathrm{s}}(t) = \left\{ \begin{array}{ll}
12 & t < 8\\
-t + 20 & 8 \leq t < 16\\
4 & 16 \leq t < 18.8\overline{3}
\end{array} \right. \qquad x_{\mathrm{e}}(t) = \left\{ \begin{array}{ll}
12 & 18.8\overline{3} \leq t < 26\\
-t + 38 & 26 \leq t < 34\\
4 & t \geq 34
\end{array} \right.
\end{equation*}

Note that, $T = \{ 18.8\overline{3} \}$, that is, $t^{\star}_{1} = 18.8\overline{3}$, and $Q = 1$. The mapping function provides $l(1) = 3$. Moreover, $t^{\star}_{1} > \gamma_{a_{1}} - x_{1} = 16$ (then, $x_{\mathrm{s}}(t)$ has the structure of~\eqref{equ:xs_1}) and $t^{\star}_{1} < \gamma_{a_{4}} - x_{2} = 26$ (then, $x_{\mathrm{e}}(t)$ has the structure of~\eqref{equ:xe_1}). The graphical representation of $x^{\circ}(t)$ is the following.

\begin{figure}[h]
\centering
\psfrag{xopt(t)}[cl][Bl][.8][0]{$x^{\circ}(t)$}
\psfrag{x}[bc][Bl][.8][0]{$t$}
\psfrag{XS}[bc][Bl][.7][0]{$x_{\mathrm{s}}(t)$}
\psfrag{XE}[bc][Bl][.7][0]{$x_{\mathrm{e}}(t)$}
\psfrag{Y1}[cr][Bl][.7][0]{$4$}
\psfrag{Y2}[cr][Bl][.7][0]{$12$}
\psfrag{T1}[bc][Bl][.7][0]{$8$}
\psfrag{T2}[bc][Bl][.7][0]{$16$}
\psfrag{T3}[bc][Bl][.7][0]{$18.8\overline{3}$}
\psfrag{T4}[bc][Bl][.7][0]{$26$}
\psfrag{T5}[bc][Bl][.7][0]{$34$}
\includegraphics[scale=.25]{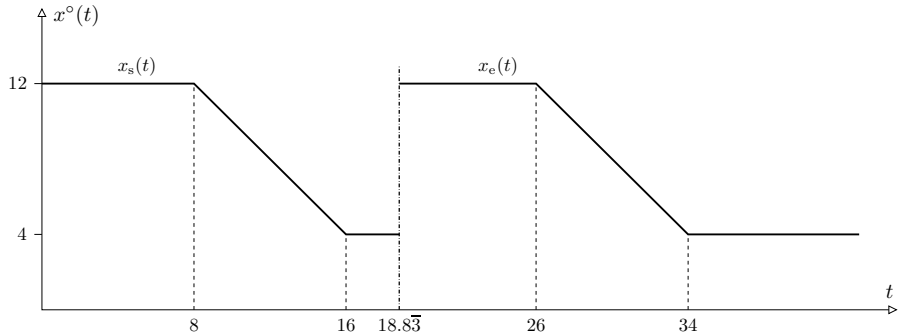}
\caption{Example 8 -- Functions $x^{\circ}(t)$.}
\label{fig:es8_xopt}
\end{figure}

By applying lemma~\ref{lem:h(t)} the following function $h(t) = f \big( x^{\circ}(t) + t \big) + g \big( x^{\circ}(t) \big)$ is obtained.

\begin{figure}[h]
\centering
\psfrag{h(t)}[cl][Bl][.8][0]{$h(t)$}
\psfrag{x}[bc][Bl][.8][0]{$t$}
\psfrag{m1}[cr][Bl][.6][0]{$1$}
\psfrag{m2}[cr][Bl][.6][0]{$3$}
\psfrag{m3}[Bc][Bl][.6][0]{$0$}
\psfrag{m4}[cr][Bl][.6][0]{$1$}
\psfrag{m5}[cr][Bl][.6][0]{$1.5$}
\psfrag{G1}[bc][Bl][.7][0]{$8$}
\psfrag{G2}[bc][Bl][.7][0]{$16$}
\psfrag{G3}[bc][Bl][.7][0]{$18.8\overline{3}$}
\psfrag{G4}[bc][Bl][.7][0]{$26$}
\psfrag{G5}[bc][Bl][.7][0]{$34$}
\includegraphics[scale=.25]{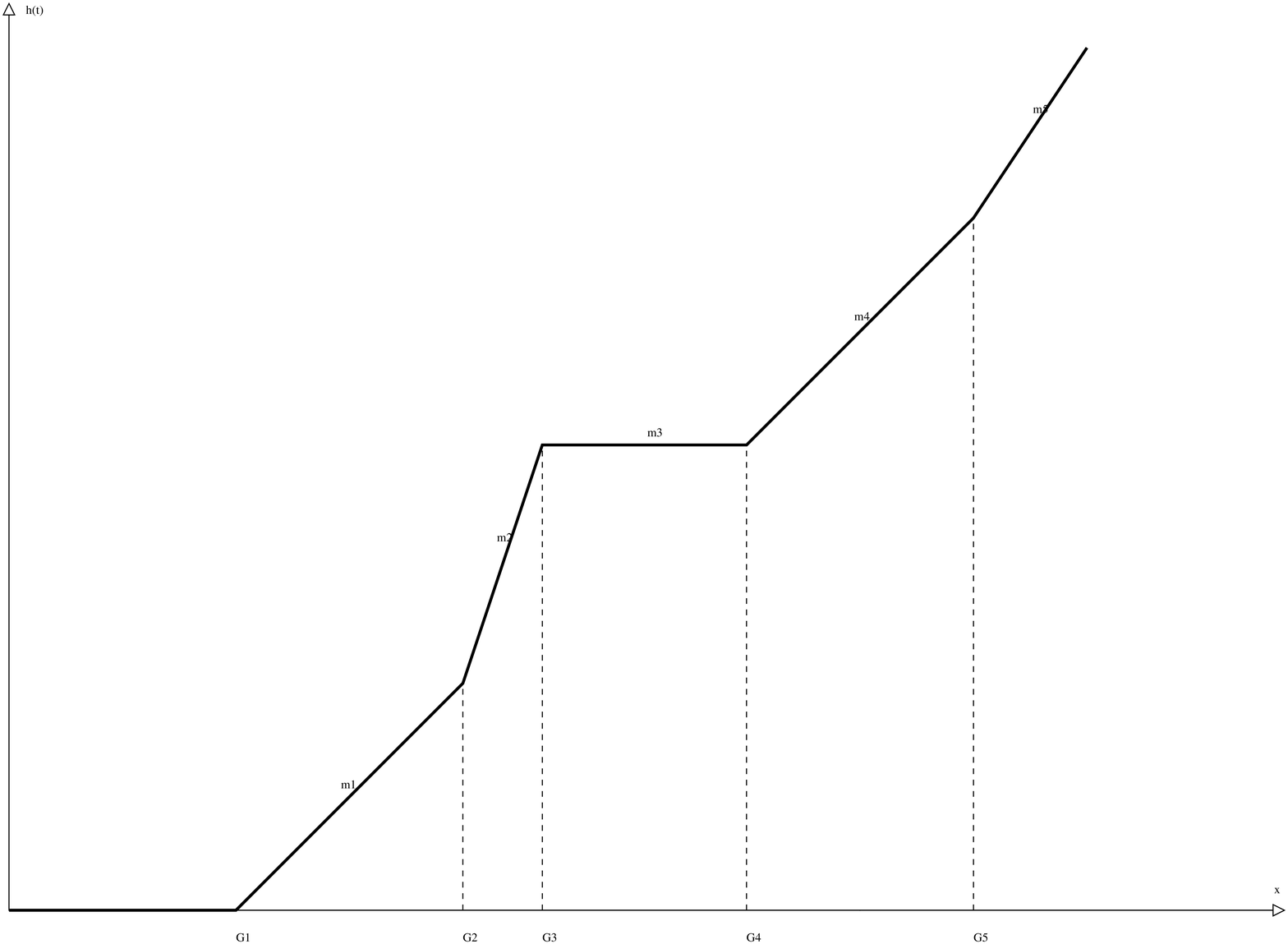}
\caption{Example 8 -- Function $h(t) = f \big( x^{\circ}(t) + t \big) + g \big( x^{\circ}(t) \big)$.}
\label{fig:es8_h}
\end{figure}

\vspace{10pt}

In accordance with lemma~\ref{lem:h(t)} and, in particular, with~\eqref{equ:h_2}, function $h(t)$ is
\begin{equation*}
h(t) = \left\{ \begin{array}{ll}
\vspace{3pt} f(12+t) & \forall \, t \, : \, x^{\circ}(t) = 12\\
\vspace{3pt} t-8 & \forall \, t < 18.8\overline{3} : x^{\circ}(t) \neq \{ 4,12 \} \quad (\Rightarrow 8 < t < 16)\\
\vspace{3pt} t - 9.5 & \forall \, t \geq 18.8\overline{3} : x^{\circ}(t) \neq \{ 4,12 \} \quad (\Rightarrow 26 < t < 34)\\
f(4+t) + 8 & \forall \, t \, : \, x^{\circ}(t) = 4\\
\end{array} \right.
\end{equation*}

\clearpage

\subsection{Example 9}

Consider the following functions $f(x)$ and $g(x)$ (depicted in the same graphic).

\begin{figure}[h]
\centering
\psfrag{f(x)}[cl][Bl][.8][0]{$f(x)$}
\psfrag{g(x)}[cl][Bl][.8][0]{$g(x)$}
\psfrag{x}[bc][Bl][.8][0]{$x$}
\psfrag{X1}[tc][Bl][.7][0]{$4$}
\psfrag{X2}[tc][Bl][.7][0]{$12$}
\psfrag{G1}[tc][Bl][.7][0]{$16$}
\psfrag{G2}[tc][Bl][.7][0]{$24$}
\psfrag{G3}[tc][Bl][.7][0]{$25$}
\psfrag{G4}[tc][Bl][.7][0]{$26$}
\psfrag{G5}[tc][Bl][.7][0]{$27$}
\psfrag{G6}[tc][Bl][.7][0]{$28$}
\psfrag{G7}[tc][Bl][.7][0]{$38$}
\psfrag{n}[cl][Bl][.6][0]{$-1$}
\psfrag{m1}[cr][Bl][.6][0]{$1.5$}
\psfrag{m2}[Bc][Bl][.6][0]{$0$}
\psfrag{m3}[cr][Bl][.6][0]{$3$}
\psfrag{m4}[Bc][Bl][.6][0]{$0$}
\psfrag{m5}[cr][Bl][.6][0]{$1.5$}
\psfrag{m6}[cr][Bl][.6][0]{$0.375$}
\psfrag{m7}[cr][Bl][.6][0]{$1.5$}
\includegraphics[scale=.25]{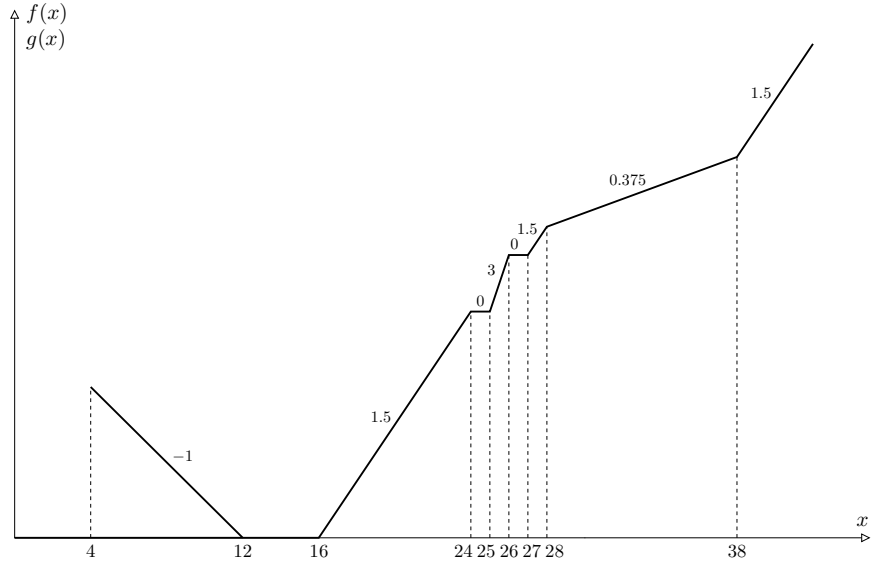}
\caption{Example 9 -- Functions $f(x)$ and $g(x)$.}
\label{fig:es9_fg}
\end{figure}

Algorithm~\ref{alg:tstar} provides $\omega_{1} = 18$, $\omega_{2} = +\infty$,  and $\omega_{3} = 18.4$ (the application of algorithm~\ref{alg:tstar} is reported in the following, for each value of $j$). Then, by applying lemma~\ref{lem:xopt} (taking into account $f(x+t)$, instead of $f(x)$, and $g(x)$) the following function $x^{\circ}(t)$ is obtained.
\begin{equation*}
x^{\circ}(t) = \left\{ \begin{array}{ll}
x_{\mathrm{s}}(t) & t < 18\\
x_{1}(t) & 18 \leq t < 18.4\\
x_{\mathrm{e}}(t) & t \geq 18.4
\end{array} \right.
\end{equation*}
with
\begin{equation*}
x_{\mathrm{s}}(t) = \left\{ \begin{array}{ll}
12 & t < 4\\
-t + 16 & 4 \leq t < 12\\
4 & 12 \leq t < 18
\end{array} \right. \qquad x_{1}(t) = -t + 25 \qquad x_{\mathrm{e}}(t) = \left\{ \begin{array}{ll}
12 & 18.4 \leq t < 26\\
-t + 38 & 26 \leq t < 34\\
4 & t \geq 34
\end{array} \right.
\end{equation*}

Note that, $T = \{ 18 , 18.4 \}$, that is, $t^{\star}_{1} = 18$ and $t^{\star}_{2} = 18.4$, and $Q = 2$. The mapping function provides $l(1) = 1$ and $l(2) = 3$. Moreover, $t^{\star}_{1} > \gamma_{a_{1}} - x_{1} = 12$ (then, $x_{\mathrm{s}}(t)$ has the structure of~\eqref{equ:xs_1}), $t^{\star}_{1} \geq \gamma_{a_{2}} - x_{2} = 13$ and $t^{\star}_{2} \leq \gamma_{a_{2}} - x_{1} = 21$ (then, $x_{1}(t)$ has the structure of~\eqref{equ:xj_4}), and $t^{\star}_{2} < \gamma_{a_{4}} - x_{2} = 26$ (then, $x_{\mathrm{e}}(t)$ has the structure of~\eqref{equ:xe_1}). The graphical representation of $x^{\circ}(t)$ is the following.

\begin{figure}[h]
\centering
\psfrag{xopt(t)}[cl][Bl][.8][0]{$x^{\circ}(t)$}
\psfrag{x}[bc][Bl][.8][0]{$t$}
\psfrag{XS}[bc][Bl][.7][0]{$x_{\mathrm{s}}(t)$}
\psfrag{X1}[cr][Bl][.7][0]{$x_{1}(t)$}
\psfrag{XE}[bc][Bl][.7][0]{$x_{\mathrm{e}}(t)$}
\psfrag{Y1}[cr][Bl][.7][0]{$4$}
\psfrag{Y2}[cr][Bl][.7][0]{$12$}
\psfrag{T1}[tc][Bl][.7][0]{$4$}
\psfrag{T2}[tc][Bl][.7][0]{$12$}
\psfrag{T3}[tr][Bl][.7][0]{$18$}
\psfrag{T4}[tl][Bl][.7][0]{$18.4$}
\psfrag{T5}[tc][Bl][.7][0]{$26$}
\psfrag{T6}[tc][Bl][.7][0]{$34$}
\includegraphics[scale=.25]{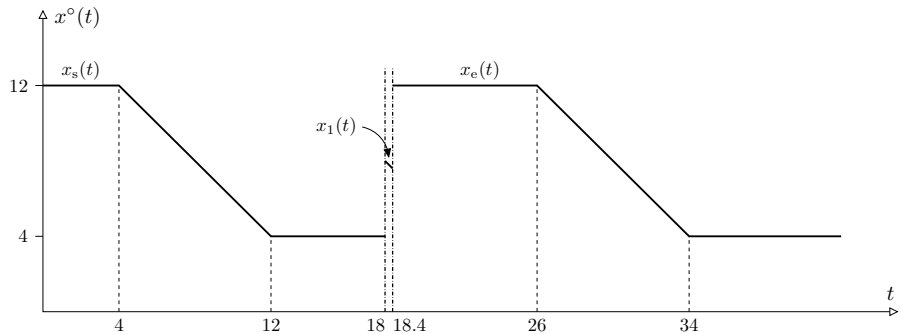}
\caption{Example 9 -- Functions $x^{\circ}(t)$.}
\label{fig:es9_xopt}
\end{figure}

By applying lemma~\ref{lem:h(t)} the following function $h(t) = f \big( x^{\circ}(t) + t \big) + g \big( x^{\circ}(t) \big)$ is obtained.

\begin{figure}[h]
\centering
\psfrag{h(t)}[cl][Bl][.8][0]{$h(t)$}
\psfrag{x}[bc][Bl][.8][0]{$t$}
\psfrag{m1}[cr][Bl][.6][0]{$1$}
\psfrag{m2}[cr][Bl][.6][0]{$1.5$}
\psfrag{m3}[cr][Bl][.6][0]{$1$}
\psfrag{m4}[cr][Bl][.6][0]{$0.375$}
\psfrag{m5}[cr][Bl][.6][0]{$1$}
\psfrag{m6}[cr][Bl][.6][0]{$1.5$}
\psfrag{G1}[tc][Bl][.7][0]{$4$}
\psfrag{G2}[tc][Bl][.7][0]{$12$}
\psfrag{G3}[tr][Bl][.7][0]{$18$}
\psfrag{G4}[tl][Bl][.7][0]{$18.4$}
\psfrag{G5}[tc][Bl][.7][0]{$26$}
\psfrag{G6}[tc][Bl][.7][0]{$34$}
\includegraphics[scale=.25]{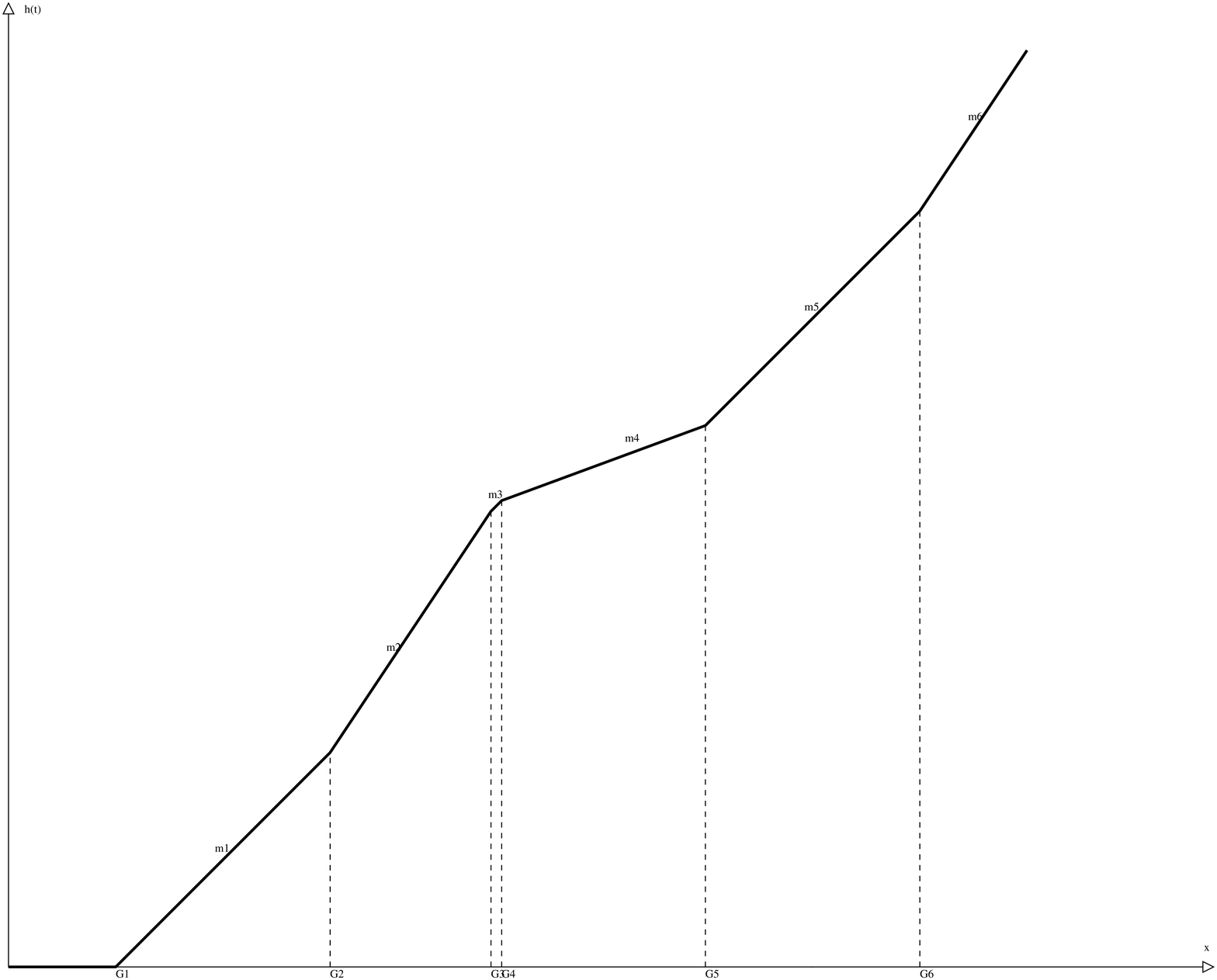}
\caption{Example 9 -- Function $h(t) = f \big( x^{\circ}(t) + t \big) + g \big( x^{\circ}(t) \big)$.}
\label{fig:es9_h}
\end{figure}

In accordance with lemma~\ref{lem:h(t)} and, in particular, with~\eqref{equ:h_3}, function $h(t)$ is
\begin{equation*}
h(t) = \left\{ \begin{array}{ll}
\vspace{3pt} f(12+t) & \forall \, t \, : \, x^{\circ}(t) = 12\\
\vspace{3pt} t-4 & \forall \, t < 18 : x^{\circ}(t) \neq \{ 4,12 \} \quad (\Rightarrow 4 < t < 12)\\
\vspace{3pt} t-1 & \forall \, t \in [ 18 , 18.4 ) : x^{\circ}(t) \neq \{ 4,12 \} \quad (\Rightarrow 18 < t < 18.4)\\
\vspace{3pt} t - 5.75 & \forall \, t \geq 18.4 : x^{\circ}(t) \neq \{ 4,12 \} \quad (\Rightarrow 26 < t < 34)\\
f(4+t) + 8 & \forall \, t \, : \, x^{\circ}(t) = 4\\
\end{array} \right.
\end{equation*}

%
%
\newpage

\section{Application to the single machine scheduling} \label{sec:app1}

Consider a single machine scheduling problem in which 1 job of class $P_{1}$ and 2 jobs of class $P_{2}$ must be executed. The due dates, the marginal tardiness costs of jobs, the processing time bounds and the marginal deviation costs of jobs are the:

\begin{center}
\begin{tabular}{|ll|}
\hline
$\alpha_{1,1} = 0.5$ & $dd_{1,1} = 10$\\
\hline
\multicolumn{2}{|c|}{$\beta_{1} = 1$}\\
$pt^{\mathrm{low}}_{1} = 1$ & $pt^{\mathrm{nom}}_{1} = 4$\\
\hline
\end{tabular}
\hspace{.5cm}
\begin{tabular}{|ll|}
\hline
$\alpha_{2,1} = 0.25$ & $dd_{2,1} = 12$\\
$\alpha_{2,2} = 0.75$ & $dd_{2,2} = 20$\\
\hline
\multicolumn{2}{|c|}{$\beta_{2} = 1$}\\
$pt^{\mathrm{low}}_{2} = 1$ & $pt^{\mathrm{nom}}_{2} = 2$\\
\hline
\end{tabular}
\end{center}
No setup is required between the execution of jobs of different classes. The evolution of the system state can be represented by the following diagram.
\begin{figure}[h]
\centering
\psfrag{S1}[bc][Bl][.8][0]{$S2 = \begin{bmatrix} 1 \\ 0 \\ t_{1}\end{bmatrix}$}
\psfrag{S2}[bc][Bl][.8][0]{$S4 = \begin{bmatrix} 1 \\ 1 \\ t_{2}\end{bmatrix}$}
\psfrag{S3}[bc][Bl][.8][0]{$S6 = \begin{bmatrix} 1 \\ 2 \\ t_{3}\end{bmatrix}$}
\psfrag{S4}[tc][Bl][.8][0]{$S1 = \begin{bmatrix} 0 \\ 0 \\ t_{0}\end{bmatrix}$}
\psfrag{S5}[tc][Bl][.8][0]{$S3 = \begin{bmatrix} 0 \\ 1 \\ t_{1}\end{bmatrix}$}
\psfrag{S6}[tc][Bl][.8][0]{$S5 = \begin{bmatrix} 0 \\ 2 \\ t_{2}\end{bmatrix}$}
\includegraphics[scale=.7]{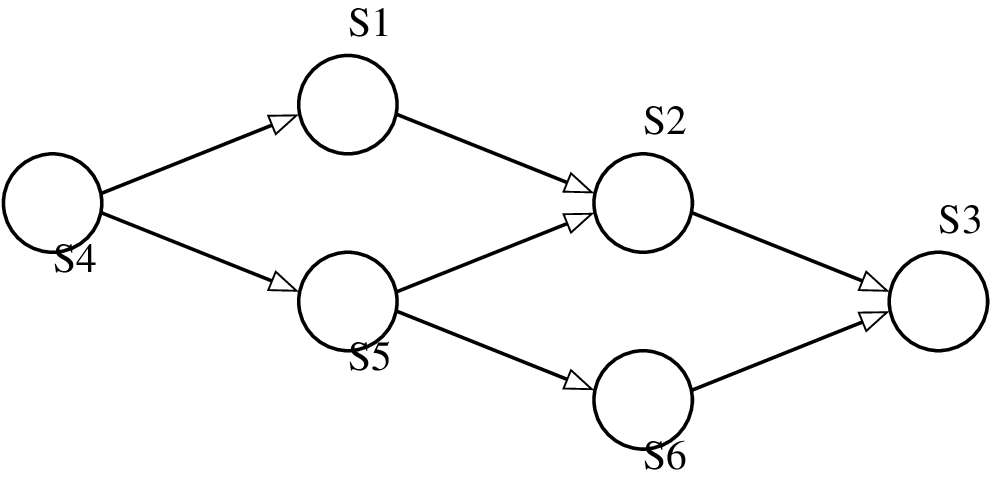}
\caption{State diagram in the case of two classes of jobs, where $N_{1} = 1$ and $N_{2} = 2$.}
\label{fig:esS_statediagram}
\end{figure}

The application of dynamic programming, in conjunction with the new lemmas, provides the following optimal control strategies.

\vspace{24pt}
{\bf Stage 2 -- State $\boldsymbol{[ 1 \  1 \  t_{2}]^{T}}$}

\vspace{6pt}
\hspace{1cm}\includegraphics[scale=.4]{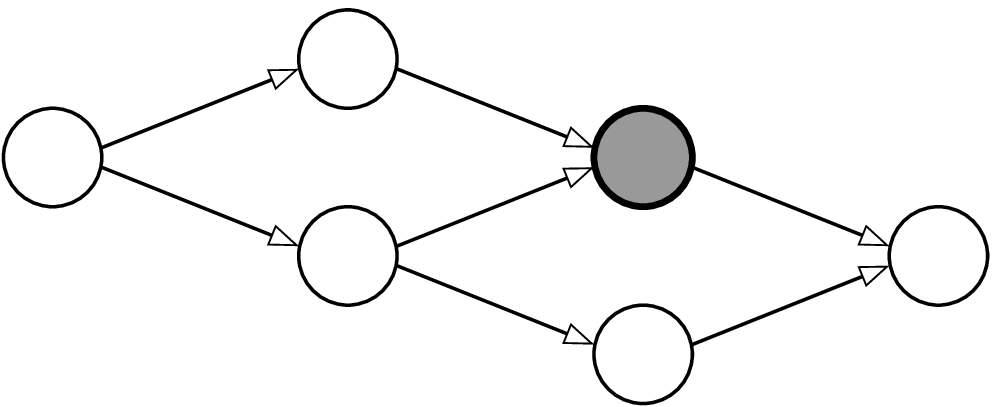}

\vspace{12pt}
In state $[1 \; 1 \; t_{2}]^{T}$ the unique job of class $P_{1}$ has been completed; then the decision about the class of the next job to be executed is mandatory. The cost function to be minimized in this state, with respect to the (continuos) decision variable $\tau$ only (which corresponds to the processing time $pt_{2,2}$), is
\begin{equation*}
\alpha_{2,2} \, \max \{ t_{2} + \tau - dd_{2,2} \, , \, 0 \} + \beta_{2} \, (pt^{\mathrm{nom}}_{2} - \tau) + J^{\circ}_{1,2}(t_{3})
\end{equation*}
that can be written as $f (pt_{2,2} + t_{2}) + g (pt_{2,2})$ being
\begin{equation*}
f (pt_{2,2} + t_{2}) = 0.75 \cdot \max \{ pt_{2,2} + t_{2} - 20 \, , \, 0 \}
\end{equation*}
\begin{equation*}
g (pt_{2,2}) = \left\{ \begin{array}{ll}
2 - pt_{2,2} & pt_{2,2} \in [ 1 , 2 )\\
0 & pt_{2,2} \notin [ 1 , 2 )
\end{array} \right.
\end{equation*}
the two functions illustrated in figure~\ref{fig:esS_salpha_1_1_2}.

\newpage

\begin{figure}[h]
\centering
\psfrag{f(x)}[cl][Bl][.8][0]{$f (pt_{2,2} + t_{2})$}
\psfrag{g(x)}[cl][Bl][.8][0]{$g (pt_{2,2})$}
\psfrag{x}[bc][Bl][.8][0]{$pt_{2,2}$}
\psfrag{X1}[tc][Bl][.7][0]{$1$}
\psfrag{X2}[tc][Bl][.7][0]{$2$}
\psfrag{G1}[tc][Bl][.7][0]{$20-t_{2}$}
\psfrag{n}[cl][Bl][.6][0]{$-1$}
\psfrag{m1}[br][Bl][.6][0]{$0.75$}
\includegraphics[scale=.25]{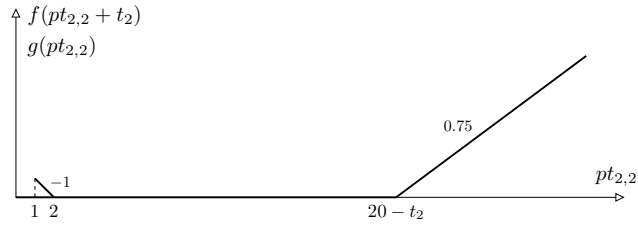}
\caption{Functions $f (pt_{2,2} + t_{2})$ and $g (pt_{2,2})$ in state $[ 1 \  1 \  t_{2}]^{T}$.}
\label{fig:esS_salpha_1_1_2}
\end{figure}

It is possible to apply lemma~\ref{lem:xopt} (note that $f (pt_{2,2} + t_{2})$ follows definition~\ref{def:f(x+t)} and $g (pt_{2,2})$ follows definition~\ref{def:g(x)}), which provides the optimal processing time
\begin{equation*}
pt^{\circ}_{2,2}(t_{2}) = \arg \min_{\substack{pt_{2,2}\\1 \leq pt_{2,2} \leq 2}} \big\{ f (pt_{2,2} + t_{2}) + g (pt_{2,2}) \big\} = x_{\mathrm{e}}(t_{2})
\end{equation*}
illustrated in figure~\ref{fig:esS_tau_1_1}, being $x_{\mathrm{e}}(t_{2})$ the function
\begin{equation*}
x_{\mathrm{e}}(t_{2}) = 2
\end{equation*}
$pt^{\circ}_{2,2}(t_{2})$ and $x_{\mathrm{e}}(t_{2})$ are in accordance with~\eqref{equ:xopt_1} and~\eqref{equ:xe_3}, respectively. Note that, in this case, $A = \emptyset$, $B = \emptyset$, $\lvert A \rvert = \lvert B \rvert = 0$, and then there is no need of executing algorithm~\ref{alg:tstar}. Taking into account the mandatory decision about the class of the next job to be executed; the optimal control strategies for this state are
\begin{equation*}
\delta_{1}^{\circ} (1,1, t_{2}) = 0 \quad \forall \, t_{2} \qquad \delta_{2}^{\circ} (1,1, t_{2}) = 1 \quad \forall \, t_{2}
\end{equation*}
\begin{equation*}
\tau^{\circ} (1,1, t_{2}) = pt^{\circ}_{2,2}(t_{2}) = 2 \quad \forall \, t_{2}
\end{equation*}

\begin{figure}[h]
\centering
\psfrag{f(x)}[cl][Bl][.8][0]{$pt^{\circ}_{2,2}(t_{2}) \equiv \tau^{\circ} (1,1, t_{2})$}
\psfrag{x}[bc][Bl][.8][0]{$t_{2}$}
\psfrag{Y1}[cr][Bl][.7][0]{$1$}
\psfrag{Y2}[cr][Bl][.7][0]{$2$}
\includegraphics[scale=.25]{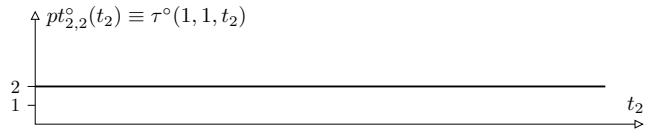}
\caption{Optimal control strategy $\tau^{\circ} (1,1, t_{2})$ (service time) in state $[ 1 \  1 \  t_{2}]^{T}$.}
\label{fig:esS_tau_1_1}
\end{figure}

The optimal cost-to-go
\begin{equation*}
J^{\circ}_{1,1} (t_{2}) = f \big( pt^{\circ}_{2,2}(t_{2}) + t_{2} \big) + g \big( pt^{\circ}_{2,2}(t_{2}) \big)
\end{equation*}
illustrated in figure~\ref{fig:esS_J_1_1}, is provided by lemma~\ref{lem:h(t)}.

\begin{figure}[h]
\centering
\psfrag{f(x)}[cl][Bl][.8][0]{$J^{\circ}_{1,1} (t_{2})$}
\psfrag{x}[bc][Bl][.8][0]{$t_{2}$}
\psfrag{G1}[tc][Bl][.7][0]{$18$}
\psfrag{m1}[br][Bl][.6][0]{$0.75$}
\includegraphics[scale=.25]{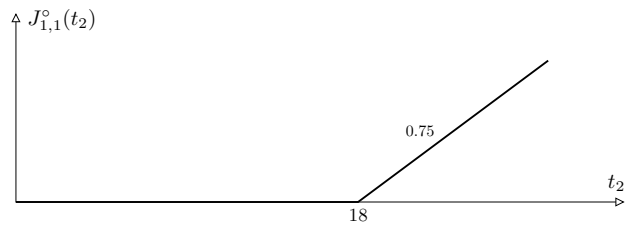}
\caption{Optimal cost-to-go $J^{\circ}_{1,1} (t_{2})$ in state $[ 1 \  1 \  t_{2}]^{T}$.}
\label{fig:esS_J_1_1}
\end{figure}

{\bf Stage 2 -- State $\boldsymbol{[ 0 \  2 \  t_{2}]}^{T}$}

\hspace{1cm}\includegraphics[scale=.4]{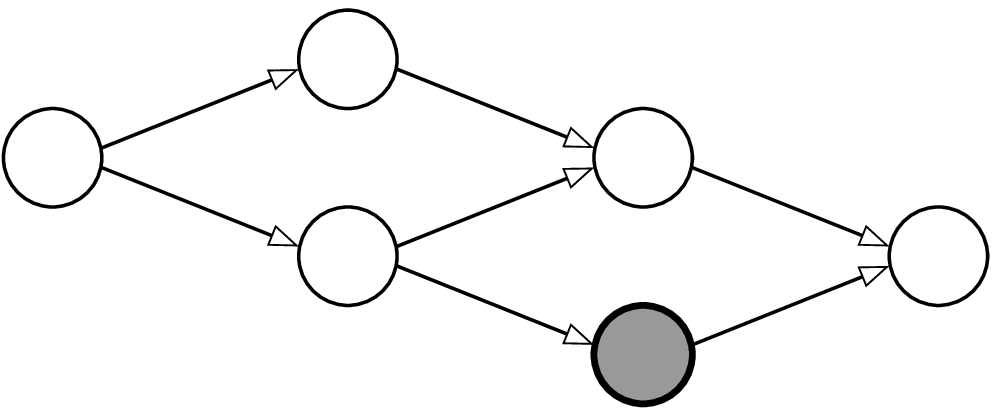}

In state $[0 \; 2 \; t_{2}]^{T}$ all jobs of class $P_{2}$ have been completed; then the decision about the class of the next job to be executed is mandatory. The cost function to be minimized in this state, with respect to the (continuos) decision variable $\tau$ only (which corresponds to the processing time $pt_{1,1}$), is
\begin{equation*}
\alpha_{1,1} \, \max \{ t_{2} + \tau - dd_{1,1} \, , \, 0 \} + \beta_{1} \, (pt^{\mathrm{nom}}_{1} - \tau) + J^{\circ}_{1,2}(t_{3})
\end{equation*}
that can be written as $f (pt_{1,1} + t_{2}) + g (pt_{1,1})$ being
\begin{equation*}
f (pt_{1,1} + t_{2}) = 0.5 \cdot \max \{ pt_{1,1} - 10 \, , \, 0 \}
\end{equation*}
\begin{equation*}
g (pt_{1,1}) = \left\{ \begin{array}{ll}
4 - pt_{1,1} & pt_{1,1} \in [ 1 , 4 )\\
0 & pt_{1,1} \notin [ 1 , 4 )
\end{array} \right.
\end{equation*}
the two functions illustrated in figure~\ref{fig:esS_salpha_0_2_1}.

\begin{figure}[h]
\centering
\psfrag{f(x)}[cl][Bl][.8][0]{$f (pt_{1,1} + t_{2})$}
\psfrag{g(x)}[cl][Bl][.8][0]{$g (pt_{1,1})$}
\psfrag{x}[bc][Bl][.8][0]{$pt_{1,1}$}
\psfrag{X1}[tc][Bl][.7][0]{$1$}
\psfrag{X2}[tc][Bl][.7][0]{$4$}
\psfrag{G1}[tc][Bl][.7][0]{$10-t_{2}$}
\psfrag{n}[cl][Bl][.6][0]{$-1$}
\psfrag{m1}[br][Bl][.6][0]{$0.5$}
\includegraphics[scale=.25]{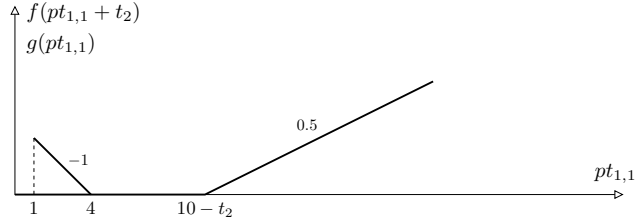}
\caption{Functions $f (pt_{1,1} + t_{2})$ and $g (pt_{1,1})$ in state $[ 0 \  2 \  t_{2}]^{T}$.}
\label{fig:esS_salpha_0_2_1}
\end{figure}

It is possible to apply lemma~\ref{lem:xopt} (note that $f (pt_{1,1}, t_{2})$ follows definition~\ref{def:f(x+t)} and $g (pt_{1,1})$ follows definition~\ref{def:g(x)}), which provides the optimal processing time
\begin{equation*}
pt^{\circ}_{1,1}(t_{2}) = \arg \min_{\substack{pt_{1,1}\\1 \leq pt_{1,1} \leq 4}} \big\{ f (pt_{1,1} + t_{2}) + g (pt_{1,1}) \big\} = x_{\mathrm{e}}(t_{2})
\end{equation*}
illustrated in figure~\ref{fig:esS_tau_0_2}, being $x_{\mathrm{e}}(t_{2})$ the function
\begin{equation*}
x_{\mathrm{e}}(t_{2}) = 4
\end{equation*}
$pt^{\circ}_{1,1}(t_{2})$ and $x_{\mathrm{e}}(t_{2})$ are in accordance with~\eqref{equ:xopt_1} and~\eqref{equ:xe_3}, respectively. Note that, in this case, $A = \emptyset$, $B = \emptyset$, $\lvert A \rvert = \lvert B \rvert = 0$, and then there is no need of executing algorithm~\ref{alg:tstar}. Taking into account the mandatory decision about the class of the next job to be executed; the optimal control strategies for this state are
\begin{equation*}
\delta_{1}^{\circ} (0,2, t_{2}) = 1 \quad \forall \, t_{2} \qquad \delta_{2}^{\circ} (0,2, t_{2}) = 0 \quad \forall \, t_{2}
\end{equation*}
\begin{equation*}
\tau^{\circ} (0,2, t_{2}) = pt^{\circ}_{1,1}(t_{2}) = 4 \quad \forall \, t_{2}
\end{equation*}

\begin{figure}[h]
\centering
\psfrag{f(x)}[cl][Bl][.8][0]{$pt^{\circ}_{1,1}(t_{2}) \equiv \tau^{\circ} (0,2, t_{2})$}
\psfrag{x}[bc][Bl][.8][0]{$t_{2}$}
\psfrag{Y1}[cr][Bl][.7][0]{$1$}
\psfrag{Y2}[cr][Bl][.7][0]{$4$}
\includegraphics[scale=.25]{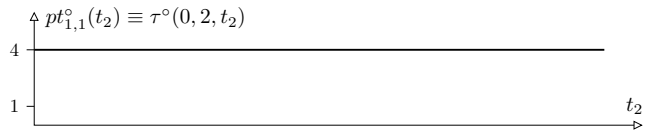}
\caption{Optimal control strategy $\tau^{\circ} (0,2, t_{2})$ (service time) in state $[ 0 \  2 \  t_{2}]^{T}$.}
\label{fig:esS_tau_0_2}
\end{figure}

The optimal cost-to-go
\begin{equation*}
J^{\circ}_{0,2} (t_{2}) = f \big( pt^{\circ}_{1,1}(t_{2}) + t_{2} \big) + g \big( pt^{\circ}_{1,1}(t_{2}) \big)
\end{equation*}
illustrated in figure~\ref{fig:esS_J_0_2}, is provided by lemma~\ref{lem:h(t)}.

\begin{figure}[h]
\centering
\psfrag{f(x)}[cl][Bl][.8][0]{$J^{\circ}_{0,2} (t_{2})$}
\psfrag{x}[bc][Bl][.8][0]{$t_{2}$}
\psfrag{G1}[tc][Bl][.7][0]{$6$}
\psfrag{m1}[br][Bl][.6][0]{$0.5$}
\includegraphics[scale=.25]{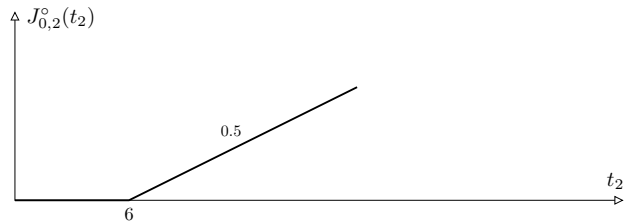}
\caption{Optimal cost-to-go $J^{\circ}_{0,2} (t_{2})$ in state $[ 0 \  2 \  t_{2}]^{T}$.}
\label{fig:esS_J_0_2}
\end{figure}

{\bf Stage 1 -- State $\boldsymbol{[ 1 \  0 \  t_{1}]}^{T}$}

\hspace{1cm}\includegraphics[scale=.4]{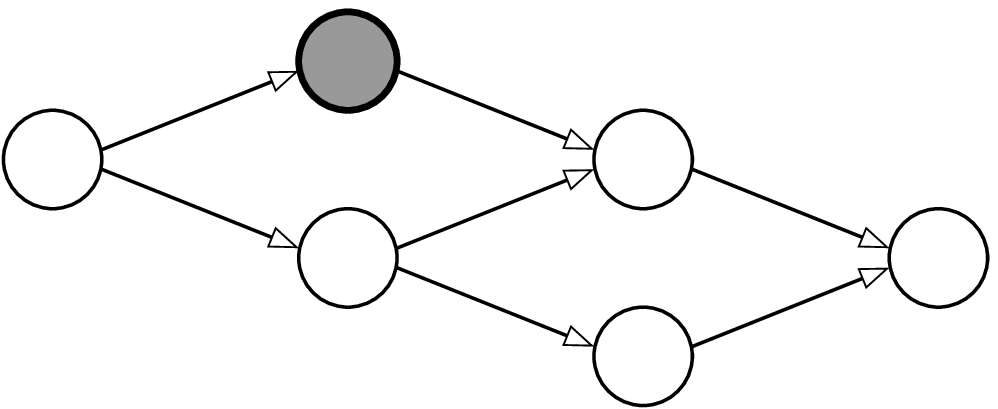}

In state $[1 \; 0 \; t_{1}]^{T}$ the unique job of class $P_{1}$ has been completed; then the decision about the class of the next job to be executed is mandatory. The cost function to be minimized in this state, with respect to the (continuos) decision variable $\tau$ only (which corresponds to the processing time $pt_{2,1}$), is
\begin{equation*}
\alpha_{2,1} \, \max \{ t_{1} + \tau - dd_{2,1} \, , \, 0 \} + (pt^{\mathrm{nom}}_{2} - \tau) +  J^{\circ}_{1,1} (t_{2})
\end{equation*}
that can be written as $f (pt_{2,1} + t_{1}) + g (pt_{2,1})$ being
\begin{equation*}
f (pt_{2,1} + t_{1}) = 0.25 \cdot \max \{ pt_{2,1} + t_{1} - 12 \, , \, 0 \} + J^{\circ}_{1,1} (pt_{2,1} + t_{1})
\end{equation*}
\begin{equation*}
g (pt_{2,1}) = \left\{ \begin{array}{ll}
2 - pt_{2,1} & pt_{2,1} \in [ 1 , 2 )\\
0 & pt_{2,1} \notin [ 1 , 2 )
\end{array} \right.
\end{equation*}
the two functions illustrated in figure~\ref{fig:esS_salpha_1_0_2}.

\begin{figure}[h]
\centering
\psfrag{f(x)}[cl][Bl][.8][0]{$f (pt_{2,1} + t_{1})$}
\psfrag{g(x)}[cl][Bl][.8][0]{$g (pt_{2,1})$}
\psfrag{x}[bc][Bl][.8][0]{$pt_{2,1}$}
\psfrag{X1}[tc][Bl][.7][0]{$1$}
\psfrag{X2}[tc][Bl][.7][0]{$2$}
\psfrag{G1}[tc][Bl][.7][0]{$12-t_{1}$}
\psfrag{G2}[tc][Bl][.7][0]{$18-t_{1}$}
\psfrag{n}[cl][Bl][.6][0]{$-1$}
\psfrag{m1}[bc][Bl][.6][0]{$0.25$}
\psfrag{m2}[cr][Bl][.6][0]{$1$}
\includegraphics[scale=.25]{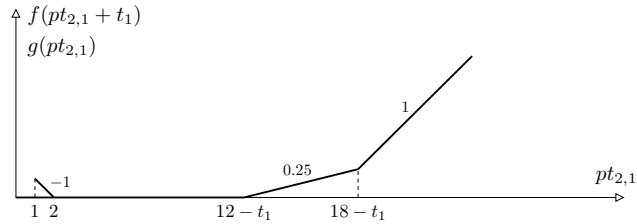}
\caption{Functions $f (pt_{2,1} + t_{1})$ and $g_{2,1} (pt_{2,1})$ in state $[ 1 \  0 \  t_{1}]^{T}$.}
\label{fig:esS_salpha_1_0_2}
\end{figure}

\begin{figure}[h]
\centering
\psfrag{f(x)}[cl][Bl][.8][0]{$pt^{\circ}_{2,1}(t_{1}) \equiv \tau^{\circ} (1,0, t_{1})$}
\psfrag{x}[bc][Bl][.8][0]{$t_{1}$}
\psfrag{Y1}[cr][Bl][.7][0]{$1$}
\psfrag{Y2}[cr][Bl][.7][0]{$2$}
\psfrag{G1}[tc][Bl][.7][0]{$16$}
\psfrag{G2}[tc][Bl][.7][0]{$17$}
\includegraphics[scale=.25]{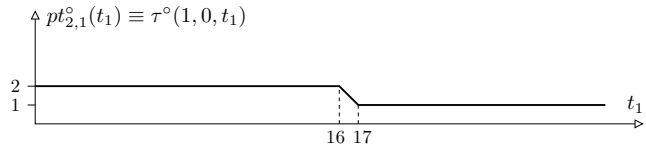}
\caption{Optimal control strategy $\tau^{\circ} (1,0, t_{1})$ (service time) in state $[ 1 \  0 \  t_{1}]^{T}$.}
\label{fig:esS_tau_1_0}
\end{figure}

It is possible to apply lemma~\ref{lem:xopt} (note that $f (pt_{2,1} + t_{1})$ follows definition~\ref{def:f(x+t)} and $g (pt_{2,1})$ follows definition~\ref{def:g(x)}), which provides the optimal processing time
\begin{equation*}
pt^{\circ}_{2,1}(t_{1}) = \arg \min_{\substack{pt_{2,1}\\1 \leq pt_{2,1} \leq 2}} \big\{ f (pt_{2,1} + t_{1}) + g (pt_{2,1}) \big\} = x_{\mathrm{e}}(t_{1})
\end{equation*}
illustrated in figure~\ref{fig:esS_tau_1_0}, being $x_{\mathrm{e}}(t_{1})$ the function
\begin{equation*}
x_{\mathrm{e}}(t_{1}) = \left\{ \begin{array}{ll}
2 &  t_{1} < 16\\
-t_{1} + 18 & 16 \leq t_{1} < 17\\
1 & t_{1} \geq 17
\end{array} \right.
\end{equation*}
$pt^{\circ}_{2,1}(t_{1})$ and $x_{\mathrm{e}}(t_{1})$ are in accordance with~\eqref{equ:xopt_1} and~\eqref{equ:xe_1}, respectively. Note that, in this case, $A = \{ 2 \}$, $\lvert A \rvert = 1$, $\gamma_{a_{1}} = 18$; moreover, since $B = \emptyset$ and $\lvert B \rvert = 0$, there is no need of executing algorithm~\ref{alg:tstar}. It is worth again remarking that, in the current state, the decision about the class of the next job to be executed is mandatory, since the unique job of class $P_{1}$ has been completed. Then,
\begin{equation*}
\delta_{1}^{\circ} (1,0, t_{1}) = 0 \quad \forall \, t_{1} \qquad \delta_{2}^{\circ} (1,0, t_{1}) = 1 \quad \forall \, t_{1}
\end{equation*}
\begin{equation*}
\tau^{\circ} (1,0, t_{1}) = pt^{\circ}_{2,1}(t_{1}) = \left\{ \begin{array}{ll}
2 &  t_{1} < 16\\
-t_{1} + 18 & 16 \leq t_{1} < 17\\
1 & t_{1} \geq 17
\end{array} \right.
\end{equation*}

The optimal cost-to-go
\begin{equation*}
J^{\circ}_{1,0} (t_{1}) = f \big( pt^{\circ}_{2,1}(t_{1}) + t_{1} \big) + g \big( pt^{\circ}_{2,1}(t_{1}) \big)
\end{equation*}
illustrated in figure~\ref{fig:esS_J_1_0}, is provided by lemma~\ref{lem:h(t)}.

\begin{figure}[h]
\centering
\psfrag{f(x)}[cl][Bl][.8][0]{$J^{\circ}_{1,0} (t_{1})$}
\psfrag{x}[bc][Bl][.8][0]{$t_{1}$}
\psfrag{G1}[tc][Bl][.7][0]{$10$}
\psfrag{G2}[tc][Bl][.7][0]{$16$}
\psfrag{G3}[tc][Bl][.7][0]{$17$}
\psfrag{m1}[bc][Bl][.6][0]{$0.25$}
\psfrag{m2}[cr][Bl][.6][0]{$1$}
\includegraphics[scale=.25]{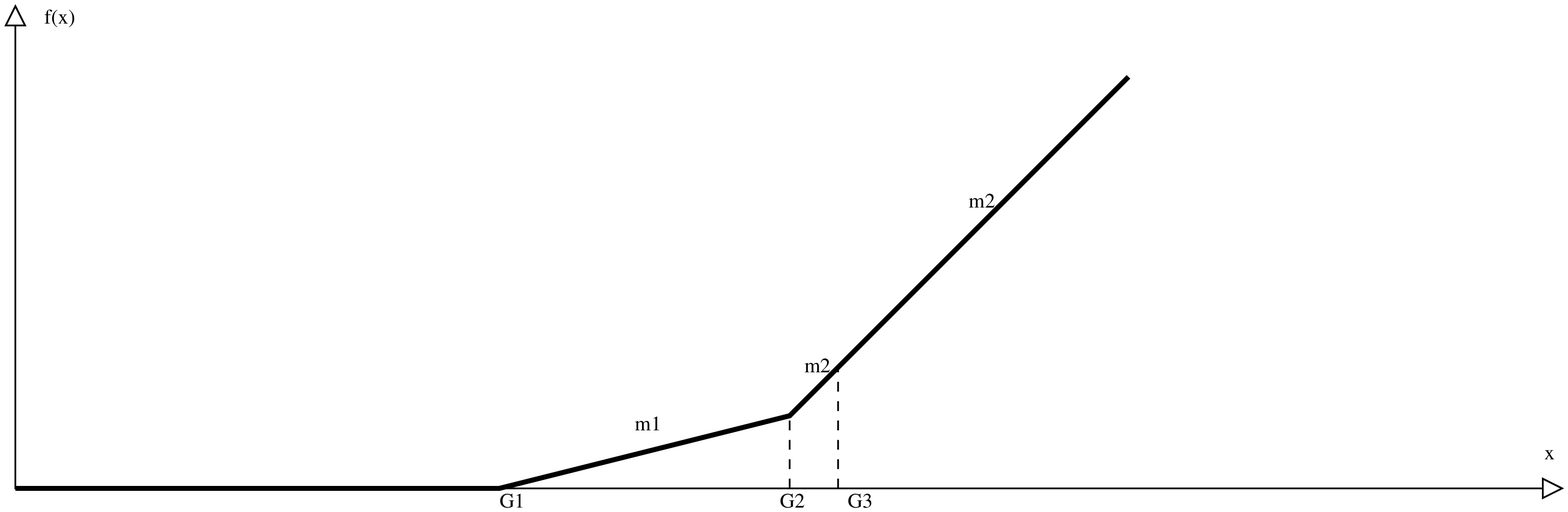}
\caption{Optimal cost-to-go $J^{\circ}_{1,0} (t_{1})$ in state $[ 1 \  0 \  t_{1}]^{T}$.}
\label{fig:esS_J_1_0}
\end{figure}

{\bf Stage 1 -- State $\boldsymbol{[ 0 \  1 \  t_{1}]}^{T}$}

\hspace{1cm}\includegraphics[scale=.4]{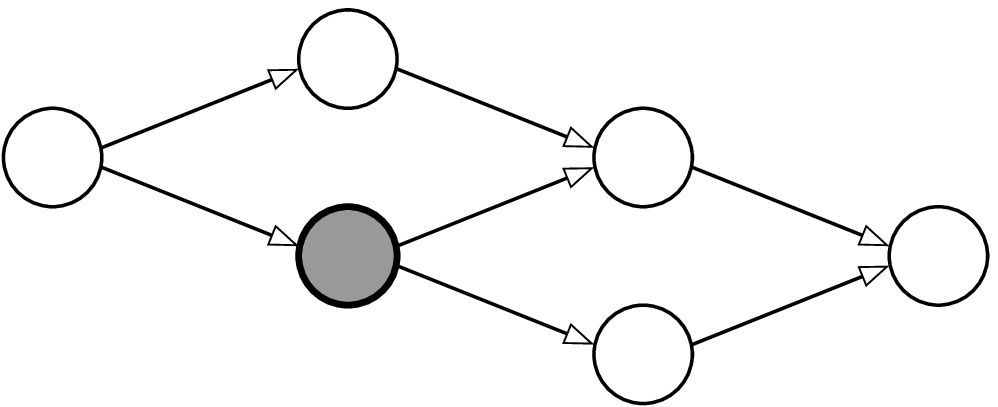}

In state $[0 \; 1 \; t_{1}]^{T}$, the cost function to be minimized, with respect to the (continuos) decision variable $\tau$ and to the (binary) decision variables $\delta_{1}$ and $\delta_{2}$ is
\begin{equation*}
\begin{split}
&\delta_{1} \big[ \alpha_{1,1} \, \max \{ t_{1} + \tau - dd_{1,1} \, , \, 0 \} + \beta_{1} \, ( pt^{\mathrm{nom}}_{1} - \tau ) + J^{\circ}_{1,1} (t_{2}) \big] +\\
&+ \delta_{2} \big[ \alpha_{2,2} \, \max \{ t_{1} + \tau - dd_{2,2} \, , \, 0 \} + \beta_{2} \, ( pt^{\mathrm{nom}}_{2} - \tau ) + J^{\circ}_{0,2} (t_{2}) \big]
\end{split}
\end{equation*}

{\it Case i)} in which it is assumed $\delta_{1} = 1$ (and $\delta_{2} = 0$).

\hspace{1cm}\includegraphics[scale=.4]{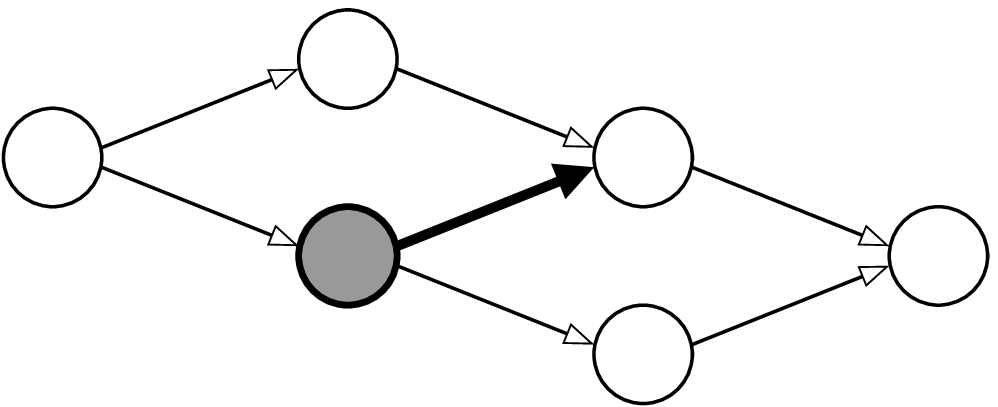}

In this case, it is necessary to minimize, with respect to the (continuos) decision variable $\tau$ which corresponds to the processing time $pt_{1,1}$, the following function
\begin{equation*}
\alpha_{1,1} \, \max \{ t_{1} + \tau - dd_{1,1} \, , \, 0 \} + \beta_{1} \, ( pt^{\mathrm{nom}}_{1} - \tau ) + J^{\circ}_{1,1} (t_{2})
\end{equation*}
that can be written as $f (pt_{1,1} + t_{1}) + g (pt_{1,1})$ being
\begin{equation*}
f (pt_{1,1} + t_{1}) = 0.5 \cdot \max \{ pt_{1,1} + t_{1} - 10 \, , \, 0 \} + J^{\circ}_{1,1} (pt_{1,1} + t_{1})
\end{equation*}
\begin{equation*}
g (pt_{1,1}) = \left\{ \begin{array}{ll}
4 - pt_{1,1} & pt_{1,1} \in [ 1 , 4 )\\
0 & pt_{1,1} \notin [ 1 , 4 )
\end{array} \right.
\end{equation*}
the two functions illustrated in figure~\ref{fig:esS_salpha_0_1_1}.

\begin{figure}[h]
\centering
\psfrag{f(x)}[cl][Bl][.8][0]{$f (pt_{1,1} + t_{1})$}
\psfrag{g(x)}[cl][Bl][.8][0]{$g (pt_{1,1})$}
\psfrag{x}[bc][Bl][.8][0]{$pt_{1,1}$}
\psfrag{X1}[tc][Bl][.7][0]{$1$}
\psfrag{X2}[tc][Bl][.7][0]{$4$}
\psfrag{G1}[tc][Bl][.7][0]{$10-t_{1}$}
\psfrag{G2}[tc][Bl][.7][0]{$18-t_{1}$}
\psfrag{n}[cl][Bl][.6][0]{$-1$}
\psfrag{m1}[bc][Bl][.6][0]{$0.5$}
\psfrag{m2}[cr][Bl][.6][0]{$1.25$}
\includegraphics[scale=.25]{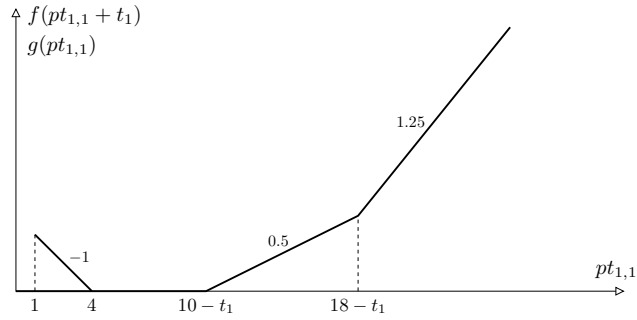}
\caption{Functions $f (pt_{1,1} + t_{1})$ and $g (pt_{1,1})$ in state $[ 0 \  1 \  t_{1}]^{T}$.}
\label{fig:esS_salpha_0_1_1}
\end{figure}

It is possible to apply lemma~\ref{lem:xopt} (note that $f (pt_{1,1} + t_{1})$ follows definition~\ref{def:f(x+t)} and $g (pt_{1,1})$ follows definition~\ref{def:g(x)}), which provides the function
\begin{equation*}
pt^{\circ}_{1,1}(t_{1}) = \arg \min_{\substack{pt_{1,1}\\1 \leq pt_{1,1} \leq 4}} \big\{ f (pt_{1,1} + t_{1}) + g (pt_{1,1}) \big\} = x_{\mathrm{e}}(t_{1})
\end{equation*}
illustrated in figure~\ref{fig:esS_tau_0_1_1}, being $x_{\mathrm{e}}(t_{1})$ the function
\begin{equation*}
x_{\mathrm{e}}(t_{1}) = \left\{ \begin{array}{ll}
4 &  t_{1} < 14\\
-t_{1} + 18 & 14 \leq t_{1} < 17\\
1 & t_{1} \geq 17
\end{array} \right.
\end{equation*}
$pt^{\circ}_{1,1}(t_{1})$ and $x_{\mathrm{e}}(t_{1})$ are in accordance with~\eqref{equ:xopt_1} and~\eqref{equ:xe_1}, respectively. Note that, in this case, $A = \{ 2 \}$, $\lvert A \rvert = 1$, $\gamma_{a_{1}} = 18$; moreover, since $B = \emptyset$ and $\lvert B \rvert = 0$, there is no need of executing algorithm~\ref{alg:tstar}.

\begin{figure}[h]
\centering
\psfrag{f(x)}[cl][Bl][.8][0]{$pt^{\circ}_{1,1}(t_{1})$}
\psfrag{x}[bc][Bl][.8][0]{$t_{1}$}
\psfrag{Y1}[cr][Bl][.7][0]{$1$}
\psfrag{Y2}[cr][Bl][.7][0]{$4$}
\psfrag{G1}[tc][Bl][.7][0]{$14$}
\psfrag{G2}[tc][Bl][.7][0]{$17$}
\includegraphics[scale=.25]{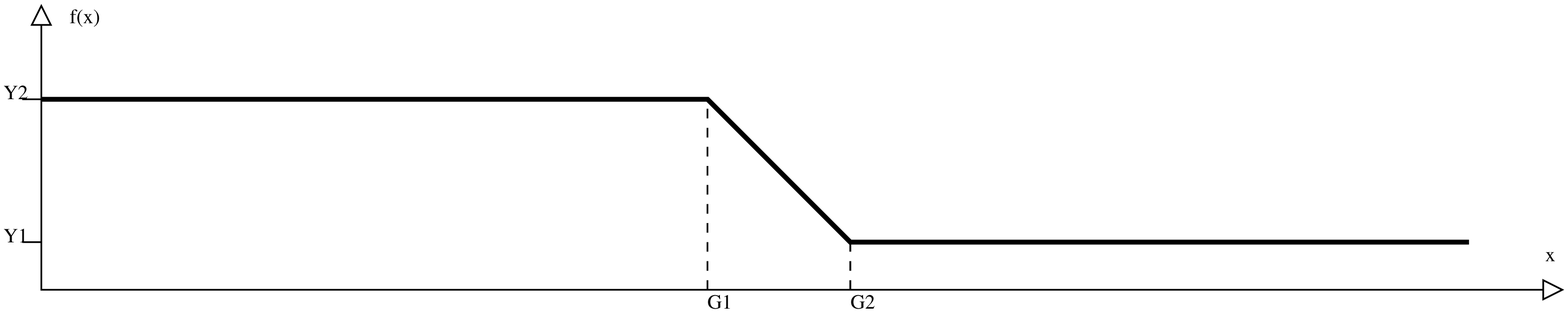}
\caption{Function $pt^{\circ}_{1,1}(t_{1})$.}
\label{fig:esS_tau_0_1_1}
\end{figure}

The conditioned cost-to-go
\begin{equation*}
J^{\circ}_{0,1} (t_{1} \mid \delta_{1} = 1) = f \big( pt^{\circ}_{1,1}(t_{1}) + t_{1} \big) + g \big( pt^{\circ}_{1,1}(t_{1}) \big)
\end{equation*}
illustrated in figure~\ref{fig:esS_J_0_1_1}, is provided by lemma~\ref{lem:h(t)}.

\begin{figure}[h]
\centering
\psfrag{f(x)}[cl][Bl][.8][0]{$J^{\circ}_{0,1} (t_{1} \mid \delta_{1} = 1)$}
\psfrag{x}[bc][Bl][.8][0]{$t_{1}$}
\psfrag{G1}[tc][Bl][.7][0]{$6$}
\psfrag{G2}[tc][Bl][.7][0]{$14$}
\psfrag{G3}[tc][Bl][.7][0]{$17$}
\psfrag{m1}[bc][Bl][.6][0]{$0.5$}
\psfrag{m2}[cr][Bl][.6][0]{$1$}
\psfrag{m3}[cr][Bl][.6][0]{$1.25$}
\includegraphics[scale=.25]{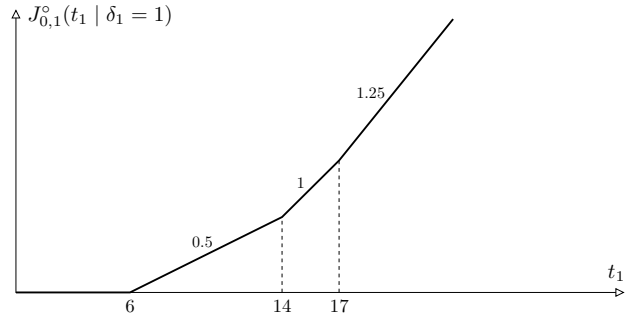}
\caption{Conditioned cost-to-go $J^{\circ}_{0,1} (t_{1} \mid \delta_{1} = 1)$.}
\label{fig:esS_J_0_1_1}
\end{figure}

{\it Case ii)} in which it is assumed $\delta_{2} = 1$ (and $\delta_{1} = 0$).

\hspace{1cm}\includegraphics[scale=.4]{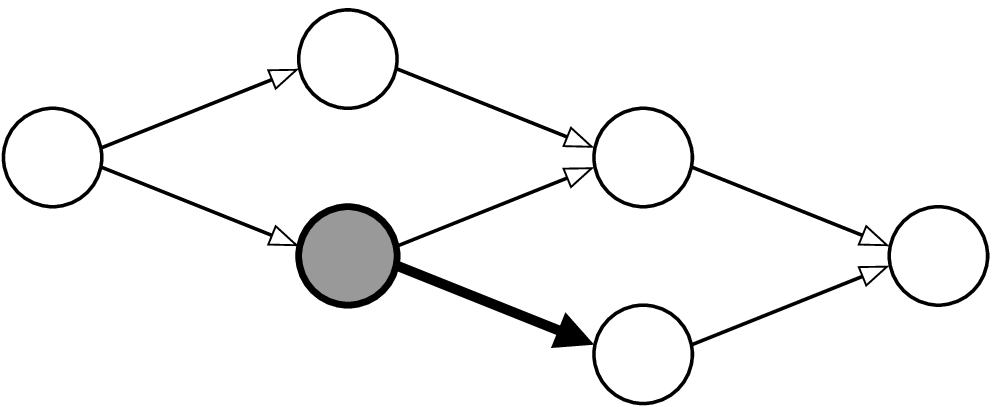}

In this case, it is necessary to minimize, with respect to the (continuos) decision variable $\tau$ which corresponds to the processing time $pt_{2,2}$, the following function
\begin{equation*}
\alpha_{2,2} \, \max \{ t_{1} + \tau - dd_{2,2} \, , \, 0 \} + \beta_{2} \, ( pt^{\mathrm{nom}}_{2} - \tau ) + J^{\circ}_{0,2} (t_{2})
\end{equation*}
that can be written as $f (pt_{2,2} + t_{1}) + g (pt_{2,2})$ being
\begin{equation*}
f (pt_{2,2} + t_{1}) = 0.75 \cdot \max \{ pt_{2,2} + t_{1} - 20 \, , \, 0 \} + J^{\circ}_{0,2} (pt_{2,2} + t_{1})
\end{equation*}
\begin{equation*}
g (pt_{2,2}) = \left\{ \begin{array}{ll}
2 - pt_{2,2} & pt_{2,2} \in [ 1 , 2 )\\
0 & pt_{2,2} \notin [ 1 , 2 )
\end{array} \right.
\end{equation*}

\newpage
the two functions illustrated in figure~\ref{fig:esS_salpha_0_1_2}.

\begin{figure}[h]
\centering
\psfrag{f(x)}[cl][Bl][.8][0]{$f (pt_{2,2} + t_{1})$}
\psfrag{g(x)}[cl][Bl][.8][0]{$g (pt_{2,2})$}
\psfrag{x}[bc][Bl][.8][0]{$pt_{2,2}$}
\psfrag{X1}[tc][Bl][.7][0]{$1$}
\psfrag{X2}[tc][Bl][.7][0]{$2$}
\psfrag{G1}[tc][Bl][.7][0]{$6-t_{1}$}
\psfrag{G2}[tc][Bl][.7][0]{$20-t_{1}$}
\psfrag{n}[cl][Bl][.6][0]{$-1$}
\psfrag{m1}[bc][Bl][.6][0]{$0.5$}
\psfrag{m2}[cr][Bl][.6][0]{$1.25$}
\includegraphics[scale=.25]{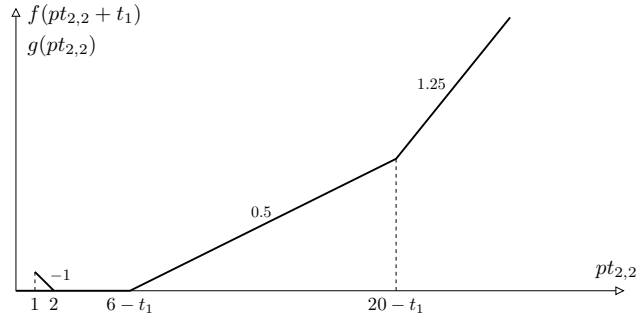}
\caption{Functions $f (pt_{2,2} + t_{1})$ and $g (pt_{2,2})$ in state $[ 0 \  1 \  t_{1}]^{T}$.}
\label{fig:esS_salpha_0_1_2}
\end{figure}

It is possible to apply lemma~\ref{lem:xopt} (note that $f (pt_{2,2} + t_{1})$ follows definition~\ref{def:f(x+t)} and $g (pt_{2,2})$ follows definition~\ref{def:g(x)}), which provides the function
\begin{equation*}
pt^{\circ}_{2,2}(t_{1}) = \arg \min_{\substack{pt_{2,2}\\1 \leq pt_{2,2} \leq 2}} \big\{ f (pt_{2,2} + t_{1}) + g (pt_{2,2}) \big\} = x_{\mathrm{e}}(t_{1})
\end{equation*}
illustrated in figure~\ref{fig:esS_tau_0_1_2}, being $x_{\mathrm{e}}(t_{1})$ the function
\begin{equation*}
x_{\mathrm{e}}(t_{1}) = \left\{ \begin{array}{ll}
2 &  t_{1} < 18\\
-t_{1} + 20 & 18 \leq t_{1} < 19\\
1 & t_{1} \geq 19
\end{array} \right.
\end{equation*}
$pt^{\circ}_{2,2}(t_{1})$ and $x_{\mathrm{e}}(t_{1})$ are in accordance with~\eqref{equ:xopt_1} and~\eqref{equ:xe_1}, respectively. Note that, in this case, $A = \{ 2 \}$, $\lvert A \rvert = 1$, $\gamma_{a_{1}} = 20$; moreover, since $B = \emptyset$ and $\lvert B \rvert = 0$, there is no need of executing algorithm~\ref{alg:tstar}.

\begin{figure}[h]
\centering
\psfrag{f(x)}[cl][Bl][.8][0]{$pt^{\circ}_{2,2}(t_{1})$}
\psfrag{x}[bc][Bl][.8][0]{$t_{1}$}
\psfrag{Y1}[cr][Bl][.7][0]{$1$}
\psfrag{Y2}[cr][Bl][.7][0]{$2$}
\psfrag{G1}[tc][Bl][.7][0]{$18$}
\psfrag{G2}[tc][Bl][.7][0]{$19$}
\includegraphics[scale=.25]{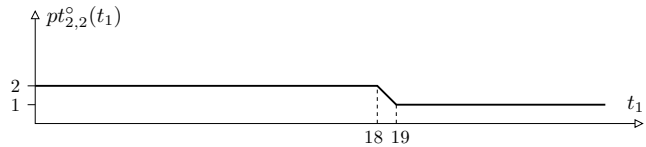}
\caption{Function $pt^{\circ}_{2,2}(t_{1})$.}
\label{fig:esS_tau_0_1_2}
\end{figure}

The conditioned cost-to-go
\begin{equation*}
J^{\circ}_{0,1} (t_{1} \mid \delta_{2} = 1) = f \big( pt^{\circ}_{2,2}(t_{1}) + t_{1} \big) + g \big( pt^{\circ}_{2,2}(t_{1}) \big)
\end{equation*}
illustrated in figure~\ref{fig:esS_J_0_1_2}, is provided by lemma~\ref{lem:h(t)}.

\begin{figure}[h]
\centering
\psfrag{f(x)}[cl][Bl][.8][0]{$J^{\circ}_{0,1} (t_{1} \mid \delta_{2} = 1)$}
\psfrag{x}[bc][Bl][.8][0]{$t_{1}$}
\psfrag{G1}[tc][Bl][.7][0]{$4$}
\psfrag{G2}[tc][Bl][.7][0]{$18$}
\psfrag{G3}[tc][Bl][.7][0]{$19$}
\psfrag{m1}[bc][Bl][.6][0]{$0.5$}
\psfrag{m2}[cr][Bl][.6][0]{$1$}
\psfrag{m3}[cr][Bl][.6][0]{$1.25$}
\includegraphics[scale=.25]{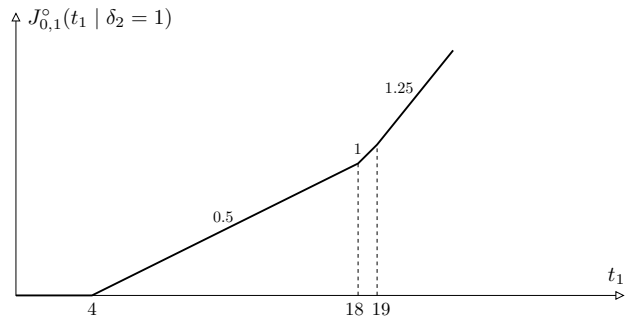}
\caption{Conditioned cost-to-go $J^{\circ}_{0,1} (t_{1} \mid \delta_{2} = 1)$.}
\label{fig:esS_J_0_1_2}
\end{figure}

In order to find the optimal cost-to-go $J^{\circ}_{0,1} (t_{1})$, it is necessary to carry out the following minimization
\begin{equation*}
J^{\circ}_{0,1} (t_{1}) = \min \big\{ J^{\circ}_{0,1} (t_{1} \mid \delta_{1} = 1) \, , \, J^{\circ}_{0,1} (t_{1} \mid \delta_{2} = 1) \big\}
\end{equation*}

\newpage
which provides, in accordance with lemma~\ref{lem:min}, the function illustrated in figure~\ref{fig:esS_J_0_1}.

\begin{figure}[h]
\centering
\psfrag{f(x)}[cl][Bl][.8][0]{$J^{\circ}_{0,1} (t_{1})$}
\psfrag{x}[bc][Bl][.8][0]{$t_{1}$}
\psfrag{G2}[tc][Bl][.7][0]{$6$}
\psfrag{G3}[tc][Bl][.7][0]{$14$}
\psfrag{G4}[tc][Bl][.7][0]{$16$}
\psfrag{G5}[tc][Bl][.7][0]{$18$}
\psfrag{G6}[tc][Bl][.7][0]{$19$}
\psfrag{m1}[bc][Bl][.6][0]{$0.5$}
\psfrag{m2}[cr][Bl][.6][0]{$1$}
\psfrag{m3}[cr][Bl][.6][0]{$1.25$}
\includegraphics[scale=.25]{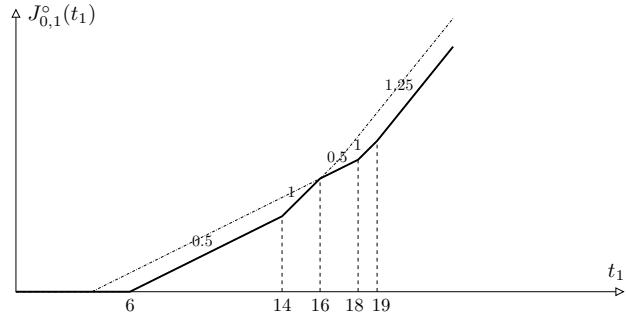}
\caption{Optimal cost-to-go $J^{\circ}_{0,1} (t_{1})$ in state $[ 0 \  1 \  t_{1}]^{T}$.}
\label{fig:esS_J_0_1}
\end{figure}

Since $J^{\circ}_{0,1} (t_{1} \mid \delta_{1} = 1)$ is the minimum in $(-\infty,16)$ and $J^{\circ}_{0,1} (t_{1} \mid \delta_{2} = 1)$ is the minimum in $[16,+\infty)$, the optimal control strategies for this state are
\begin{equation*}
\delta_{1}^{\circ} (0,1, t_{1}) = \left\{ \begin{array}{ll}
1 & t_{1} < 16\\
0 & t_{1} \geq 16
\end{array} \right. \qquad \delta_{2}^{\circ} (0,1, t_{1}) = \left\{ \begin{array}{ll}
0 & t_{1} < 16\\
1 & t_{1} \geq 16
\end{array} \right.
\end{equation*}
\begin{equation*}
\tau^{\circ} (0,1, t_{1}) = \delta_{1}^{\circ} (0,1, t_{1}) \, pt^{\circ}_{1,1}(t_{1}) + \delta_{2}^{\circ} (0,1, t_{1}) \, pt^{\circ}_{2,2}(t_{1}) = \left\{ \begin{array}{ll}
4 &  t_{1} < 14\\
-t_{1} + 18 & 14 \leq t_{1} < 16\\
2 & 16 \leq t_{1} < 18\\
-t_{1} + 20 & 18 \leq t_{1} < 19\\
1 & t_{1} \geq 19
\end{array} \right.
\end{equation*}
illustrated in figures~\ref{fig:esS_delta_0_1_1}, \ref{fig:esS_delta_0_1_2}, and~\ref{fig:esS_tau_0_1}, respectively.

\begin{figure}[h!]
\centering
\psfrag{f(x)}[cl][Bl][.8][0]{$\delta_{1}^{\circ} (0,1, t_{1})$}
\psfrag{x}[bc][Bl][.8][0]{$t_{1}$}
\psfrag{Y1}[cr][Bl][.7][0]{$1$}
\psfrag{G1}[tc][Bl][.7][0]{$16$}
\includegraphics[scale=.25]{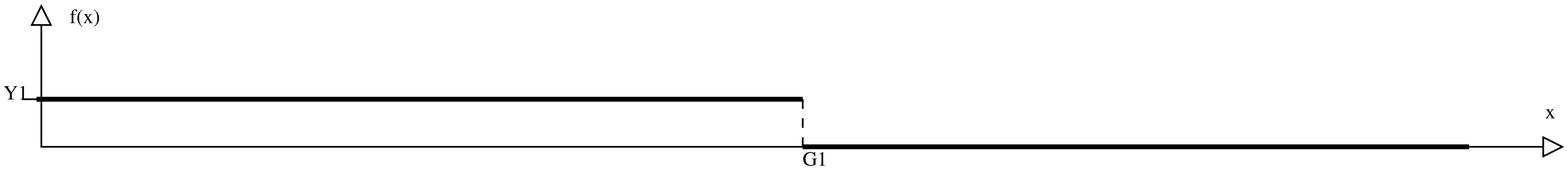}
\caption{Optimal control strategy $\delta_{1}^{\circ} (0,1, t_{1})$ in state $[ 0 \  1 \  t_{1}]^{T}$.}
\label{fig:esS_delta_0_1_1}
\end{figure}

\begin{figure}[h!]
\centering
\psfrag{f(x)}[cl][Bl][.8][0]{$\delta_{2}^{\circ} (0,1, t_{1})$}
\psfrag{x}[bc][Bl][.8][0]{$t_{1}$}
\psfrag{Y1}[cr][Bl][.7][0]{$1$}
\psfrag{G1}[tc][Bl][.7][0]{$16$}
\includegraphics[scale=.25]{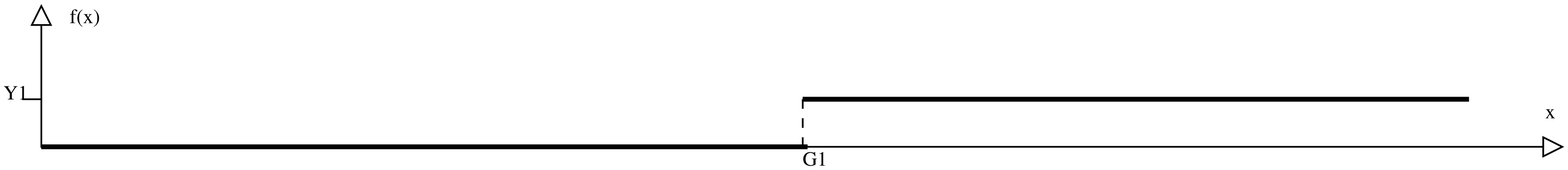}
\caption{Optimal control strategy $\delta_{2}^{\circ} (0,1, t_{1})$ in state $[ 0 \  1 \  t_{1}]^{T}$.}
\label{fig:esS_delta_0_1_2}
\end{figure}

\begin{figure}[h!]
\centering
\psfrag{f(x)}[cl][Bl][.8][0]{$\tau^{\circ} (0,1, t_{1})$}
\psfrag{x}[bc][Bl][.8][0]{$t_{1}$}
\psfrag{Y1}[cr][Bl][.7][0]{$1$}
\psfrag{Y2}[cr][Bl][.7][0]{$2$}
\psfrag{Y3}[cr][Bl][.7][0]{$4$}
\psfrag{G1}[tc][Bl][.7][0]{$14$}
\psfrag{G2}[tc][Bl][.7][0]{$16$}
\psfrag{G3}[tc][Bl][.7][0]{$18$}
\psfrag{G4}[tc][Bl][.7][0]{$19$}
\includegraphics[scale=.25]{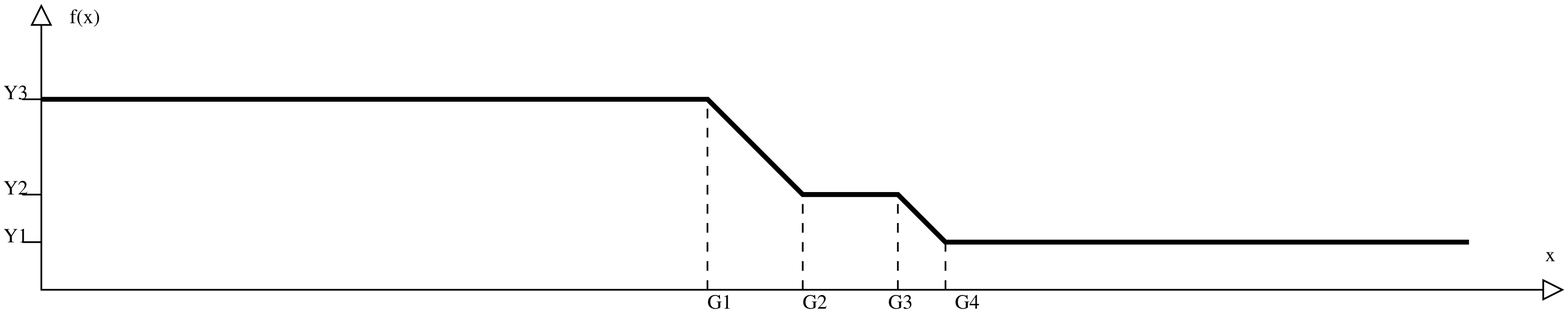}
\caption{Optimal control strategy $\tau^{\circ} (0,1, t_{1})$ (service time) in state $[ 0 \  1 \  t_{1}]^{T}$.}
\label{fig:esS_tau_0_1}
\end{figure}

{\bf Stage 0 -- State $\boldsymbol{[ 0 \  0 \  t_{0}]}^{T}$} (initial state)

\hspace{1cm}\includegraphics[scale=.4]{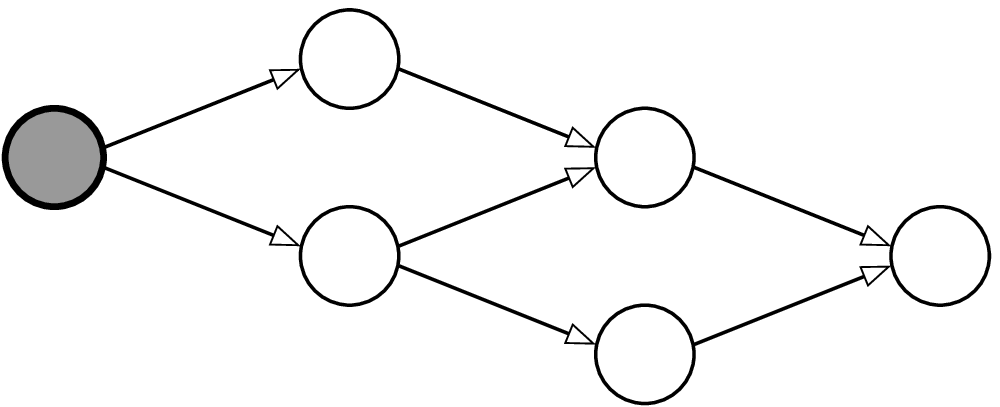}

In state $[0 \; 0 \; t_{0}]^{T}$, the cost function to be minimized, with respect to the (continuos) decision variable $\tau$ and to the (binary) decision variables $\delta_{1}$ and $\delta_{2}$ is
\begin{equation*}
\begin{split}
&\delta_{1} \big[ \alpha_{1,1} \, \max \{ t_{0} + \tau - dd_{1,1} \, , \, 0 \} + \beta_{1} \, ( pt^{\mathrm{nom}}_{1} - \tau ) + J^{\circ}_{1,0} (t_{1}) \big] +\\
&+ \delta_{2} \big[ \alpha_{2,1} \, \max \{ t_{0} + \tau - dd_{2,1} \, , \, 0 \} + \beta_{2} \, ( pt^{\mathrm{nom}}_{2} - \tau ) + J^{\circ}_{0,1} (t_{1}) \big]
\end{split}
\end{equation*}

{\it Case i)} in which it is assumed $\delta_{1} = 1$ (and $\delta_{2} = 0$).

\hspace{1cm}\includegraphics[scale=.4]{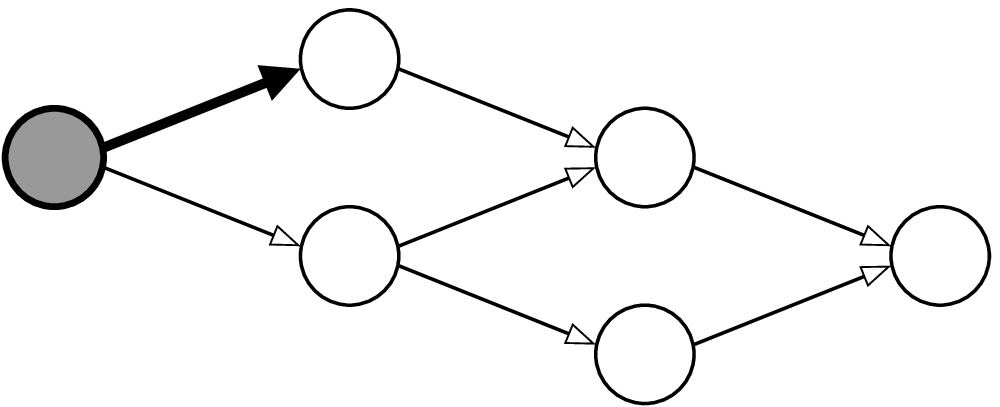}

In this case, it is necessary to minimize, with respect to the (continuos) decision variable $\tau$ which corresponds to the processing time $pt_{1,1}$, the following function
\begin{equation*}
\alpha_{1,1} \, \max \{ t_{0} + \tau - dd_{1,1} \, , \, 0 \} + \beta_{1} \, ( pt^{\mathrm{nom}}_{1} - \tau ) + J^{\circ}_{1,0} (t_{1})
\end{equation*}
that can be written as $f (pt_{1,1} + t_{0}) + g (pt_{1,1})$ being
\begin{equation*}
f (pt_{1,1} + t_{0}) = 0.5 \cdot \max \{ pt_{1,1} + t_{0} - 10 \, , \, 0 \} + J^{\circ}_{1,1} (pt_{1,1} + t_{0})
\end{equation*}
\begin{equation*}
g (pt_{1,1}) = \left\{ \begin{array}{ll}
4 - pt_{1,1} & pt_{1,1} \in [ 1 , 4 )\\
0 & pt_{1,1} \notin [ 1 , 4 )
\end{array} \right.
\end{equation*}
the two functions illustrated in figure~\ref{fig:esS_salpha_0_0_1}.

\begin{figure}[h]
\centering
\psfrag{f(x)}[cl][Bl][.8][0]{$f (pt_{1,1} + t_{0})$}
\psfrag{g(x)}[cl][Bl][.8][0]{$g (pt_{1,1})$}
\psfrag{x}[bc][Bl][.8][0]{$pt_{1,1}$}
\psfrag{X1}[tc][Bl][.7][0]{$1$}
\psfrag{X2}[tc][Bl][.7][0]{$4$}
\psfrag{G1}[tc][Bl][.7][0]{$10-t_{1}$}
\psfrag{G2}[tc][Bl][.7][0]{$16-t_{1}$}
\psfrag{n}[cl][Bl][.6][0]{$-1$}
\psfrag{m1}[cr][Bl][.6][0]{$0.75$}
\psfrag{m2}[cr][Bl][.6][0]{$1.5$}
\includegraphics[scale=.25]{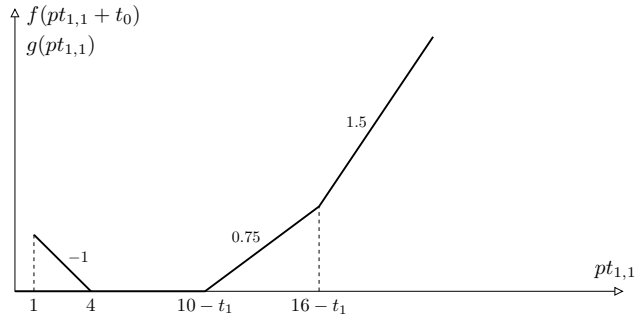}
\caption{Functions $f (pt_{1,1} + t_{0})$ and $g (pt_{1,1})$ in state $[ 0 \  0 \  t_{0}]^{T}$.}
\label{fig:esS_salpha_0_0_1}
\end{figure}

It is possible to apply lemma~\ref{lem:xopt} (note that $f (pt_{1,1} + t_{0})$ follows definition~\ref{def:f(x+t)} and $g (pt_{1,1})$ follows definition~\ref{def:g(x)}), which provides the function
\begin{equation*}
pt^{\circ}_{1,1}(t_{0}) = \arg \min_{\substack{pt_{1,1}\\1 \leq pt_{1,1} \leq 4}} \big\{ f (pt_{1,1} + t_{0}) + g (pt_{1,1}) \big\} = x_{\mathrm{e}}(t_{0})
\end{equation*}
illustrated in figure~\ref{fig:esS_tau_0_0_1}, being $x_{\mathrm{e}}(t_{0})$ the function
\begin{equation*}
x_{\mathrm{e}}(t_{0}) = \left\{ \begin{array}{ll}
4 &  t_{1} < 12\\
-t_{1} + 16 & 12 \leq t_{1} < 15\\
1 & t_{1} \geq 15
\end{array} \right.
\end{equation*}
$pt^{\circ}_{1,1}(t_{0})$ and $x_{\mathrm{e}}(t_{0})$ are in accordance with~\eqref{equ:xopt_1} and~\eqref{equ:xe_1}, respectively. Note that, in this case, $A = \{ 2 \}$, $\lvert A \rvert = 1$, $\gamma_{a_{1}} = 16$; moreover, since $B = \emptyset$ and $\lvert B \rvert = 0$, there is no need of executing algorithm~\ref{alg:tstar}.

\begin{figure}[h]
\centering
\psfrag{f(x)}[cl][Bl][.8][0]{$pt^{\circ}_{1,1}(t_{0})$}
\psfrag{x}[bc][Bl][.8][0]{$t_{0}$}
\psfrag{Y1}[cr][Bl][.7][0]{$1$}
\psfrag{Y2}[cr][Bl][.7][0]{$4$}
\psfrag{G1}[tc][Bl][.7][0]{$12$}
\psfrag{G2}[tc][Bl][.7][0]{$15$}
\includegraphics[scale=.25]{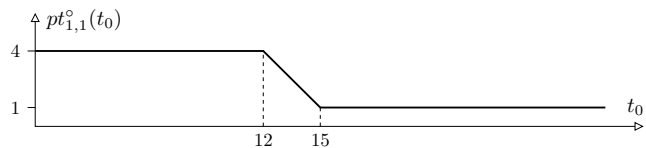}
\caption{Function $pt^{\circ}_{1,1}(t_{0})$.}
\label{fig:esS_tau_0_0_1}
\end{figure}

The conditioned cost-to-go
\begin{equation*}
J^{\circ}_{0,0} (t_{0} \mid \delta_{1} = 1) = f \big( pt^{\circ}_{1,1}(t_{0}) + t_{0} \big) + g \big( pt^{\circ}_{1,1}(t_{0}) \big)
\end{equation*}
illustrated in figure~\ref{fig:esS_J_0_0_1}, is provided by lemma~\ref{lem:h(t)}.

\begin{figure}[h]
\centering
\psfrag{f(x)}[cl][Bl][.8][0]{$J^{\circ}_{0,0} (t_{0} \mid \delta_{1} = 1)$}
\psfrag{x}[bc][Bl][.8][0]{$t_{0}$}
\psfrag{G1}[tc][Bl][.7][0]{$6$}
\psfrag{G2}[tc][Bl][.7][0]{$12$}
\psfrag{G3}[tc][Bl][.7][0]{$15$}
\psfrag{m1}[cr][Bl][.6][0]{$0.75$}
\psfrag{m2}[cr][Bl][.6][0]{$1$}
\psfrag{m3}[cr][Bl][.6][0]{$1.5$}
\includegraphics[scale=.25]{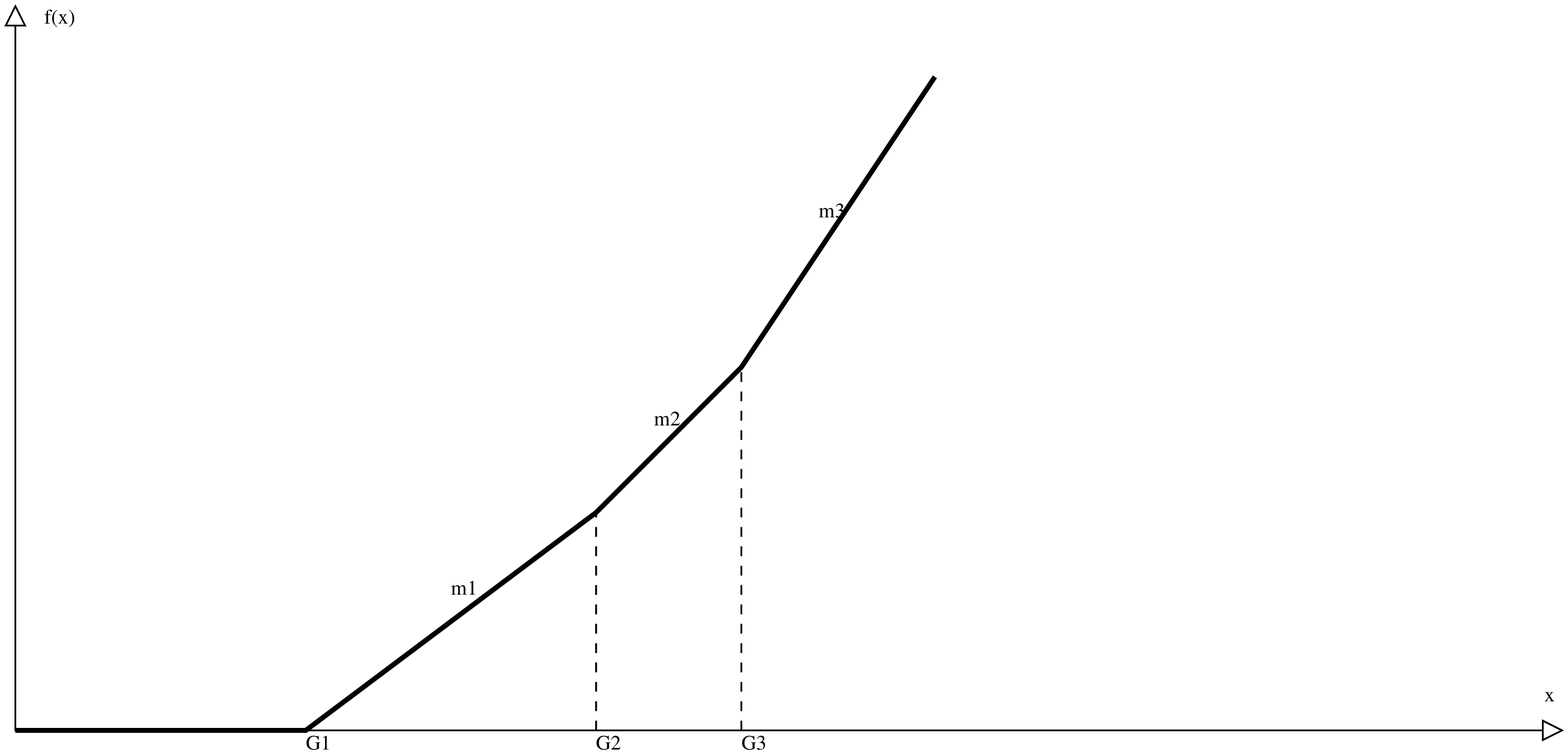}
\caption{Conditioned cost-to-go $J^{\circ}_{0,0} (t_{0} \mid \delta_{1} = 1)$.}
\label{fig:esS_J_0_0_1}
\end{figure}

{\it Case ii)} in which it is assumed $\delta_{2} = 1$ (and $\delta_{1} = 0$).

\hspace{1cm}\includegraphics[scale=.4]{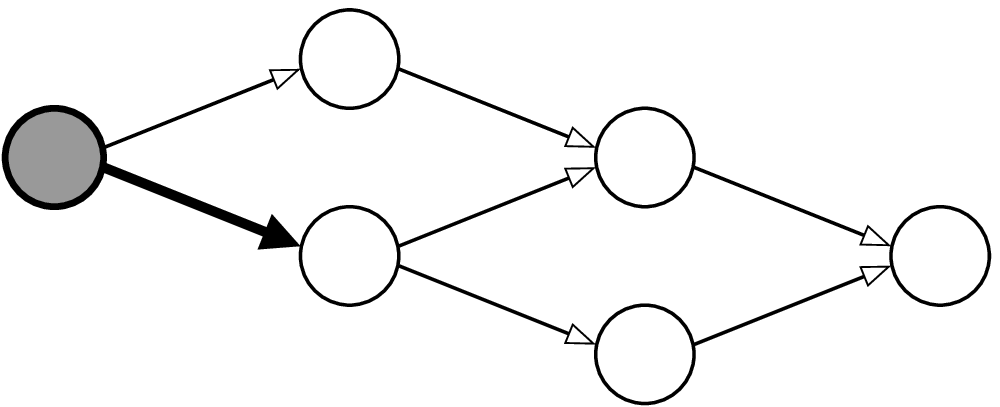}

In this case, it is necessary to minimize, with respect to the (continuos) decision variable $\tau$ which corresponds to the processing time $pt_{2,1}$, the following function
\begin{equation*}
\alpha_{2,1} \, \max \{ t_{0} + \tau - dd_{2,1} \, , \, 0 \} + \beta_{2} \, ( pt^{\mathrm{nom}}_{2} - \tau ) + J^{\circ}_{0,1} (t_{1})
\end{equation*}
that can be written as $f (pt_{2,1} + t_{0}) + g (pt_{2,1})$ being
\begin{equation*}
f (pt_{2,1} + t_{0}) = 0.25 \cdot \max \{ pt_{2,1} + t_{0} - 12 \, , \, 0 \} + J^{\circ}_{0,1} (pt_{2,1} + t_{0})
\end{equation*}
\begin{equation*}
g (pt_{2,1}) = \left\{ \begin{array}{ll}
2 - pt_{2,1} & pt_{2,1} \in [ 1 , 2 )\\
0 & pt_{2,1} \notin [ 1 , 2 )
\end{array} \right.
\end{equation*}
the two functions illustrated in figure~\ref{fig:esS_salpha_0_0_2}.

\begin{figure}[h]
\centering
\psfrag{f(x)}[cl][Bl][.8][0]{$f (pt_{2,1} + t_{0})$}
\psfrag{g(x)}[cl][Bl][.8][0]{$g (pt_{2,1})$}
\psfrag{x}[bc][Bl][.8][0]{$pt_{2,1}$}
\psfrag{X1}[tc][Bl][.7][0]{$1$}
\psfrag{X2}[tc][Bl][.7][0]{$2$}
\psfrag{G1}[Bl][Bl][.7][-45]{$6-t_{0}$}
\psfrag{G2}[Bl][Bl][.7][-45]{$12-t_{0}$}
\psfrag{G3}[Bl][Bl][.7][-45]{$14-t_{0}$}
\psfrag{G4}[Bl][Bl][.7][-45]{$16-t_{0}$}
\psfrag{G5}[Bl][Bl][.7][-45]{$18-t_{0}$}
\psfrag{G6}[Bl][Bl][.7][-45]{$19-t_{0}$}
\psfrag{n}[cl][Bl][.6][0]{$-1$}
\psfrag{m1}[bc][Bl][.6][0]{$0.5$}
\psfrag{m2}[cr][Bl][.6][0]{$0.75$}
\psfrag{m3}[cr][Bl][.6][0]{$1.25$}
\psfrag{m4}[cr][Bl][.6][0]{$1.5$}
\includegraphics[scale=.25]{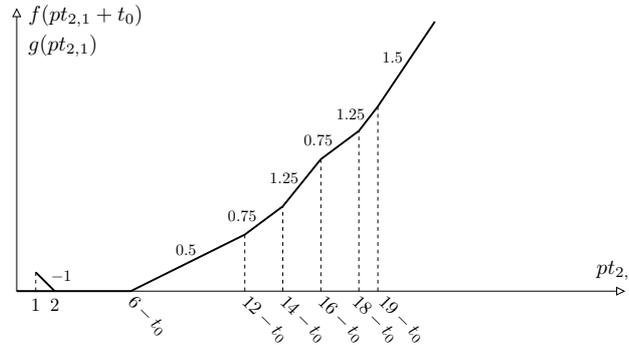}
\vspace{10pt}
\caption{Functions $f (pt_{2,1} + t_{0})$ and $g (pt_{2,1})$ in state $[ 0 \  0 \  t_{0}]^{T}$.}
\label{fig:esS_salpha_0_0_2}
\end{figure}

It is possible to apply lemma~\ref{lem:xopt} (note that $f (pt_{2,1} + t_{0})$ follows definition~\ref{def:f(x+t)} and $g (pt_{2,1})$ follows definition~\ref{def:g(x)}), which provides the function
\begin{equation*}
pt^{\circ}_{2,1}(t_{0}) = \arg \min_{\substack{pt_{2,1}\\1 \leq pt_{2,1} \leq 2}} \big\{ f (pt_{2,1} + t_{0}) + g (pt_{2,1}) \big\} = \left\{ \begin{array}{ll}
x_{\mathrm{s}}(t_{0}) & t_{0} < 14.5\\
x_{\mathrm{e}}(t_{0}) & t_{0} \geq 14.5 \\
\end{array} \right.
\end{equation*}
illustrated in figure~\ref{fig:esS_tau_0_0_2}, in which $14.5$ is the value $\omega_{1}$ determined by applying algorithm~\ref{alg:tstar}, and being $x_{\mathrm{s}}(t_{0})$ and $x_{\mathrm{e}}(t_{0})$ the functions
\begin{equation*}
x_{\mathrm{s}}(t_{0}) = \left\{ \begin{array}{ll}
2 &  t_{0} < 12\\
-t_{0} + 14 & 12 \leq t_{0} < 13\\
1 & 13 \leq t_{0} < 14.5
\end{array} \right.
\end{equation*}
\begin{equation*}
x_{\mathrm{e}}(t_{0}) = \left\{ \begin{array}{ll}
2 &  14.5 \leq t_{0} < 16\\
-t_{0} + 18 & 16 \leq t_{0} < 17\\
1 & t_{0} \geq 17
\end{array} \right.
\end{equation*}
$pt^{\circ}_{2,1}(t_{0})$, $x_{\mathrm{s}}(t_{0})$, and $x_{\mathrm{e}}(t_{0})$ are in accordance with~\eqref{equ:xopt_2}, \eqref{equ:xs_1}, and~\eqref{equ:xe_1}, respectively.

\begin{figure}[h]
\centering
\psfrag{f(x)}[cl][Bl][.8][0]{$pt^{\circ}_{2,1}(t_{0})$}
\psfrag{x}[bc][Bl][.8][0]{$t_{0}$}
\psfrag{Y1}[cr][Bl][.7][0]{$1$}
\psfrag{Y2}[cr][Bl][.7][0]{$2$}
\psfrag{G1}[tc][Bl][.7][0]{$12$}
\psfrag{G2}[tc][Bl][.7][0]{$13$}
\psfrag{G3}[tc][Bl][.7][0]{$14.5$}
\psfrag{G4}[tc][Bl][.7][0]{$16$}
\psfrag{G5}[tc][Bl][.7][0]{$17$}
\includegraphics[scale=.25]{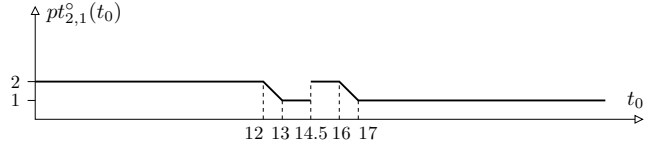}
\caption{Function $pt^{\circ}_{2,1}(t_{0})$.}
\label{fig:esS_tau_0_0_2}
\end{figure}

$\omega_{1}$ is determined by applying algorithm~\ref{alg:tstar} as follows.
\begin{align*}
& A = \{ 3 ,  5 \} & & \lvert A \rvert = 2 & & \gamma_{a_{1}} = 14 & & \gamma_{a_{2}} = 18 & & x_{1} = 1 & & x_{2} = 2\\
& B = \{ 4 \} & & \lvert B \rvert = 1 & & \gamma_{b_{1}} = 16
\end{align*}

With $j = 1$, the ``Section A -- Initialization'' part of the algorithm provides:
\begin{equation*}
\text{\small [row 1]:} \qquad \gamma_{0} = -\infty 
\end{equation*}
\begin{equation*}
\text{\small [row 2]:} \qquad h \geq 0 \; : \; \gamma_{h} \leq 16 - (2 - 1) < \gamma_{h+1} \quad \Rightarrow \quad h = 3
\end{equation*}
\begin{equation*}
\text{\small [row 3]:} \qquad i = 4
\end{equation*}
\begin{equation*}
\text{\small [row 4]:} \qquad \gamma_{7} = +\infty 
\end{equation*}
\begin{equation*}
\text{\small [row 5]:} \qquad k \leq 6 \; : \; \gamma_{k} < 16 + (2 - 1) \leq \gamma_{k+1} \quad \Rightarrow \quad k = 4 
\end{equation*}
\begin{equation*}
\text{\small [row 6]:} \qquad \text{condition: $j = \lvert B \rvert$ and $\lvert A \rvert = \lvert B \rvert$ ($1 = 1$ and $2 = 1$) is false}
\end{equation*}
\begin{equation*}
\text{\small [row 10]:} \qquad \left\{ \begin{array}{l}
\tilde{\mu}_{3} = 1.25 - 1 = 0.25\\
\tilde{\mu}_{4} = 0.75 - 1 = -0.25
\end{array} \right. 
\end{equation*}
\begin{equation*}
\text{\small [row 12]:} \qquad \tau = 16-(2-1) = 15
\end{equation*}
\begin{equation*}
\text{\small [row 13]:} \qquad \theta = 16
\end{equation*}
\begin{equation*}
\text{\small [row 14]:} \qquad d = \max \{ 0, 0.25 \cdot (16-15) \}Ê= 0.25
\end{equation*}
\begin{equation*}
\text{\small [row 15]:} \qquad \text{condition: $h < b_{j}-1$ ($3 < 4 - 1$) is false}
\end{equation*}
\begin{equation*}
\text{\small [row 20]:} \qquad \lambda = 3
\end{equation*}
\begin{equation*}
\text{\small [row 21]:} \qquad \xi = 4
\end{equation*}

Since condition: $h < b_{1}$ and $i < a_{2}$ ($4 < 5$ and $3 < 4$) {\small [row 22]} is true, the ``Section B -- First Loop'' part of the algorithm is executed:
\begin{equation*}
\text{\small [row 23]:} \qquad \psi = \min \{ 16-15 , 18-16 \} = 1
\end{equation*}
\begin{equation*}
\text{\small [row 24]:} \qquad \text{condition: $\gamma_{h+1} - \tau \leq \gamma_{i+1} - \theta$ ($16-15 \leq 18-16$) is true}
\end{equation*}
\begin{equation*}
\text{\small [row 25]:} \qquad \lambda = 3 + 1 = 4
\end{equation*}
\begin{equation*}
\text{\small [row 27]:} \qquad \text{condition: $\gamma_{h+1} - \tau \geq \gamma_{i+1} - \theta$ ($16-15 \geq 18-16$) is false}
\end{equation*}
\begin{equation*}
\text{\small [row 30]:} \qquad \delta = \max \{Ê0, -0.25 \cdot \big[Ê18 - (15+1) \big] = 0
\end{equation*}
\begin{equation*}
\text{\small [row 31]:} \qquad \text{condition: $\lambda < b_{j}-1$ ($3 < 4 - 1$) is false}
\end{equation*}
\begin{equation*}
\text{\small [row 36]:} \qquad \text{condition: $\xi = b_{j}$ ($4 = 4$) is true}
\end{equation*}
\begin{equation*}
\text{\small [row 37]:} \qquad \delta = 0 - 0.25 \cdot \big[Ê(16+1) - 16 \big] = -0.25
\end{equation*}
\begin{equation*}
\text{\small [row 43]:} \qquad \text{condition: $\delta \leq 0$ ($-0.25 \leq 0$) is true}
\end{equation*}
\begin{equation*}
\text{\small [row 44]:} \qquad a_{0} = 0 
\end{equation*}
\begin{equation*}
\text{\small [row 45]:} \qquad r \geq 1 \; : \; a_{r-1} \leq 3 < a_{r} \quad \Rightarrow \quad r = 2
\end{equation*}
\begin{equation*}
\text{\small [row 46]:} \qquad \text{condition: $r \leq j$ ($2 \leq 1$) is false}
\end{equation*}
\begin{equation*}
\text{\small [row 68]:} \qquad \omega_{1} = 15 - 1 + \dfrac{0.25}{0.25+0.25} = 14.5
\end{equation*}
\begin{equation*}
\text{\small [row 69]:} \qquad \text{exit algorithm}
\end{equation*}

The conditioned cost-to-go
\begin{equation*}
J^{\circ}_{0,0} (t_{0} \mid \delta_{2} = 1) = f \big( pt^{\circ}_{2,1}(t_{0}) + t_{0} \big) + g \big( pt^{\circ}_{2,1}(t_{0}) \big)
\end{equation*}
illustrated in figure~\ref{fig:esS_J_0_0_2}, is provided by lemma~\ref{lem:h(t)}.

\begin{figure}[h]
\centering
\psfrag{f(x)}[cl][Bl][.8][0]{$J^{\circ}_{0,0} (t_{0} \mid \delta_{2} = 1)$}
\psfrag{x}[bc][Bl][.8][0]{$t_{0}$}
\psfrag{G1}[tc][Bl][.7][0]{$4$}
\psfrag{G2}[tc][Bl][.7][0]{$10$}
\psfrag{G3}[tc][Bl][.7][0]{$12$}
\psfrag{G4}[tc][Bl][.7][0]{$13$}
\psfrag{G5}[tc][Bl][.7][0]{$14.5$}
\psfrag{G6}[tc][Bl][.7][0]{$16$}
\psfrag{G7}[tc][Bl][.7][0]{$17$}
\psfrag{G8}[tc][Bl][.7][0]{$18$}
\psfrag{m1}[bc][Bl][.6][0]{$0.5$}
\psfrag{m2}[cr][Bl][.6][0]{$1$}
\psfrag{m3}[cr][Bl][.6][0]{$0.75$}
\psfrag{m4}[cr][Bl][.6][0]{$1.25$}
\psfrag{m5}[cr][Bl][.6][0]{$1.5$}
\includegraphics[scale=.25]{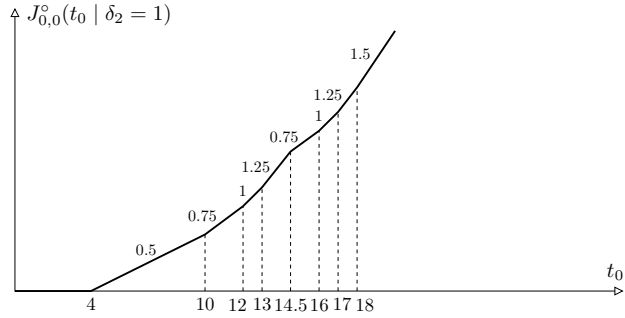}
\caption{Conditioned cost-to-go $J^{\circ}_{0,0} (t_{0} \mid \delta_{2} = 1)$.}
\label{fig:esS_J_0_0_2}
\end{figure}

In order to find the optimal cost-to-go $J^{\circ}_{0,0} (t_{0})$, it is necessary to carry out the following minimization
\begin{equation*}
J^{\circ}_{0,0} (t_{0}) = \min \big\{ J^{\circ}_{0,0} (t_{0} \mid \delta_{1} = 1) \, , \, J^{\circ}_{0,0} (t_{0} \mid \delta_{2} = 1) \big\}
\end{equation*}
which provides, in accordance with lemma~\ref{lem:min}, the function illustrated in figure~\ref{fig:esS_J_0_0}.

\begin{figure}[h]
\centering
\psfrag{f(x)}[cl][Bl][.8][0]{$J^{\circ}_{0,0} (t_{0})$}
\psfrag{x}[bc][Bl][.8][0]{$t_{1}$}
\psfrag{G1}[tc][Bl][.7][0]{$6$}
\psfrag{G2}[tc][Bl][.7][0]{$12$}
\psfrag{G3}[tc][Bl][.7][0]{$15$}
\psfrag{G4}[tc][Bl][.7][0]{$15.\overline{3}$}
\psfrag{G5}[tc][Bl][.7][0]{$16$}
\psfrag{G6}[tc][Bl][.7][0]{$17$}
\psfrag{G7}[tc][Bl][.7][0]{$18$}
\psfrag{m1}[cr][Bl][.6][0]{$0.75$}
\psfrag{m2}[cr][Bl][.6][0]{$1$}
\psfrag{m3}[cr][Bl][.6][0]{$1.5$}
\psfrag{m4}[cr][Bl][.6][0]{$1.25$}
\psfrag{m5}[cr][Bl][.6][0]{$1.5$}
\psfrag{m6}[cr][Bl][.6][0]{$0.75$}
\includegraphics[scale=.25]{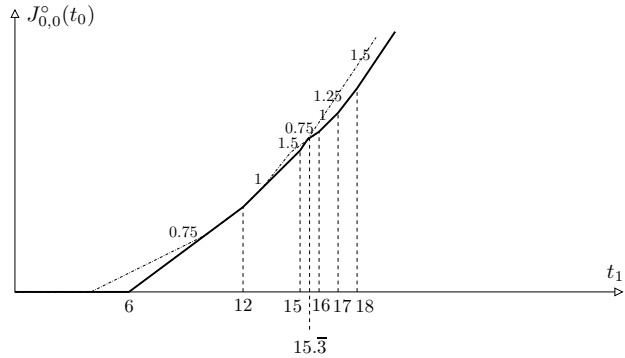}
\caption{Optimal cost-to-go $J^{\circ}_{0,0} (t_{0})$ in state $[ 0 \  0 \  t_{0}]^{T}$.}
\label{fig:esS_J_0_0}
\end{figure}

Since $J^{\circ}_{0,0} (t_{0} \mid \delta_{1} = 1)$ is the minimum in $(-\infty,15.\overline{3})$ and $J^{\circ}_{0,0} (t_{0} \mid \delta_{2} = 1)$ is the minimum in $[15.\overline{3},+\infty)$, the optimal control strategies for this state are
\begin{equation*}
\delta_{1}^{\circ} (0,0, t_{0}) = \left\{ \begin{array}{ll}
1 & t_{0} < 15.\overline{3}\\
0 & t_{0} \geq 15.\overline{3}
\end{array} \right. \qquad \delta_{2}^{\circ} (0,0, t_{0}) = \left\{ \begin{array}{ll}
0 & t_{0} < 15.\overline{3}\\
1 & t_{0} \geq 15.\overline{3}
\end{array} \right.
\end{equation*}
\begin{equation*}
\tau^{\circ} (0,0, t_{0}) = \delta_{1}^{\circ} (0,0, t_{0}) \, pt^{\circ}_{1,1}(t_{0}) + \delta_{2}^{\circ} (0,0, t_{0}) \, pt^{\circ}_{2,1}(t_{0}) = \left\{ \begin{array}{ll}
4 &  t_{0} < 12\\
-t_{0} + 16 & 12 \leq t_{0} < 15\\
2 & 15 \leq t_{0} < 15.\overline{3}\\
2 & 15.\overline{3} \leq t_{0} < 16\\
-t_{0} + 18 & 16 \leq t_{0} < 17\\
1 & t_{0} \geq 17
\end{array} \right.
\end{equation*}

\newpage
illustrated in figures~\ref{fig:esS_delta_0_0_1}, \ref{fig:esS_delta_0_0_2}, and~\ref{fig:esS_tau_0_0}, respectively.

\begin{figure}[h!]
\centering
\psfrag{f(x)}[cl][Bl][.8][0]{$\delta_{1}^{\circ} (0,0, t_{0})$}
\psfrag{x}[bc][Bl][.8][0]{$t_{0}$}
\psfrag{Y1}[cr][Bl][.7][0]{$1$}
\psfrag{G1}[tc][Bl][.7][0]{$15.\overline{3}$}
\includegraphics[scale=.25]{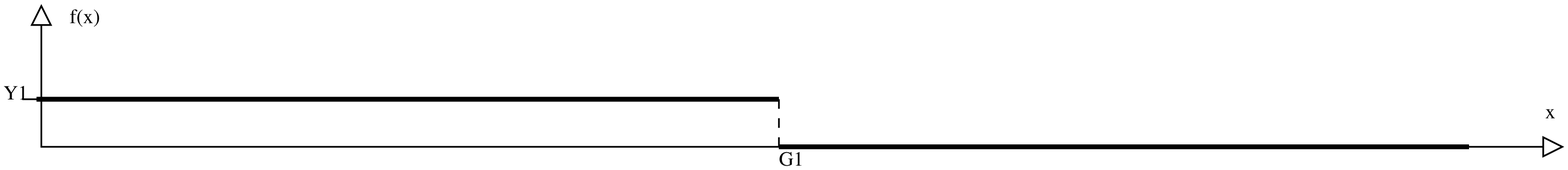}\vspace{-4pt}
\caption{Optimal control strategy $\delta_{1}^{\circ} (0,0, t_{0})$ in state $[ 0 \  0 \  t_{0}]^{T}$.}
\label{fig:esS_delta_0_0_1}
\end{figure}

\begin{figure}[h!]
\centering
\psfrag{f(x)}[cl][Bl][.8][0]{$\delta_{2}^{\circ} (0,0, t_{0})$}
\psfrag{x}[bc][Bl][.8][0]{$t_{0}$}
\psfrag{Y1}[cr][Bl][.7][0]{$1$}
\psfrag{G1}[tc][Bl][.7][0]{$15.\overline{3}$}
\includegraphics[scale=.25]{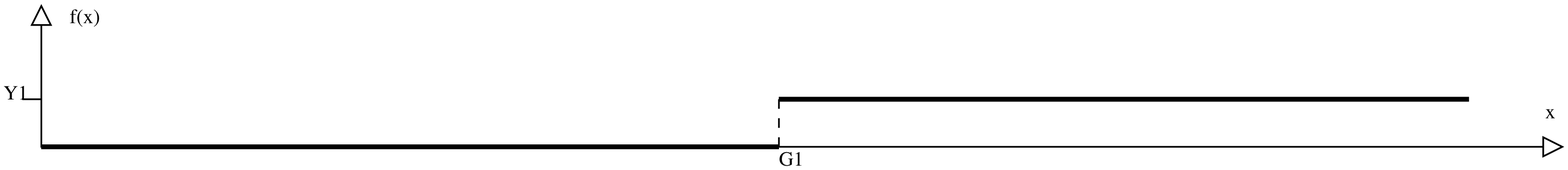}\vspace{-4pt}
\caption{Optimal control strategy $\delta_{2}^{\circ} (0,0, t_{0})$ in state $[ 0 \  0 \  t_{0}]^{T}$.}
\label{fig:esS_delta_0_0_2}
\end{figure}

\begin{figure}[h!]
\centering
\psfrag{f(x)}[cl][Bl][.8][0]{$\tau^{\circ} (0,0, t_{0})$}
\psfrag{x}[bc][Bl][.8][0]{$t_{0}$}
\psfrag{Y1}[cr][Bl][.7][0]{$1$}
\psfrag{Y2}[cr][Bl][.7][0]{$2$}
\psfrag{Y3}[cr][Bl][.7][0]{$4$}
\psfrag{G1}[tc][Bl][.7][0]{$12$}
\psfrag{G2}[tc][Bl][.7][0]{$15$}
\psfrag{G3}[tc][Bl][.7][0]{$16$}
\psfrag{G4}[tc][Bl][.7][0]{$17$}
\psfrag{G5}[tc][Bl][.7][0]{$15.\overline{3}$}
\includegraphics[scale=.25]{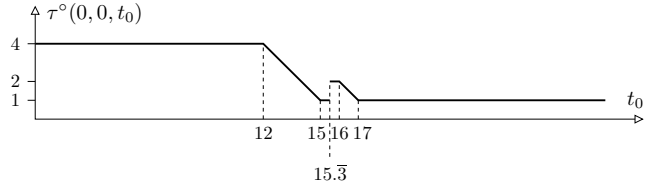}\vspace{-2pt}
\caption{Optimal control strategy $\tau^{\circ} (0,0, t_{0})$ (processing time) in state $[ 0 \  0 \  t_{0}]^{T}$.}
\label{fig:esS_tau_0_0}
\end{figure}

Since the two conditional costs-to-go have the same value in the interval $[10,13]$, the following functions represent alternative optimal control strategies for the considered state
\begin{equation*}
\delta_{1}^{\circ} (0,0, t_{0}) = \left\{ \begin{array}{ll}
1 & t_{0} \leq 10\\
0 & 10 \leq t_{0} < 13\\
1 & 13 \leq t_{0} < 15.\overline{3}\\
0 & t_{0} > 15.\overline{3}
\end{array} \right. \qquad \delta_{2}^{\circ} (0,0, t_{0}) = \left\{ \begin{array}{ll}
0 & t_{0} \leq 10\\
1 & 10 \leq t_{0} < 13\\
0 & 13 \leq t_{0} < 15.\overline{3}\\
1 & t_{0} > 15.\overline{3}
\end{array} \right.
\end{equation*}
\begin{equation*}
\tau^{\circ} (0,0, t_{1}) = \delta_{1}^{\circ} (0,0, t_{0}) \, pt^{\circ}_{1,1}(t_{0}) + \delta_{2}^{\circ} (0,0, t_{0}) \, pt^{\circ}_{2,1}(t_{0}) =  \left\{ \begin{array}{ll}
4 &  t_{0} < 10\\
2 & 10 \leq t_{0} < 12\\
-t_{0} + 14 & 12 \leq t_{0} < 13\\
-t_{0} + 16 & 13 \leq t_{0} < 15\\
1 & 15 \leq t_{0} < 15.\overline{3}\\
2 & 15.\overline{3} \leq t_{0} < 16\\
-t_{0} + 18 & 16 \leq t_{0} < 17\\
1 & t_{0} \geq 17
\end{array} \right.
\end{equation*}
Such functions are illustrated in figures~\ref{fig:esS_delta_0_0_1alt}, \ref{fig:esS_delta_0_0_2alt}, and~\ref{fig:esS_tau_0_0alt}, respectively.

\begin{figure}[h!]
\centering
\psfrag{f(x)}[cl][Bl][.8][0]{$\delta_{1}^{\circ} (0,0, t_{0})$}
\psfrag{x}[bc][Bl][.8][0]{$t_{0}$}
\psfrag{Y1}[cr][Bl][.7][0]{$1$}
\psfrag{G1}[Bc][Bl][.7][0]{$10$}
\psfrag{G2}[Bc][Bl][.7][0]{$13$}
\psfrag{G3}[Bc][Bl][.7][0]{$15.\overline{3}$}
\includegraphics[scale=.25]{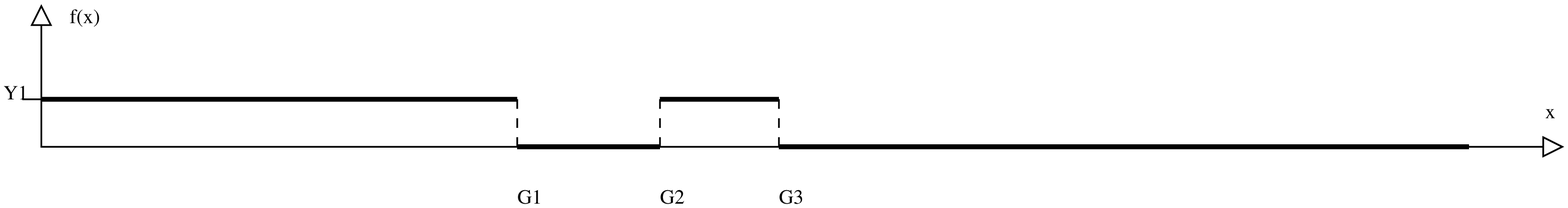}\vspace{-8pt}
\caption{Alternative optimal control strategy $\delta_{1}^{\circ} (0,0, t_{0})$ in state $[ 0 \  0 \  t_{0}]^{T}$.}
\label{fig:esS_delta_0_0_1alt}
\end{figure}

\begin{figure}[h!]
\centering
\psfrag{f(x)}[cl][Bl][.8][0]{$\delta_{2}^{\circ} (0,0, t_{0})$}
\psfrag{x}[bc][Bl][.8][0]{$t_{0}$}
\psfrag{Y1}[cr][Bl][.7][0]{$1$}
\psfrag{G1}[Bc][Bl][.7][0]{$10$}
\psfrag{G2}[Bc][Bl][.7][0]{$13$}
\psfrag{G3}[Bc][Bl][.7][0]{$15.\overline{3}$}
\includegraphics[scale=.25]{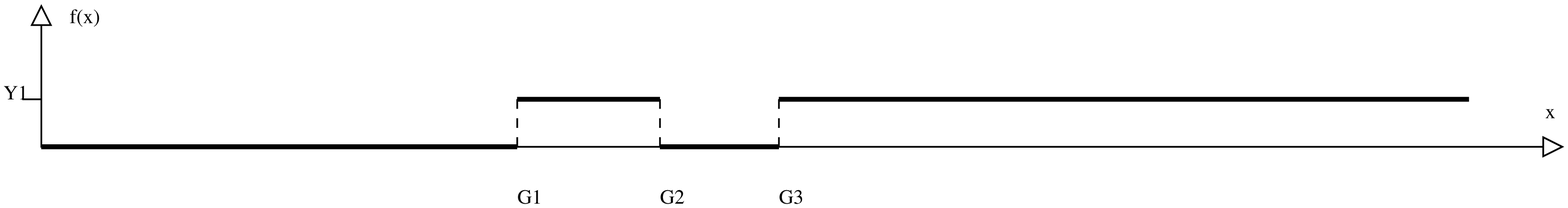}\vspace{-8pt}
\caption{Alternative optimal control strategy $\delta_{2}^{\circ} (0,0, t_{0})$ in state $[ 0 \  0 \  t_{0}]^{T}$.}
\label{fig:esS_delta_0_0_2alt}
\end{figure}

\begin{figure}[h!]
\centering
\psfrag{f(x)}[cl][Bl][.8][0]{$\tau^{\circ} (0,0, t_{0})$}
\psfrag{x}[bc][Bl][.8][0]{$t_{0}$}
\psfrag{Y1}[cr][Bl][.7][0]{$1$}
\psfrag{Y2}[cr][Bl][.7][0]{$2$}
\psfrag{Y3}[cr][Bl][.7][0]{$4$}
\psfrag{G1}[tc][Bl][.7][0]{$10$}
\psfrag{G2}[tc][Bl][.7][0]{$12$}
\psfrag{G3}[tc][Bl][.7][0]{$13$}
\psfrag{G4}[tc][Bl][.7][0]{$15$}
\psfrag{G5}[tc][Bl][.7][0]{$16$}
\psfrag{G6}[tc][Bl][.7][0]{$17$}
\psfrag{G7}[tc][Bl][.7][0]{$15.\overline{3}$}
\includegraphics[scale=.25]{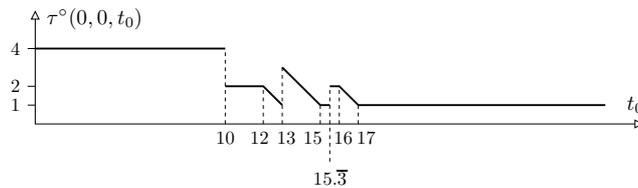}\vspace{-4pt}
\caption{Alternative optimal control strategy $\tau^{\circ} (0,0, t_{0})$ (service time) in state $[ 0 \  0 \  t_{0}]^{T}$.}
\label{fig:esS_tau_0_0alt}
\end{figure}

%
\clearpage

\section{Application to the single machine scheduling -- Example with setup} \label{sec:app2}

Consider a single machine scheduling problem in which 4 jobs of class $P_{1}$ and 3 jobs of class $P_{2}$ must be executed. The due dates, the marginal tardiness costs of jobs, the processing time bounds and the marginal deviation costs of jobs are:

\begin{center}
\begin{tabular}{|ll|}
\hline
$\alpha_{1,1} = 0.75$ & $dd_{1,1} = 19$\\
$\alpha_{1,2} = 0.5$ & $dd_{1,2} = 24$\\
$\alpha_{1,3} = 1.5$ & $dd_{1,3} = 29$\\
$\alpha_{1,4} = 0.5$ & $dd_{1,4} = 41$\\
\hline
\multicolumn{2}{|c|}{$\beta_{1} = 1$}\\
$pt^{\mathrm{low}}_{1} = 4$ & $pt^{\mathrm{nom}}_{1} = 8$\\
\hline
\end{tabular}
\hspace{.5cm}
\begin{tabular}{|ll|}
\hline
$\alpha_{2,1} = 2$ & $dd_{2,1} = 21$\\
$\alpha_{2,2} = 1$ & $dd_{2,2} = 24$\\
$\alpha_{2,3} = 1$ & $dd_{2,3} = 38$\\
\hline
\multicolumn{2}{|c|}{$\beta_{2} = 1.5$}\\
$pt^{\mathrm{low}}_{2} = 4$ & $pt^{\mathrm{nom}}_{2} = 6$\\
\hline
\end{tabular}
\end{center}
A setup is required between the execution of jobs of different classes. Setup times and costs are:
\begin{center}
\begin{tabular}{|ll|}
\hline
$st_{1,1} = 0$ & $st_{1,2} = 1$\\
$st_{2,1} = 0.5$ & $st_{2,2} = 0$\\
\hline
\end{tabular}
\hspace{.5cm}
\begin{tabular}{|ll|}
\hline
$sc_{1,1} = 0$ & $sc_{1,2} = 0.5$\\
$sc_{2,1} = 1$ & $sc_{2,2} = 0$\\
\hline
\end{tabular}
\end{center}

The evolution of the system state can be represented by the following diagram.
\begin{figure}[h]
\centering
\psfrag{S0}[bc][Bl][.7][0]{$S0$}
\psfrag{S1}[bc][Bl][.7][0]{$S1$}
\psfrag{S2}[tc][Bl][.7][0]{$S2$}
\psfrag{S3}[bc][Bl][.7][0]{$S3$}
\psfrag{S4}[bc][Bl][.7][0]{$S4$}
\psfrag{S5}[tc][Bl][.7][0]{$S5$}
\psfrag{S6}[tc][Bl][.7][0]{$S6$}
\psfrag{S7}[bc][Bl][.7][0]{$S7$}
\psfrag{S8}[bc][Bl][.7][0]{$S8$}
\psfrag{S9}[tc][Bl][.7][0]{$S9$}
\psfrag{S10}[bc][Bl][.7][0]{$S10$}
\psfrag{S11}[tc][Bl][.7][0]{$S11$}
\psfrag{S12}[tc][Bl][.7][0]{$S12$}
\psfrag{S13}[bc][Bl][.7][0]{$S13$}
\psfrag{S14}[bc][Bl][.7][0]{$S14$}
\psfrag{S15}[tc][Bl][.7][0]{$S15$}
\psfrag{S16}[bc][Bl][.7][0]{$S16$}
\psfrag{S17}[tc][Bl][.7][0]{$S17$}
\psfrag{S18}[bc][Bl][.7][0]{$S18$}
\psfrag{S19}[tc][Bl][.7][0]{$S19$}
\psfrag{S20}[bc][Bl][.7][0]{$S20$}
\psfrag{S21}[tc][Bl][.7][0]{$S21$}
\psfrag{S22}[bc][Bl][.7][0]{$S22$}
\psfrag{S23}[tc][Bl][.7][0]{$S23$}
\psfrag{S24}[bc][Bl][.7][0]{$S24$}
\psfrag{S25}[tc][Bl][.7][0]{$S25$}
\psfrag{S26}[bc][Bl][.7][0]{$S26$}
\psfrag{S27}[tc][Bl][.7][0]{$S27$}
\psfrag{S28}[bc][Bl][.7][0]{$S28$}
\psfrag{S29}[tc][Bl][.7][0]{$S29$}
\psfrag{S30}[bc][Bl][.7][0]{$S30$}
\psfrag{S31}[tc][Bl][.7][0]{$S31$}
\psfrag{ST0}[bc][Bl][.7][0]{\it stage $0$}
\psfrag{ST1}[bc][Bl][.8][0]{\it stage $1$}
\psfrag{ST2}[bc][Bl][.8][0]{\it stage $2$}
\psfrag{ST3}[bc][Bl][.8][0]{\it stage $3$}
\psfrag{ST4}[bc][Bl][.8][0]{\it stage $4$}
\psfrag{ST5}[bc][Bl][.8][0]{\it stage $5$}
\psfrag{ST6}[bc][Bl][.8][0]{\it stage $6$}
\psfrag{ST7}[bc][Bl][.8][0]{\it stage $7$}
\includegraphics[scale=.5]{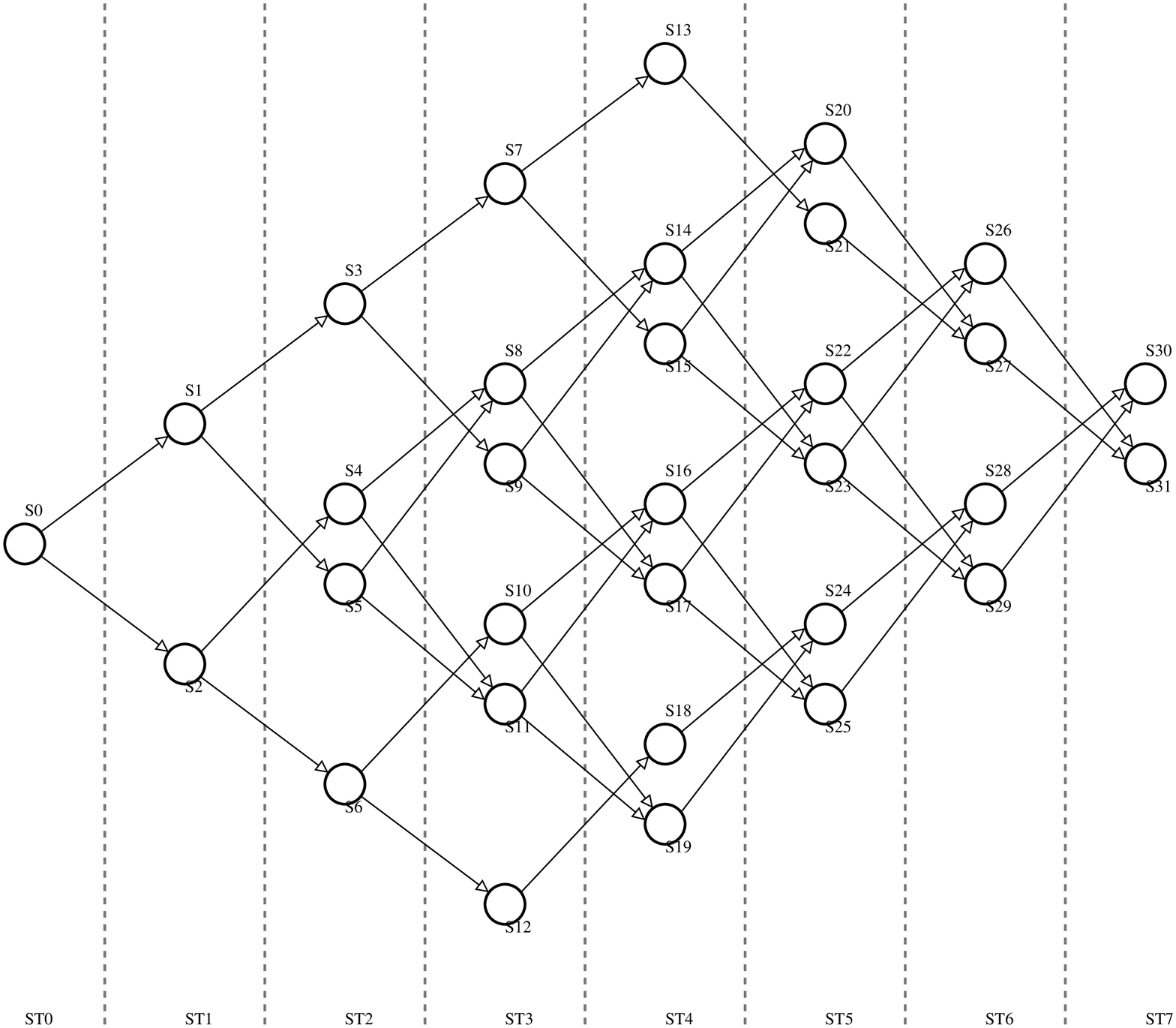}
\caption{State diagram in the case of two classes of jobs, where $N_{1} = 4$ and $N_{2} = 3$, with setup.}
\label{fig:esS2_statediagram}
\end{figure}

The 32 states (from $S0$ to $S31$) in the 7 stages are:

\begin{center}
\begin{tabular}{|ll|}
\hline
\multicolumn{2}{|c|}{\it stage $0$}\\
\hline
{\small $S0$} & $[0 \; 0 \; 0 \; t_{0}]^{T}$\\
\hline
\end{tabular}
\hspace{.5cm}
\begin{tabular}{|ll|}
\hline
\multicolumn{2}{|c|}{\it stage $1$}\\
\hline
{\small $S1$} & $[1 \; 0 \; 1 \; t_{1}]^{T}$\\
{\small $S2$} & $[0 \; 1 \; 2 \; t_{1}]^{T}$\\
\hline
\end{tabular}
\hspace{.5cm}
\begin{tabular}{|ll|}
\hline
\multicolumn{2}{|c|}{\it stage $2$}\\
\hline
{\small $S3$} & $[2 \; 0 \; 1 \; t_{2}]^{T}$\\
{\small $S4$} & $[1 \; 1 \; 1 \; t_{2}]^{T}$\\
{\small $S5$} & $[1 \; 1 \; 2 \; t_{2}]^{T}$\\
{\small $S6$} & $[0 \; 2 \; 2 \; t_{2}]^{T}$\\
\hline
\end{tabular}
\hspace{.5cm}
\begin{tabular}{|ll|}
\hline
\multicolumn{2}{|c|}{\it stage $3$}\\
\hline
{\small $S7$} & $[3 \; 0 \; 1 \; t_{3}]^{T}$\\
{\small $S8$} & $[2 \; 1 \; 1 \; t_{3}]^{T}$\\
{\small $S9$} & $[2 \; 1 \; 2 \; t_{3}]^{T}$\\
{\small $S10$} & $[1 \; 2 \; 1 \; t_{3}]^{T}$\\
{\small $S11$} & $[1 \; 2 \; 2 \; t_{3}]^{T}$\\
{\small $S12$} & $[0 \; 3 \; 2 \; t_{3}]^{T}$\\
\hline
\end{tabular}
\end{center}

\begin{center}
\begin{tabular}{|ll|}
\hline
\multicolumn{2}{|c|}{\it stage $4$}\\
\hline
{\small $S13$} & $[4 \; 0 \; 1 \; t_{4}]^{T}$\\
{\small $S14$} & $[3 \; 1 \; 1 \; t_{4}]^{T}$\\
{\small $S15$} & $[3 \; 1 \; 2 \; t_{4}]^{T}$\\
{\small $S16$} & $[2 \; 2 \; 1 \; t_{4}]^{T}$\\
{\small $S17$} & $[2 \; 2 \; 2 \; t_{4}]^{T}$\\
{\small $S18$} & $[1 \; 3 \; 1 \; t_{4}]^{T}$\\
{\small $S19$} & $[1 \; 3 \; 2 \; t_{4}]^{T}$\\
\hline
\end{tabular}
\hspace{.5cm}
\begin{tabular}{|ll|}
\hline
\multicolumn{2}{|c|}{\it stage $5$}\\
\hline
{\small $S20$} & $[4 \; 1 \; 1 \; t_{5}]^{T}$\\
{\small $S21$} & $[4 \; 1 \; 2 \; t_{5}]^{T}$\\
{\small $S22$} & $[3 \; 2 \; 1 \; t_{5}]^{T}$\\
{\small $S23$} & $[3 \; 2 \; 2 \; t_{5}]^{T}$\\
{\small $S24$} & $[2 \; 3 \; 1 \; t_{5}]^{T}$\\
{\small $S25$} & $[2 \; 3 \; 2 \; t_{5}]^{T}$\\
\hline
\end{tabular}
\hspace{.5cm}
\begin{tabular}{|ll|}
\hline
\multicolumn{2}{|c|}{\it stage $6$}\\
\hline
{\small $S26$} & $[4 \; 2 \; 1 \; t_{6}]^{T}$\\
{\small $S27$} & $[4 \; 2 \; 2 \; t_{6}]^{T}$\\
{\small $S28$} & $[3 \; 3 \; 1 \; t_{6}]^{T}$\\
{\small $S29$} & $[3 \; 3 \; 2 \; t_{6}]^{T}$\\
\hline
\end{tabular}
\hspace{.5cm}
\begin{tabular}{|ll|}
\hline
\multicolumn{2}{|c|}{\it stage $7$}\\
\hline
{\small $S30$} & $[4 \; 3 \; 1 \; t_{7}]^{T}$\\
{\small $S31$} & $[4 \; 3 \; 2 \; t_{7}]^{T}$\\
\hline
\end{tabular}
\end{center}

The application of dynamic programming, in conjunction with the new lemmas, provides the following optimal control strategies.

{\it Remark.} In the following, the time variables $t_{j}$, $j = 0, \ldots, 7$, will be considered $\in \mathbb{R}$, that is, also negative values are taken into account. Negative values of $t_{j}$ can be considered when the strategies are determined in advance with respect to the initial time instant $0$ at which the processing of the jobs starts. In this case, it is possible to exploit the optimal control strategies determined for the negative values of $t_{j}$ to start the execution of the jobs as soon as they become available, even before $0$.

\vspace{12pt}
{\bf Stage $7$ -- State $\boldsymbol{[4 \; 3 \; 2 \; t_{7}]^{T}}$ ($S31$)}

No decision has to be taken in state $[4 \; 3 \; 2 \; t_{7}]^{T}$. The optimal cost-to-go is obviously null, that is
\begin{equation*}
J^{\circ}_{4,3,2} (t_{7}) = 0
\end{equation*}

{\bf Stage $7$ -- State $\boldsymbol{[4 \; 3 \; 1 \; t_{7}]^{T}}$ ($S30$)}

No decision has to be taken in state $[4 \; 3 \; 1 \; t_{7}]^{T}$. The optimal cost-to-go is obviously null, that is
\begin{equation*}
J^{\circ}_{4,3,1} (t_{7}) = 0
\end{equation*}

{\bf Stage $6$ -- State $\boldsymbol{[3 \; 3 \; 2 \; t_{6}]^{T}}$ ($S29$)}

In state $[3 \; 3 \; 2 \; t_{6}]^{T}$ all jobs of class $P_{2}$ have been completed; then the decision about the class of the next job to be executed is mandatory. The cost function to be minimized in this state, with respect to the (continuos) decision variable $\tau$ only (which corresponds to the processing time $pt_{1,4}$), is
\begin{equation*}
\alpha_{1,4} \, \max \{ t_{6} + st_{2,1} + \tau - dd_{1,4} \, , \, 0 \} + \beta_{1} \, ( pt^{\mathrm{nom}}_{1} - \tau ) + sc_{2,1} + J^{\circ}_{4,3,1} (t_{7})
\end{equation*}
that can be written as $f (pt_{1,4} + t_{6}) + g (pt_{1,4})$ being
\begin{equation*}
f (pt_{1,4} + t_{6}) = 0.5 \cdot \max \{ pt_{1,4} + t_{6} - 40.5 \, , \, 0 \} + 1
\end{equation*}
\begin{equation*}
g (pt_{1,4}) = \left\{ \begin{array}{ll}
8 - pt_{1,4} & pt_{1,4} \in [ 4 , 8 )\\
0 & pt_{1,4} \notin [ 4 , 8 )
\end{array} \right.
\end{equation*}
The function $pt^{\circ}_{1,4}(t_{6}) = \arg \min_{pt_{1,4}} \{ f (pt_{1,4} + t_{6}) + g (pt_{1,4}) \} $, with $4 \leq pt_{1,4} \leq 8$, is determined by applying lemma~\ref{lem:xopt}. It is
\begin{equation*}
pt^{\circ}_{1,4}(t_{6}) = x_{\mathrm{e}}(t_{6}) \qquad \text{with} \quad x_{\mathrm{e}}(t_{6}) = 8 \quad \forall \, t_{6}
\end{equation*}

Taking into account the mandatory decision about the class of the next job to be executed, the optimal control strategies for this state are
\begin{equation*}
\delta_{1}^{\circ} (3,3,2, t_{6}) = 1 \quad \forall \, t_{6} \qquad \delta_{2}^{\circ} (3,3,2, t_{6}) = 0 \quad \forall \, t_{6}
\end{equation*}
\begin{equation*}
\tau^{\circ} (3,3,2, t_{6}) = 8 \quad \forall \, t_{6}
\end{equation*}
The optimal control strategy $\tau^{\circ} (3,3,2, t_{6})$ is illustrated in figure~\ref{fig:esS3_tau_3_3_2}.

\begin{figure}[h!]
\centering
\psfrag{f(x)}[Bl][Bl][.8][0]{$\tau^{\circ} (3,3,2, t_{6})$}
\psfrag{x}[bc][Bl][.8][0]{$t_{6}$}
\psfrag{0}[tc][Bl][.8][0]{$0$}
\includegraphics[scale=.2]{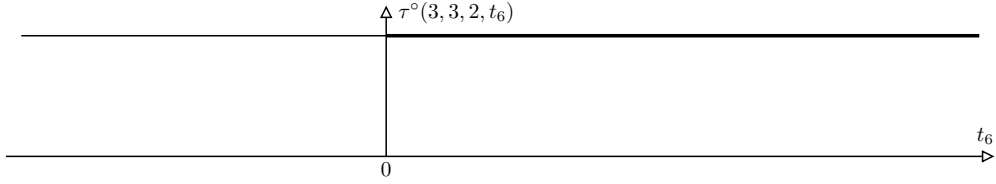}
\caption{Optimal control strategy $\tau^{\circ} (3,3,2, t_{6})$ in state $[ 3 \; 3 \; 2 \; t_{6}]^{T}$.}
\label{fig:esS3_tau_3_3_2}
\end{figure}

The optimal cost-to-go $J^{\circ}_{3,3,2} (t_{6}) = f ( pt^{\circ}_{1,4}(t_{6}) + t_{6} ) + g ( pt^{\circ}_{1,4}(t_{6}) )$, illustrated in figure~\ref{fig:esS2_J_3_3_2}, is provided by lemma~\ref{lem:h(t)}. It is specified by the initial value 1, by the abscissa $\gamma_{1} = 32.5$ at which the slope changes, and by the slope $\mu_{1} = 0.5$ in the interval $[32.5,+\infty)$.

\begin{figure}[h!]
\centering
\psfrag{0}[cc][tc][.8][0]{$0$}
\includegraphics[scale=.8]{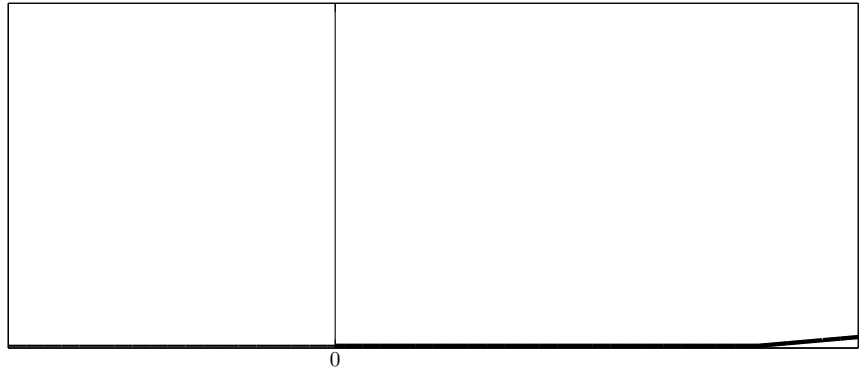}%
\vspace{-12pt}
\caption{Optimal cost-to-go $J^{\circ}_{3,3,2} (t_{6})$ in state $[3 \; 3 \; 2 \; t_{6}]^{T}$.}
\label{fig:esS2_J_3_3_2}
\end{figure}

\vspace{12pt}
{\bf Stage $6$ -- State $\boldsymbol{[3 \; 3 \; 1 \; t_{6}]^{T}}$ ($S28$)}

In state $[3 \; 3 \; 1 \; t_{6}]^{T}$ all jobs of class $P_{2}$ have been completed; then the decision about the class of the next job to be executed is mandatory. The cost function to be minimized in this state, with respect to the (continuos) decision variable $\tau$ only (which corresponds to the processing time $pt_{1,4}$), is
\begin{equation*}
\alpha_{1,4} \, \max \{ t_{6} + st_{1,1} + \tau - dd_{1,4} \, , \, 0 \} + \beta_{1} \, (pt^{\mathrm{nom}}_{1} - \tau) + sc_{1,1} + J^{\circ}_{4,3,1} (t_{7})
\end{equation*}
that can be written as $f (pt_{1,4} + t_{6}) + g (pt_{1,4})$ being
\begin{equation*}
f (pt_{1,4} + t_{6}) = 0.5 \cdot \max \{ pt_{1,4} + t_{6} - 41 \, , \, 0 \}
\end{equation*}
\begin{equation*}
g (pt_{1,4}) = \left\{ \begin{array}{ll}
8 - pt_{1,4} & pt_{1,4} \in [ 4 , 8 )\\
0 & pt_{1,4} \notin [ 4 , 8 )
\end{array} \right.
\end{equation*}
The function $pt^{\circ}_{1,4}(t_{6}) = \arg \min_{pt_{1,4}} \{ f (pt_{1,4} + t_{6}) + g (pt_{1,4}) \} $, with $4 \leq pt_{1,4} \leq 8$, is determined by applying lemma~\ref{lem:xopt}. It is
\begin{equation*}
pt^{\circ}_{1,4}(t_{6}) = x_{\mathrm{e}}(t_{6}) \qquad \text{with} \quad x_{\mathrm{e}}(t_{6}) = 8 \quad \forall \, t_{6}
\end{equation*}

Taking into account the mandatory decision about the class of the next job to be executed, the optimal control strategies for this state are
\begin{equation*}
\delta_{1}^{\circ} (3,3,1, t_{6}) = 1 \quad \forall \, t_{6} \qquad \delta_{2}^{\circ} (3,3,1, t_{6}) = 0 \quad \forall \, t_{6}
\end{equation*}
\begin{equation*}
\tau^{\circ} (3,3,1, t_{6}) = 8 \quad \forall \, t_{6}
\end{equation*}
The optimal control strategy $\tau^{\circ} (3,3,1, t_{6})$ is illustrated in figure~\ref{fig:esS3_tau_3_3_1}.

\begin{figure}[h!]
\centering
\psfrag{f(x)}[Bl][Bl][.8][0]{$\tau^{\circ} (3,3,1, t_{6})$}
\psfrag{x}[bc][Bl][.8][0]{$t_{6}$}
\psfrag{0}[tc][Bl][.8][0]{$0$}
\includegraphics[scale=.2]{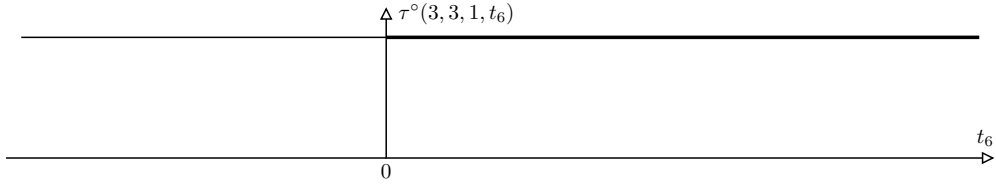}
\caption{Optimal control strategy $\tau^{\circ} (3,3,1, t_{6})$ in state $[ 3 \; 3 \; 1 \; t_{6}]^{T}$.}
\label{fig:esS3_tau_3_3_1}
\end{figure}

The optimal cost-to-go $J^{\circ}_{3,3,1} (t_{6}) = f ( pt^{\circ}_{1,4}(t_{6}) + t_{6} ) + g ( pt^{\circ}_{1,4}(t_{6}) )$, illustrated in figure~\ref{fig:esS2_J_3_3_1}, is provided by lemma~\ref{lem:h(t)}. It is specified by the initial value 0, by the abscissa $\gamma_{1} = 33$ at which the slope changes, and by the slope $\mu_{1} = 0.5$ in the interval $[33,+\infty)$.

\begin{figure}[h!]
\centering
\psfrag{0}[cc][tc][.8][0]{$0$}
\includegraphics[scale=.8]{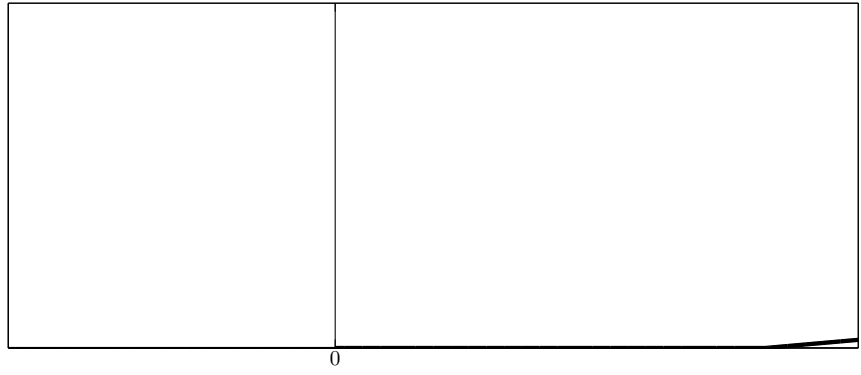}%
\vspace{-12pt}
\caption{Optimal cost-to-go $J^{\circ}_{3,3,1} (t_{6})$ in state $[3 \; 3 \; 1 \; t_{6}]^{T}$.}
\label{fig:esS2_J_3_3_1}
\end{figure}

{\bf Stage $6$ -- State $\boldsymbol{[4 \; 2 \; 2 \; t_{6}]^{T}}$ ($S27$)}

In state $[4 \; 2 \; 2 \; t_{6}]^{T}$ all jobs of class $P_{1}$ have been completed; then the decision about the class of the next job to be executed is mandatory. The cost function to be minimized in this state, with respect to the (continuos) decision variable $\tau$ only (which corresponds to the processing time $pt_{2,3}$), is
\begin{equation*}
\alpha_{2,3} \, \max \{ t_{6} + st_{2,2} + \tau - dd_{2,3} \, , \, 0 \} + \beta_{2} \, (pt^{\mathrm{nom}}_{2} - \tau) + sc_{2,2} + J^{\circ}_{4,3,2} (t_{7})
\end{equation*}
that can be written as $f (pt_{2,3} + t_{6}) + g (pt_{2,3})$ being
\begin{equation*}
f (pt_{2,3} + t_{6}) = \max \{ pt_{2,3} + t_{6} - 38 \, , \, 0 \}
\end{equation*}
\begin{equation*}
g (pt_{2,3}) = \left\{ \begin{array}{ll}
1.5 \cdot (6 - pt_{2,3}) & pt_{2,3} \in [ 4 , 6 )\\
0 & pt_{2,3} \notin [ 4 , 6 )
\end{array} \right.
\end{equation*}
The function $pt^{\circ}_{2,3}(t_{6}) = \arg \min_{pt_{2,3}} \{ f (pt_{2,3} + t_{6}) + g (pt_{2,3}) \} $, with $4 \leq pt_{2,3} \leq 6$, is determined by applying lemma~\ref{lem:xopt}. It is
\begin{equation*}
pt^{\circ}_{2,3}(t_{6}) = x_{\mathrm{e}}(t_{6}) \qquad \text{with} \quad x_{\mathrm{e}}(t_{6}) = 6 \quad \forall \, t_{6}
\end{equation*}

Taking into account the mandatory decision about the class of the next job to be executed, the optimal control strategies for this state are
\begin{equation*}
\delta_{1}^{\circ} (4,2,2, t_{6}) = 0 \quad \forall \, t_{6} \qquad \delta_{2}^{\circ} (4,2,2, t_{6}) = 1 \quad \forall \, t_{6}
\end{equation*}
\begin{equation*}
\tau^{\circ} (4,2,2, t_{6}) = 6 \quad \forall \, t_{6}
\end{equation*}
The optimal control strategy $\tau^{\circ} (4,2,2, t_{6})$ is illustrated in figure~\ref{fig:esS3_tau_4_2_2}.

The optimal cost-to-go $J^{\circ}_{4,2,1} (t_{6}) = f ( pt^{\circ}_{2,3}(t_{6}) + t_{6} ) + g ( pt^{\circ}_{2,3}(t_{6}) )$, illustrated in figure~\ref{fig:esS2_J_4_2_2}, is provided by lemma~\ref{lem:h(t)}. It is specified by the initial value 0, by the abscissa $\gamma_{1} = 32$ at which the slope changes, and by the slope $\mu_{1} = 1$ in the interval $[32,+\infty)$.

\newpage

\begin{figure}[h!]
\centering
\psfrag{f(x)}[Bl][Bl][.8][0]{$\tau^{\circ} (4,2,2, t_{6})$}
\psfrag{x}[bc][Bl][.8][0]{$t_{6}$}
\psfrag{0}[tc][Bl][.8][0]{$0$}
\includegraphics[scale=.2]{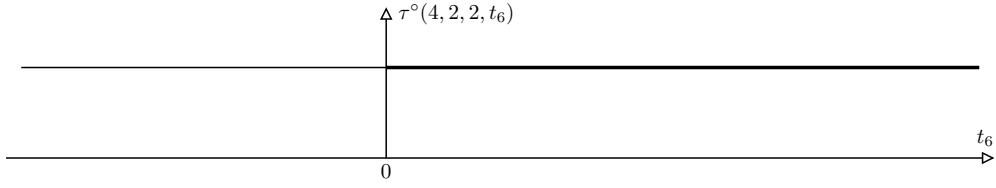}
\caption{Optimal control strategy $\tau^{\circ} (4,2,2, t_{6})$ in state $[ 4 \; 2 \; 2 \; t_{6}]^{T}$.}
\label{fig:esS3_tau_4_2_2}
\end{figure}

\begin{figure}[h!]
\centering
\psfrag{0}[cc][tc][.8][0]{$0$}
\includegraphics[scale=.8]{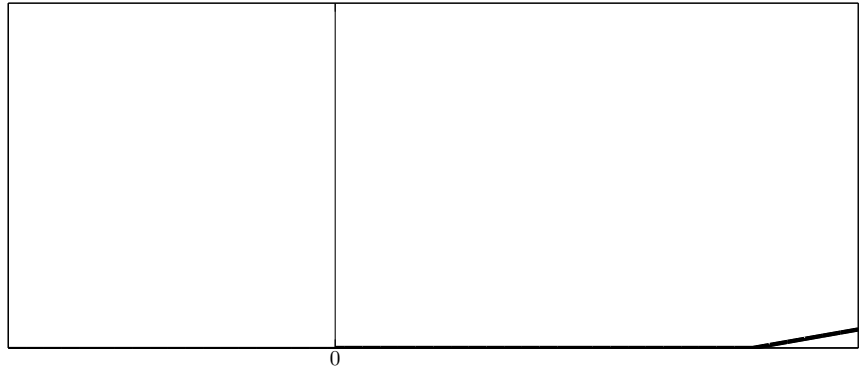}%
\vspace{-12pt}
\caption{Optimal cost-to-go $J^{\circ}_{4,2,2} (t_{6})$ in state $[4 \; 2 \; 2 \; t_{6}]^{T}$.}
\label{fig:esS2_J_4_2_2}
\end{figure}

{\bf Stage $6$ -- State $\boldsymbol{[4 \; 2 \; 1 \; t_{6}]^{T}}$ ($S26$)}

In state $[4 \; 2 \; 1 \; t_{6}]^{T}$ all jobs of class $P_{1}$ have been completed; then the decision about the class of the next job to be executed is mandatory. The cost function to be minimized in this state, with respect to the (continuos) decision variable $\tau$ only (which corresponds to the processing time $pt_{2,3}$), is
\begin{equation*}
\alpha_{2,3} \, \max \{ t_{6} + st_{1,2} + \tau - dd_{2,3} \, , \, 0 \} + \beta_{2} \, (pt^{\mathrm{nom}}_{2} - \tau) + sc_{1,2} + J^{\circ}_{4,3,2} (t_{7})
\end{equation*}
that can be written as $f (pt_{2,3} + t_{6}) + g (pt_{2,3})$ being
\begin{equation*}
f (pt_{2,3} + t_{6}) = \max \{ pt_{2,3} + t_{6} - 37 \, , \, 0 \} + 0.5
\end{equation*}
\begin{equation*}
g (pt_{2,3}) = \left\{ \begin{array}{ll}
1.5 \cdot (6 - pt_{2,3}) & pt_{2,3} \in [ 4 , 6 )\\
0 & pt_{2,3} \notin [ 4 , 6 )
\end{array} \right.
\end{equation*}
The function $pt^{\circ}_{2,3}(t_{6}) = \arg \min_{pt_{2,3}} \{ f (pt_{2,3} + t_{6}) + g (pt_{2,3}) \} $, with $4 \leq pt_{2,3} \leq 6$, is determined by applying lemma~\ref{lem:xopt}. It is
\begin{equation*}
pt^{\circ}_{2,3}(t_{6}) = x_{\mathrm{e}}(t_{6}) \qquad \text{with} \quad x_{\mathrm{e}}(t_{6}) = 6 \quad \forall \, t_{6}
\end{equation*}

Taking into account the mandatory decision about the class of the next job to be executed, the optimal control strategies for this state are
\begin{equation*}
\delta_{1}^{\circ} (4,2,1, t_{6}) = 0 \quad \forall \, t_{6} \qquad \delta_{2}^{\circ} (4,2,1, t_{6}) = 1 \quad \forall \, t_{6}
\end{equation*}
\begin{equation*}
\tau^{\circ} (4,2,1, t_{6}) = 6 \quad \forall \, t_{6}
\end{equation*}
The optimal control strategy $\tau^{\circ} (4,2,1, t_{6})$ is illustrated in figure~\ref{fig:esS3_tau_4_2_1}.

\begin{figure}[h!]
\centering
\psfrag{f(x)}[Bl][Bl][.8][0]{$\tau^{\circ} (4,2,1, t_{6})$}
\psfrag{x}[bc][Bl][.8][0]{$t_{6}$}
\psfrag{0}[tc][Bl][.8][0]{$0$}
\includegraphics[scale=.2]{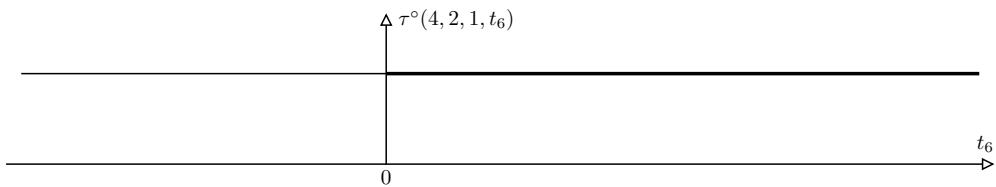}
\caption{Optimal control strategy $\tau^{\circ} (4,2,1, t_{6})$ in state $[ 4 \; 2 \; 1 \; t_{6}]^{T}$.}
\label{fig:esS3_tau_4_2_1}
\end{figure}

The optimal cost-to-go $J^{\circ}_{4,2,1} (t_{6}) = f ( pt^{\circ}_{2,3}(t_{6}) + t_{6} ) + g ( pt^{\circ}_{2,3}(t_{6}) )$, illustrated in figure~\ref{fig:esS2_J_4_2_1}, is provided by lemma~\ref{lem:h(t)}. It is specified by the initial value 0.5, by the abscissa $\gamma_{1} = 31$ at which the slope changes, and by the slope $\mu_{1} = 1$ in the interval $[31,+\infty)$.

\begin{figure}[h!]
\centering
\psfrag{0}[cc][tc][.8][0]{$0$}
\includegraphics[scale=.8]{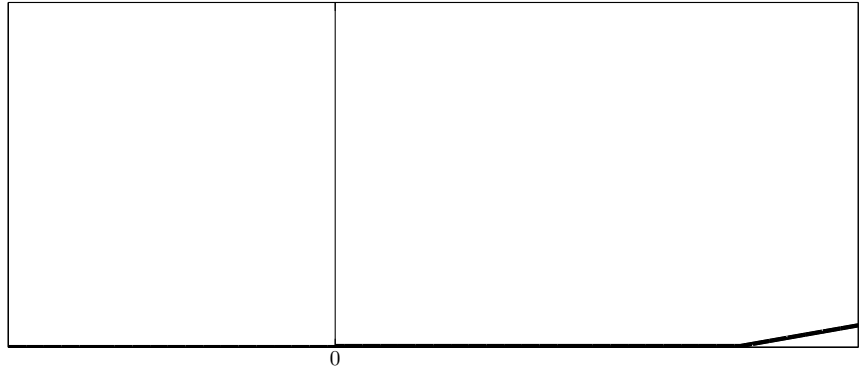}%
\vspace{-12pt}
\caption{Optimal cost-to-go $J^{\circ}_{4,2,1} (t_{6})$ in state $[4 \; 2 \; 1 \; t_{6}]^{T}$.}
\label{fig:esS2_J_4_2_1}
\end{figure}

{\bf Stage $5$ -- State $\boldsymbol{[2 \; 3 \; 2 \; t_{5}]^{T}}$ ($S25$)}

In state $[2 \; 3 \; 2 \; t_{5}]^{T}$ all jobs of class $P_{2}$ have been completed; then the decision about the class of the next job to be executed is mandatory. The cost function to be minimized in this state, with respect to the (continuos) decision variable $\tau$ only (which corresponds to the processing time $pt_{1,3}$), is
\begin{equation*}
\alpha_{1,3} \, \max \{ t_{5} + st_{2,1} + \tau - dd_{1,3} \, , \, 0 \} + \beta_{1} \, (pt^{\mathrm{nom}}_{1} - \tau) + sc_{2,1} + J^{\circ}_{3,3,1} (t_{6})
\end{equation*}
that can be written as $f (pt_{1,3} + t_{5}) + g (pt_{1,3})$ being
\begin{equation*}
f (pt_{1,3} + t_{5}) = 1.5 \cdot \max \{ pt_{1,3} + t_{5} - 28.5 \, , \, 0 \} + 1 + J^{\circ}_{3,3,1} (pt_{1,3} + t_{5} + 0.5)
\end{equation*}
\begin{equation*}
g (pt_{1,3}) = \left\{ \begin{array}{ll}
8 - pt_{1,3} & pt_{1,3} \in [ 4 , 8 )\\
0 & pt_{1,3} \notin [ 4 , 8 )
\end{array} \right.
\end{equation*}
The function $pt^{\circ}_{1,3}(t_{5}) = \arg \min_{pt_{1,3}} \{ f (pt_{1,3} + t_{5}) + g (pt_{1,3}) \} $, with $4 \leq pt_{1,3} \leq 8$, is determined by applying lemma~\ref{lem:xopt}. It is
\begin{equation*}
pt^{\circ}_{1,3}(t_{5}) = x_{\mathrm{e}}(t_{5}) \qquad \text{with} \quad x_{\mathrm{e}}(t_{5}) = \left\{ \begin{array}{ll}
8 &  t_{5} < 20.5\\
-t_{5} + 28.5 & 20.5 \leq t_{5} < 24.5\\
4 & t_{5} \geq 24.5
\end{array} \right.
\end{equation*}

Taking into account the mandatory decision about the class of the next job to be executed, the optimal control strategies for this state are
\begin{equation*}
\delta_{1}^{\circ} (2,3,2, t_{5}) = 1 \quad \forall \, t_{5} \qquad \delta_{2}^{\circ} (2,3,2, t_{5}) = 0 \quad \forall \, t_{5}
\end{equation*}
\begin{equation*}
\tau^{\circ} (2,3,2, t_{5}) = \left\{ \begin{array}{ll}
8 &  t_{5} < 20.5\\
-t_{5} + 28.5 & 20.5 \leq t_{5} < 24.5\\
4 & t_{5} \geq 24.5
\end{array} \right.
\end{equation*}
The optimal control strategy $\tau^{\circ} (2,3,2, t_{5})$ is illustrated in figure~\ref{fig:esS3_tau_2_3_2}.

\begin{figure}[h!]
\centering
\psfrag{f(x)}[Bl][Bl][.8][0]{$\tau^{\circ} (2,3,2, t_{5})$}
\psfrag{x}[bc][Bl][.8][0]{$t_{5}$}
\psfrag{0}[tc][Bl][.8][0]{$0$}
\includegraphics[scale=.2]{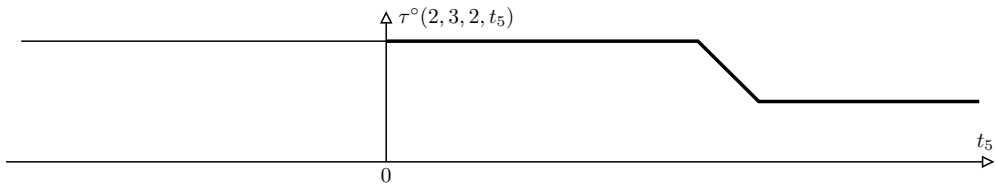}
\caption{Optimal control strategy $\tau^{\circ} (2,3,2, t_{5})$ in state $[ 2 \; 3 \; 2 \; t_{5}]^{T}$.}
\label{fig:esS3_tau_2_3_2}
\end{figure}

The optimal cost-to-go $J^{\circ}_{2,3,2} (t_{5}) = f ( pt^{\circ}_{1,3}(t_{5}) + t_{5} ) + g ( pt^{\circ}_{1,3}(t_{5}) )$, illustrated in figure~\ref{fig:esS2_J_2_3_2}, is provided by lemma~\ref{lem:h(t)}. It is specified by the initial value 1, by the set \{ 20.5, 24.5, 28.5 \} of abscissae $\gamma_{i}$, $i = 1, \ldots, 3$, at which the slope changes, and by the set \{ 1, 1.5, 2 \} of slopes $\mu_{i}$, $i = 1, \ldots, 3$, in the various intervals.

\begin{figure}[h!]
\centering
\psfrag{0}[cc][tc][.8][0]{$0$}
\includegraphics[scale=.8]{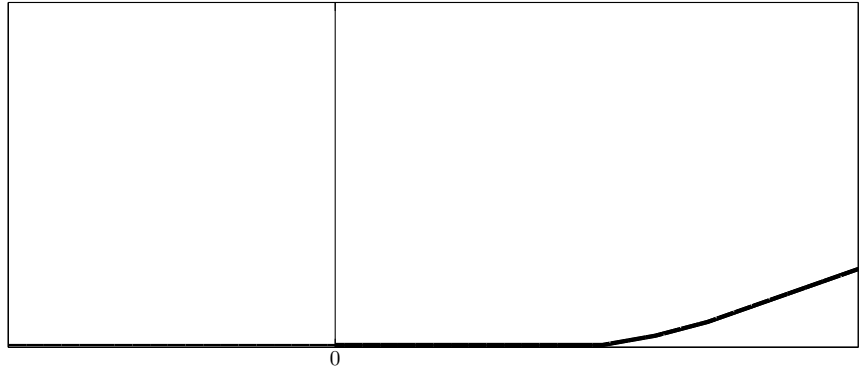}%
\vspace{-12pt}
\caption{Optimal cost-to-go $J^{\circ}_{2,3,2} (t_{5})$ in state $[2 \; 3 \; 2 \; t_{5}]^{T}$.}
\label{fig:esS2_J_2_3_2}
\end{figure}

{\bf Stage $5$ -- State $\boldsymbol{[2 \; 3 \; 1 \; t_{5}]^{T}}$ ($S24$)}

In state $[2 \; 3 \; 1 \; t_{5}]^{T}$ all jobs of class $P_{2}$ have been completed; then the decision about the class of the next job to be executed is mandatory. The cost function to be minimized in this state, with respect to the (continuos) decision variable $\tau$ only (which corresponds to the processing time $pt_{1,3}$), is
\begin{equation*}
\alpha_{1,3} \, \max \{ t_{5} + st_{1,1} + \tau - dd_{1,3} \, , \, 0 \} + \beta_{1} \, (pt^{\mathrm{nom}}_{1} - \tau) + sc_{1,1} + J^{\circ}_{3,3,1} (t_{6})
\end{equation*}
that can be written as $f (pt_{1,3} + t_{5}) + g (pt_{1,3})$ being
\begin{equation*}
f (pt_{1,3} + t_{5}) = 1.5 \cdot \max \{ pt_{1,3} + t_{5} - 29 \, , \, 0 \} + J^{\circ}_{3,3,1} (pt_{1,3} + t_{5})
\end{equation*}
\begin{equation*}
g (pt_{1,3}) = \left\{ \begin{array}{ll}
8 - pt_{1,3} & pt_{1,3} \in [ 4 , 8 )\\
0 & pt_{1,3} \notin [ 4 , 8 )
\end{array} \right.
\end{equation*}
The function $pt^{\circ}_{1,3}(t_{5}) = \arg \min_{pt_{1,3}} \{ f (pt_{1,3} + t_{5}) + g (pt_{1,3}) \} $, with $4 \leq pt_{1,3} \leq 8$, is determined by applying lemma~\ref{lem:xopt}. It is
\begin{equation*}
pt^{\circ}_{1,3}(t_{5}) = x_{\mathrm{e}}(t_{5}) \qquad \text{with} \quad x_{\mathrm{e}}(t_{5}) = \left\{ \begin{array}{ll}
8 &  t_{5} < 21\\
-t_{5} + 29 & 21 \leq t_{5} < 25\\
4 & t_{5} \geq 25
\end{array} \right.
\end{equation*}

Taking into account the mandatory decision about the class of the next job to be executed, the optimal control strategies for this state are
\begin{equation*}
\delta_{1}^{\circ} (2,3,1, t_{5}) = 1 \quad \forall \, t_{5} \qquad \delta_{2}^{\circ} (2,3,1, t_{5}) = 0 \quad \forall \, t_{5}
\end{equation*}
\begin{equation*}
\tau^{\circ} (2,3,1, t_{5}) = \left\{ \begin{array}{ll}
8 &  t_{5} < 21\\
-t_{5} + 29 & 21 \leq t_{5} < 25\\
4 & t_{5} \geq 25
\end{array} \right.
\end{equation*}
The optimal control strategy $\tau^{\circ} (2,3,1, t_{5})$ is illustrated in figure~\ref{fig:esS3_tau_2_3_1}.

\begin{figure}[h!]
\centering
\psfrag{f(x)}[Bl][Bl][.8][0]{$\tau^{\circ} (2,3,1, t_{5})$}
\psfrag{x}[bc][Bl][.8][0]{$t_{5}$}
\psfrag{0}[tc][Bl][.8][0]{$0$}
\includegraphics[scale=.2]{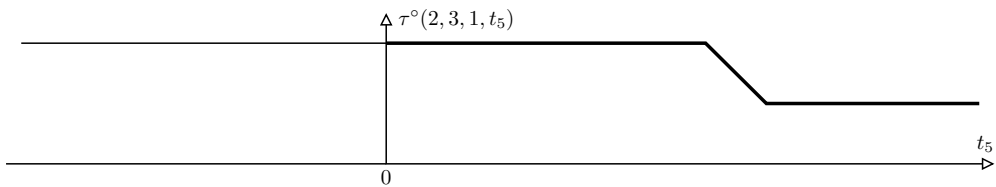}
\caption{Optimal control strategy $\tau^{\circ} (2,3,1, t_{5})$ in state $[ 2 \; 3 \; 1 \; t_{5}]^{T}$.}
\label{fig:esS3_tau_2_3_1}
\end{figure}

The optimal cost-to-go $J^{\circ}_{2,3,1} (t_{5}) = f ( pt^{\circ}_{1,3}(t_{5}) + t_{5} ) + g ( pt^{\circ}_{1,3}(t_{5}) )$, illustrated in figure~\ref{fig:esS2_J_2_3_1}, is provided by lemma~\ref{lem:h(t)}. It is specified by the initial value 0, by the set \{ 21, 25, 29 \} of abscissae $\gamma_{i}$, $i = 1, \ldots, 3$, at which the slope changes, and by the set \{ 1, 1.5, 2 \} of slopes $\mu_{i}$, $i = 1, \ldots, 3$, in the various intervals.

\begin{figure}[h!]
\centering
\psfrag{0}[cc][tc][.8][0]{$0$}
\includegraphics[scale=.8]{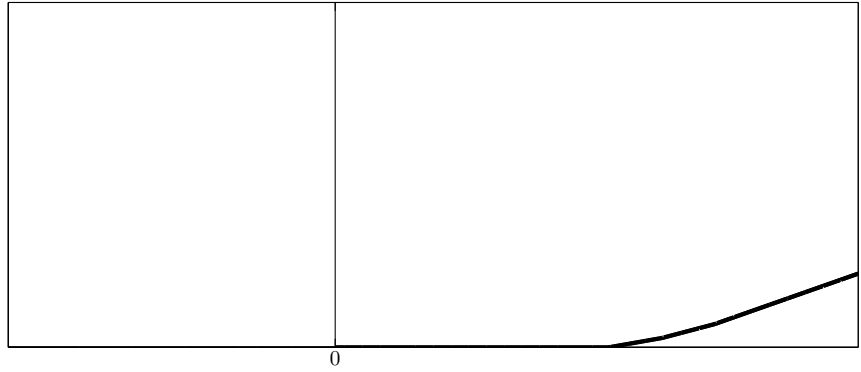}%
\vspace{-12pt}
\caption{Optimal cost-to-go $J^{\circ}_{2,3,1} (t_{5})$ in state $[2 \; 3 \; 1 \; t_{5}]^{T}$.}
\label{fig:esS2_J_2_3_1}
\end{figure}

{\bf Stage $5$ -- State $\boldsymbol{[3 \; 2 \; 2 \; t_{5}]^{T}}$ ($S23$)}

In state $[3 \; 2 \; 2 \; t_{5}]^{T}$, the cost function to be minimized, with respect to the (continuos) decision variable $\tau$ and to the (binary) decision variables $\delta_{1}$ and $\delta_{2}$ is
\begin{equation*}
\begin{split}
&\delta_{1} \big[ \alpha_{1,4} \, \max \{ t_{5} + st_{2,1} + \tau - dd_{1,4} \, , \, 0 \} + \beta_{1} \, ( pt^{\mathrm{nom}}_{1} - \tau ) + sc_{2,1} + J^{\circ}_{4,2,1} (t_{6}) \big] +\\
&+ \delta_{2} \big[ \alpha_{2,3} \, \max \{ t_{5} + st_{2,2} + \tau - dd_{2,3} \, , \, 0 \} + \beta_{2} \, ( pt^{\mathrm{nom}}_{2} - \tau ) + sc_{2,2} + J^{\circ}_{3,3,2} (t_{6}) \big]
\end{split}
\end{equation*}

{\it Case i)} in which it is assumed $\delta_{1} = 1$ (and $\delta_{2} = 0$).

In this case, it is necessary to minimize, with respect to the (continuos) decision variable $\tau$ which corresponds to the processing time $pt_{1,4}$, the following function
\begin{equation*}
\alpha_{1,4} \, \max \{ t_{5} + st_{2,1} + \tau - dd_{1,4} \, , \, 0 \} + \beta_{1} \, (pt^{\mathrm{nom}}_{1} - \tau) + sc_{2,1} + J^{\circ}_{4,2,1} (t_{6})
\end{equation*}
that can be written as $f (pt_{1,4} + t_{5}) + g (pt_{1,4})$ being
\begin{equation*}
f (pt_{1,4} + t_{5}) = 0.5 \cdot \max \{ pt_{1,4} + t_{5} - 40.5 \, , \, 0 \} + 1 + J^{\circ}_{4,2,1} (pt_{1,4} + t_{5} + 0.5)
\end{equation*}
\begin{equation*}
g (pt_{1,4}) = \left\{ \begin{array}{ll}
8 - pt_{1,4} & pt_{1,4} \in [ 4 , 8 )\\
0 & pt_{1,4} \notin [ 4 , 8 )
\end{array} \right.
\end{equation*}
The function $pt^{\circ}_{1,4}(t_{5}) = \arg \min_{pt_{1,4}} \{ f (pt_{1,4} + t_{5}) + g (pt_{1,4}) \} $, with $4 \leq pt_{1,4} \leq 8$, is determined by applying lemma~\ref{lem:xopt}. It is (see figure~\ref{fig:esS3_tau_3_2_2_pt_1_4})
\begin{equation*}
pt^{\circ}_{1,4}(t_{5}) = x_{\mathrm{e}}(t_{5}) \qquad \text{with} \quad x_{\mathrm{e}}(t_{5}) = \left\{ \begin{array}{ll}
8 &  t_{5} < 22.5\\
-t_{5} + 30.5 & 22.5 \leq t_{5} < 26.5\\
4 & t_{5} \geq 26.5
\end{array} \right.
\end{equation*}

\begin{figure}[h!]
\centering
\psfrag{f(x)}[Bl][Bl][.8][0]{$pt^{\circ}_{1,4}(t_{5})$}
\psfrag{x}[bc][Bl][.8][0]{$t_{5}$}
\psfrag{0}[tc][Bl][.8][0]{$0$}
\includegraphics[scale=.2]{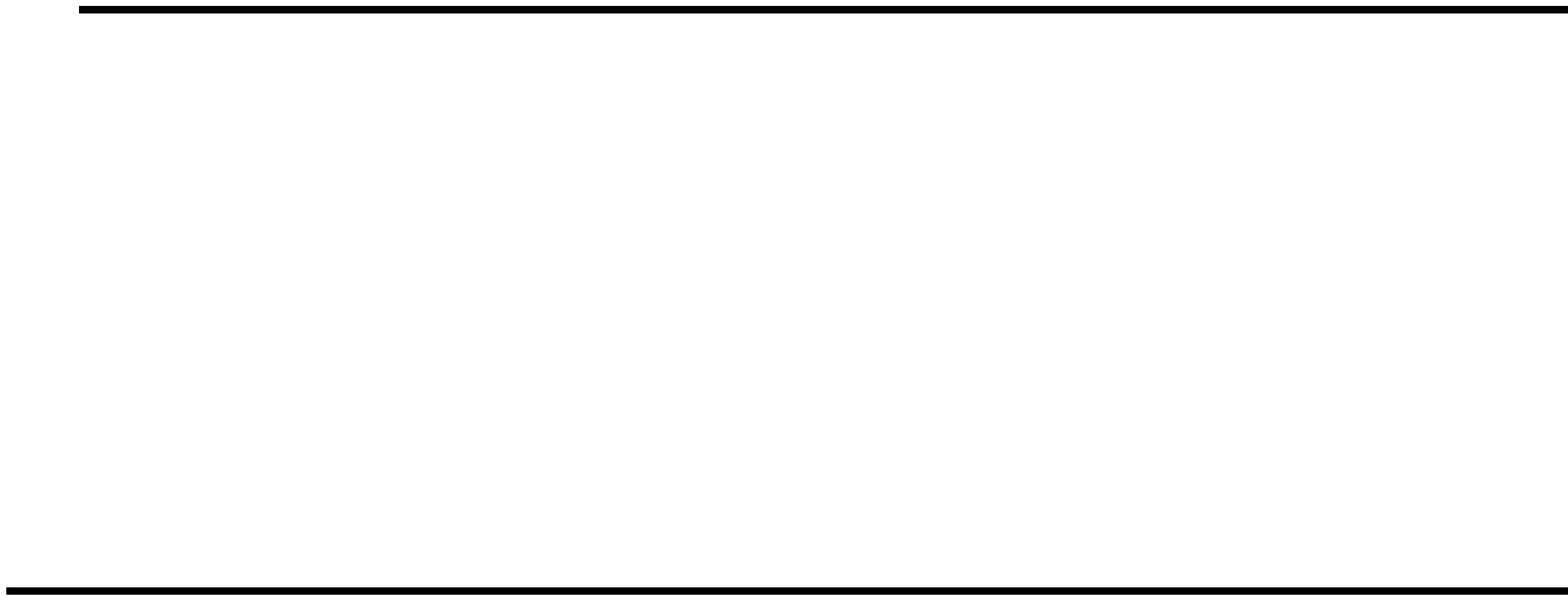}
\caption{Optimal processing time $pt^{\circ}_{1,4}(t_{5})$, under the assumption $\delta_{1} = 1$ in state $[ 3 \; 2 \; 2 \; t_{5}]^{T}$.}
\label{fig:esS3_tau_3_2_2_pt_1_4}
\end{figure}

The conditioned cost-to-go $J^{\circ}_{3,2,2} (t_{5} \mid \delta_{1}=1) = f ( pt^{\circ}_{1,4}(t_{5}) + t_{5} ) + g ( pt^{\circ}_{1,4}(t_{5}) )$, illustrated in figure~\ref{fig:esS3_J_3_2_2_min}, is provided by lemma~\ref{lem:h(t)}. It is specified by the initial value 1.5, by the set \{ 22.5, 36.5 \} of abscissae $\gamma_{i}$, $i = 1, \ldots, 2$, at which the slope changes, and by the set \{ 1, 1.5 \} of slopes $\mu_{i}$, $i = 1, \ldots, 2$, in the various intervals.

{\it Case ii)} in which it is assumed $\delta_{2} = 1$ (and $\delta_{1} = 0$).

In this case, it is necessary to minimize, with respect to the (continuos) decision variable $\tau$ which corresponds to the processing time $pt_{2,3}$, the following function
\begin{equation*}
\alpha_{2,3} \, \max \{ t_{5} + st_{2,2} + \tau - dd_{2,3} \, , \, 0 \} + \beta_{2} \, (pt^{\mathrm{nom}}_{2} - \tau) + sc_{2,2} + J^{\circ}_{3,3,2} (t_{6})
\end{equation*}
that can be written as $f (pt_{2,3} + t_{5}) + g (pt_{2,3})$ being
\begin{equation*}
f (pt_{2,3} + t_{5}) = \max \{ pt_{2,3} + t_{5} - 38 \, , \, 0 \} + J^{\circ}_{3,3,2} (pt_{2,3} + t_{5})
\end{equation*}
\begin{equation*}
g (pt_{2,3}) = \left\{ \begin{array}{ll}
1.5 \cdot (6 - pt_{2,3}) & pt_{2,3} \in [ 4 , 6 )\\
0 & pt_{2,3} \notin [ 4 , 6 )
\end{array} \right.
\end{equation*}
The function $pt^{\circ}_{2,3}(t_{5}) = \arg \min_{pt_{2,3}} \{ f (pt_{2,3} + t_{5}) + g (pt_{2,3}) \} $, with $4 \leq pt_{2,3} \leq 6$, is determined by applying lemma~\ref{lem:xopt}. It is (see figure~\ref{fig:esS3_tau_3_2_2_pt_2_3})
\begin{equation*}
pt^{\circ}_{2,3}(t_{5}) = x_{\mathrm{e}}(t_{5}) \qquad \text{with} \quad x_{\mathrm{e}}(t_{5}) = \left\{ \begin{array}{ll}
6 &  t_{5} < 32\\
-t_{5} + 38 & 32 \leq t_{5} < 34\\
4 & t_{5} \geq 34
\end{array} \right.
\end{equation*}

\begin{figure}[h!]
\centering
\psfrag{f(x)}[Bl][Bl][.8][0]{$pt^{\circ}_{2,3}(t_{5})$}
\psfrag{x}[bc][Bl][.8][0]{$t_{5}$}
\psfrag{0}[tc][Bl][.8][0]{$0$}
\includegraphics[scale=.2]{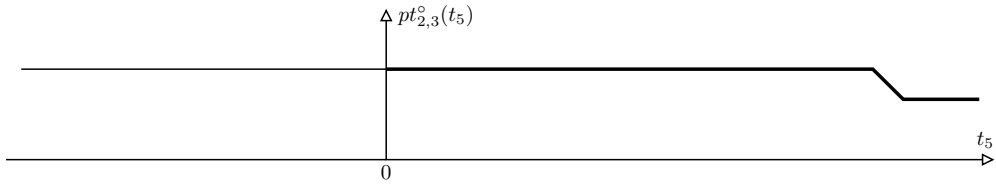}
\caption{Optimal processing time $pt^{\circ}_{2,3}(t_{5})$, under the assumption $\delta_{2} = 1$ in state $[ 3 \; 2 \; 2 \; t_{5}]^{T}$.}
\label{fig:esS3_tau_3_2_2_pt_2_3}
\end{figure}

The conditioned cost-to-go $J^{\circ}_{3,2,2} (t_{5} \mid \delta_{2}=1) = f ( pt^{\circ}_{2,3}(t_{5}) + t_{5} ) + g ( pt^{\circ}_{2,3}(t_{5}) )$, illustrated in figure~\ref{fig:esS3_J_3_2_2_min}, is provided by lemma~\ref{lem:h(t)}. It is specified by the initial value 1, by the set \{ 26.5, 32 \} of abscissae $\gamma_{i}$, $i = 1, \ldots, 2$, at which the slope changes, and by the set \{ 0.5, 1.5 \} of slopes $\mu_{i}$, $i = 1, \ldots, 2$, in the various intervals.

\begin{figure}[h!]
\centering
\psfrag{J1}[br][Bc][.8][0]{$J^{\circ}_{3,2,2} (t_{5} \mid \delta_{1} = 1)$}
\psfrag{J2}[Bl][Bc][.8][0]{$J^{\circ}_{3,2,2} (t_{5} \mid \delta_{2} = 1)$}
\psfrag{0}[cc][tc][.7][0]{$0$}\includegraphics[scale=.6]{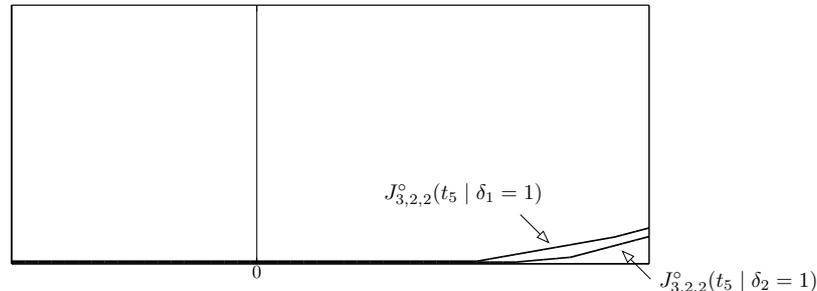}%
\vspace{-12pt}
\caption{Conditioned costs-to-go $J^{\circ}_{3,2,2} (t_{5} \mid \delta_{1} = 1)$ and $J^{\circ}_{3,2,2} (t_{5} \mid \delta_{2} = 1)$ in state $[3 \; 2 \; 2 \; t_{5}]^{T}$.}
\label{fig:esS3_J_3_2_2_min}
\end{figure}

In order to find the optimal cost-to-go $J^{\circ}_{3,2,2} (t_{5})$, it is necessary to carry out the following minimization
\begin{equation*}
J^{\circ}_{3,2,2} (t_{5}) = \min \big\{ J^{\circ}_{3,2,2} (t_{5} \mid \delta_{1} = 1) \, , \, J^{\circ}_{3,2,2} (t_{5} \mid \delta_{2} = 1) \big\}
\end{equation*}
which provides, in accordance with lemma~\ref{lem:min}, the continuous, nondecreasing, piecewise linear function illustrated in figure~\ref{fig:esS3_J_3_2_2}.

\begin{figure}[h!]
\centering
\psfrag{0}[cc][tc][.8][0]{$0$}
\includegraphics[scale=.8]{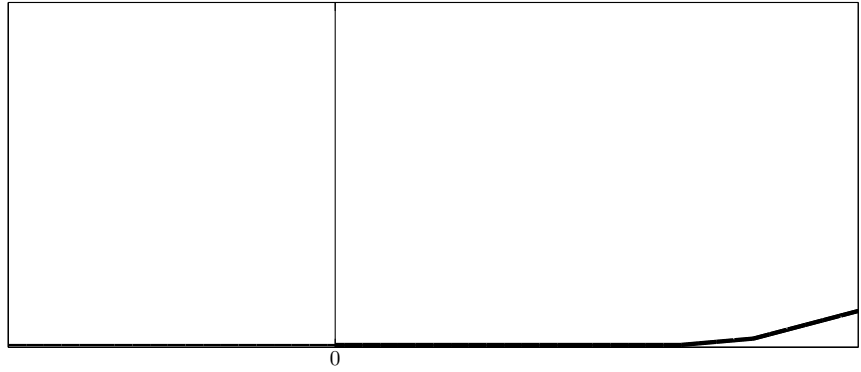}%
\vspace{-12pt}
\caption{Optimal cost-to-go $J^{\circ}_{3,2,2} (t_{5})$ in state $[3 \; 2 \; 2 \; t_{5}]^{T}$.}
\label{fig:esS3_J_3_2_2}
\end{figure}

The function $J^{\circ}_{3,2,2} (t_{5})$ is specified by the initial value 1, by the set \{ 26.5, 32 \} of abscissae $\gamma_{i}$, $i = 1, \ldots, 2$, at which the slope changes, and by the set \{ 0.5, 1.5 \} of slopes $\mu_{i}$, $i = 1, \ldots, 2$, in the various intervals.

Since $J^{\circ}_{3,2,2} (t_{5} \mid \delta_{2} = 1)$ is always the minimum (see again figure~\ref{fig:esS3_J_3_2_2_min}), the optimal control strategies for this state are
\begin{equation*}
\delta_{1}^{\circ} (3,2,2, t_{5}) = 0 \quad \forall \, t_{5} \qquad \delta_{2}^{\circ} (3,2,2, t_{5}) = 1 \quad \forall \, t_{5}
\end{equation*}
\begin{equation*}
\tau^{\circ} (3,2,2, t_{5}) = \left\{ \begin{array}{ll}
6 &  t_{5} < 32\\
-t_{5} + 38 & 32 \leq t_{5} < 34\\
4 & t_{5} \geq 34
\end{array} \right.
\end{equation*}

The optimal control strategy $\tau^{\circ} (3,2,2, t_{5})$ is illustrated in figure~\ref{fig:esS3_tau_3_2_2}.

\begin{figure}[h!]
\centering
\psfrag{f(x)}[Bl][Bl][.8][0]{$\tau^{\circ} (3,2,2, t_{5})$}
\psfrag{x}[bc][Bl][.8][0]{$t_{5}$}
\psfrag{0}[tc][Bl][.8][0]{$0$}
\includegraphics[scale=.2]{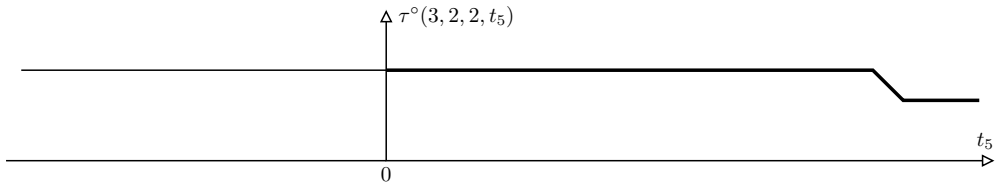}
\caption{Optimal control strategy $\tau^{\circ} (3,2,2, t_{5})$ in state $[ 3 \; 2 \; 2 \; t_{5}]^{T}$.}
\label{fig:esS3_tau_3_2_2}
\end{figure}

{\bf Stage $5$ -- State $\boldsymbol{[3 \; 2 \; 1 \; t_{5}]^{T}}$ ($S22$)}

In state $[3 \; 2 \; 1 \; t_{5}]^{T}$, the cost function to be minimized, with respect to the (continuos) decision variable $\tau$ and to the (binary) decision variables $\delta_{1}$ and $\delta_{2}$ is
\begin{equation*}
\begin{split}
&\delta_{1} \big[ \alpha_{1,4} \, \max \{ t_{5} + st_{1,1} + \tau - dd_{1,4} \, , \, 0 \} + \beta_{1} \, ( pt^{\mathrm{nom}}_{1} - \tau ) + sc_{1,1} + J^{\circ}_{4,2,1} (t_{6}) \big] +\\
&+ \delta_{2} \big[ \alpha_{2,3} \, \max \{ t_{5} + st_{1,2} + \tau - dd_{2,3} \, , \, 0 \} + \beta_{2} \, ( pt^{\mathrm{nom}}_{2} - \tau ) + sc_{1,2} + J^{\circ}_{3,3,2} (t_{6}) \big]
\end{split}
\end{equation*}

{\it Case i)} in which it is assumed $\delta_{1} = 1$ (and $\delta_{2} = 0$).

In this case, it is necessary to minimize, with respect to the (continuos) decision variable $\tau$ which corresponds to the processing time $pt_{1,4}$, the following function
\begin{equation*}
\alpha_{1,4} \, \max \{ t_{5} + st_{1,1} + \tau - dd_{1,4} \, , \, 0 \} + \beta_{1} \, (pt^{\mathrm{nom}}_{1} - \tau) + sc_{1,1} + J^{\circ}_{4,2,1} (t_{6})
\end{equation*}
that can be written as $f (pt_{1,4} + t_{5}) + g (pt_{1,4})$ being
\begin{equation*}
f (pt_{1,4} + t_{5}) = 0.5 \cdot \max \{ pt_{1,4} + t_{5} - 41 \, , \, 0 \} + J^{\circ}_{4,2,1} (pt_{1,4} + t_{5})
\end{equation*}
\begin{equation*}
g (pt_{1,4}) = \left\{ \begin{array}{ll}
8 - pt_{1,4} & pt_{1,4} \in [ 4 , 8 )\\
0 & pt_{1,4} \notin [ 4 , 8 )
\end{array} \right.
\end{equation*}
The function $pt^{\circ}_{1,4}(t_{5}) = \arg \min_{pt_{1,4}} \{ f (pt_{1,4} + t_{5}) + g (pt_{1,4}) \} $, with $4 \leq pt_{1,4} \leq 8$, is determined by applying lemma~\ref{lem:xopt}. It is (see figure~\ref{fig:esS3_tau_3_2_1_pt_1_4})
\begin{equation*}
pt^{\circ}_{1,4}(t_{5}) = x_{\mathrm{e}}(t_{5}) \qquad \text{with} \quad x_{\mathrm{e}}(t_{5}) = \left\{ \begin{array}{ll}
8 &  t_{5} < 23\\
-t_{5} + 31 & 23 \leq t_{5} < 27\\
4 & t_{5} \geq 27
\end{array} \right.
\end{equation*}

\begin{figure}[h!]
\centering
\psfrag{f(x)}[Bl][Bl][.8][0]{$pt^{\circ}_{1,4}(t_{5})$}
\psfrag{x}[bc][Bl][.8][0]{$t_{5}$}
\psfrag{0}[tc][Bl][.8][0]{$0$}
\includegraphics[scale=.2]{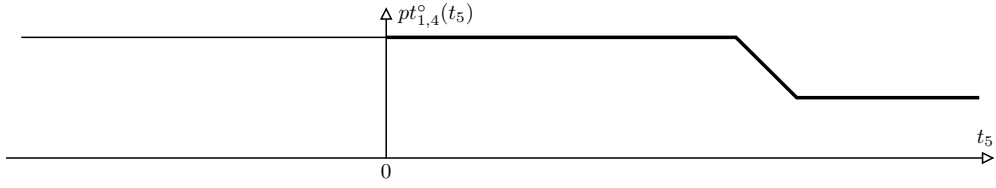}
\caption{Optimal processing time $pt^{\circ}_{1,4}(t_{5})$, under the assumption $\delta_{1} = 1$ in state $[ 3 \; 2 \; 1 \; t_{5}]^{T}$.}
\label{fig:esS3_tau_3_2_1_pt_1_4}
\end{figure}

The conditioned cost-to-go $J^{\circ}_{3,2,1} (t_{5} \mid \delta_{1}=1) = f ( pt^{\circ}_{1,4}(t_{5}) + t_{5} ) + g ( pt^{\circ}_{1,4}(t_{5}) )$, illustrated in figure~\ref{fig:esS3_J_3_2_1_min}, is provided by lemma~\ref{lem:h(t)}. It is specified by the initial value 0.5, by the set \{ 23, 37 \} of abscissae $\gamma_{i}$, $i = 1, \ldots, 2$, at which the slope changes, and by the set \{ 1, 1.5 \} of slopes $\mu_{i}$, $i = 1, \ldots, 2$, in the various intervals.

{\it Case ii)} in which it is assumed $\delta_{2} = 1$ (and $\delta_{1} = 0$).

In this case, it is necessary to minimize, with respect to the (continuos) decision variable $\tau$ which corresponds to the processing time $pt_{2,3}$, the following function
\begin{equation*}
\alpha_{2,3} \, \max \{ t_{5} + st_{1,2} + \tau - dd_{2,3} \, , \, 0 \} + \beta_{2} \, (pt^{\mathrm{nom}}_{2} - \tau) + sc_{1,2} + J^{\circ}_{3,3,2} (t_{6})
\end{equation*}
that can be written as $f (pt_{2,3} + t_{5}) + g (pt_{2,3})$ being
\begin{equation*}
f (pt_{2,3} + t_{5}) = \max \{ pt_{2,3} + t_{5} - 37 \, , \, 0 \} + 0.5 + J^{\circ}_{3,3,2} (pt_{2,3} + t_{5}+1)
\end{equation*}
\begin{equation*}
g (pt_{2,3}) = \left\{ \begin{array}{ll}
1.5 \cdot (6 - pt_{2,3}) & pt_{2,3} \in [ 4 , 6 )\\
0 & pt_{2,3} \notin [ 4 , 6 )
\end{array} \right.
\end{equation*}
The function $pt^{\circ}_{2,3}(t_{5}) = \arg \min_{pt_{2,3}} \{ f (pt_{2,3} + t_{5}) + g (pt_{2,3}) \} $, with $4 \leq pt_{2,3} \leq 6$, is determined by applying lemma~\ref{lem:xopt}. It is (see figure~\ref{fig:esS3_tau_3_2_1_pt_2_3})
\begin{equation*}
pt^{\circ}_{2,3}(t_{5}) = x_{\mathrm{e}}(t_{5}) \qquad \text{with} \quad x_{\mathrm{e}}(t_{5}) = \left\{ \begin{array}{ll}
6 &  t_{5} < 31\\
-t_{5} + 37 & 31 \leq t_{5} < 33\\
4 & t_{5} \geq 33
\end{array} \right.
\end{equation*}

\begin{figure}[h!]
\centering
\psfrag{f(x)}[Bl][Bl][.8][0]{$pt^{\circ}_{2,3}(t_{5})$}
\psfrag{x}[bc][Bl][.8][0]{$t_{5}$}
\psfrag{0}[tc][Bl][.8][0]{$0$}
\includegraphics[scale=.2]{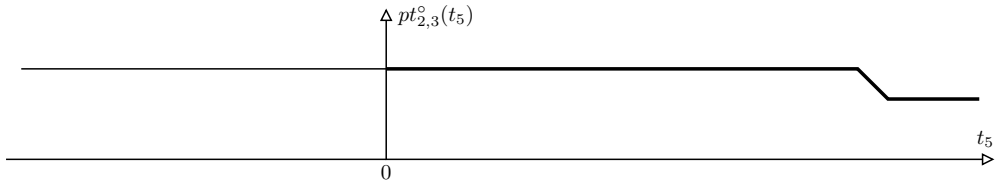}
\caption{Optimal processing time $pt^{\circ}_{2,3}(t_{5})$, under the assumption $\delta_{2} = 1$ in state $[ 3 \; 2 \; 1 \; t_{5}]^{T}$.}
\label{fig:esS3_tau_3_2_1_pt_2_3}
\end{figure}

The conditioned cost-to-go $J^{\circ}_{3,2,1} (t_{5} \mid \delta_{2}=1) = f ( pt^{\circ}_{2,3}(t_{5}) + t_{5} ) + g ( pt^{\circ}_{2,3}(t_{5}) )$, illustrated in figure~\ref{fig:esS3_J_3_2_1_min}, is provided by lemma~\ref{lem:h(t)}. It is specified by the initial value 1.5, by the set \{ 25.5, 31 \} of abscissae $\gamma_{i}$, $i = 1, \ldots, 2$, at which the slope changes, and by the set \{ 0.5, 1.5 \} of slopes $\mu_{i}$, $i = 1, \ldots, 2$, in the various intervals.

\begin{figure}[h!]
\centering
\psfrag{J1}[br][Bc][.8][0]{$J^{\circ}_{3,2,1} (t_{5} \mid \delta_{1} = 1)$}
\psfrag{J2}[Bl][Bc][.8][0]{$J^{\circ}_{3,2,1} (t_{5} \mid \delta_{2} = 1)$}
\psfrag{0}[cc][tc][.7][0]{$0$}\includegraphics[scale=.6]{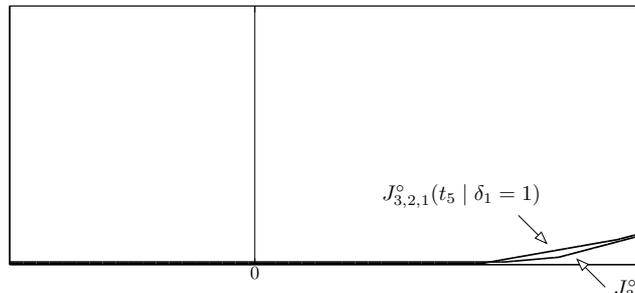}%
\vspace{-12pt}
\caption{Conditioned costs-to-go $J^{\circ}_{3,2,1} (t_{5} \mid \delta_{1} = 1)$ and $J^{\circ}_{3,2,1} (t_{5} \mid \delta_{2} = 1)$ in state $[3 \; 2 \; 1 \; t_{5}]^{T}$.}
\label{fig:esS3_J_3_2_1_min}
\end{figure}

In order to find the optimal cost-to-go $J^{\circ}_{3,2,1} (t_{5})$, it is necessary to carry out the following minimization
\begin{equation*}
J^{\circ}_{3,2,1} (t_{5}) = \min \big\{ J^{\circ}_{3,2,1} (t_{5} \mid \delta_{1} = 1) \, , \, J^{\circ}_{3,2,1} (t_{5} \mid \delta_{2} = 1) \big\}
\end{equation*}
which provides, in accordance with lemma~\ref{lem:min}, the continuous, nondecreasing, piecewise linear function illustrated in figure~\ref{fig:esS3_J_3_2_1}.

\begin{figure}[h!]
\centering
\psfrag{0}[cc][tc][.8][0]{$0$}
\includegraphics[scale=.8]{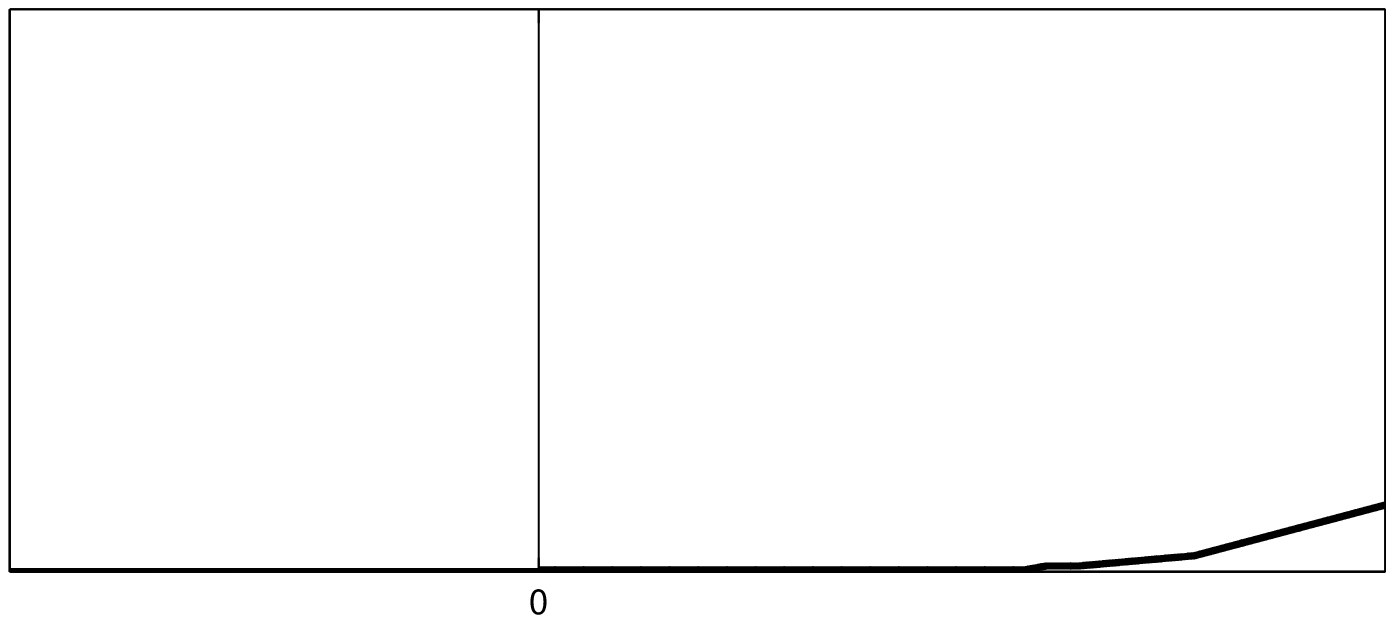}%
\vspace{-12pt}
\caption{Optimal cost-to-go $J^{\circ}_{3,2,1} (t_{5})$ in state $[3 \; 2 \; 1 \; t_{5}]^{T}$.}
\label{fig:esS3_J_3_2_1}
\end{figure}

The function $J^{\circ}_{3,2,1} (t_{5})$ is specified by the initial value 0.5, by the set \{ 23, 24, 25.5, 31 \} of abscissae $\gamma_{i}$, $i = 1, \ldots, 4$, at which the slope changes, and by the set \{ 1, 0, 0.5, 1.5 \} of slopes $\mu_{i}$, $i = 1, \ldots, 4$, in the various intervals.

Since $J^{\circ}_{3,2,1} (t_{5} \mid \delta_{1} = 1)$ is the minimum in $(-\infty,24)$, and $J^{\circ}_{3,2,1} (t_{5} \mid \delta_{2} = 1)$ is the minimum in $[24,+\infty)$, the optimal control strategies for this state are
\begin{equation*}
\delta_{1}^{\circ} (3,2,1, t_{5}) = \left\{ \begin{array}{ll}
1 &  t_{5} < 24\\
0 & t_{5} \geq 24
\end{array} \right. \qquad \delta_{2}^{\circ} (3,2,1, t_{5}) = \left\{ \begin{array}{ll}
0 &  t_{5} < 24\\
1 & t_{5} \geq 24
\end{array} \right.
\end{equation*}
\begin{equation*}
\tau^{\circ} (3,2,1, t_{5}) = \left\{ \begin{array}{ll}
8 &  t_{5} < 23\\
-t_{5} + 31 & 23 \leq t_{5} < 24\\
6 & 24 \leq t_{5} < 31\\
-t_{5} + 37 & 31 \leq t_{5} < 33\\
4 & t_{5} \geq 33
\end{array} \right.
\end{equation*}

The optimal control strategy $\tau^{\circ} (3,2,1, t_{5})$ is illustrated in figure~\ref{fig:esS3_tau_3_2_1}.

\begin{figure}[h!]
\centering
\psfrag{f(x)}[Bl][Bl][.8][0]{$\tau^{\circ} (3,2,1, t_{5})$}
\psfrag{x}[bc][Bl][.8][0]{$t_{5}$}
\psfrag{0}[tc][Bl][.8][0]{$0$}
\includegraphics[scale=.2]{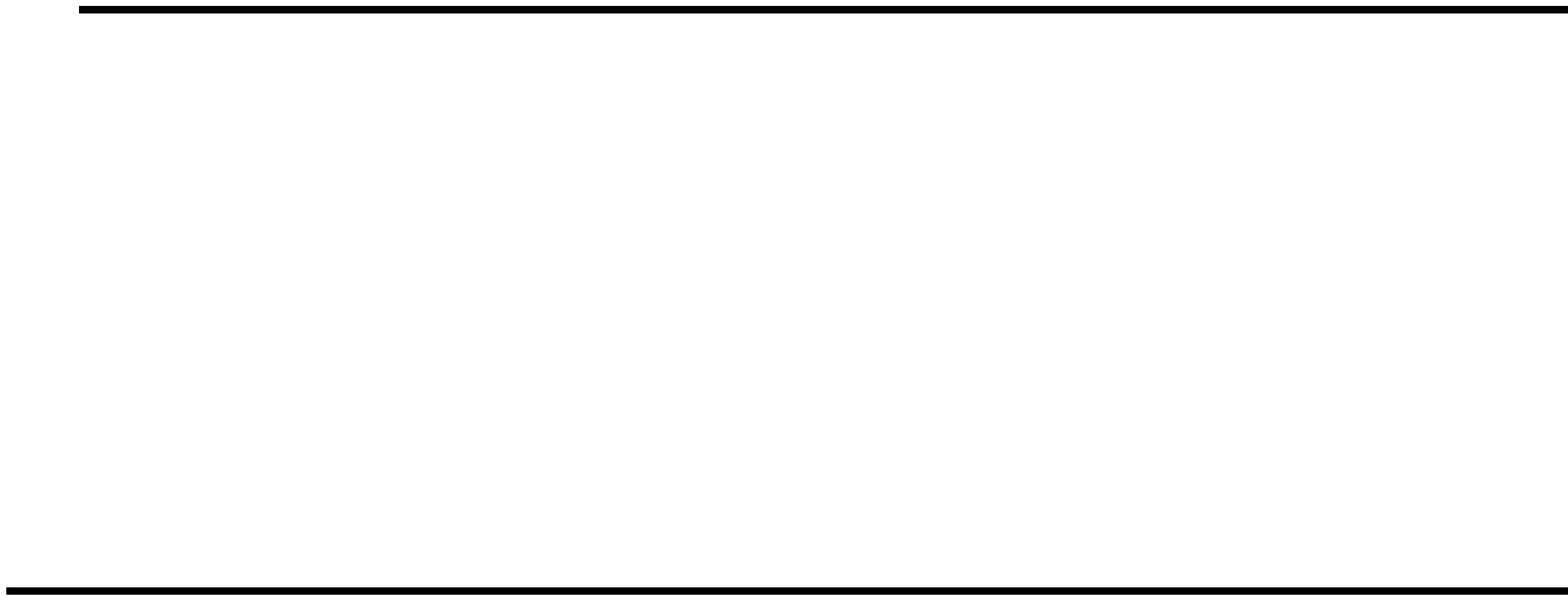}
\caption{Optimal control strategy $\tau^{\circ} (3,2,1, t_{5})$ in state $[ 3 \; 2 \; 1 \; t_{5}]^{T}$.}
\label{fig:esS3_tau_3_2_1}
\end{figure}

{\bf Stage $5$ -- State $\boldsymbol{[4 \; 1 \; 2 \; t_{5}]^{T}}$ ($S21$)}

In state $[4 \; 1 \; 2 \; t_{5}]^{T}$ all jobs of class $P_{1}$ have been completed; then the decision about the class of the next job to be executed is mandatory. The cost function to be minimized in this state, with respect to the (continuos) decision variable $\tau$ only (which corresponds to the processing time $pt_{2,2}$), is
\begin{equation*}
\alpha_{2,2} \, \max \{ t_{5} + st_{2,2} + \tau - dd_{2,2} \, , \, 0 \} + \beta_{2} \, (pt^{\mathrm{nom}}_{2} - \tau) + sc_{2,2} + J^{\circ}_{4,2,2} (t_{6})
\end{equation*}
that can be written as $f (pt_{2,2} + t_{5}) + g (pt_{2,2})$ being
\begin{equation*}
f (pt_{2,2} + t_{5}) = \max \{ pt_{2,2} + t_{5} - 24 \, , \, 0 \} + J^{\circ}_{4,2,2} (pt_{2,2} + t_{5})
\end{equation*}
\begin{equation*}
g (pt_{2,2}) = \left\{ \begin{array}{ll}
1.5 \cdot (6 - pt_{2,2}) & pt_{2,2} \in [ 4 , 6 )\\
0 & pt_{2,2} \notin [ 4 , 6 )
\end{array} \right.
\end{equation*}
The function $pt^{\circ}_{2,2}(t_{5}) = \arg \min_{pt_{2,2}} \{ f (pt_{2,2} + t_{5}) + g (pt_{2,2}) \} $, with $4 \leq pt_{2,2} \leq 6$, is determined by applying lemma~\ref{lem:xopt}. It is
\begin{equation*}
pt^{\circ}_{2,2}(t_{5}) = x_{\mathrm{e}}(t_{5}) \qquad \text{with} \quad x_{\mathrm{e}}(t_{5}) = \left\{ \begin{array}{ll}
6 &  t_{5} < 26\\
-t_{5} + 32 & 26 \leq t_{5} < 28\\
4 & t_{5} \geq 28
\end{array} \right.
\end{equation*}

Taking into account the mandatory decision about the class of the next job to be executed, the optimal control strategies for this state are
\begin{equation*}
\delta_{1}^{\circ} (4,1,2, t_{5}) = 0 \quad \forall \, t_{5} \qquad \delta_{2}^{\circ} (4,1,2, t_{5}) = 1 \quad \forall \, t_{5}
\end{equation*}
\begin{equation*}
\tau^{\circ} (4,1,2, t_{5}) = \left\{ \begin{array}{ll}
6 &  t_{5} < 26\\
-t_{5} + 32 & 26 \leq t_{5} < 28\\
4 & t_{5} \geq 28
\end{array} \right.
\end{equation*}
The optimal control strategy $\tau^{\circ} (4,1,2, t_{5})$ is illustrated in figure~\ref{fig:esS3_tau_4_1_2}.

\begin{figure}[h!]
\centering
\psfrag{f(x)}[Bl][Bl][.8][0]{$\tau^{\circ} (4,1,2, t_{5})$}
\psfrag{x}[bc][Bl][.8][0]{$t_{5}$}
\psfrag{0}[tc][Bl][.8][0]{$0$}
\includegraphics[scale=.2]{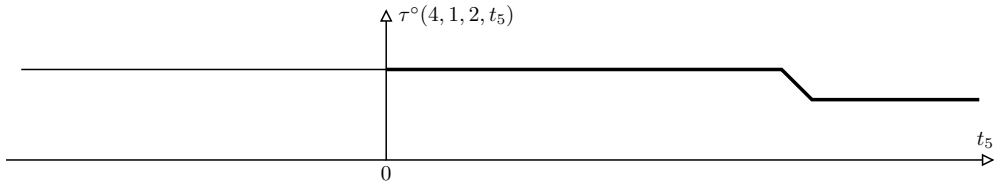}
\caption{Optimal control strategy $\tau^{\circ} (4,1,2, t_{5})$ in state $[ 4 \; 1 \; 2 \; t_{5}]^{T}$.}
\label{fig:esS3_tau_4_1_2}
\end{figure}

The optimal cost-to-go $J^{\circ}_{4,1,2} (t_{5}) = f ( pt^{\circ}_{2,2}(t_{5}) + t_{5} ) + g ( pt^{\circ}_{2,2}(t_{5}) )$, illustrated in figure~\ref{fig:esS2_J_4_1_2}, is provided by lemma~\ref{lem:h(t)}. It is specified by the initial value 0, by the set \{ 18, 26, 28 \} of abscissae $\gamma_{i}$, $i = 1, \ldots, 3$, at which the slope changes, and by the set \{ 1, 1.5, 2 \} of slopes $\mu_{i}$, $i = 1, \ldots, 3$, in the various intervals.

\begin{figure}[h!]
\centering
\psfrag{0}[cc][tc][.8][0]{$0$}
\includegraphics[scale=.8]{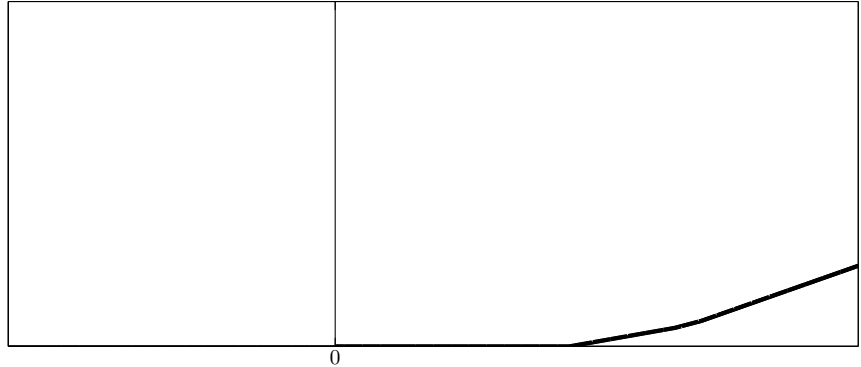}%
\vspace{-12pt}
\caption{Optimal cost-to-go $J^{\circ}_{4,1,2} (t_{5})$ in state $[4 \; 1 \; 2 \; t_{5}]^{T}$.}
\label{fig:esS2_J_4_1_2}
\end{figure}

{\bf Stage $5$ -- State $\boldsymbol{[4 \; 1 \; 1 \; t_{5}]^{T}}$ ($S20$)}

In state $[4 \; 1 \; 1 \; t_{5}]^{T}$ all jobs of class $P_{1}$ have been completed; then the decision about the class of the next job to be executed is mandatory. The cost function to be minimized in this state, with respect to the (continuos) decision variable $\tau$ only (which corresponds to the processing time $pt_{2,2}$), is
\begin{equation*}
\alpha_{2,2} \, \max \{ t_{5} + st_{1,2} + \tau - dd_{2,2} \, , \, 0 \} + \beta_{2} \, (pt^{\mathrm{nom}}_{2} - \tau) + sc_{1,2} + J^{\circ}_{4,2,2} (t_{6})
\end{equation*}
that can be written as $f (pt_{2,2} + t_{5}) + g (pt_{2,2})$ being
\begin{equation*}
f (pt_{2,2} + t_{5}) = \max \{ pt_{2,2} + t_{5} - 23 \, , \, 0 \} + 0.5 + J^{\circ}_{4,2,2} (pt_{2,2} + t_{5}+1)
\end{equation*}
\begin{equation*}
g (pt_{2,2}) = \left\{ \begin{array}{ll}
1.5 \cdot (6 - pt_{2,2}) & pt_{2,2} \in [ 4 , 6 )\\
0 & pt_{2,2} \notin [ 4 , 6 )
\end{array} \right.
\end{equation*}
The function $pt^{\circ}_{2,2}(t_{5}) = \arg \min_{pt_{2,2}} \{ f (pt_{2,2} + t_{5}) + g (pt_{2,2}) \} $, with $4 \leq pt_{2,2} \leq 6$, is determined by applying lemma~\ref{lem:xopt}. It is
\begin{equation*}
pt^{\circ}_{2,2}(t_{5}) = x_{\mathrm{e}}(t_{5}) \qquad \text{with} \quad x_{\mathrm{e}}(t_{5}) = \left\{ \begin{array}{ll}
6 &  t_{5} < 25\\
-t_{5} + 31 & 25 \leq t_{5} < 27\\
4 & t_{5} \geq 27
\end{array} \right.
\end{equation*}

Taking into account the mandatory decision about the class of the next job to be executed, the optimal control strategies for this state are
\begin{equation*}
\delta_{1}^{\circ} (4,1,1, t_{5}) = 0 \quad \forall \, t_{5} \qquad \delta_{2}^{\circ} (4,1,1, t_{5}) = 1 \quad \forall \, t_{5}
\end{equation*}
\begin{equation*}
\tau^{\circ} (4,1,1, t_{5}) = \left\{ \begin{array}{ll}
6 &  t_{5} < 25\\
-t_{5} + 31 & 25 \leq t_{5} < 27\\
4 & t_{5} \geq 27
\end{array} \right.
\end{equation*}
The optimal control strategy $\tau^{\circ} (4,1,1, t_{5})$ is illustrated in figure~\ref{fig:esS3_tau_4_1_1}.

\begin{figure}[h!]
\centering
\psfrag{f(x)}[Bl][Bl][.8][0]{$\tau^{\circ} (4,1,1, t_{5})$}
\psfrag{x}[bc][Bl][.8][0]{$t_{5}$}
\psfrag{0}[tc][Bl][.8][0]{$0$}
\includegraphics[scale=.2]{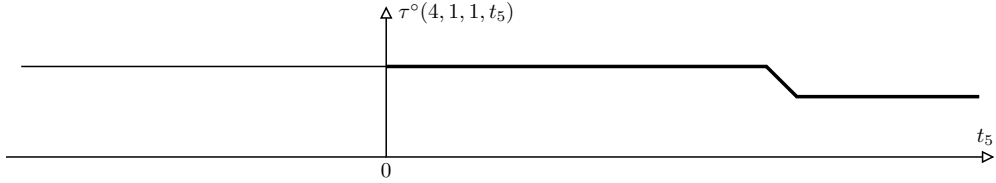}
\caption{Optimal control strategy $\tau^{\circ} (4,1,1, t_{5})$ in state $[ 4 \; 1 \; 1 \; t_{5}]^{T}$.}
\label{fig:esS3_tau_4_1_1}
\end{figure}

The optimal cost-to-go $J^{\circ}_{4,1,1} (t_{5}) = f ( pt^{\circ}_{2,2}(t_{5}) + t_{5} ) + g ( pt^{\circ}_{2,2}(t_{5}) )$, illustrated in figure~\ref{fig:esS2_J_4_1_1}, is provided by lemma~\ref{lem:h(t)}. It is specified by the initial value 0.5, by the set \{ 17, 25, 27 \} of abscissae $\gamma_{i}$, $i = 1, \ldots, 3$, at which the slope changes, and by the set \{ 1, 1.5, 2 \} of slopes $\mu_{i}$, $i = 1, \ldots, 3$, in the various intervals.

\begin{figure}[h!]
\centering
\psfrag{0}[cc][tc][.8][0]{$0$}
\includegraphics[scale=.8]{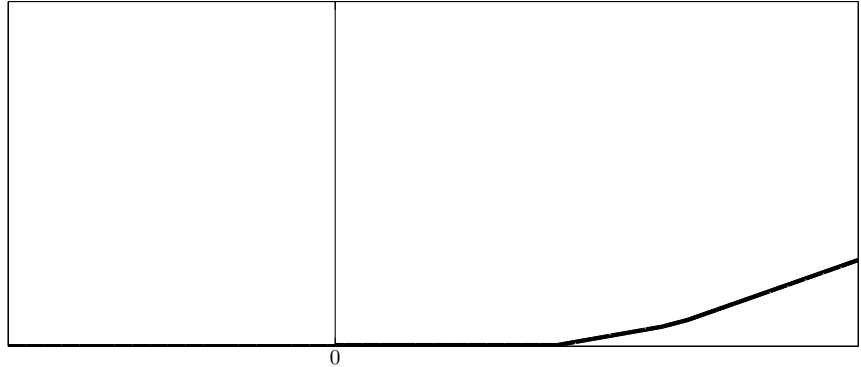}%
\vspace{-12pt}
\caption{Optimal cost-to-go $J^{\circ}_{4,1,1} (t_{5})$ in state $[4 \; 1 \; 1 \; t_{5}]^{T}$.}
\label{fig:esS2_J_4_1_1}
\end{figure}


{\bf Stage $4$ -- State $\boldsymbol{[1 \; 3 \; 2 \; t_{4}]^{T}}$ ($S19$)}

In state $[1 \; 3 \; 2 \; t_{4}]^{T}$ all jobs of class $P_{2}$ have been completed; then the decision about the class of the next job to be executed is mandatory. The cost function to be minimized in this state, with respect to the (continuos) decision variable $\tau$ only (which corresponds to the processing time $pt_{1,2}$), is
\begin{equation*}
\alpha_{1,2} \, \max \{ t_{4} + st_{2,1} + \tau - dd_{1,2} \, , \, 0 \} + \beta_{1} \, (pt^{\mathrm{nom}}_{1} - \tau) + sc_{2,1} + J^{\circ}_{2,3,1} (t_{5})
\end{equation*}
that can be written as $f (pt_{1,2} + t_{4}) + g (pt_{1,2})$ being
\begin{equation*}
f (pt_{1,2} + t_{4}) = 0.5 \cdot \max \{ pt_{1,2} + t_{4} - 23.5 \, , \, 0 \} + 1 + J^{\circ}_{2,3,1} (pt_{1,2} + t_{4} + 0.5)
\end{equation*}
\begin{equation*}
g (pt_{1,2}) = \left\{ \begin{array}{ll}
8 - pt_{1,2} & pt_{1,2} \in [ 4 , 8 )\\
0 & pt_{1,2} \notin [ 4 , 8 )
\end{array} \right.
\end{equation*}
The function $pt^{\circ}_{1,2}(t_{4}) = \arg \min_{pt_{1,2}} \{ f (pt_{1,2} + t_{4}) + g (pt_{1,2}) \} $, with $4 \leq pt_{1,2} \leq 8$, is determined by applying lemma~\ref{lem:xopt}. It is
\begin{equation*}
pt^{\circ}_{1,2}(t_{4}) = x_{\mathrm{e}}(t_{4}) \qquad \text{with} \quad x_{\mathrm{e}}(t_{4}) = \left\{ \begin{array}{ll}
8 &  t_{4} < 12.5\\
-t_{4} + 20.5 & 12.5 \leq t_{4} < 16.5\\
4 & t_{4} \geq 16.5
\end{array} \right.
\end{equation*}

Taking into account the mandatory decision about the class of the next job to be executed, the optimal control strategies for this state are
\begin{equation*}
\delta_{1}^{\circ} (1,3,2, t_{4}) = 1 \quad \forall \, t_{4} \qquad \delta_{2}^{\circ} (1,3,2, t_{4}) = 0 \quad \forall \, t_{4}
\end{equation*}
\begin{equation*}
\tau^{\circ} (1,3,2, t_{4}) = \left\{ \begin{array}{ll}
8 &  t_{4} < 12.5\\
-t_{4} + 20.5 & 12.5 \leq t_{4} < 16.5\\
4 & t_{4} \geq 16.5
\end{array} \right.
\end{equation*}
The optimal control strategy $\tau^{\circ} (1,3,2, t_{4})$ is illustrated in figure~\ref{fig:esS3_tau_1_3_2}.

\begin{figure}[h!]
\centering
\psfrag{f(x)}[Bl][Bl][.8][0]{$\tau^{\circ} (1,3,2, t_{4})$}
\psfrag{x}[bc][Bl][.8][0]{$t_{4}$}
\psfrag{0}[tc][Bl][.8][0]{$0$}
\includegraphics[scale=.2]{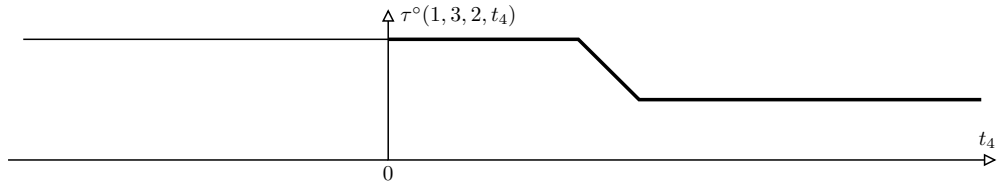}
\caption{Optimal control strategy $\tau^{\circ} (1,3,2, t_{4})$ in state $[ 1 \; 3 \; 2 \; t_{4}]^{T}$.}
\label{fig:esS3_tau_1_3_2}
\end{figure}

The optimal cost-to-go $J^{\circ}_{1,3,2} (t_{4}) = f ( pt^{\circ}_{1,2}(t_{4}) + t_{4} ) + g ( pt^{\circ}_{1,2}(t_{4}) )$, illustrated in figure~\ref{fig:esS2_J_1_3_2}, is provided by lemma~\ref{lem:h(t)}. It is specified by the initial value 1, by the set \{ 12.5, 19.5, 20.5, 24.5 \} of abscissae $\gamma_{i}$, $i = 1, \ldots, 4$, at which the slope changes, and by the set \{ 1, 1.5, 2, 2.5 \} of slopes $\mu_{i}$, $i = 1, \ldots, 4$, in the various intervals.

\begin{figure}[h!]
\centering
\psfrag{0}[cc][tc][.8][0]{$0$}
\includegraphics[scale=.8]{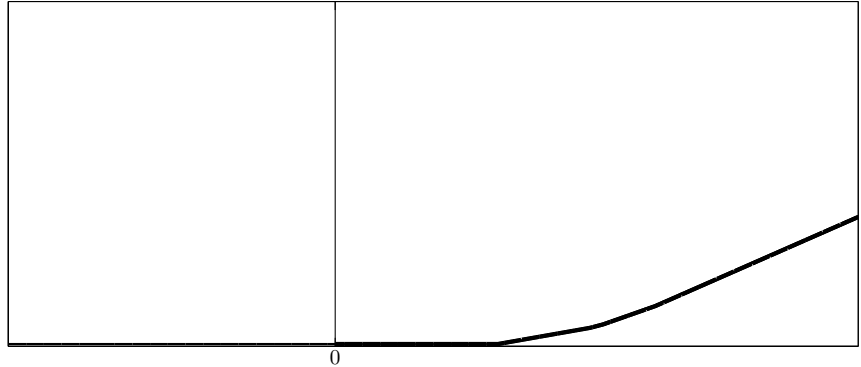}%
\vspace{-12pt}
\caption{Optimal cost-to-go $J^{\circ}_{1,3,2} (t_{4})$ in state $[1 \; 3 \; 2 \; t_{4}]^{T}$.}
\label{fig:esS2_J_1_3_2}
\end{figure}


{\bf Stage $4$ -- State $\boldsymbol{[1 \; 3 \; 1 \; t_{4}]^{T}}$ ($S18$)}

In state $[1 \; 3 \; 1 \; t_{4}]^{T}$ all jobs of class $P_{2}$ have been completed; then the decision about the class of the next job to be executed is mandatory. The cost function to be minimized in this state, with respect to the (continuos) decision variable $\tau$ only (which corresponds to the processing time $pt_{1,2}$), is
\begin{equation*}
\alpha_{1,2} \, \max \{ t_{4} + st_{1,1} + \tau - dd_{1,2} \, , \, 0 \} + \beta_{1} \, (pt^{\mathrm{nom}}_{1} - \tau) + sc_{1,1} + J^{\circ}_{2,3,1} (t_{5})
\end{equation*}
that can be written as $f (pt_{1,2} + t_{4}) + g (pt_{1,2})$ being
\begin{equation*}
f (pt_{1,2} + t_{4}) = 0.5 \cdot \max \{ pt_{1,2} + t_{4} - 24 \, , \, 0 \} + J^{\circ}_{2,3,1} (pt_{1,2} + t_{4})
\end{equation*}
\begin{equation*}
g (pt_{1,2}) = \left\{ \begin{array}{ll}
8 - pt_{1,2} & pt_{1,2} \in [ 4 , 8 )\\
0 & pt_{1,2} \notin [ 4 , 8 )
\end{array} \right.
\end{equation*}
The function $pt^{\circ}_{1,2}(t_{4}) = \arg \min_{pt_{1,2}} \{ f (pt_{1,2} + t_{4}) + g (pt_{1,2}) \} $, with $4 \leq pt_{1,2} \leq 8$, is determined by applying lemma~\ref{lem:xopt}. It is
\begin{equation*}
pt^{\circ}_{1,2}(t_{4}) = x_{\mathrm{e}}(t_{4}) \qquad \text{with} \quad x_{\mathrm{e}}(t_{4}) = \left\{ \begin{array}{ll}
8 &  t_{4} < 13\\
-t_{4} + 21 & 13 \leq t_{4} < 17\\
4 & t_{4} \geq 17
\end{array} \right.
\end{equation*}

Taking into account the mandatory decision about the class of the next job to be executed, the optimal control strategies for this state are
\begin{equation*}
\delta_{1}^{\circ} (1,3,1, t_{4}) = 1 \quad \forall \, t_{4} \qquad \delta_{2}^{\circ} (1,3,1, t_{4}) = 0 \quad \forall \, t_{4}
\end{equation*}
\begin{equation*}
\tau^{\circ} (1,3,1, t_{4}) = \left\{ \begin{array}{ll}
8 &  t_{4} < 13\\
-t_{4} + 21 & 13 \leq t_{4} < 17\\
4 & t_{4} \geq 17
\end{array} \right.
\end{equation*}
The optimal control strategy $\tau^{\circ} (1,3,1, t_{4})$ is illustrated in figure~\ref{fig:esS3_tau_1_3_1}.

\begin{figure}[h!]
\centering
\psfrag{f(x)}[Bl][Bl][.8][0]{$\tau^{\circ} (1,3,1, t_{4})$}
\psfrag{x}[bc][Bl][.8][0]{$t_{4}$}
\psfrag{0}[tc][Bl][.8][0]{$0$}
\includegraphics[scale=.2]{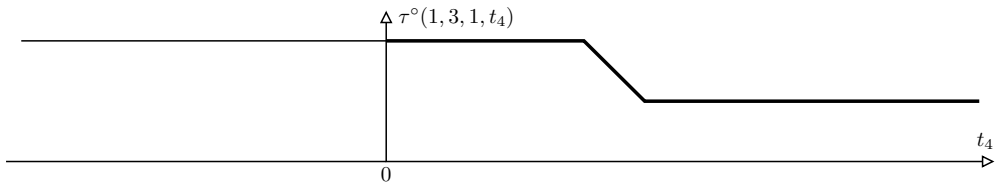}
\caption{Optimal control strategy $\tau^{\circ} (1,3,1, t_{4})$ in state $[ 1 \; 3 \; 1 \; t_{4}]^{T}$.}
\label{fig:esS3_tau_1_3_1}
\end{figure}

The optimal cost-to-go $J^{\circ}_{1,3,1} (t_{4}) = f ( pt^{\circ}_{1,2}(t_{4}) + t_{4} ) + g ( pt^{\circ}_{1,2}(t_{4}) )$, illustrated in figure~\ref{fig:esS2_J_1_3_1}, is provided by lemma~\ref{lem:h(t)}. It is specified by the initial value 0, by the set \{ 13, 20, 21, 25 \} of abscissae $\gamma_{i}$, $i = 1, \ldots, 4$, at which the slope changes, and by the set \{ 1, 1.5, 2, 2.5 \} of slopes $\mu_{i}$, $i = 1, \ldots, 4$, in the various intervals.

\begin{figure}[h!]
\centering
\psfrag{0}[cc][tc][.8][0]{$0$}
\includegraphics[scale=.8]{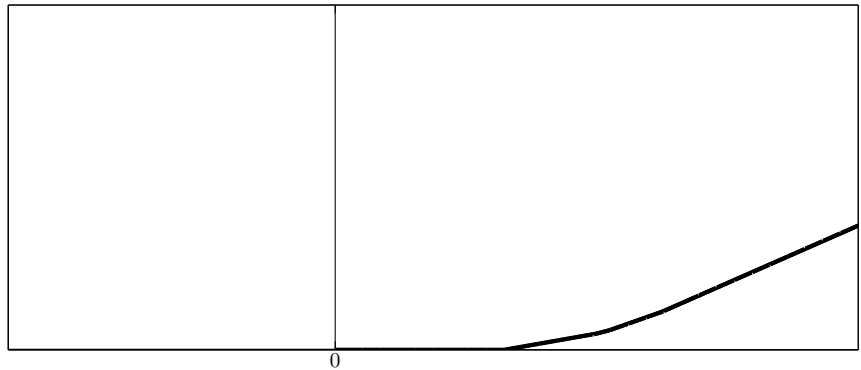}%
\vspace{-12pt}
\caption{Optimal cost-to-go $J^{\circ}_{1,3,1} (t_{4})$ in state $[1 \; 3 \; 1 \; t_{4}]^{T}$.}
\label{fig:esS2_J_1_3_1}
\end{figure}


{\bf Stage $4$ -- State $\boldsymbol{[2 \; 2 \; 2 \; t_{4}]^{T}}$ ($S17$)}

In state $[2 \; 2 \; 2 \; t_{4}]^{T}$, the cost function to be minimized, with respect to the (continuos) decision variable $\tau$ and to the (binary) decision variables $\delta_{1}$ and $\delta_{2}$ is
\begin{equation*}
\begin{split}
&\delta_{1} \big[ \alpha_{1,3} \, \max \{ t_{4} + st_{2,1} + \tau - dd_{1,3} \, , \, 0 \} + \beta_{1} \, ( pt^{\mathrm{nom}}_{1} - \tau ) + sc_{2,1} + J^{\circ}_{3,2,1} (t_{5}) \big] +\\
&+ \delta_{2} \big[ \alpha_{2,3} \, \max \{ t_{4} + st_{2,2} + \tau - dd_{2,3} \, , \, 0 \} + \beta_{2} \, ( pt^{\mathrm{nom}}_{2} - \tau ) + sc_{2,2} + J^{\circ}_{2,3,2} (t_{5}) \big]
\end{split}
\end{equation*}

{\it Case i)} in which it is assumed $\delta_{1} = 1$ (and $\delta_{2} = 0$).

In this case, it is necessary to minimize, with respect to the (continuos) decision variable $\tau$ which corresponds to the processing time $pt_{1,3}$, the following function
\begin{equation*}
\alpha_{1,3} \, \max \{ t_{4} + st_{2,1} + \tau - dd_{1,3} \, , \, 0 \} + \beta_{1} \, (pt^{\mathrm{nom}}_{1} - \tau) + sc_{2,1} + J^{\circ}_{3,2,1} (t_{5})
\end{equation*}
that can be written as $f (pt_{1,3} + t_{4}) + g (pt_{1,3})$ being
\begin{equation*}
f (pt_{1,3} + t_{4}) = 1.5 \cdot \max \{ pt_{1,3} + t_{4} - 28.5 \, , \, 0 \} + 1 + J^{\circ}_{3,2,1} (pt_{1,3} + t_{4} + 0.5)
\end{equation*}
\begin{equation*}
g (pt_{1,3}) = \left\{ \begin{array}{ll}
8 - pt_{1,3} & pt_{1,3} \in [ 4 , 8 )\\
0 & pt_{1,3} \notin [ 4 , 8 )
\end{array} \right.
\end{equation*}
The function $pt^{\circ}_{1,3}(t_{4}) = \arg \min_{pt_{1,3}} \{ f (pt_{1,3} + t_{4}) + g (pt_{1,3}) \} $, with $4 \leq pt_{1,3} \leq 8$, is determined by applying lemma~\ref{lem:xopt}. It is (see figure~\ref{fig:esS3_tau_2_2_2_pt_1_3})
\begin{equation*}
pt^{\circ}_{1,3}(t_{4}) = \left\{ \begin{array}{ll}
x_{\mathrm{s}}(t_{4}) &  t_{4} < 15.5\\
x_{\mathrm{e}}(t_{4}) & t_{4} \geq 15.5
\end{array} \right. \qquad \text{with} \quad x_{\mathrm{s}}(t_{4}) = \left\{ \begin{array}{ll}
8 &  t_{4} < 14.5\\
-t_{4} + 22.5 & 14.5 \leq t_{4} < 15.5
\end{array} \right. \  , 
\end{equation*}
\begin{equation*}
\qquad \text{and} \quad x_{\mathrm{e}}(t_{4}) = \left\{ \begin{array}{ll}
8 &  15.5 \leq t_{4} < 20.5\\
-t_{4} + 28.5 & 20.5 \leq t_{4} < 24.5\\
4 & t_{4} \geq 24.5
\end{array} \right.
\end{equation*}

\begin{figure}[h!]
\centering
\psfrag{f(x)}[Bl][Bl][.8][0]{$pt^{\circ}_{1,3}(t_{4})$}
\psfrag{x}[bc][Bl][.8][0]{$t_{4}$}
\psfrag{0}[tc][Bl][.8][0]{$0$}
\includegraphics[scale=.2]{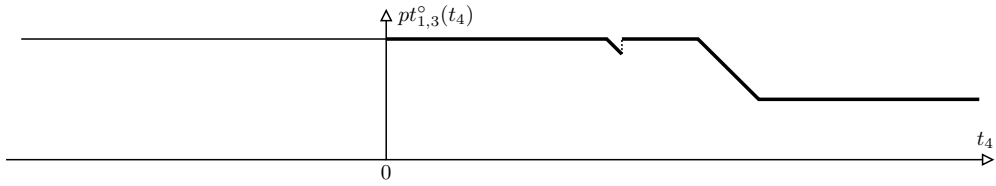}
\caption{Optimal processing time $pt^{\circ}_{1,3}(t_{4})$, under the assumption $\delta_{1} = 1$ in state $[ 2 \; 2 \; 2 \; t_{4}]^{T}$.}
\label{fig:esS3_tau_2_2_2_pt_1_3}
\end{figure}

The conditioned cost-to-go $J^{\circ}_{2,2,2} (t_{4} \mid \delta_{1}=1) = f ( pt^{\circ}_{1,3}(t_{4}) + t_{4} ) + g ( pt^{\circ}_{1,3}(t_{4}) )$, illustrated in figure~\ref{fig:esS3_J_2_2_2_min}, is provided by lemma~\ref{lem:h(t)}. It is specified by the initial value 1.5, by the set \{ 14.5, 15.5, 17, 20.5, 24.5, 26.5 \} of abscissae $\gamma_{i}$, $i = 1, \ldots, 6$, at which the slope changes, and by the set \{ 1, 0, 0.5, 1, 2, 3 \} of slopes $\mu_{i}$, $i = 1, \ldots, 6$, in the various intervals.

{\it Case ii)} in which it is assumed $\delta_{2} = 1$ (and $\delta_{1} = 0$).

In this case, it is necessary to minimize, with respect to the (continuos) decision variable $\tau$ which corresponds to the processing time $pt_{2,3}$, the following function
\begin{equation*}
\alpha_{2,3} \, \max \{ t_{4} + st_{2,2} + \tau - dd_{2,3} \, , \, 0 \} + \beta_{2} \, (pt^{\mathrm{nom}}_{2} - \tau) + sc_{2,2} + J^{\circ}_{2,3,2} (t_{5})
\end{equation*}
that can be written as $f (pt_{2,3} + t_{4}) + g (pt_{2,3})$ being
\begin{equation*}
f (pt_{2,3} + t_{4}) = \max \{ pt_{2,3} + t_{4} - 38 \, , \, 0 \} + J^{\circ}_{2,3,2} (pt_{2,3} + t_{4})
\end{equation*}
\begin{equation*}
g (pt_{2,3}) = \left\{ \begin{array}{ll}
1.5 \cdot (6 - pt_{2,3}) & pt_{2,3} \in [ 4 , 6 )\\
0 & pt_{2,3} \notin [ 4 , 6 )
\end{array} \right.
\end{equation*}
The function $pt^{\circ}_{2,3}(t_{4}) = \arg \min_{pt_{2,3}} \{ f (pt_{2,3} + t_{4}) + g (pt_{2,3}) \} $, with $4 \leq pt_{2,3} \leq 6$, is determined by applying lemma~\ref{lem:xopt}. It is (see figure~\ref{fig:esS3_tau_2_2_2_pt_2_3})
\begin{equation*}
pt^{\circ}_{2,3}(t_{4}) = x_{\mathrm{e}}(t_{4}) \qquad \text{with} \quad x_{\mathrm{e}}(t_{4}) = \left\{ \begin{array}{ll}
6 &  t_{4} < 18.5\\
-t_{4} + 24.5 & 18.5 \leq t_{4} < 20.5\\
4 & t_{4} \geq 20.5
\end{array} \right.
\end{equation*}

\begin{figure}[h!]
\centering
\psfrag{f(x)}[Bl][Bl][.8][0]{$pt^{\circ}_{2,3}(t_{4})$}
\psfrag{x}[bc][Bl][.8][0]{$t_{4}$}
\psfrag{0}[tc][Bl][.8][0]{$0$}
\includegraphics[scale=.2]{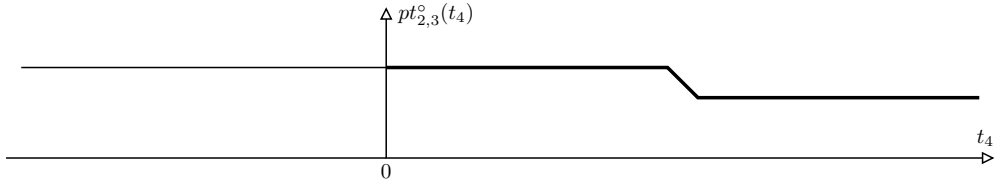}
\caption{Optimal processing time $pt^{\circ}_{2,3}(t_{4})$, under the assumption $\delta_{2} = 1$ in state $[ 2 \; 2 \; 2 \; t_{4}]^{T}$.}
\label{fig:esS3_tau_2_2_2_pt_2_3}
\end{figure}

The conditioned cost-to-go $J^{\circ}_{2,2,2} (t_{4} \mid \delta_{2}=1) = f ( pt^{\circ}_{2,3}(t_{4}) + t_{4} ) + g ( pt^{\circ}_{2,3}(t_{4}) )$, illustrated in figure~\ref{fig:esS3_J_2_2_2_min}, is provided by lemma~\ref{lem:h(t)}. It is specified by the initial value 1, by the set \{ 14.5, 18.5, 24.5, 34 \} of abscissae $\gamma_{i}$, $i = 1, \ldots, 4$, at which the slope changes, and by the set \{ 1, 1.5, 2, 3 \} of slopes $\mu_{i}$, $i = 1, \ldots, 4$, in the various intervals.

\begin{figure}[h!]
\centering
\psfrag{J1}[cl][Bc][.8][0]{$J^{\circ}_{2,2,2} (t_{4} \mid \delta_{1} = 1)$}
\psfrag{J2}[br][Bc][.8][0]{$J^{\circ}_{2,2,2} (t_{4} \mid \delta_{2} = 1)$}
\psfrag{0}[cc][tc][.7][0]{$0$}\includegraphics[scale=.6]{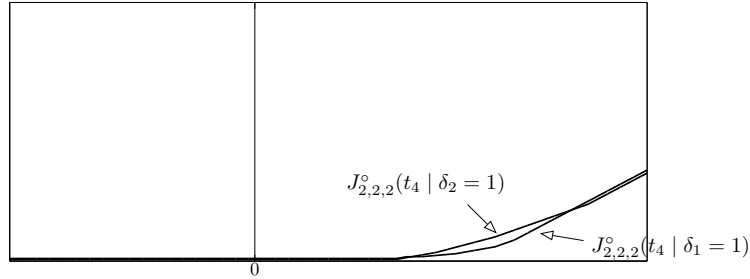}%
\vspace{-12pt}
\caption{Conditioned costs-to-go $J^{\circ}_{2,2,2} (t_{4} \mid \delta_{1} = 1)$ and $J^{\circ}_{2,2,2} (t_{4} \mid \delta_{2} = 1)$ in state $[2 \; 2 \; 2 \; t_{4}]^{T}$.}
\label{fig:esS3_J_2_2_2_min}
\end{figure}

In order to find the optimal cost-to-go $J^{\circ}_{2,2,2} (t_{4})$, it is necessary to carry out the following minimization
\begin{equation*}
J^{\circ}_{2,2,2} (t_{4}) = \min \big\{ J^{\circ}_{2,2,2} (t_{4} \mid \delta_{1} = 1) \, , \, J^{\circ}_{2,2,2} (t_{4} \mid \delta_{2} = 1) \big\}
\end{equation*}
which provides, in accordance with lemma~\ref{lem:min}, the continuous, nondecreasing, piecewise linear function illustrated in figure~\ref{fig:esS3_J_2_2_2}.

\begin{figure}[h!]
\centering
\psfrag{0}[cc][tc][.8][0]{$0$}
\includegraphics[scale=.8]{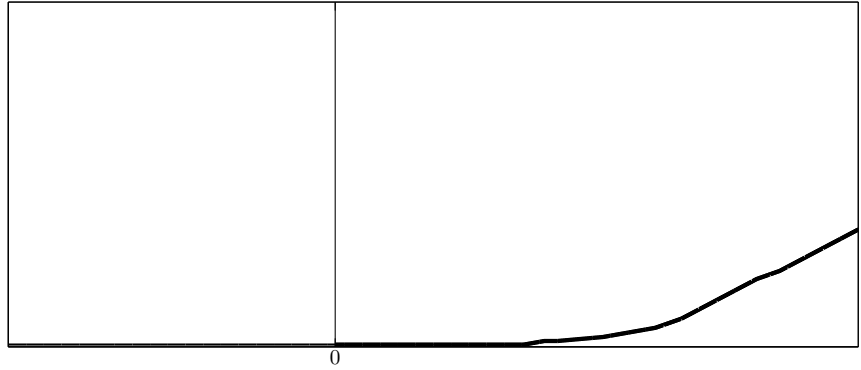}%
\vspace{-12pt}
\caption{Optimal cost-to-go $J^{\circ}_{2,2,2} (t_{4})$ in state $[2 \; 2 \; 2 \; t_{4}]^{T}$.}
\label{fig:esS3_J_2_2_2}
\end{figure}

The function $J^{\circ}_{2,2,2} (t_{4})$ is specified by the initial value 1, by the set \{ 14.5, 16, 17, 20.5, 24.5, 26.5, 32.25, 34 \} of abscissae $\gamma_{i}$, $i = 1, \ldots, 8$, at which the slope changes, and by the set \{ 1, 0, 0.5, 1, 2, 3, 2, 3 \} of slopes $\mu_{i}$, $i = 1, \ldots, 8$, in the various intervals.

Since $J^{\circ}_{2,2,2} (t_{4} \mid \delta_{1} = 1)$ is the minimum in $[16,32.25)$, and $J^{\circ}_{2,2,2} (t_{4} \mid \delta_{2} = 1)$ is the minimum in $(-\infty, 16)$ and in $[32.25,+\infty)$, the optimal control strategies for this state are
\begin{equation*}
\delta_{1}^{\circ} (2,2,2, t_{4}) = \left\{ \begin{array}{ll}
0 &  t_{4} < 16\\
1 &  16 \leq t_{4} < 32.25\\
0 & t_{4} \geq 32.25
\end{array} \right. \qquad \delta_{2}^{\circ} (2,2,2, t_{4}) = \left\{ \begin{array}{ll}
1 &  t_{4} < 16\\
0 &  16 \leq t_{4} < 32.25\\
1 & t_{4} \geq 32.25
\end{array} \right.
\end{equation*}
\begin{equation*}
\tau^{\circ} (2,2,2, t_{4}) = \left\{ \begin{array}{ll}
6 &  t_{4} < 16\\
8 & 16 \leq t_{4} < 20.5\\
-t_{4} + 28.5 & 20.5 \leq t_{4} < 24.5\\
4 & t_{4} \geq 24.5
\end{array} \right.
\end{equation*}
The optimal control strategy $\tau^{\circ} (2,2,2, t_{4})$ is illustrated in figure~\ref{fig:esS3_tau_2_2_2}.

\newpage

\begin{figure}[h!]
\centering
\psfrag{f(x)}[Bl][Bl][.8][0]{$\tau^{\circ} (2,2,2, t_{4})$}
\psfrag{x}[bc][Bl][.8][0]{$t_{4}$}
\psfrag{0}[tc][Bl][.8][0]{$0$}
\includegraphics[scale=.2]{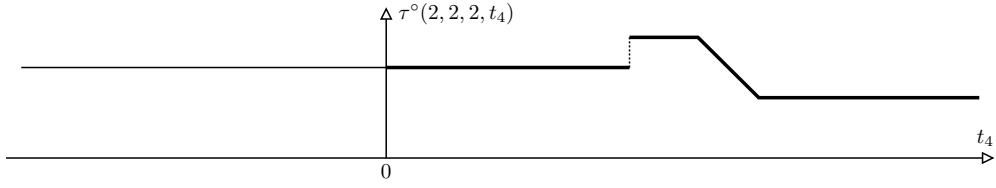}
\caption{Optimal control strategy $\tau^{\circ} (2,2,2, t_{4})$ in state $[ 2 \; 2 \; 2 \; t_{4}]^{T}$.}
\label{fig:esS3_tau_2_2_2}
\end{figure}


{\bf Stage $4$ -- State $\boldsymbol{[2 \; 2 \; 1 \; t_{4}]^{T}}$ ($S16$)}

In state $[2 \; 2 \; 1 \; t_{4}]^{T}$, the cost function to be minimized, with respect to the (continuos) decision variable $\tau$ and to the (binary) decision variables $\delta_{1}$ and $\delta_{2}$ is
\begin{equation*}
\begin{split}
&\delta_{1} \big[ \alpha_{1,3} \, \max \{ t_{4} + st_{1,1} + \tau - dd_{1,3} \, , \, 0 \} + \beta_{1} \, ( pt^{\mathrm{nom}}_{1} - \tau ) + sc_{1,1} + J^{\circ}_{3,2,1} (t_{5}) \big] +\\
&+ \delta_{2} \big[ \alpha_{2,3} \, \max \{ t_{4} + st_{1,2} + \tau - dd_{2,3} \, , \, 0 \} + \beta_{2} \, ( pt^{\mathrm{nom}}_{2} - \tau ) + sc_{1,2} + J^{\circ}_{2,3,2} (t_{5}) \big]
\end{split}
\end{equation*}

{\it Case i)} in which it is assumed $\delta_{1} = 1$ (and $\delta_{2} = 0$).

In this case, it is necessary to minimize, with respect to the (continuos) decision variable $\tau$ which corresponds to the processing time $pt_{1,3}$, the following function
\begin{equation*}
\alpha_{1,3} \, \max \{ t_{4} + st_{1,1} + \tau - dd_{1,3} \, , \, 0 \} + \beta_{1} \, (pt^{\mathrm{nom}}_{1} - \tau) + sc_{1,1} + J^{\circ}_{3,2,1} (t_{5})
\end{equation*}
that can be written as $f (pt_{1,3} + t_{4}) + g (pt_{1,3})$ being
\begin{equation*}
f (pt_{1,3} + t_{4}) = 1.5 \cdot \max \{ pt_{1,3} + t_{4} - 29 \, , \, 0 \} + J^{\circ}_{3,2,1} (pt_{1,3} + t_{4})
\end{equation*}
\begin{equation*}
g (pt_{1,3}) = \left\{ \begin{array}{ll}
8 - pt_{1,3} & pt_{1,3} \in [ 4 , 8 )\\
0 & pt_{1,3} \notin [ 4 , 8 )
\end{array} \right.
\end{equation*}
The function $pt^{\circ}_{1,3}(t_{4}) = \arg \min_{pt_{1,3}} \{ f (pt_{1,3} + t_{4}) + g (pt_{1,3}) \} $, with $4 \leq pt_{1,3} \leq 8$, is determined by applying lemma~\ref{lem:xopt}. It is (see figure~\ref{fig:esS3_tau_2_2_1_pt_1_3})
\begin{equation*}
pt^{\circ}_{1,3}(t_{4}) = \left\{ \begin{array}{ll}
x_{\mathrm{s}}(t_{4}) &  t_{4} < 16\\
x_{\mathrm{e}}(t_{4}) & t_{4} \geq 16
\end{array} \right. \qquad \text{with} \quad x_{\mathrm{s}}(t_{4}) = \left\{ \begin{array}{ll}
8 &  t_{4} < 15\\
-t_{4} + 23 & 15 \leq t_{4} < 16
\end{array} \right. \  , 
\end{equation*}
\begin{equation*}
\qquad \text{and} \quad x_{\mathrm{e}}(t_{4}) = \left\{ \begin{array}{ll}
8 &  16 \leq t_{4} < 21\\
-t_{4} + 29 & 21 \leq t_{4} < 25\\
4 & t_{4} \geq 25
\end{array} \right.
\end{equation*}

\begin{figure}[h!]
\centering
\psfrag{f(x)}[Bl][Bl][.8][0]{$pt^{\circ}_{1,3}(t_{4})$}
\psfrag{x}[bc][Bl][.8][0]{$t_{4}$}
\psfrag{0}[tc][Bl][.8][0]{$0$}
\includegraphics[scale=.2]{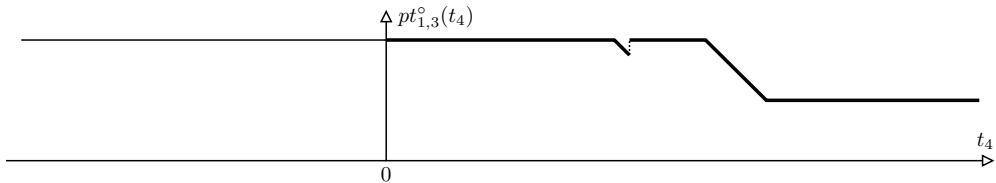}
\caption{Optimal processing time $pt^{\circ}_{1,3}(t_{4})$, under the assumption $\delta_{1} = 1$ in state $[ 2 \; 2 \; 1 \; t_{4}]^{T}$.}
\label{fig:esS3_tau_2_2_1_pt_1_3}
\end{figure}

The conditioned cost-to-go $J^{\circ}_{2,2,1} (t_{4} \mid \delta_{1}=1) = f ( pt^{\circ}_{1,3}(t_{4}) + t_{4} ) + g ( pt^{\circ}_{1,3}(t_{4}) )$, illustrated in figure~\ref{fig:esS3_J_2_2_1_min}, is provided by lemma~\ref{lem:h(t)}. It is specified by the initial value 1.5, by the set \{ 15, 16, 17.5, 21, 25, 27 \} of abscissae $\gamma_{i}$, $i = 1, \ldots, 6$, at which the slope changes, and by the set \{ 1, 0, 0.5, 1, 2, 3 \} of slopes $\mu_{i}$, $i = 1, \ldots, 6$, in the various intervals.

{\it Case ii)} in which it is assumed $\delta_{2} = 1$ (and $\delta_{1} = 0$).

In this case, it is necessary to minimize, with respect to the (continuos) decision variable $\tau$ which corresponds to the processing time $pt_{2,3}$, the following function
\begin{equation*}
\alpha_{2,3} \, \max \{ t_{4} + st_{1,2} + \tau - dd_{2,3} \, , \, 0 \} + \beta_{2} \, (pt^{\mathrm{nom}}_{2} - \tau) + sc_{1,2} + J^{\circ}_{2,3,2} (t_{5})
\end{equation*}
that can be written as $f (pt_{2,3} + t_{4}) + g (pt_{2,3})$ being
\begin{equation*}
f (pt_{2,3} + t_{4}) = \max \{ pt_{2,3} + t_{4} - 37 \, , \, 0 \} + 0.5 + J^{\circ}_{2,3,2} (pt_{2,3} + t_{4} + 1)
\end{equation*}
\begin{equation*}
g (pt_{2,3}) = \left\{ \begin{array}{ll}
1.5 \cdot (6 - pt_{2,3}) & pt_{2,3} \in [ 4 , 6 )\\
0 & pt_{2,3} \notin [ 4 , 6 )
\end{array} \right.
\end{equation*}
The function $pt^{\circ}_{2,3}(t_{4}) = \arg \min_{pt_{2,3}} \{ f (pt_{2,3} + t_{4}) + g (pt_{2,3}) \} $, with $4 \leq pt_{2,3} \leq 6$, is determined by applying lemma~\ref{lem:xopt}. It is (see figure~\ref{fig:esS3_tau_2_2_1_pt_2_3})
\begin{equation*}
pt^{\circ}_{2,3}(t_{4}) = x_{\mathrm{e}}(t_{4}) \qquad \text{with} \quad x_{\mathrm{e}}(t_{4}) = \left\{ \begin{array}{ll}
6 &  t_{4} < 17.5\\
-t_{4} + 23.5 & 17.5 \leq t_{4} < 19.5\\
4 & t_{4} \geq 19.5
\end{array} \right.
\end{equation*}

\begin{figure}[h!]
\centering
\psfrag{f(x)}[Bl][Bl][.8][0]{$pt^{\circ}_{2,3}(t_{4})$}
\psfrag{x}[bc][Bl][.8][0]{$t_{4}$}
\psfrag{0}[tc][Bl][.8][0]{$0$}
\includegraphics[scale=.2]{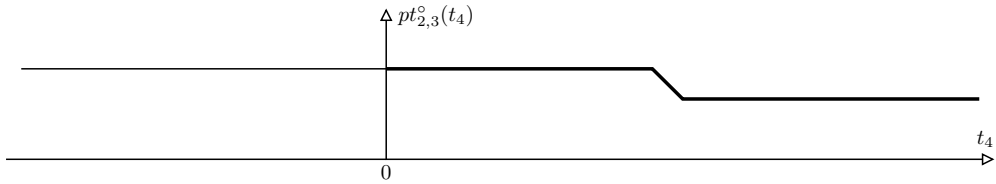}
\caption{Optimal processing time $pt^{\circ}_{2,3}(t_{4})$, under the assumption $\delta_{2} = 1$ in state $[ 2 \; 2 \; 1 \; t_{4}]^{T}$.}
\label{fig:esS3_tau_2_2_1_pt_2_3}
\end{figure}

The conditioned cost-to-go $J^{\circ}_{2,2,1} (t_{4} \mid \delta_{2}=1) = f ( pt^{\circ}_{2,3}(t_{4}) + t_{4} ) + g ( pt^{\circ}_{2,3}(t_{4}) )$, illustrated in figure~\ref{fig:esS3_J_2_2_1_min}, is provided by lemma~\ref{lem:h(t)}. It is specified by the initial value 1.5, by the set \{ 13.5, 17.5, 23.5, 33 \} of abscissae $\gamma_{i}$, $i = 1, \ldots, 4$, at which the slope changes, and by the set \{ 1, 1.5, 2, 3 \} of slopes $\mu_{i}$, $i = 1, \ldots, 4$, in the various intervals.

\begin{figure}[h!]
\centering
\psfrag{J1}[cl][Bc][.8][0]{$J^{\circ}_{2,2,1} (t_{4} \mid \delta_{1} = 1)$}
\psfrag{J2}[br][Bc][.8][0]{$J^{\circ}_{2,2,1} (t_{4} \mid \delta_{2} = 1)$}
\psfrag{0}[cc][tc][.7][0]{$0$}\includegraphics[scale=.6]{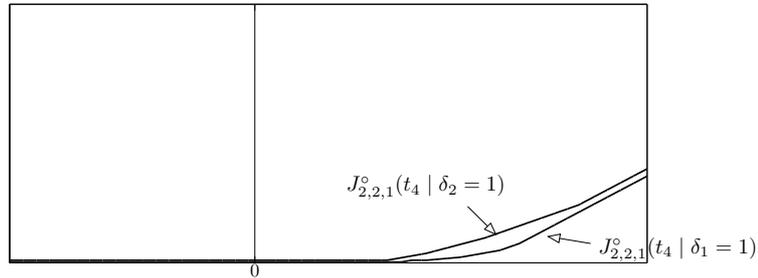}%
\vspace{-12pt}
\caption{Conditioned costs-to-go $J^{\circ}_{2,2,1} (t_{4} \mid \delta_{1} = 1)$ and $J^{\circ}_{2,2,1} (t_{4} \mid \delta_{2} = 1)$ in state $[2 \; 2 \; 1 \; t_{4}]^{T}$.}
\label{fig:esS3_J_2_2_1_min}
\end{figure}

In order to find the optimal cost-to-go $J^{\circ}_{2,2,1} (t_{4})$, it is necessary to carry out the following minimization
\begin{equation*}
J^{\circ}_{2,2,1} (t_{4}) = \min \big\{ J^{\circ}_{2,2,1} (t_{4} \mid \delta_{1} = 1) \, , \, J^{\circ}_{2,2,1} (t_{4} \mid \delta_{2} = 1) \big\}
\end{equation*}
which provides, in accordance with lemma~\ref{lem:min}, the continuous, nondecreasing, piecewise linear function illustrated in figure~\ref{fig:esS3_J_2_2_1}.

\begin{figure}[h!]
\centering
\psfrag{0}[cc][tc][.8][0]{$0$}
\includegraphics[scale=.8]{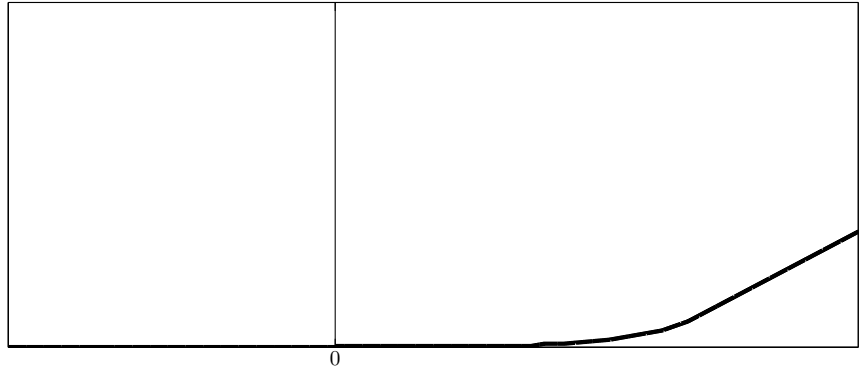}%
\vspace{-12pt}
\caption{Optimal cost-to-go $J^{\circ}_{2,2,1} (t_{4})$ in state $[2 \; 2 \; 1 \; t_{4}]^{T}$.}
\label{fig:esS3_J_2_2_1}
\end{figure}

The function $J^{\circ}_{2,2,1} (t_{4})$ is specified by the initial value 0.5, by the set \{ 15, 16, 17.5, 21, 25, 27 \} of abscissae $\gamma_{i}$, $i = 1, \ldots, 6$, at which the slope changes, and by the set \{ 1, 0, 0.5, 1, 2, 3 \} of slopes $\mu_{i}$, $i = 1, \ldots, 6$, in the various intervals.

Since $J^{\circ}_{2,2,1} (t_{4} \mid \delta_{1} = 1)$ is always the minimum (see again figure~\ref{fig:esS3_J_2_2_1_min}), the optimal control strategies for this state are
\begin{equation*}
\delta_{1}^{\circ} (2,2,1, t_{4}) = 1 \quad \forall \, t_{4} \qquad \delta_{2}^{\circ} (2,2,1, t_{4}) = 0 \quad \forall \, t_{4}
\end{equation*}
\begin{equation*}
\tau^{\circ} (2,2,1, t_{4}) = \left\{ \begin{array}{ll}
8 &  t_{4} < 15\\
-t_{4} + 23 & 15 \leq t_{4} < 16\\
8 &  16 \leq t_{4} < 21\\
-t_{4} + 29 & 21 \leq t_{4} < 25\\
4 & t_{4} \geq 25
\end{array} \right.
\end{equation*}

The optimal control strategy $\tau^{\circ} (2,2,1, t_{4})$ is illustrated in figure~\ref{fig:esS3_tau_2_2_1}.

\begin{figure}[h!]
\centering
\psfrag{f(x)}[Bl][Bl][.8][0]{$\tau^{\circ} (2,2,1, t_{4})$}
\psfrag{x}[bc][Bl][.8][0]{$t_{4}$}
\psfrag{0}[tc][Bl][.8][0]{$0$}
\includegraphics[scale=.2]{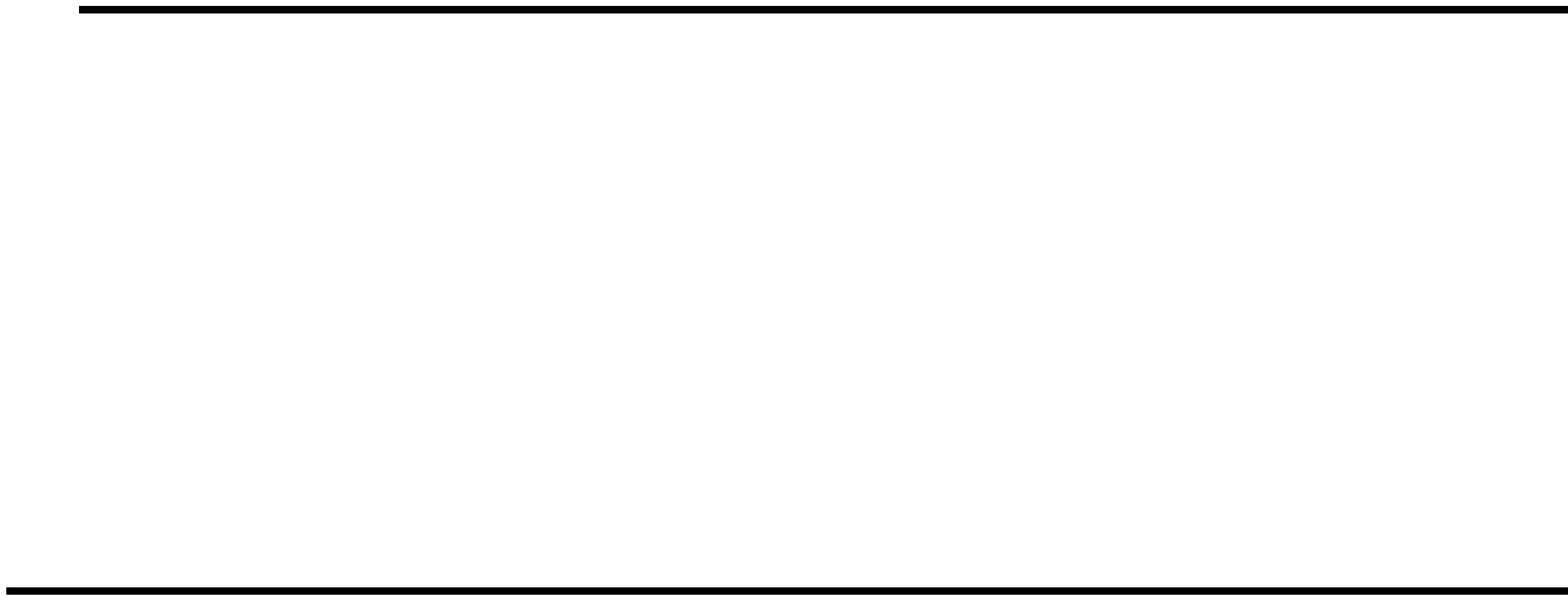}
\caption{Optimal control strategy $\tau^{\circ} (2,2,1, t_{4})$ in state $[ 2 \; 2 \; 1 \; t_{4}]^{T}$.}
\label{fig:esS3_tau_2_2_1}
\end{figure}


{\bf Stage $4$ -- State $\boldsymbol{[3 \; 1 \; 2 \; t_{4}]^{T}}$ ($S15$)}

In state $[3 \; 1 \; 2 \; t_{4}]^{T}$, the cost function to be minimized, with respect to the (continuos) decision variable $\tau$ and to the (binary) decision variables $\delta_{1}$ and $\delta_{2}$ is
\begin{equation*}
\begin{split}
&\delta_{1} \big[ \alpha_{1,4} \, \max \{ t_{4} + st_{2,1} + \tau - dd_{1,4} \, , \, 0 \} + \beta_{1} \, ( pt^{\mathrm{nom}}_{1} - \tau ) + sc_{2,1} + J^{\circ}_{4,1,1} (t_{5}) \big] +\\
&+ \delta_{2} \big[ \alpha_{2,2} \, \max \{ t_{4} + st_{2,2} + \tau - dd_{2,2} \, , \, 0 \} + \beta_{2} \, ( pt^{\mathrm{nom}}_{2} - \tau ) + sc_{2,2} + J^{\circ}_{3,2,2} (t_{5}) \big]
\end{split}
\end{equation*}

{\it Case i)} in which it is assumed $\delta_{1} = 1$ (and $\delta_{2} = 0$).

In this case, it is necessary to minimize, with respect to the (continuos) decision variable $\tau$ which corresponds to the processing time $pt_{1,4}$, the following function
\begin{equation*}
\alpha_{1,4} \, \max \{ t_{4} + st_{2,1} + \tau - dd_{1,4} \, , \, 0 \} + \beta_{1} \, ( pt^{\mathrm{nom}}_{1} - \tau ) + sc_{2,1} + J^{\circ}_{4,1,1} (t_{5})
\end{equation*}
that can be written as $f (pt_{1,4} + t_{4}) + g (pt_{1,4})$ being
\begin{equation*}
f (pt_{1,4} + t_{4}) = 0.5 \cdot \max \{ pt_{1,4} + t_{4} - 40.5 \, , \, 0 \} + 1 + J^{\circ}_{4,1,1} (pt_{1,4} + t_{4} + 0.5)
\end{equation*}
\begin{equation*}
g (pt_{1,4}) = \left\{ \begin{array}{ll}
8 - pt_{1,4} & pt_{1,4} \in [ 4 , 8 )\\
0 & pt_{1,4} \notin [ 4 , 8 )
\end{array} \right.
\end{equation*}
The function $pt^{\circ}_{1,4}(t_{4}) = \arg \min_{pt_{1,4}} \{ f (pt_{1,4} + t_{4}) + g (pt_{1,4}) \} $, with $4 \leq pt_{1,4} \leq 8$, is determined by applying lemma~\ref{lem:xopt}. It is (see figure~\ref{fig:esS3_tau_3_1_2_pt_1_4})
\begin{equation*}
pt^{\circ}_{1,4}(t_{4}) = x_{\mathrm{e}}(t_{4}) \qquad \text{with} \quad x_{\mathrm{e}}(t_{4}) = \left\{ \begin{array}{ll}
8 &  t_{4} < 8.5\\
-t_{4} + 16.5 & 8.5 \leq t_{4} < 12.5\\
4 & t_{4} \geq 12.5
\end{array} \right.
\end{equation*}

The conditioned cost-to-go $J^{\circ}_{3,1,2} (t_{4} \mid \delta_{1}=1) = f ( pt^{\circ}_{1,4}(t_{4}) + t_{4} ) + g ( pt^{\circ}_{1,4}(t_{4}) )$, illustrated in figure~\ref{fig:esS3_J_3_1_2_min}, is provided by lemma~\ref{lem:h(t)}. It is specified by the initial value 1.5, by the set \{ 8.5, 20.5, 22.5, 36.5 \} of abscissae $\gamma_{i}$, $i = 1, \ldots, 4$, at which the slope changes, and by the set \{ 1, 1.5, 2, 2.5 \} of slopes $\mu_{i}$, $i = 1, \ldots, 4$, in the various intervals.

\newpage

\begin{figure}[h!]
\centering
\psfrag{f(x)}[Bl][Bl][.8][0]{$pt^{\circ}_{1,4}(t_{4})$}
\psfrag{x}[bc][Bl][.8][0]{$t_{4}$}
\psfrag{0}[tc][Bl][.8][0]{$0$}
\includegraphics[scale=.2]{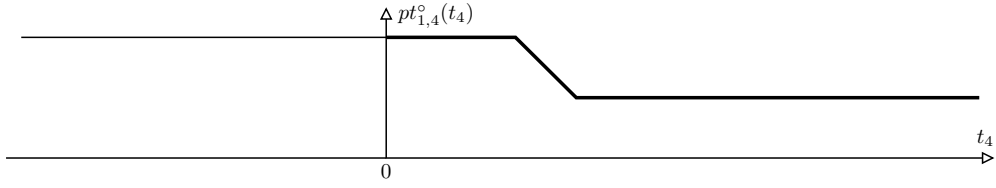}
\caption{Optimal processing time $pt^{\circ}_{1,4}(t_{4})$, under the assumption $\delta_{1} = 1$ in state $[ 3 \; 1 \; 2 \; t_{4}]^{T}$.}
\label{fig:esS3_tau_3_1_2_pt_1_4}
\end{figure}

{\it Case ii)} in which it is assumed $\delta_{2} = 1$ (and $\delta_{1} = 0$).

In this case, it is necessary to minimize, with respect to the (continuos) decision variable $\tau$ which corresponds to the processing time $pt_{2,2}$, the following function
\begin{equation*}
\alpha_{2,2} \, \max \{ t_{4} + st_{2,2} + \tau - dd_{2,2} \, , \, 0 \} + \beta_{2} \, ( pt^{\mathrm{nom}}_{2} - \tau ) + sc_{2,2} + J^{\circ}_{3,2,2} (t_{5})
\end{equation*}
that can be written as $f (pt_{2,2} + t_{4}) + g (pt_{2,2})$ being
\begin{equation*}
f (pt_{2,2} + t_{4}) = \max \{ pt_{2,2} + t_{4} - 24 \, , \, 0 \} + J^{\circ}_{3,2,2} (pt_{2,2} + t_{4})
\end{equation*}
\begin{equation*}
g (pt_{2,2}) = \left\{ \begin{array}{ll}
1.5 \cdot (6 - pt_{2,2}) & pt_{2,2} \in [ 4 , 6 )\\
0 & pt_{2,2} \notin [ 4 , 6 )
\end{array} \right.
\end{equation*}
The function $pt^{\circ}_{2,2}(t_{4}) = \arg \min_{pt_{2,2}} \{ f (pt_{2,2} + t_{4}) + g (pt_{2,2}) \} $, with $4 \leq pt_{2,2} \leq 6$, is determined by applying lemma~\ref{lem:xopt}. It is (see figure~\ref{fig:esS3_tau_3_1_2_pt_2_2})
\begin{equation*}
pt^{\circ}_{2,2}(t_{4}) = x_{\mathrm{e}}(t_{4}) \qquad \text{with} \quad x_{\mathrm{e}}(t_{4}) = \left\{ \begin{array}{ll}
6 &  t_{4} < 20.5\\
-t_{4} + 26.5 & 20.5 \leq t_{4} < 22.5\\
4 & t_{4} \geq 22.5
\end{array} \right.
\end{equation*}

\begin{figure}[h!]
\centering
\psfrag{f(x)}[Bl][Bl][.8][0]{$pt^{\circ}_{2,2}(t_{4})$}
\psfrag{x}[bc][Bl][.8][0]{$t_{4}$}
\psfrag{0}[tc][Bl][.8][0]{$0$}
\includegraphics[scale=.2]{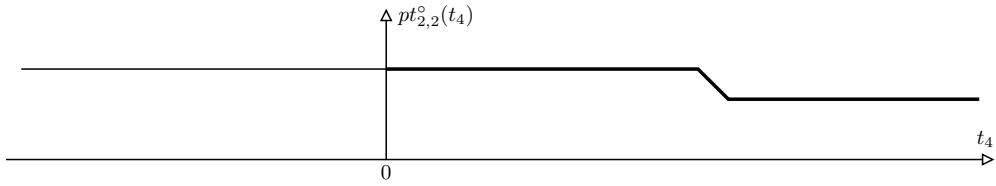}
\caption{Optimal processing time $pt^{\circ}_{2,2}(t_{4})$, under the assumption $\delta_{2} = 1$ in state $[ 3 \; 1 \; 2 \; t_{4}]^{T}$.}
\label{fig:esS3_tau_3_1_2_pt_2_2}
\end{figure}

The conditioned cost-to-go $J^{\circ}_{3,1,2} (t_{4} \mid \delta_{2}=1) = f ( pt^{\circ}_{2,2}(t_{4}) + t_{4} ) + g ( pt^{\circ}_{2,2}(t_{4}) )$, illustrated in figure~\ref{fig:esS3_J_3_1_2_min}, is provided by lemma~\ref{lem:h(t)}. It is specified by the initial value 1, by the set \{ 18, 20.5, 28 \} of abscissae $\gamma_{i}$, $i = 1, \ldots, 3$, at which the slope changes, and by the set \{ 1, 1.5, 2.5 \} of slopes $\mu_{i}$, $i = 1, \ldots, 3$, in the various intervals.

\begin{figure}[h!]
\centering
\psfrag{J1}[br][Bc][.8][0]{$J^{\circ}_{3,1,2} (t_{4} \mid \delta_{1} = 1)$}
\psfrag{J2}[cl][Bc][.8][0]{$J^{\circ}_{3,1,2} (t_{4} \mid \delta_{2} = 1)$}
\psfrag{0}[cc][tc][.7][0]{$0$}\includegraphics[scale=.6]{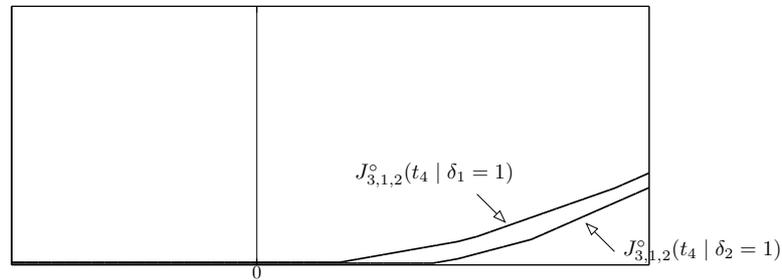}%
\vspace{-12pt}
\caption{Conditioned costs-to-go $J^{\circ}_{3,1,2} (t_{4} \mid \delta_{1} = 1)$ and $J^{\circ}_{3,1,2} (t_{4} \mid \delta_{2} = 1)$ in state $[3 \; 1 \; 2 \; t_{4}]^{T}$.}
\label{fig:esS3_J_3_1_2_min}
\end{figure}

In order to find the optimal cost-to-go $J^{\circ}_{3,1,2} (t_{4})$, it is necessary to carry out the following minimization
\begin{equation*}
J^{\circ}_{3,1,2} (t_{4}) = \min \big\{ J^{\circ}_{3,1,2} (t_{4} \mid \delta_{1} = 1) \, , \, J^{\circ}_{3,1,2} (t_{4} \mid \delta_{2} = 1) \big\}
\end{equation*}
which provides, in accordance with lemma~\ref{lem:min}, the continuous, nondecreasing, piecewise linear function illustrated in figure~\ref{fig:esS3_J_3_1_2}.

\begin{figure}[h!]
\centering
\psfrag{0}[cc][tc][.8][0]{$0$}
\includegraphics[scale=.8]{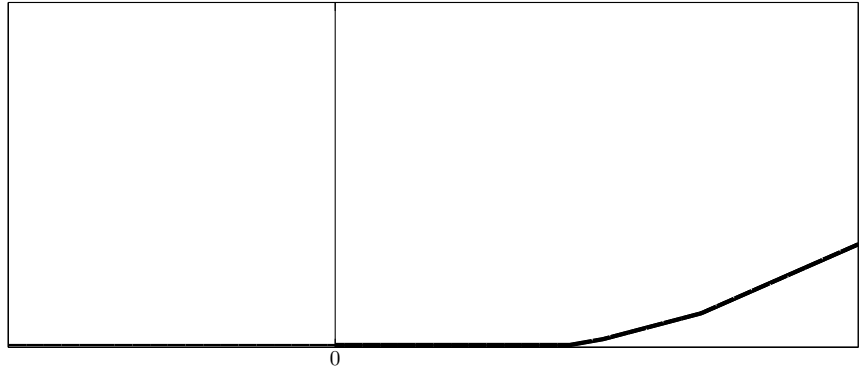}%
\vspace{-12pt}
\caption{Optimal cost-to-go $J^{\circ}_{3,1,2} (t_{4})$ in state $[3 \; 1 \; 2 \; t_{4}]^{T}$.}
\label{fig:esS3_J_3_1_2}
\end{figure}

The function $J^{\circ}_{3,1,2} (t_{4})$ is specified by the initial value 1, by the set \{ 18, 20.5, 28 \} of abscissae $\gamma_{i}$, $i = 1, \ldots, 3$, at which the slope changes, and by the set \{ 1, 1.5, 2.5 \} of slopes $\mu_{i}$, $i = 1, \ldots, 3$, in the various intervals.

Since $J^{\circ}_{3,1,2} (t_{4} \mid \delta_{2} = 1)$ is always the minimum (see again figure~\ref{fig:esS3_J_3_1_2_min}), the optimal control strategies for this state are
\begin{equation*}
\delta_{1}^{\circ} (3,1,2, t_{4}) = 0 \quad \forall \, t_{4} \qquad \delta_{2}^{\circ} (3,1,2, t_{4}) = 1 \quad \forall \, t_{4}
\end{equation*}
\begin{equation*}
\tau^{\circ} (3,1,2, t_{4}) = \left\{ \begin{array}{ll}
6 &  t_{4} < 20.5\\
-t_{4} + 26.5 & 20.5 \leq t_{4} < 22.5\\
4 & t_{4} \geq 22.5
\end{array} \right.
\end{equation*}

The optimal control strategy $\tau^{\circ} (3,1,2, t_{4})$ is illustrated in figure~\ref{fig:esS3_tau_3_1_2}.

\begin{figure}[h!]
\centering
\psfrag{f(x)}[Bl][Bl][.8][0]{$\tau^{\circ} (3,1,2, t_{4})$}
\psfrag{x}[bc][Bl][.8][0]{$t_{4}$}
\psfrag{0}[tc][Bl][.8][0]{$0$}
\includegraphics[scale=.2]{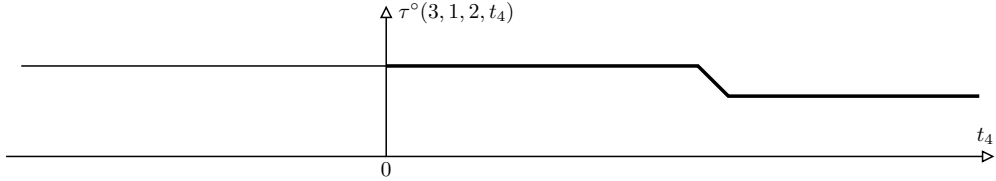}
\caption{Optimal control strategy $\tau^{\circ} (3,1,2, t_{4})$ in state $[ 3 \; 1 \; 2 \; t_{4}]^{T}$.}
\label{fig:esS3_tau_3_1_2}
\end{figure}


{\bf Stage $4$ -- State $\boldsymbol{[3 \; 1 \; 1 \; t_{4}]^{T}}$ ($S14$)}

In state $[3 \; 1 \; 1 \; t_{4}]^{T}$, the cost function to be minimized, with respect to the (continuos) decision variable $\tau$ and to the (binary) decision variables $\delta_{1}$ and $\delta_{2}$ is
\begin{equation*}
\begin{split}
&\delta_{1} \big[ \alpha_{1,4} \, \max \{ t_{4} + st_{1,1} + \tau - dd_{1,4} \, , \, 0 \} + \beta_{1} \, ( pt^{\mathrm{nom}}_{1} - \tau ) + sc_{1,1} + J^{\circ}_{4,1,1} (t_{5}) \big] +\\
&+ \delta_{2} \big[ \alpha_{2,2} \, \max \{ t_{4} + st_{1,2} + \tau - dd_{2,2} \, , \, 0 \} + \beta_{2} \, ( pt^{\mathrm{nom}}_{2} - \tau ) + sc_{1,2} + J^{\circ}_{3,2,2} (t_{5}) \big]
\end{split}
\end{equation*}

{\it Case i)} in which it is assumed $\delta_{1} = 1$ (and $\delta_{2} = 0$).

In this case, it is necessary to minimize, with respect to the (continuos) decision variable $\tau$ which corresponds to the processing time $pt_{1,4}$, the following function
\begin{equation*}
\alpha_{1,4} \, \max \{ t_{4} + st_{1,1} + \tau - dd_{1,4} \, , \, 0 \} + \beta_{1} \, ( pt^{\mathrm{nom}}_{1} - \tau ) + sc_{1,1} + J^{\circ}_{4,1,1} (t_{5})
\end{equation*}
that can be written as $f (pt_{1,4} + t_{4}) + g (pt_{1,4})$ being
\begin{equation*}
f (pt_{1,4} + t_{4}) = 0.5 \cdot \max \{ pt_{1,4} + t_{4} - 41 \, , \, 0 \} + J^{\circ}_{4,1,1} (pt_{1,4} + t_{4})
\end{equation*}
\begin{equation*}
g (pt_{1,4}) = \left\{ \begin{array}{ll}
8 - pt_{1,4} & pt_{1,4} \in [ 4 , 8 )\\
0 & pt_{1,4} \notin [ 4 , 8 )
\end{array} \right.
\end{equation*}
The function $pt^{\circ}_{1,4}(t_{4}) = \arg \min_{pt_{1,4}} \{ f (pt_{1,4} + t_{4}) + g (pt_{1,4}) \} $, with $4 \leq pt_{1,4} \leq 8$, is determined by applying lemma~\ref{lem:xopt}. It is (see figure~\ref{fig:esS3_tau_3_1_1_pt_1_4})
\begin{equation*}
pt^{\circ}_{1,4}(t_{4}) = x_{\mathrm{e}}(t_{4}) \qquad \text{with} \quad x_{\mathrm{e}}(t_{4}) = \left\{ \begin{array}{ll}
8 &  t_{4} < 9\\
-t_{4} + 17 & 9 \leq t_{4} < 13\\
4 & t_{4} \geq 13
\end{array} \right.
\end{equation*}

\begin{figure}[h!]
\centering
\psfrag{f(x)}[Bl][Bl][.8][0]{$pt^{\circ}_{1,4}(t_{4})$}
\psfrag{x}[bc][Bl][.8][0]{$t_{4}$}
\psfrag{0}[tc][Bl][.8][0]{$0$}
\includegraphics[scale=.2]{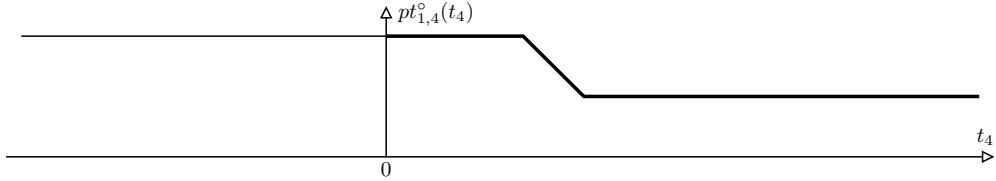}
\caption{Optimal processing time $pt^{\circ}_{1,4}(t_{4})$, under the assumption $\delta_{1} = 1$ in state $[ 3 \; 1 \; 1 \; t_{4}]^{T}$.}
\label{fig:esS3_tau_3_1_1_pt_1_4}
\end{figure}

The conditioned cost-to-go $J^{\circ}_{3,1,1} (t_{4} \mid \delta_{1}=1) = f ( pt^{\circ}_{1,4}(t_{4}) + t_{4} ) + g ( pt^{\circ}_{1,4}(t_{4}) )$, illustrated in figure~\ref{fig:esS3_J_3_1_1_min}, is provided by lemma~\ref{lem:h(t)}. It is specified by the initial value 0.5, by the set \{ 9, 21, 23, 37 \} of abscissae $\gamma_{i}$, $i = 1, \ldots, 4$, at which the slope changes, and by the set \{ 1, 1.5, 2, 2.5 \} of slopes $\mu_{i}$, $i = 1, \ldots, 4$, in the various intervals.

{\it Case ii)} in which it is assumed $\delta_{2} = 1$ (and $\delta_{1} = 0$).

In this case, it is necessary to minimize, with respect to the (continuos) decision variable $\tau$ which corresponds to the processing time $pt_{2,2}$, the following function
\begin{equation*}
\alpha_{2,2} \, \max \{ t_{4} + st_{1,2} + \tau - dd_{2,2} \, , \, 0 \} + \beta_{2} \, ( pt^{\mathrm{nom}}_{2} - \tau ) + sc_{1,2} + J^{\circ}_{3,2,2} (t_{5})
\end{equation*}
that can be written as $f (pt_{2,2} + t_{4}) + g (pt_{2,2})$ being
\begin{equation*}
f (pt_{2,2} + t_{4}) = \max \{ pt_{2,2} + t_{4} - 23 \, , \, 0 \} + 0.5 + J^{\circ}_{3,2,2} (pt_{2,2} + t_{4} + 1)
\end{equation*}
\begin{equation*}
g (pt_{2,2}) = \left\{ \begin{array}{ll}
1.5 \cdot (6 - pt_{2,2}) & pt_{2,2} \in [ 4 , 6 )\\
0 & pt_{2,2} \notin [ 4 , 6 )
\end{array} \right.
\end{equation*}
The function $pt^{\circ}_{2,2}(t_{4}) = \arg \min_{pt_{2,2}} \{ f (pt_{2,2} + t_{4}) + g (pt_{2,3}) \} $, with $4 \leq pt_{2,2} \leq 6$, is determined by applying lemma~\ref{lem:xopt}. It is (see figure~\ref{fig:esS3_tau_3_1_1_pt_2_2})
\begin{equation*}
pt^{\circ}_{2,2}(t_{4}) = x_{\mathrm{e}}(t_{4}) \qquad \text{with} \quad x_{\mathrm{e}}(t_{4}) = \left\{ \begin{array}{ll}
6 &  t_{4} < 19.5\\
-t_{4} + 25.5 & 19.5 \leq t_{4} < 21.5\\
4 & t_{4} \geq 21.5
\end{array} \right.
\end{equation*}

\begin{figure}[h!]
\centering
\psfrag{f(x)}[Bl][Bl][.8][0]{$pt^{\circ}_{2,2}(t_{4})$}
\psfrag{x}[bc][Bl][.8][0]{$t_{4}$}
\psfrag{0}[tc][Bl][.8][0]{$0$}
\includegraphics[scale=.2]{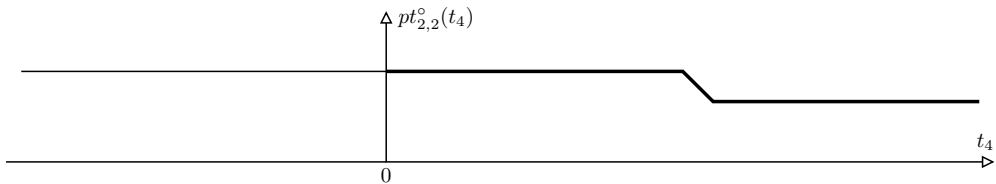}
\caption{Optimal processing time $pt^{\circ}_{2,2}(t_{4})$, under the assumption $\delta_{2} = 1$ in state $[ 3 \; 1 \; 1 \; t_{4}]^{T}$.}
\label{fig:esS3_tau_3_1_1_pt_2_2}
\end{figure}

The conditioned cost-to-go $J^{\circ}_{3,1,1} (t_{4} \mid \delta_{2}=1) = f ( pt^{\circ}_{2,2}(t_{4}) + t_{4} ) + g ( pt^{\circ}_{2,2}(t_{4}) )$, illustrated in figure~\ref{fig:esS3_J_3_1_1_min}, is provided by lemma~\ref{lem:h(t)}. It is specified by the initial value 1, by the set \{ 17, 19.5, 27 \} of abscissae $\gamma_{i}$, $i = 1, \ldots, 3$, at which the slope changes, and by the set \{ 1, 1.5, 2.5 \} of slopes $\mu_{i}$, $i = 1, \ldots, 3$, in the various intervals.

\begin{figure}[h!]
\centering
\psfrag{J1}[br][Bc][.8][0]{$J^{\circ}_{3,1,1} (t_{4} \mid \delta_{1} = 1)$}
\psfrag{J2}[cl][Bc][.8][0]{$J^{\circ}_{3,1,1} (t_{4} \mid \delta_{2} = 1)$}
\psfrag{0}[cc][tc][.7][0]{$0$}\includegraphics[scale=.6]{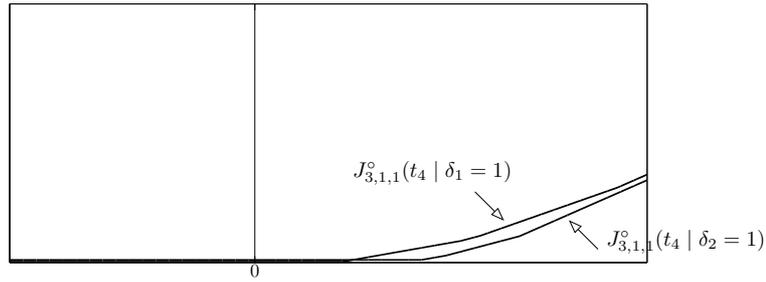}%
\vspace{-12pt}
\caption{Conditioned costs-to-go $J^{\circ}_{3,1,1} (t_{4} \mid \delta_{1} = 1)$ and $J^{\circ}_{3,1,1} (t_{4} \mid \delta_{2} = 1)$ in state $[3 \; 1 \; 1 \; t_{4}]^{T}$.}
\label{fig:esS3_J_3_1_1_min}
\end{figure}

In order to find the optimal cost-to-go $J^{\circ}_{3,1,1} (t_{4})$, it is necessary to carry out the following minimization
\begin{equation*}
J^{\circ}_{3,1,1} (t_{4}) = \min \big\{ J^{\circ}_{3,1,1} (t_{4} \mid \delta_{1} = 1) \, , \, J^{\circ}_{3,1,1} (t_{4} \mid \delta_{2} = 1) \big\}
\end{equation*}
which provides, in accordance with lemma~\ref{lem:min}, the continuous, nondecreasing, piecewise linear function illustrated in figure~\ref{fig:esS3_J_3_1_1}.

\begin{figure}[h!]
\centering
\psfrag{0}[cc][tc][.8][0]{$0$}
\includegraphics[scale=.8]{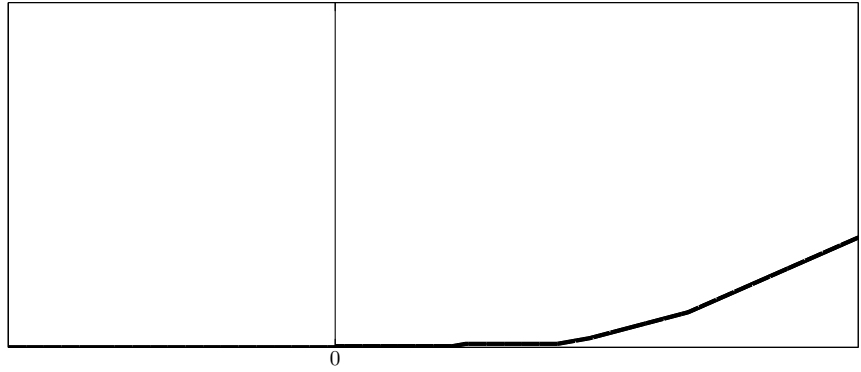}%
\vspace{-12pt}
\caption{Optimal cost-to-go $J^{\circ}_{3,1,1} (t_{4})$ in state $[3 \; 1 \; 1 \; t_{4}]^{T}$.}
\label{fig:esS3_J_3_1_1}
\end{figure}

The function $J^{\circ}_{3,1,1} (t_{4})$ is specified by the initial value 0.5, by the set \{ 9, 10, 17, 19.5, 27 \} of abscissae $\gamma_{i}$, $i = 1, \ldots, 5$, at which the slope changes, and by the set \{ 1, 0, 1, 1.5, 2.5 \} of slopes $\mu_{i}$, $i = 1, \ldots, 5$, in the various intervals.

Since $J^{\circ}_{3,1,1} (t_{4} \mid \delta_{1} = 1)$ is the minimum in $(-\infty,10)$, and $J^{\circ}_{3,1,1} (t_{4} \mid \delta_{2} = 1)$ is the minimum in $[10,+\infty)$, the optimal control strategies for this state are
\begin{equation*}
\delta_{1}^{\circ} (3,1,1, t_{4}) = \left\{ \begin{array}{ll}
1 &  t_{4} < 10\\
0 & t_{4} \geq 10
\end{array} \right. \qquad \delta_{2}^{\circ} (3,1,1, t_{4}) = \left\{ \begin{array}{ll}
0 &  t_{4} < 10\\
1 & t_{4} \geq 10
\end{array} \right.
\end{equation*}
\begin{equation*}
\tau^{\circ} (3,1,1, t_{4}) = \left\{ \begin{array}{ll}
8 &  t_{4} < 9\\
-t_{4} + 17 & 9 \leq t_{4} < 24.5\\
6 & 10 \leq t_{4} < 19.5\\
-t_{4} + 25.5 & 19.5 \leq t_{4} < 21.5\\
4 & t_{5} \geq 21.5
\end{array} \right.
\end{equation*}

The optimal control strategy $\tau^{\circ} (3,1,1, t_{4})$ is illustrated in figure~\ref{fig:esS3_tau_3_1_1}.

\begin{figure}[h!]
\centering
\psfrag{f(x)}[Bl][Bl][.8][0]{$\tau^{\circ} (3,1,1, t_{4})$}
\psfrag{x}[bc][Bl][.8][0]{$t_{4}$}
\psfrag{0}[tc][Bl][.8][0]{$0$}
\includegraphics[scale=.2]{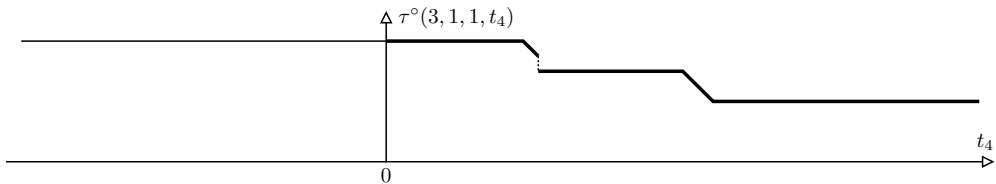}
\caption{Optimal control strategy $\tau^{\circ} (3,1,1, t_{4})$ in state $[ 3 \; 1 \; 1 \; t_{4}]^{T}$.}
\label{fig:esS3_tau_3_1_1}
\end{figure}


{\bf Stage $4$ -- State $\boldsymbol{[4 \; 0 \; 1 \; t_{4}]^{T}}$ ($S13$)}

In state $[4 \; 0 \; 1 \; t_{4}]^{T}$ all jobs of class $P_{1}$ have been completed; then the decision about the class of the next job to be executed is mandatory. The cost function to be minimized in this state, with respect to the (continuos) decision variable $\tau$ only (which corresponds to the processing time $pt_{2,1}$), is
\begin{equation*}
\alpha_{2,1} \, \max \{ t_{4} + st_{1,2} + \tau - dd_{2,1} \, , \, 0 \} + \beta_{2} \, (pt^{\mathrm{nom}}_{2} - \tau) + sc_{1,2} + J^{\circ}_{4,1,2} (t_{5})
\end{equation*}
that can be written as $f (pt_{2,1} + t_{4}) + g (pt_{2,1})$ being
\begin{equation*}
f (pt_{2,1} + t_{4}) = 2 \cdot \max \{ pt_{2,1} + t_{4} - 20 \, , \, 0 \} + 0.5 + J^{\circ}_{4,1,2} (pt_{2,1} + t_{4} + 1)
\end{equation*}
\begin{equation*}
g (pt_{2,1}) = \left\{ \begin{array}{ll}
1.5 \cdot (6 - pt_{2,1}) & pt_{2,1} \in [ 4 , 6 )\\
0 & pt_{2,1} \notin [ 4 , 6 )
\end{array} \right.
\end{equation*}
The function $pt^{\circ}_{2,1}(t_{4}) = \arg \min_{pt_{2,1}} \{ f (pt_{2,1} + t_{4}) + g (pt_{2,1}) \} $, with $4 \leq pt_{2,1} \leq 6$, is determined by applying lemma~\ref{lem:xopt}. It is
\begin{equation*}
pt^{\circ}_{2,1}(t_{4}) = x_{\mathrm{e}}(t_{4}) \qquad \text{with} \quad x_{\mathrm{e}}(t_{4}) = \left\{ \begin{array}{ll}
6 &  t_{4} < 14\\
-t_{4} + 20 & 14 \leq t_{4} < 16\\
4 & t_{4} \geq 16
\end{array} \right.
\end{equation*}

Taking into account the mandatory decision about the class of the next job to be executed, the optimal control strategies for this state are
\begin{equation*}
\delta_{1}^{\circ} (4,0,1, t_{4}) = 0 \quad \forall \, t_{4} \qquad \delta_{2}^{\circ} (4,0,1, t_{4}) = 1 \quad \forall \, t_{4}
\end{equation*}
\begin{equation*}
\tau^{\circ} (4,0,1, t_{4}) = \left\{ \begin{array}{ll}
6 &  t_{4} < 14\\
-t_{4} + 20 & 14 \leq t_{4} < 16\\
4 & t_{4} \geq 16
\end{array} \right.
\end{equation*}
The optimal control strategy $\tau^{\circ} (4,0,1, t_{4})$ is illustrated in figure~\ref{fig:esS3_tau_4_0_1}.

\begin{figure}[h!]
\centering
\psfrag{f(x)}[Bl][Bl][.8][0]{$\tau^{\circ} (4,0,1, t_{4})$}
\psfrag{x}[bc][Bl][.8][0]{$t_{5}$}
\psfrag{0}[tc][Bl][.8][0]{$0$}
\includegraphics[scale=.2]{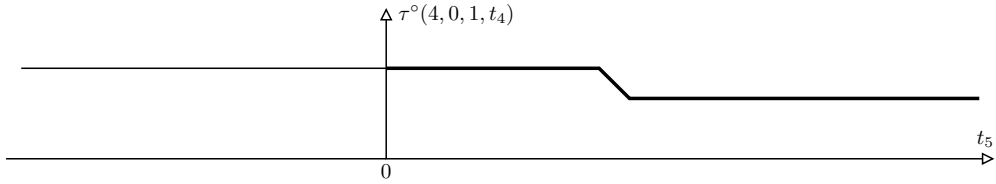}
\caption{Optimal control strategy $\tau^{\circ} (4,0,1, t_{4})$ in state $[ 4 \; 0 \; 1 \; t_{4}]^{T}$.}
\label{fig:esS3_tau_4_0_1}
\end{figure}

The optimal cost-to-go $J^{\circ}_{4,0,1} (t_{4}) = f ( pt^{\circ}_{2,1}(t_{4}) + t_{4} ) + g ( pt^{\circ}_{2,1}(t_{4}) )$, illustrated in figure~\ref{fig:esS2_J_4_0_1}, is provided by lemma~\ref{lem:h(t)}. It is specified by the initial value 0.5, by the set \{ 11, 14, 16, 21, 23 \} of abscissae $\gamma_{i}$, $i = 1, \ldots, 5$, at which the slope changes, and by the set \{ 1, 1.5, 3, 3.5, 4 \} of slopes $\mu_{i}$, $i = 1, \ldots, 5$, in the various intervals.

\begin{figure}[h!]
\centering
\psfrag{0}[cc][tc][.8][0]{$0$}
\includegraphics[scale=.8]{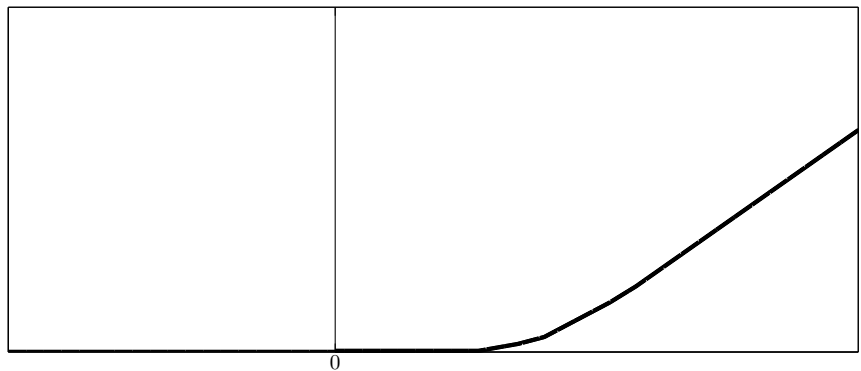}%
\vspace{-12pt}
\caption{Optimal cost-to-go $J^{\circ}_{4,0,1} (t_{4})$ in state $[4 \; 0 \; 1 \; t_{4}]^{T}$.}
\label{fig:esS2_J_4_0_1}
\end{figure}


{\bf Stage $3$ -- State $\boldsymbol{[0 \; 3 \; 2 \; t_{3}]^{T}}$ ($S12$)}

In state $[0 \; 3 \; 2 \; t_{3}]^{T}$ all jobs of class $P_{2}$ have been completed; then the decision about the class of the next job to be executed is mandatory. The cost function to be minimized in this state, with respect to the (continuos) decision variable $\tau$ only (which corresponds to the processing time $pt_{1,1}$), is
\begin{equation*}
\alpha_{1,1} \, \max \{ t_{3} + st_{2,1} + \tau - dd_{1,1} \, , \, 0 \} + \beta_{1} \, (pt^{\mathrm{nom}}_{1} - \tau) + sc_{2,1} + J^{\circ}_{1,3,1} (t_{4})
\end{equation*}
that can be written as $f (pt_{1,1} + t_{3}) + g (pt_{1,1})$ being
\begin{equation*}
f (pt_{1,1} + t_{3}) = 0.75 \cdot \max \{ pt_{1,1} + t_{3} - 18.5 \, , \, 0 \} + 1 + J^{\circ}_{1,3,1} (pt_{1,1} + t_{3} + 0.5)
\end{equation*}
\begin{equation*}
g (pt_{1,1}) = \left\{ \begin{array}{ll}
8 - pt_{1,1} & pt_{1,1} \in [ 4 , 8 )\\
0 & pt_{1,1} \notin [ 4 , 8 )
\end{array} \right.
\end{equation*}
The function $pt^{\circ}_{1,1}(t_{3}) = \arg \min_{pt_{1,1}} \{ f (pt_{1,1} + t_{3}) + g (pt_{1,1}) \} $, with $4 \leq pt_{1,1} \leq 8$, is determined by applying lemma~\ref{lem:xopt}. It is
\begin{equation*}
pt^{\circ}_{1,1}(t_{3}) = x_{\mathrm{e}}(t_{3}) \qquad \text{with} \quad x_{\mathrm{e}}(t_{3}) = \left\{ \begin{array}{ll}
8 &  t_{3} < 4.5\\
-t_{3} + 12.5 & 4.5 \leq t_{3} < 8.5\\
4 & t_{3} \geq 8.5
\end{array} \right.
\end{equation*}

Taking into account the mandatory decision about the class of the next job to be executed, the optimal control strategies for this state are
\begin{equation*}
\delta_{1}^{\circ} (0,3,2, t_{3}) = 1 \quad \forall \, t_{3} \qquad \delta_{2}^{\circ} (0,3,2, t_{3}) = 0 \quad \forall \, t_{3}
\end{equation*}
\begin{equation*}
\tau^{\circ} (0,3,2, t_{3}) = \left\{ \begin{array}{ll}
8 &  t_{3} < 4.5\\
-t_{3} + 12.5 & 4.5 \leq t_{3} < 8.5\\
4 & t_{3} \geq 8.5
\end{array} \right.
\end{equation*}
The optimal control strategy $\tau^{\circ} (0,3,2, t_{3})$ is illustrated in figure~\ref{fig:esS3_tau_0_3_2}.

\begin{figure}[h!]
\centering
\psfrag{f(x)}[Bl][Bl][.8][0]{$\tau^{\circ} (0,3,2, t_{3})$}
\psfrag{x}[bc][Bl][.8][0]{$t_{3}$}
\psfrag{0}[tc][Bl][.8][0]{$0$}
\includegraphics[scale=.2]{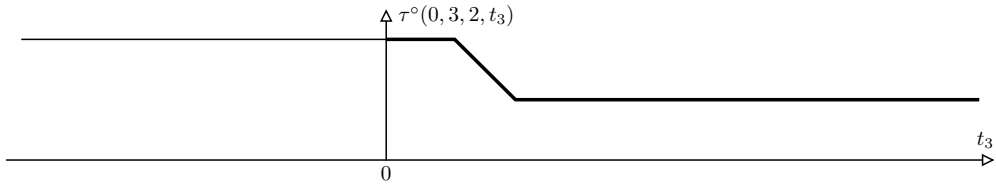}
\caption{Optimal control strategy $\tau^{\circ} (0,3,2, t_{3})$ in state $[ 0 \; 3 \; 2 \; t_{3}]^{T}$.}
\label{fig:esS3_tau_0_3_2}
\end{figure}

The optimal cost-to-go $J^{\circ}_{0,3,2} (t_{3}) = f ( pt^{\circ}_{1,1}(t_{3}) + t_{3} ) + g ( pt^{\circ}_{1,1}(t_{3}) )$, illustrated in figure~\ref{fig:esS2_J_0_3_2}, is provided by lemma~\ref{lem:h(t)}. It is specified by the initial value 1, by the set \{ 4.5, 14.5, 15.5, 16.5, 20.5 \} of abscissae $\gamma_{i}$, $i = 1, \ldots, 5$, at which the slope changes, and by the set \{ 1, 1.75, 2.25, 2.75, 3.25 \} of slopes $\mu_{i}$, $i = 1, \ldots, 5$, in the various intervals.

\begin{figure}[h!]
\centering
\psfrag{0}[cc][tc][.8][0]{$0$}
\includegraphics[scale=.8]{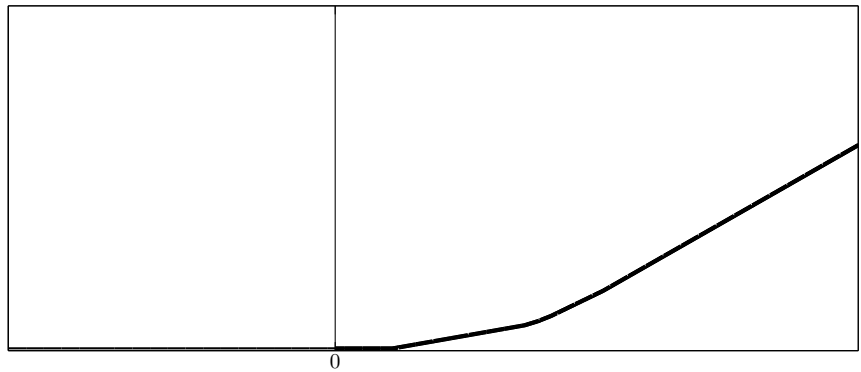}%
\vspace{-12pt}
\caption{Optimal cost-to-go $J^{\circ}_{0,3,2} (t_{3})$ in state $[0 \; 3 \; 2 \; t_{3}]^{T}$.}
\label{fig:esS2_J_0_3_2}
\end{figure}


{\bf Stage $3$ -- State $\boldsymbol{[1 \; 2 \; 2 \; t_{3}]^{T}}$ ($S11$)}

In state $[1 \; 2 \; 2 \; t_{3}]^{T}$, the cost function to be minimized, with respect to the (continuos) decision variable $\tau$ and to the (binary) decision variables $\delta_{1}$ and $\delta_{2}$ is
\begin{equation*}
\begin{split}
&\delta_{1} \big[ \alpha_{1,2} \, \max \{ t_{3} + st_{2,1} + \tau - dd_{1,2} \, , \, 0 \} + \beta_{1} \, ( pt^{\mathrm{nom}}_{1} - \tau ) + sc_{2,1} + J^{\circ}_{2,2,1} (t_{4}) \big] +\\
&+ \delta_{2} \big[ \alpha_{2,3} \, \max \{ t_{3} + st_{2,2} + \tau - dd_{2,3} \, , \, 0 \} + \beta_{2} \, ( pt^{\mathrm{nom}}_{2} - \tau ) + sc_{2,2} + J^{\circ}_{1,3,2} (t_{4}) \big]
\end{split}
\end{equation*}

{\it Case i)} in which it is assumed $\delta_{1} = 1$ (and $\delta_{2} = 0$).

In this case, it is necessary to minimize, with respect to the (continuos) decision variable $\tau$ which corresponds to the processing time $pt_{1,2}$, the following function
\begin{equation*}
\alpha_{1,2} \, \max \{ t_{3} + st_{2,1} + \tau - dd_{1,2} \, , \, 0 \} + \beta_{1} \, ( pt^{\mathrm{nom}}_{1} - \tau ) + sc_{2,1} + J^{\circ}_{2,2,1} (t_{4})
\end{equation*}
that can be written as $f (pt_{1,2} + t_{3}) + g (pt_{1,2})$ being
\begin{equation*}
f (pt_{1,2} + t_{3}) = 0.5 \cdot \max \{ pt_{1,2} + t_{3} - 23.5 \, , \, 0 \} + 1 + J^{\circ}_{2,2,1} (pt_{1,2} + t_{3} + 0.5)
\end{equation*}
\begin{equation*}
g (pt_{1,2}) = \left\{ \begin{array}{ll}
8 - pt_{1,2} & pt_{1,2} \in [ 4 , 8 )\\
0 & pt_{1,2} \notin [ 4 , 8 )
\end{array} \right.
\end{equation*}
The function $pt^{\circ}_{1,2}(t_{3}) = \arg \min_{pt_{1,2}} \{ f (pt_{1,2} + t_{3}) + g (pt_{1,2}) \} $, with $4 \leq pt_{1,2} \leq 8$, is determined by applying lemma~\ref{lem:xopt}. It is (see figure~\ref{fig:esS3_tau_1_2_2_pt_1_2})
\begin{equation*}
pt^{\circ}_{1,2}(t_{3}) = \left\{ \begin{array}{ll}
x_{\mathrm{s}}(t_{3}) &  t_{3} < 7.5\\
x_{\mathrm{e}}(t_{3}) & t_{3} \geq 7.5
\end{array} \right. \qquad \text{with} \quad x_{\mathrm{s}}(t_{3}) = \left\{ \begin{array}{ll}
8 &  t_{3} < 6.5\\
-t_{3} + 14.5 & 6.5 \leq t_{3} < 7.5
\end{array} \right. \  , 
\end{equation*}
\begin{equation*}
\qquad \text{and} \quad x_{\mathrm{e}}(t_{3}) = \left\{ \begin{array}{ll}
8 &  7.5 \leq t_{3} < 12.5\\
-t_{3} + 20.5 & 12.5 \leq t_{3} < 16.5\\
4 & t_{3} \geq 16.5
\end{array} \right.
\end{equation*}

\begin{figure}[h!]
\centering
\psfrag{f(x)}[Bl][Bl][.8][0]{$pt^{\circ}_{1,2}(t_{3})$}
\psfrag{x}[bc][Bl][.8][0]{$t_{3}$}
\psfrag{0}[tc][Bl][.8][0]{$0$}
\includegraphics[scale=.2]{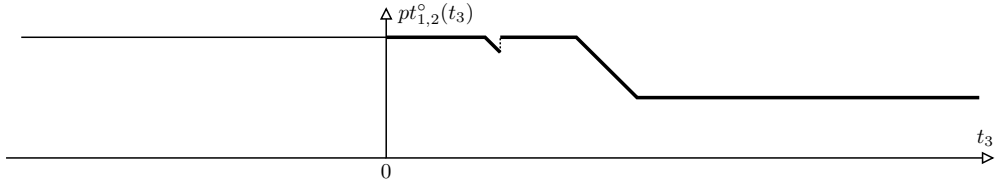}
\caption{Optimal processing time $pt^{\circ}_{1,2}(t_{3})$, under the assumption $\delta_{1} = 1$ in state $[ 1 \; 2 \; 2 \; t_{3}]^{T}$.}
\label{fig:esS3_tau_1_2_2_pt_1_2}
\end{figure}

The conditioned cost-to-go $J^{\circ}_{1,2,2} (t_{3} \mid \delta_{1}=1) = f ( pt^{\circ}_{1,2}(t_{3}) + t_{3} ) + g ( pt^{\circ}_{1,2}(t_{3}) )$, illustrated in figure~\ref{fig:esS3_J_1_2_2_min}, is provided by lemma~\ref{lem:h(t)}. It is specified by the initial value 1.5, by the set \{ 6.5, 7.5, 9, 12.5, 19.5, 20.5, 22.5 \} of abscissae $\gamma_{i}$, $i = 1, \ldots, 7$, at which the slope changes, and by the set \{ 1, 0, 0.5, 1, 1.5, 2.5, 3.5 \} of slopes $\mu_{i}$, $i = 1, \ldots, 7$, in the various intervals.

{\it Case ii)} in which it is assumed $\delta_{2} = 1$ (and $\delta_{1} = 0$).

In this case, it is necessary to minimize, with respect to the (continuos) decision variable $\tau$ which corresponds to the processing time $pt_{2,3}$, the following function
\begin{equation*}
\alpha_{2,3} \, \max \{ t_{3} + st_{2,2} + \tau - dd_{2,3} \, , \, 0 \} + \beta_{2} \, ( pt^{\mathrm{nom}}_{2} - \tau ) + sc_{2,2} + J^{\circ}_{1,3,2} (t_{4})
\end{equation*}
that can be written as $f (pt_{2,3} + t_{3}) + g (pt_{2,3})$ being
\begin{equation*}
f (pt_{2,3} + t_{3}) = \max \{ pt_{2,3} + t_{3} - 38 \, , \, 0 \} + J^{\circ}_{1,3,2} (pt_{2,3} + t_{3})
\end{equation*}
\begin{equation*}
g (pt_{2,3}) = \left\{ \begin{array}{ll}
1.5 \cdot (6 - pt_{2,3}) & pt_{2,3} \in [ 4 , 6 )\\
0 & pt_{2,3} \notin [ 4 , 6 )
\end{array} \right.
\end{equation*}
The function $pt^{\circ}_{2,3}(t_{3}) = \arg \min_{pt_{2,3}} \{ f (pt_{2,3} + t_{3}) + g (pt_{2,3}) \} $, with $4 \leq pt_{2,3} \leq 6$, is determined by applying lemma~\ref{lem:xopt}. It is (see figure~\ref{fig:esS3_tau_1_2_2_pt_2_3})
\begin{equation*}
pt^{\circ}_{2,3}(t_{3}) = x_{\mathrm{e}}(t_{3}) \qquad \text{with} \quad x_{\mathrm{e}}(t_{3}) = \left\{ \begin{array}{ll}
6 &  t_{3} < 13.5\\
-t_{3} + 19.5 & 13.5 \leq t_{3} < 15.5\\
4 & t_{3} \geq 15.5
\end{array} \right.
\end{equation*}

\begin{figure}[h!]
\centering
\psfrag{f(x)}[Bl][Bl][.8][0]{$pt^{\circ}_{2,3}(t_{3})$}
\psfrag{x}[bc][Bl][.8][0]{$t_{3}$}
\psfrag{0}[tc][Bl][.8][0]{$0$}
\includegraphics[scale=.2]{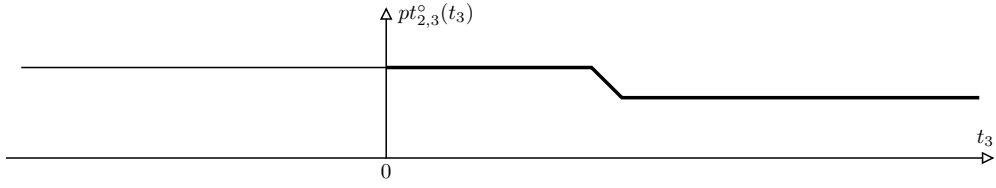}
\caption{Optimal processing time $pt^{\circ}_{2,3}(t_{3})$, under the assumption $\delta_{2} = 1$ in state $[ 1 \; 2 \; 2 \; t_{3}]^{T}$.}
\label{fig:esS3_tau_1_2_2_pt_2_3}
\end{figure}

The conditioned cost-to-go $J^{\circ}_{1,2,2} (t_{3} \mid \delta_{2}=1) = f ( pt^{\circ}_{2,3}(t_{3}) + t_{3} ) + g ( pt^{\circ}_{2,3}(t_{3}) )$, illustrated in figure~\ref{fig:esS3_J_1_2_2_min}, is provided by lemma~\ref{lem:h(t)}. It is specified by the initial value 1, by the set \{ 6.5, 13.5, 16.5, 20.5, 34 \} of abscissae $\gamma_{i}$, $i = 1, \ldots, 5$, at which the slope changes, and by the set \{ 1, 1.5, 2, 2.5, 3.5 \} of slopes $\mu_{i}$, $i = 1, \ldots, 5$, in the various intervals.

\begin{figure}[h!]
\centering
\psfrag{J1}[cl][Bc][.8][0]{$J^{\circ}_{1,2,2} (t_{3} \mid \delta_{1} = 1)$}
\psfrag{J2}[br][Bc][.8][0]{$J^{\circ}_{1,2,2} (t_{3} \mid \delta_{2} = 1)$}
\psfrag{0}[cc][tc][.7][0]{$0$}\includegraphics[scale=.6]{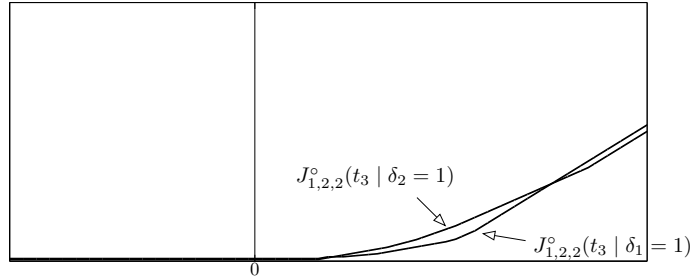}%
\vspace{-12pt}
\caption{Conditioned costs-to-go $J^{\circ}_{1,2,2} (t_{3} \mid \delta_{1} = 1)$ and $J^{\circ}_{1,2,2} (t_{3} \mid \delta_{2} = 1)$ in state $[1 \; 2 \; 2 \; t_{3}]^{T}$.}
\label{fig:esS3_J_1_2_2_min}
\end{figure}

In order to find the optimal cost-to-go $J^{\circ}_{1,2,2} (t_{3})$, it is necessary to carry out the following minimization
\begin{equation*}
J^{\circ}_{1,2,2} (t_{3}) = \min \big\{ J^{\circ}_{1,2,2} (t_{3} \mid \delta_{1} = 1) \, , \, J^{\circ}_{1,2,2} (t_{3} \mid \delta_{2} = 1) \big\}
\end{equation*}
which provides, in accordance with lemma~\ref{lem:min}, the continuous, nondecreasing, piecewise linear function illustrated in figure~\ref{fig:esS3_J_1_2_2}.

\begin{figure}[h!]
\centering
\psfrag{0}[cc][tc][.8][0]{$0$}
\includegraphics[scale=.8]{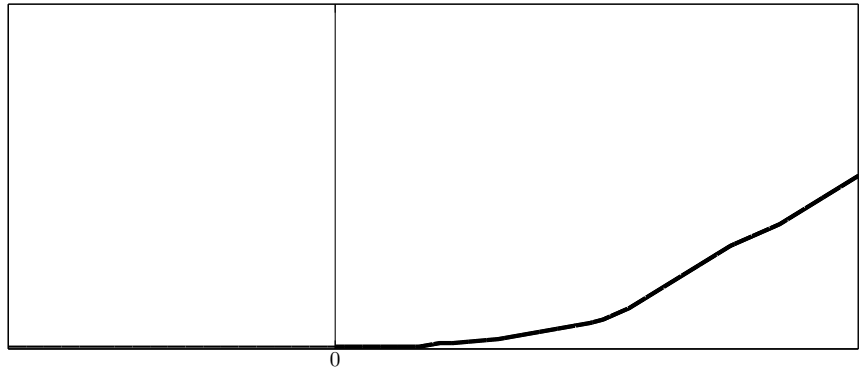}%
\vspace{-12pt}
\caption{Optimal cost-to-go $J^{\circ}_{1,2,2} (t_{3})$ in state $[1 \; 2 \; 2 \; t_{3}]^{T}$.}
\label{fig:esS3_J_1_2_2}
\end{figure}

The function $J^{\circ}_{1,2,2} (t_{3})$ is specified by the initial value 1, by the set \{ 6.5, 8, 9, 12.5, 19.5, 20.5, 22.5, 30.25, 34 \} of abscissae $\gamma_{i}$, $i = 1, \ldots, 9$, at which the slope changes, and by the set \{ 1, 0, 0.5, 1, 1.5, 2.5, 3.5, 2.5, 3.5 \} of slopes $\mu_{i}$, $i = 1, \ldots, 9$, in the various intervals.

Since $J^{\circ}_{1,2,2} (t_{3} \mid \delta_{1} = 1)$ is the minimum in $[8,30.25)$, and $J^{\circ}_{1,2,2} (t_{3} \mid \delta_{2} = 1)$ is the minimum in $(-\infty, 8)$ and in $[30.25,+\infty)$, the optimal control strategies for this state are
\begin{equation*}
\delta_{1}^{\circ} (1,2,2, t_{3}) = \left\{ \begin{array}{ll}
0 &  t_{3} < 8\\
1 &  8 \leq t_{3} < 30.25\\
0 & t_{3} \geq 30.25
\end{array} \right. \qquad \delta_{2}^{\circ} (1,2,2, t_{3}) = \left\{ \begin{array}{ll}
1 &  t_{3} < 8\\
0 &  8 \leq t_{3} < 30.25\\
1 & t_{3} \geq 30.25
\end{array} \right.
\end{equation*}
\begin{equation*}
\tau^{\circ} (1,2,2, t_{3}) = \left\{ \begin{array}{ll}
6 &  t_{3} < 8\\
8 & 8 \leq t_{34} < 12.5\\
-t_{3} + 20.5 & 12.5 \leq t_{3} < 16.5\\
4 & t_{3} \geq 16.5
\end{array} \right.
\end{equation*}
The optimal control strategy $\tau^{\circ} (1,2,2, t_{3})$ is illustrated in figure~\ref{fig:esS3_tau_1_2_2}.

\begin{figure}[h!]
\centering
\psfrag{f(x)}[Bl][Bl][.8][0]{$\tau^{\circ} (1,2,2, t_{3})$}
\psfrag{x}[bc][Bl][.8][0]{$t_{3}$}
\psfrag{0}[tc][Bl][.8][0]{$0$}
\includegraphics[scale=.2]{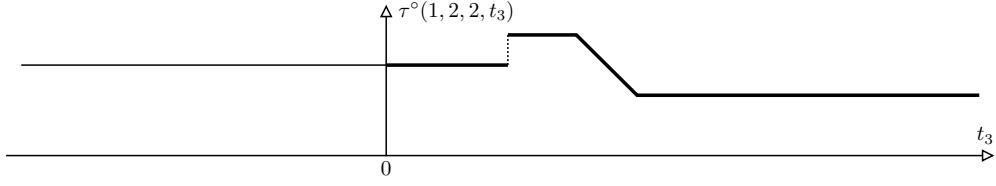}
\caption{Optimal control strategy $\tau^{\circ} (1,2,2, t_{3})$ in state $[ 1 \; 2 \; 2 \; t_{3}]^{T}$.}
\label{fig:esS3_tau_1_2_2}
\end{figure}


{\bf Stage $3$ -- State $\boldsymbol{[1 \; 2 \; 1 \; t_{3}]^{T}}$ ($S10$)}

In state $[1 \; 2 \; 1 \; t_{3}]^{T}$, the cost function to be minimized, with respect to the (continuos) decision variable $\tau$ and to the (binary) decision variables $\delta_{1}$ and $\delta_{2}$ is
\begin{equation*}
\begin{split}
&\delta_{1} \big[ \alpha_{1,2} \, \max \{ t_{3} + st_{1,1} + \tau - dd_{1,2} \, , \, 0 \} + \beta_{1} \, ( pt^{\mathrm{nom}}_{1} - \tau ) + sc_{1,1} + J^{\circ}_{2,2,1} (t_{4}) \big] +\\
&+ \delta_{2} \big[ \alpha_{2,3} \, \max \{ t_{3} + st_{1,2} + \tau - dd_{2,3} \, , \, 0 \} + \beta_{2} \, ( pt^{\mathrm{nom}}_{2} - \tau ) + sc_{1,2} + J^{\circ}_{1,3,2} (t_{4}) \big]
\end{split}
\end{equation*}

{\it Case i)} in which it is assumed $\delta_{1} = 1$ (and $\delta_{2} = 0$).

In this case, it is necessary to minimize, with respect to the (continuos) decision variable $\tau$ which corresponds to the processing time $pt_{1,2}$, the following function
\begin{equation*}
\alpha_{1,2} \, \max \{ t_{3} + st_{1,1} + \tau - dd_{1,2} \, , \, 0 \} + \beta_{1} \, ( pt^{\mathrm{nom}}_{1} - \tau ) + sc_{1,1} + J^{\circ}_{2,2,1} (t_{4})
\end{equation*}
that can be written as $f (pt_{1,2} + t_{3}) + g (pt_{1,2})$ being
\begin{equation*}
f (pt_{1,2} + t_{3}) = 0.5 \cdot \max \{ pt_{1,2} + t_{3} - 24 \, , \, 0 \} + J^{\circ}_{2,2,1} (pt_{1,2} + t_{3})
\end{equation*}
\begin{equation*}
g (pt_{1,2}) = \left\{ \begin{array}{ll}
8 - pt_{1,2} & pt_{1,2} \in [ 4 , 8 )\\
0 & pt_{1,2} \notin [ 4 , 8 )
\end{array} \right.
\end{equation*}
The function $pt^{\circ}_{1,2}(t_{3}) = \arg \min_{pt_{1,2}} \{ f (pt_{1,2} + t_{3}) + g (pt_{1,2}) \} $, with $4 \leq pt_{1,2} \leq 8$, is determined by applying lemma~\ref{lem:xopt}. It is (see figure~\ref{fig:esS3_tau_1_2_1_pt_1_2})
\begin{equation*}
pt^{\circ}_{1,2}(t_{3}) = \left\{ \begin{array}{ll}
x_{\mathrm{s}}(t_{3}) &  t_{3} < 8\\
x_{\mathrm{e}}(t_{3}) & t_{3} \geq 8
\end{array} \right. \qquad \text{with} \quad x_{\mathrm{s}}(t_{3}) = \left\{ \begin{array}{ll}
8 &  t_{3} < 7\\
-t_{3} + 15 & 7 \leq t_{3} < 8
\end{array} \right. \  , 
\end{equation*}
\begin{equation*}
\qquad \text{and} \quad x_{\mathrm{e}}(t_{3}) = \left\{ \begin{array}{ll}
8 &  8 \leq t_{3} < 13\\
-t_{3} + 21 & 13 \leq t_{3} < 17\\
4 & t_{3} \geq 17
\end{array} \right.
\end{equation*}

\begin{figure}[h!]
\centering
\psfrag{f(x)}[Bl][Bl][.8][0]{$pt^{\circ}_{1,2}(t_{3})$}
\psfrag{x}[bc][Bl][.8][0]{$t_{3}$}
\psfrag{0}[tc][Bl][.8][0]{$0$}
\includegraphics[scale=.2]{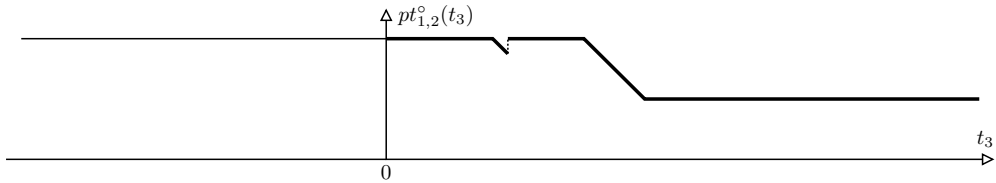}
\caption{Optimal processing time $pt^{\circ}_{1,2}(t_{3})$, under the assumption $\delta_{1} = 1$ in state $[ 1 \; 2 \; 1 \; t_{3}]^{T}$.}
\label{fig:esS3_tau_1_2_1_pt_1_2}
\end{figure}

The conditioned cost-to-go $J^{\circ}_{1,2,1} (t_{3} \mid \delta_{1}=1) = f ( pt^{\circ}_{1,2}(t_{3}) + t_{3} ) + g ( pt^{\circ}_{1,2}(t_{3}) )$, illustrated in figure~\ref{fig:esS3_J_1_2_1_min}, is provided by lemma~\ref{lem:h(t)}. It is specified by the initial value 0.5, by the set \{ 7, 8, 9.5, 13, 20, 21, 23 \} of abscissae $\gamma_{i}$, $i = 1, \ldots, 7$, at which the slope changes, and by the set \{ 1, 0, 0.5, 1, 1.5, 2.5, 3.5 \} of slopes $\mu_{i}$, $i = 1, \ldots, 7$, in the various intervals.

{\it Case ii)} in which it is assumed $\delta_{2} = 1$ (and $\delta_{1} = 0$).

In this case, it is necessary to minimize, with respect to the (continuos) decision variable $\tau$ which corresponds to the processing time $pt_{2,3}$, the following function
\begin{equation*}
\alpha_{2,3} \, \max \{ t_{3} + st_{1,2} + \tau - dd_{2,3} \, , \, 0 \} + \beta_{2} \, ( pt^{\mathrm{nom}}_{2} - \tau ) + sc_{1,2} + J^{\circ}_{1,3,2} (t_{4})
\end{equation*}
that can be written as $f (pt_{2,3} + t_{3}) + g (pt_{2,3})$ being
\begin{equation*}
f (pt_{2,3} + t_{3}) = \max \{ pt_{2,3} + t_{3} - 37 \, , \, 0 \} + 0.5 + J^{\circ}_{1,3,2} (pt_{2,3} + t_{3} + 1)
\end{equation*}
\begin{equation*}
g (pt_{2,3}) = \left\{ \begin{array}{ll}
1.5 \cdot (6 - pt_{2,3}) & pt_{2,3} \in [ 4 , 6 )\\
0 & pt_{2,3} \notin [ 4 , 6 )
\end{array} \right.
\end{equation*}
The function $pt^{\circ}_{2,3}(t_{3}) = \arg \min_{pt_{2,3}} \{ f (pt_{2,3} + t_{3}) + g (pt_{2,3}) \} $, with $4 \leq pt_{2,3} \leq 6$, is determined by applying lemma~\ref{lem:xopt}. It is (see figure~\ref{fig:esS3_tau_1_2_1_pt_2_3})
\begin{equation*}
pt^{\circ}_{2,3}(t_{3}) = x_{\mathrm{e}}(t_{3}) \qquad \text{with} \quad x_{\mathrm{e}}(t_{3}) = \left\{ \begin{array}{ll}
6 &  t_{3} < 12.5\\
-t_{3} + 18.5 & 12.5 \leq t_{3} < 14.5\\
4 & t_{3} \geq 14.5
\end{array} \right.
\end{equation*}

\begin{figure}[h!]
\centering
\psfrag{f(x)}[Bl][Bl][.8][0]{$pt^{\circ}_{2,3}(t_{3})$}
\psfrag{x}[bc][Bl][.8][0]{$t_{3}$}
\psfrag{0}[tc][Bl][.8][0]{$0$}
\includegraphics[scale=.2]{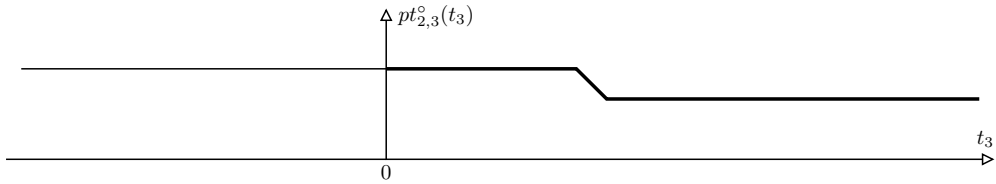}
\caption{Optimal processing time $pt^{\circ}_{2,3}(t_{3})$, under the assumption $\delta_{2} = 1$ in state $[ 1 \; 2 \; 1 \; t_{3}]^{T}$.}
\label{fig:esS3_tau_1_2_1_pt_2_3}
\end{figure}

The conditioned cost-to-go $J^{\circ}_{1,2,1} (t_{3} \mid \delta_{2}=1) = f ( pt^{\circ}_{2,3}(t_{3}) + t_{3} ) + g ( pt^{\circ}_{2,3}(t_{3}) )$, illustrated in figure~\ref{fig:esS3_J_1_2_1_min}, is provided by lemma~\ref{lem:h(t)}. It is specified by the initial value 1.5, by the set \{ 5.5, 12.5, 15.5, 19.5, 33 \} of abscissae $\gamma_{i}$, $i = 1, \ldots, 5$, at which the slope changes, and by the set \{ 1, 1.5, 2, 2.5, 3.5 \} of slopes $\mu_{i}$, $i = 1, \ldots, 5$, in the various intervals.

\begin{figure}[h!]
\centering
\psfrag{J1}[cl][Bc][.8][0]{$J^{\circ}_{1,2,1} (t_{3} \mid \delta_{1} = 1)$}
\psfrag{J2}[br][Bc][.8][0]{$J^{\circ}_{1,2,1} (t_{3} \mid \delta_{2} = 1)$}
\psfrag{0}[cc][tc][.7][0]{$0$}\includegraphics[scale=.6]{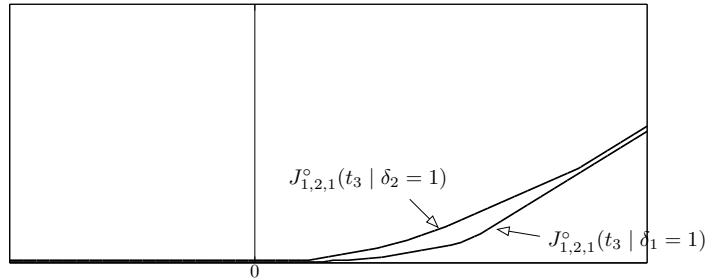}%
\vspace{-12pt}
\caption{Conditioned costs-to-go $J^{\circ}_{1,2,1} (t_{3} \mid \delta_{1} = 1)$ and $J^{\circ}_{1,2,1} (t_{3} \mid \delta_{2} = 1)$ in state $[1 \; 2 \; 1 \; t_{3}]^{T}$.}
\label{fig:esS3_J_1_2_1_min}
\end{figure}

In order to find the optimal cost-to-go $J^{\circ}_{1,2,1} (t_{3})$, it is necessary to carry out the following minimization
\begin{equation*}
J^{\circ}_{1,2,1} (t_{3}) = \min \big\{ J^{\circ}_{1,2,1} (t_{3} \mid \delta_{1} = 1) \, , \, J^{\circ}_{1,2,1} (t_{3} \mid \delta_{2} = 1) \big\}
\end{equation*}
which provides, in accordance with lemma~\ref{lem:min}, the continuous, nondecreasing, piecewise linear function illustrated in figure~\ref{fig:esS3_J_1_2_1}.

\begin{figure}[h!]
\centering
\psfrag{0}[cc][tc][.8][0]{$0$}
\includegraphics[scale=.8]{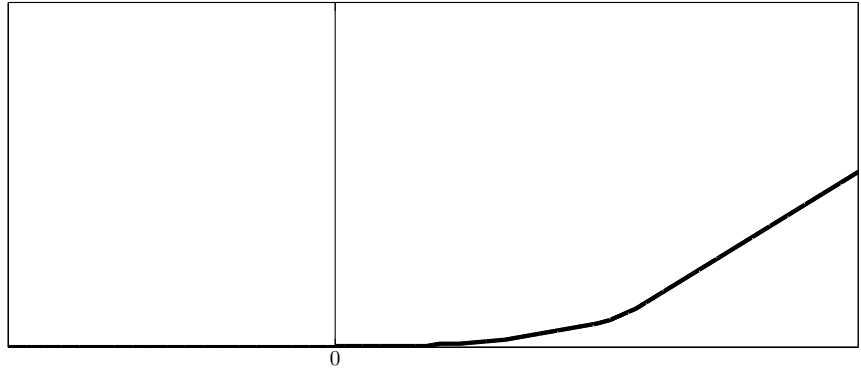}%
\vspace{-12pt}
\caption{Optimal cost-to-go $J^{\circ}_{1,2,1} (t_{3})$ in state $[1 \; 2 \; 1 \; t_{3}]^{T}$.}
\label{fig:esS3_J_1_2_1}
\end{figure}

The function $J^{\circ}_{1,2,1} (t_{3})$ is specified by the initial value 0.5, by the set \{ 7, 8, 9.5, 13, 20, 21, 23 \} of abscissae $\gamma_{i}$, $i = 1, \ldots, 7$, at which the slope changes, and by the set \{ 1, 0, 0.5, 1, 1.5, 2.5, 3.5 \} of slopes $\mu_{i}$, $i = 1, \ldots, 7$, in the various intervals.

Since $J^{\circ}_{1,2,1} (t_{3} \mid \delta_{1} = 1)$ is always the minimum (see again figure~\ref{fig:esS3_J_1_2_1_min}), the optimal control strategies for this state are
\begin{equation*}
\delta_{1}^{\circ} (1,2,1, t_{3}) = 1 \quad \forall \, t_{3} \qquad \delta_{2}^{\circ} (1,2,1, t_{3}) = 0 \quad \forall \, t_{3}
\end{equation*}
\begin{equation*}
\tau^{\circ} (1,2,1, t_{3}) = \left\{ \begin{array}{ll}
8 &  t_{3} < 7\\
-t_{3} + 15 & 7 \leq t_{3} < 8\\
8 &  8 \leq t_{3} < 13\\
-t_{3} + 21 & 13 \leq t_{3} < 17\\
4 & t_{3} \geq 17
\end{array} \right.
\end{equation*}
The optimal control strategy $\tau^{\circ} (1,2,1, t_{3})$ is illustrated in figure~\ref{fig:esS3_tau_1_2_1}.

\begin{figure}[h!]
\centering
\psfrag{f(x)}[Bl][Bl][.8][0]{$\tau^{\circ} (1,2,1, t_{3})$}
\psfrag{x}[bc][Bl][.8][0]{$t_{3}$}
\psfrag{0}[tc][Bl][.8][0]{$0$}
\includegraphics[scale=.2]{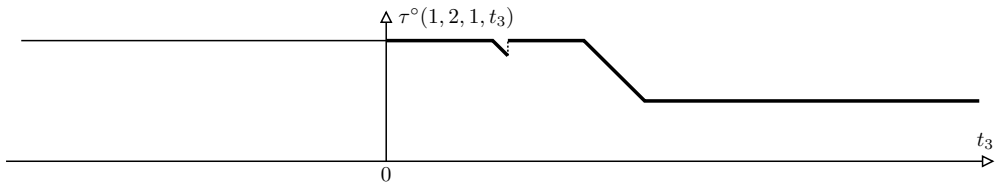}
\caption{Optimal control strategy $\tau^{\circ} (1,2,1, t_{3})$ in state $[ 1 \; 2 \; 1 \; t_{3}]^{T}$.}
\label{fig:esS3_tau_1_2_1}
\end{figure}


{\bf Stage $3$ -- State $\boldsymbol{[2 \; 1 \; 2 \; t_{3}]^{T}}$ ($S9$)}

In state $[2 \; 1 \; 2 \; t_{3}]^{T}$, the cost function to be minimized, with respect to the (continuos) decision variable $\tau$ and to the (binary) decision variables $\delta_{1}$ and $\delta_{2}$ is
\begin{equation*}
\begin{split}
&\delta_{1} \big[ \alpha_{1,3} \, \max \{ t_{3} + st_{2,1} + \tau - dd_{1,3} \, , \, 0 \} + \beta_{1} \, ( pt^{\mathrm{nom}}_{1} - \tau ) + sc_{2,1} + J^{\circ}_{3,1,1} (t_{4}) \big] +\\
&+ \delta_{2} \big[ \alpha_{2,2} \, \max \{ t_{3} + st_{2,2} + \tau - dd_{2,2} \, , \, 0 \} + \beta_{2} \, ( pt^{\mathrm{nom}}_{2} - \tau ) + sc_{2,2} + J^{\circ}_{2,2,2} (t_{4}) \big]
\end{split}
\end{equation*}

{\it Case i)} in which it is assumed $\delta_{1} = 1$ (and $\delta_{2} = 0$).

In this case, it is necessary to minimize, with respect to the (continuos) decision variable $\tau$ which corresponds to the processing time $pt_{1,3}$, the following function
\begin{equation*}
\alpha_{1,3} \, \max \{ t_{3} + st_{2,1} + \tau - dd_{1,3} \, , \, 0 \} + \beta_{1} \, ( pt^{\mathrm{nom}}_{1} - \tau ) + sc_{2,1} + J^{\circ}_{3,1,1} (t_{4})
\end{equation*}
that can be written as $f (pt_{1,3} + t_{3}) + g (pt_{1,3})$ being
\begin{equation*}
f (pt_{1,3} + t_{3}) = 1.5 \cdot \max \{ pt_{1,3} + t_{3} - 28.5 \, , \, 0 \} + 1 + J^{\circ}_{3,1,1} (pt_{1,3} + t_{3} + 0.5)
\end{equation*}
\begin{equation*}
g (pt_{1,3}) = \left\{ \begin{array}{ll}
8 - pt_{1,3} & pt_{1,3} \in [ 4 , 8 )\\
0 & pt_{1,3} \notin [ 4 , 8 )
\end{array} \right.
\end{equation*}
The function $pt^{\circ}_{1,3}(t_{3}) = \arg \min_{pt_{1,3}} \{ f (pt_{1,3} + t_{3}) + g (pt_{1,3}) \} $, with $4 \leq pt_{1,3} \leq 8$, is determined by applying lemma~\ref{lem:xopt}. It is (see figure~\ref{fig:esS3_tau_2_1_2_pt_1_3})
\begin{equation*}
pt^{\circ}_{1,3}(t_{3}) = \left\{ \begin{array}{ll}
x_{\mathrm{s}}(t_{3}) &  t_{3} < 1.5\\
x_{\mathrm{e}}(t_{3}) & t_{3} \geq 1.5
\end{array} \right. \qquad \text{with} \quad x_{\mathrm{s}}(t_{3}) = \left\{ \begin{array}{ll}
8 &  t_{3} < 0.5\\
-t_{3} + 8.5 & 0.5 \leq t_{3} < 1.5
\end{array} \right. \  , 
\end{equation*}
\begin{equation*}
\qquad \text{and} \quad x_{\mathrm{e}}(t_{3}) = \left\{ \begin{array}{ll}
8 &  1.5 \leq t_{3} < 8.5\\
-t_{3} + 16.5 & 8.5 \leq t_{3} < 12.5\\
4 & t_{3} \geq 12.5
\end{array} \right.
\end{equation*}

\begin{figure}[h!]
\centering
\psfrag{f(x)}[Bl][Bl][.8][0]{$pt^{\circ}_{1,3}(t_{3})$}
\psfrag{x}[bc][Bl][.8][0]{$t_{3}$}
\psfrag{0}[tc][Bl][.8][0]{$0$}
\includegraphics[scale=.2]{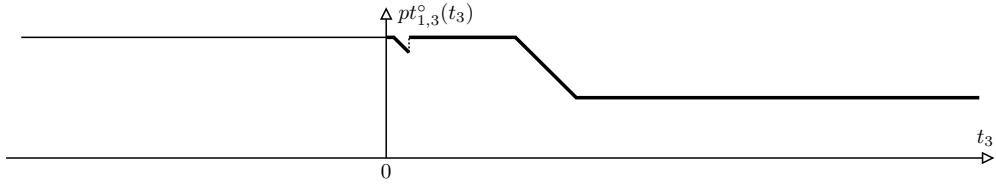}
\caption{Optimal processing time $pt^{\circ}_{1,3}(t_{3})$, under the assumption $\delta_{1} = 1$ in state $[ 2 \; 1 \; 2 \; t_{3}]^{T}$.}
\label{fig:esS3_tau_2_1_2_pt_1_3}
\end{figure}

The conditioned cost-to-go $J^{\circ}_{2,1,2} (t_{3} \mid \delta_{1}=1) = f ( pt^{\circ}_{1,3}(t_{3}) + t_{3} ) + g ( pt^{\circ}_{1,3}(t_{3}) )$, illustrated in figure~\ref{fig:esS3_J_2_1_2_min}, is provided by lemma~\ref{lem:h(t)}. It is specified by the initial value 1.5, by the set \{ 0.5, 1.5, 8.5, 15, 22.5, 24.5 \} of abscissae $\gamma_{i}$, $i = 1, \ldots, 6$, at which the slope changes, and by the set \{ 1, 0, 1, 1.5, 2.5, 4 \} of slopes $\mu_{i}$, $i = 1, \ldots, 6$, in the various intervals.

{\it Case ii)} in which it is assumed $\delta_{2} = 1$ (and $\delta_{1} = 0$).

In this case, it is necessary to minimize, with respect to the (continuos) decision variable $\tau$ which corresponds to the processing time $pt_{2,2}$, the following function
\begin{equation*}
\alpha_{2,2} \, \max \{ t_{3} + st_{2,2} + \tau - dd_{2,2} \, , \, 0 \} + \beta_{2} \, ( pt^{\mathrm{nom}}_{2} - \tau ) + sc_{2,2} + J^{\circ}_{2,2,2} (t_{4})
\end{equation*}
that can be written as $f (pt_{2,2} + t_{3}) + g (pt_{2,2})$ being
\begin{equation*}
f (pt_{2,2} + t_{3}) = \max \{ pt_{2,2} + t_{3} - 24 \, , \, 0 \} + J^{\circ}_{2,2,2} (pt_{2,2} + t_{3})
\end{equation*}
\begin{equation*}
g (pt_{2,2}) = \left\{ \begin{array}{ll}
1.5 \cdot (6 - pt_{2,2}) & pt_{2,2} \in [ 4 , 6 )\\
0 & pt_{2,2} \notin [ 4 , 6 )
\end{array} \right.
\end{equation*}
The function $pt^{\circ}_{2,2}(t_{3}) = \arg \min_{pt_{2,2}} \{ f (pt_{2,2} + t_{3}) + g (pt_{2,2}) \} $, with $4 \leq pt_{2,2} \leq 6$, is determined by applying lemma~\ref{lem:xopt}. It is (see figure~\ref{fig:esS3_tau_2_1_2_pt_2_2})
\begin{equation*}
pt^{\circ}_{2,2}(t_{3}) = x_{\mathrm{e}}(t_{3}) \qquad \text{with} \quad x_{\mathrm{e}}(t_{3}) = \left\{ \begin{array}{ll}
6 &  t_{3} < 18\\
-t_{3} + 24 & 18 \leq t_{3} < 20\\
4 & t_{3} \geq 20
\end{array} \right.
\end{equation*}

\begin{figure}[h!]
\centering
\psfrag{f(x)}[Bl][Bl][.8][0]{$pt^{\circ}_{2,2}(t_{3})$}
\psfrag{x}[bc][Bl][.8][0]{$t_{3}$}
\psfrag{0}[tc][Bl][.8][0]{$0$}
\includegraphics[scale=.2]{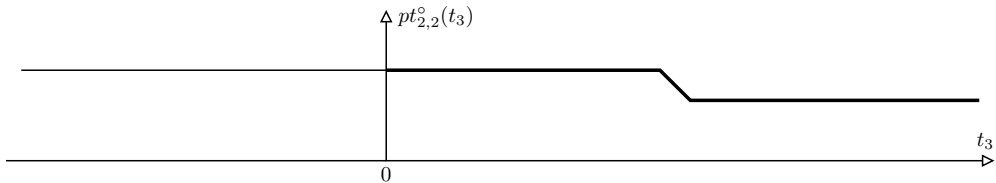}
\caption{Optimal processing time $pt^{\circ}_{2,2}(t_{3})$, under the assumption $\delta_{2} = 1$ in state $[ 2 \; 1 \; 2 \; t_{3}]^{T}$.}
\label{fig:esS3_tau_2_1_2_pt_2_2}
\end{figure}

The conditioned cost-to-go $J^{\circ}_{2,1,2} (t_{3} \mid \delta_{2}=1) = f ( pt^{\circ}_{2,2}(t_{3}) + t_{3} ) + g ( pt^{\circ}_{2,2}(t_{3}) )$, illustrated in figure~\ref{fig:esS3_J_2_1_2_min}, is provided by lemma~\ref{lem:h(t)}. It is specified by the initial value 1, by the set \{ 8.5, 10, 11, 14.5, 18, 20, 20.5, 22.5, 28.25, 30 \} of abscissae $\gamma_{i}$, $i = 1, \ldots, 10$, at which the slope changes, and by the set \{ 1, 0, 0.5, 1, 1.5, 2, 3, 4, 3, 4 \} of slopes $\mu_{i}$, $i = 1, \ldots, 10$, in the various intervals.

\begin{figure}[h!]
\centering
\psfrag{J1}[bl][Bc][.8][0]{$J^{\circ}_{2,1,2} (t_{3} \mid \delta_{2} = 1)$}
\psfrag{J2}[cr][Bc][.8][0]{$J^{\circ}_{2,1,2} (t_{3} \mid \delta_{1} = 1)$}
\psfrag{0}[cc][tc][.7][0]{$0$}\includegraphics[scale=.6]{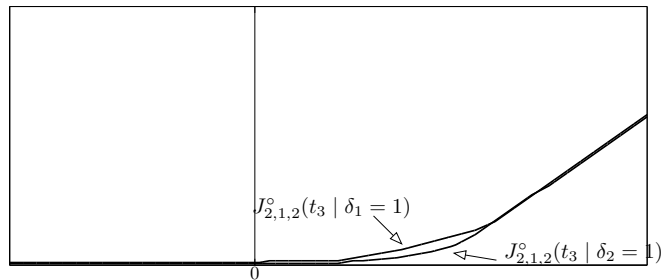}%
\vspace{-12pt}
\caption{Conditioned costs-to-go $J^{\circ}_{2,1,2} (t_{3} \mid \delta_{1} = 1)$ and $J^{\circ}_{2,1,2} (t_{3} \mid \delta_{2} = 1)$ in state $[2 \; 1 \; 2 \; t_{3}]^{T}$.}
\label{fig:esS3_J_2_1_2_min}
\end{figure}

In order to find the optimal cost-to-go $J^{\circ}_{2,1,2} (t_{3})$, it is necessary to carry out the following minimization
\begin{equation*}
J^{\circ}_{2,1,2} (t_{3}) = \min \big\{ J^{\circ}_{2,1,2} (t_{3} \mid \delta_{1} = 1) \, , \, J^{\circ}_{2,1,2} (t_{3} \mid \delta_{2} = 1) \big\}
\end{equation*}
which provides, in accordance with lemma~\ref{lem:min}, the continuous, nondecreasing, piecewise linear function illustrated in figure~\ref{fig:esS3_J_2_1_2}.

\begin{figure}[h!]
\centering
\psfrag{0}[cc][tc][.8][0]{$0$}
\includegraphics[scale=.8]{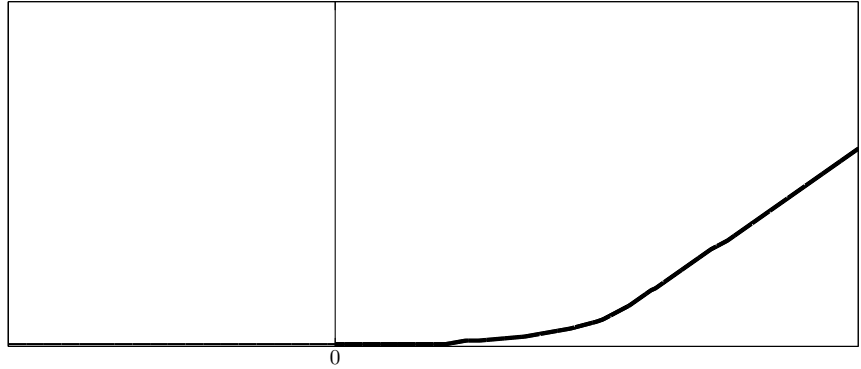}%
\vspace{-12pt}
\caption{Optimal cost-to-go $J^{\circ}_{2,1,2} (t_{3})$ in state $[2 \; 1 \; 2 \; t_{3}]^{T}$.}
\label{fig:esS3_J_2_1_2}
\end{figure}

The function $J^{\circ}_{2,1,2} (t_{3})$ is specified by the initial value 1, by the set \{ 8.5, 10, 11, 14.5, 18, 20, 20.5, 22.5, 24,1$\overline{6}$, 24.5, 28.75, 30 \} of abscissae $\gamma_{i}$, $i = 1, \ldots, 12$, at which the slope changes, and by the set \{ 1, 0, 0.5, 1, 1.5, 2, 3, 4, 2.5, 4, 3, 4 \} of slopes $\mu_{i}$, $i = 1, \ldots, 10$, in the various intervals.

Since $J^{\circ}_{2,1,2} (t_{3} \mid \delta_{1} = 1)$ is the minimum in $[24.1\overline{6},28.75)$, and $J^{\circ}_{2,1,2} (t_{3} \mid \delta_{2} = 1)$ is the minimum in $(-\infty, 24.1\overline{6})$ and in $[28.75,+\infty)$, the optimal control strategies for this state are
\begin{equation*}
\delta_{1}^{\circ} (2,1,2, t_{3}) = \left\{ \begin{array}{ll}
0 &  t_{3} < 24.1\overline{6}\\
1 &  24.1\overline{6} \leq t_{3} < 28.75\\
0 & t_{3} \geq 28.75
\end{array} \right. \qquad \delta_{2}^{\circ} (2,1,2, t_{3}) = \left\{ \begin{array}{ll}
1 &  t_{3} < 24.1\overline{6}\\
0 &  24.1\overline{6} \leq t_{3} < 28.75\\
1 & t_{3} \geq 28.75
\end{array} \right.
\end{equation*}
\begin{equation*}
\tau^{\circ} (2,1,2, t_{3}) = \left\{ \begin{array}{ll}
6 &  t_{3} < 18\\
-t_{3} + 24 & 18 \leq t_{3} < 20\\
4 & t_{3} \geq 20
\end{array} \right.
\end{equation*}
The optimal control strategy $\tau^{\circ} (2,1,2, t_{3})$ is illustrated in figure~\ref{fig:esS3_tau_2_1_2}.

\begin{figure}[h!]
\centering
\psfrag{f(x)}[Bl][Bl][.8][0]{$\tau^{\circ} (2,1,2, t_{3})$}
\psfrag{x}[bc][Bl][.8][0]{$t_{3}$}
\psfrag{0}[tc][Bl][.8][0]{$0$}
\includegraphics[scale=.2]{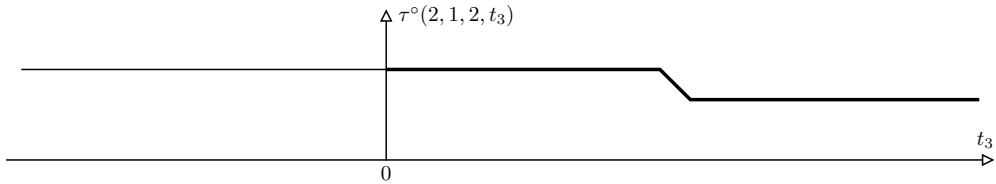}
\caption{Optimal control strategy $\tau^{\circ} (2,1,2, t_{3})$ in state $[ 2 \; 1 \; 2 \; t_{3}]^{T}$.}
\label{fig:esS3_tau_2_1_2}
\end{figure}


{\bf Stage $3$ -- State $\boldsymbol{[2 \; 1 \; 1 \; t_{3}]^{T}}$ ($S8$)}

In state $[2 \; 1 \; 1 \; t_{3}]^{T}$, the cost function to be minimized, with respect to the (continuos) decision variable $\tau$ and to the (binary) decision variables $\delta_{1}$ and $\delta_{2}$ is
\begin{equation*}
\begin{split}
&\delta_{1} \big[ \alpha_{1,3} \, \max \{ t_{3} + st_{1,1} + \tau - dd_{1,3} \, , \, 0 \} + \beta_{1} \, ( pt^{\mathrm{nom}}_{1} - \tau ) + sc_{1,1} + J^{\circ}_{3,1,1} (t_{4}) \big] +\\
&+ \delta_{2} \big[ \alpha_{2,2} \, \max \{ t_{3} + st_{1,2} + \tau - dd_{2,2} \, , \, 0 \} + \beta_{2} \, ( pt^{\mathrm{nom}}_{2} - \tau ) + sc_{1,2} + J^{\circ}_{2,2,2} (t_{4}) \big]
\end{split}
\end{equation*}

{\it Case i)} in which it is assumed $\delta_{1} = 1$ (and $\delta_{2} = 0$).

In this case, it is necessary to minimize, with respect to the (continuos) decision variable $\tau$ which corresponds to the processing time $pt_{1,3}$, the following function
\begin{equation*}
\alpha_{1,3} \, \max \{ t_{3} + st_{1,1} + \tau - dd_{1,3} \, , \, 0 \} + \beta_{1} \, ( pt^{\mathrm{nom}}_{1} - \tau ) + sc_{1,1} + J^{\circ}_{3,1,1} (t_{4})
\end{equation*}
that can be written as $f (pt_{1,3} + t_{3}) + g (pt_{1,3})$ being
\begin{equation*}
f (pt_{1,3} + t_{3}) = 1.5 \cdot \max \{ pt_{1,3} + t_{3} - 29 \, , \, 0 \} + J^{\circ}_{3,1,1} (pt_{1,3} + t_{3})
\end{equation*}
\begin{equation*}
g (pt_{1,3}) = \left\{ \begin{array}{ll}
8 - pt_{1,3} & pt_{1,3} \in [ 4 , 8 )\\
0 & pt_{1,3} \notin [ 4 , 8 )
\end{array} \right.
\end{equation*}
The function $pt^{\circ}_{1,3}(t_{3}) = \arg \min_{pt_{1,3}} \{ f (pt_{1,3} + t_{3}) + g (pt_{1,3}) \} $, with $4 \leq pt_{1,3} \leq 8$, is determined by applying lemma~\ref{lem:xopt}. It is (see figure~\ref{fig:esS3_tau_2_1_1_pt_1_3})
\begin{equation*}
pt^{\circ}_{1,3}(t_{3}) = \left\{ \begin{array}{ll}
x_{\mathrm{s}}(t_{3}) &  t_{3} < 2\\
x_{\mathrm{e}}(t_{3}) & t_{3} \geq 2
\end{array} \right. \qquad \text{with} \quad x_{\mathrm{s}}(t_{3}) = \left\{ \begin{array}{ll}
8 &  t_{3} < 1\\
-t_{3} + 9 & 1 \leq t_{3} < 2
\end{array} \right. \  , 
\end{equation*}
\begin{equation*}
\qquad \text{and} \quad x_{\mathrm{e}}(t_{3}) = \left\{ \begin{array}{ll}
8 &  1.5 \leq t_{3} < 9\\
-t_{3} + 17 & 9 \leq t_{3} < 13\\
4 & t_{3} \geq 13
\end{array} \right.
\end{equation*}

\begin{figure}[h!]
\centering
\psfrag{f(x)}[Bl][Bl][.8][0]{$pt^{\circ}_{1,3}(t_{3})$}
\psfrag{x}[bc][Bl][.8][0]{$t_{3}$}
\psfrag{0}[tc][Bl][.8][0]{$0$}
\includegraphics[scale=.2]{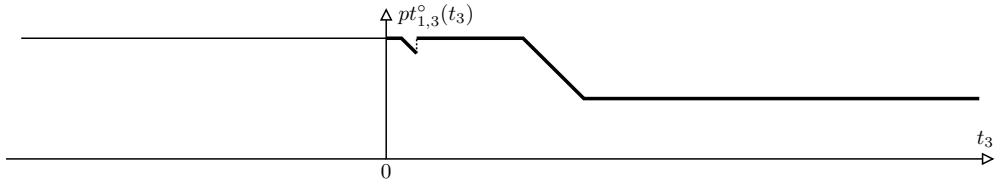}
\caption{Optimal processing time $pt^{\circ}_{1,3}(t_{3})$, under the assumption $\delta_{1} = 1$ in state $[ 2 \; 1 \; 1 \; t_{3}]^{T}$.}
\label{fig:esS3_tau_2_1_1_pt_1_3}
\end{figure}

The conditioned cost-to-go $J^{\circ}_{2,1,1} (t_{3} \mid \delta_{1}=1) = f ( pt^{\circ}_{1,3}(t_{3}) + t_{3} ) + g ( pt^{\circ}_{1,3}(t_{3}) )$, illustrated in figure~\ref{fig:esS3_J_2_1_1_min}, is provided by lemma~\ref{lem:h(t)}. It is specified by the initial value 0.5, by the set \{ 1, 2, 9, 15.5, 23, 25 \} of abscissae $\gamma_{i}$, $i = 1, \ldots, 6$, at which the slope changes, and by the set \{ 1, 0, 1, 1.5, 2.5, 4 \} of slopes $\mu_{i}$, $i = 1, \ldots, 6$, in the various intervals.

{\it Case ii)} in which it is assumed $\delta_{2} = 1$ (and $\delta_{1} = 0$).

In this case, it is necessary to minimize, with respect to the (continuos) decision variable $\tau$ which corresponds to the processing time $pt_{2,2}$, the following function
\begin{equation*}
\alpha_{2,2} \, \max \{ t_{3} + st_{1,2} + \tau - dd_{2,2} \, , \, 0 \} + \beta_{2} \, ( pt^{\mathrm{nom}}_{2} - \tau ) + sc_{1,2} + J^{\circ}_{2,2,2} (t_{4})
\end{equation*}
that can be written as $f (pt_{2,2} + t_{3}) + g (pt_{2,2})$ being
\begin{equation*}
f (pt_{2,2} + t_{3}) = \max \{ pt_{2,2} + t_{3} - 23 \, , \, 0 \} + 0.5 + J^{\circ}_{2,2,2} (pt_{2,2} + t_{3} + 1)
\end{equation*}
\begin{equation*}
g (pt_{2,2}) = \left\{ \begin{array}{ll}
1.5 \cdot (6 - pt_{2,2}) & pt_{2,2} \in [ 4 , 6 )\\
0 & pt_{2,2} \notin [ 4 , 6 )
\end{array} \right.
\end{equation*}
The function $pt^{\circ}_{2,2}(t_{3}) = \arg \min_{pt_{2,2}} \{ f (pt_{2,2} + t_{3}) + g (pt_{2,2}) \} $, with $4 \leq pt_{2,2} \leq 6$, is determined by applying lemma~\ref{lem:xopt}. It is (see figure~\ref{fig:esS3_tau_2_1_1_pt_2_2})
\begin{equation*}
pt^{\circ}_{2,2}(t_{3}) = x_{\mathrm{e}}(t_{3}) \qquad \text{with} \quad x_{\mathrm{e}}(t_{3}) = \left\{ \begin{array}{ll}
6 &  t_{3} < 17\\
-t_{3} + 23 & 17 \leq t_{3} < 19\\
4 & t_{3} \geq 19
\end{array} \right.
\end{equation*}

\begin{figure}[h!]
\centering
\psfrag{f(x)}[Bl][Bl][.8][0]{$pt^{\circ}_{2,2}(t_{3})$}
\psfrag{x}[bc][Bl][.8][0]{$t_{3}$}
\psfrag{0}[tc][Bl][.8][0]{$0$}
\includegraphics[scale=.2]{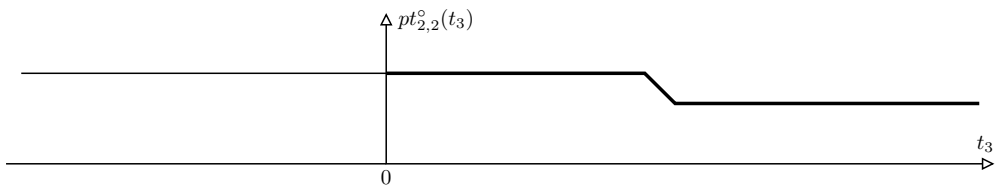}
\caption{Optimal processing time $pt^{\circ}_{2,2}(t_{3})$, under the assumption $\delta_{2} = 1$ in state $[ 2 \; 1 \; 1 \; t_{3}]^{T}$.}
\label{fig:esS3_tau_2_1_1_pt_2_2}
\end{figure}

The conditioned cost-to-go $J^{\circ}_{2,1,1} (t_{3} \mid \delta_{2}=1) = f ( pt^{\circ}_{2,2}(t_{3}) + t_{3} ) + g ( pt^{\circ}_{2,2}(t_{3}) )$, illustrated in figure~\ref{fig:esS3_J_2_1_1_min}, is provided by lemma~\ref{lem:h(t)}. It is specified by the initial value 1.5, by the set \{ 7.5, 9, 10, 13.5, 17, 19, 19.5, 21.5, 27.25, 29 \} of abscissae $\gamma_{i}$, $i = 1, \ldots, 10$, at which the slope changes, and by the set \{ 1, 0, 0.5, 1, 1.5, 2, 3, 4, 3, 4 \} of slopes $\mu_{i}$, $i = 1, \ldots, 10$, in the various intervals.

\begin{figure}[h!]
\centering
\psfrag{J1}[tl][Bc][.8][0]{$J^{\circ}_{2,1,1} (t_{3} \mid \delta_{1} = 1)$}
\psfrag{J2}[br][Bc][.8][0]{$J^{\circ}_{2,1,1} (t_{3} \mid \delta_{2} = 1)$}
\psfrag{0}[cc][tc][.7][0]{$0$}\includegraphics[scale=.6]{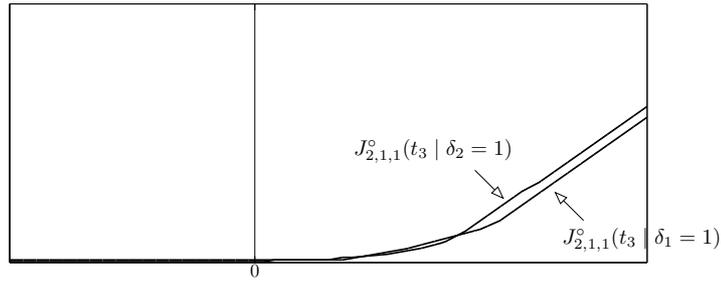}%
\vspace{-12pt}
\caption{Conditioned costs-to-go $J^{\circ}_{2,1,1} (t_{3} \mid \delta_{1} = 1)$ and $J^{\circ}_{2,1,1} (t_{3} \mid \delta_{2} = 1)$ in state $[2 \; 1 \; 1 \; t_{3}]^{T}$.}
\label{fig:esS3_J_2_1_1_min}
\end{figure}

In order to find the optimal cost-to-go $J^{\circ}_{2,1,1} (t_{3})$, it is necessary to carry out the following minimization
\begin{equation*}
J^{\circ}_{2,1,1} (t_{3}) = \min \big\{ J^{\circ}_{2,1,1} (t_{3} \mid \delta_{1} = 1) \, , \, J^{\circ}_{2,1,1} (t_{3} \mid \delta_{2} = 1) \big\}
\end{equation*}
which provides, in accordance with lemma~\ref{lem:min}, the continuous, nondecreasing, piecewise linear function illustrated in figure~\ref{fig:esS3_J_2_1_1}.

\begin{figure}[h!]
\centering
\psfrag{0}[cc][tc][.8][0]{$0$}
\includegraphics[scale=.8]{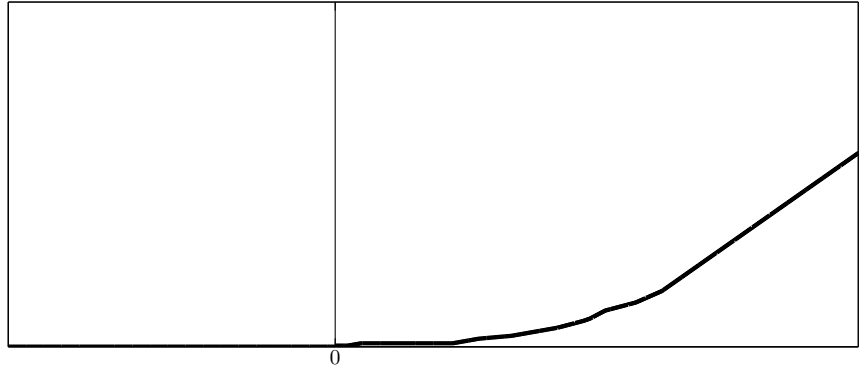}%
\vspace{-12pt}
\caption{Optimal cost-to-go $J^{\circ}_{2,1,1} (t_{3})$ in state $[2 \; 1 \; 1 \; t_{3}]^{T}$.}
\label{fig:esS3_J_2_1_1}
\end{figure}

The function $J^{\circ}_{2,1,1} (t_{3})$ is specified by the initial value 0.5, by the set \{ 1, 2, 9, 11, 13.5, 17, 19, 19.5, 20.$\overline{6}$, 23, 25 \} of abscissae $\gamma_{i}$, $i = 1, \ldots, 11$, at which the slope changes, and by the set \{ 1, 0, 1, 0.5, 1, 1.5, 2, 3, 1.5, 2.5, 4 \} of slopes $\mu_{i}$, $i = 1, \ldots, 11$, in the various intervals.

Since $J^{\circ}_{2,1,1} (t_{3} \mid \delta_{1} = 1)$ is the minimum in $(-\infty, 2)$, in $[9,11]$, and in $[20.\overline{6},+\infty)$, and $J^{\circ}_{2,1,1} (t_{3} \mid \delta_{2} = 1)$ is the minimum in $[2,9]$ and in $[11,20.\overline{6}]$, the optimal control strategies for this state are
\begin{equation*}
\delta_{1}^{\circ} (2,1,1, t_{3}) = \left\{ \begin{array}{ll}
1 &  t_{3} < 2\\
0 &  2 \leq t_{3} < 9\\
1 &  9 \leq t_{3} < 11\\
0 &  11 \leq t_{3} < 20.\overline{6}\\
1 & t_{3} \geq 20.\overline{6}
\end{array} \right. \qquad \delta_{2}^{\circ} (2,1,1, t_{3}) = \left\{ \begin{array}{ll}
0 &  t_{3} < 2\\
1 &  2 \leq t_{3} < 9\\
0 &  9 \leq t_{3} < 11\\
1 &  11 \leq t_{3} < 20.\overline{6}\\
0 & t_{3} \geq 20.\overline{6}
\end{array} \right.
\end{equation*}
\begin{equation*}
\tau^{\circ} (2,1,1, t_{3}) = \left\{ \begin{array}{ll}
8 &  t_{3} < 1\\
-t_{3} + 9 & 1 \leq t_{3} < 2\\
6 & 2 \leq t_{3} < 9\\
-t_{3} + 17 & 9 \leq t_{3} < 11\\
6 &  11 \leq t_{3} < 17\\
-t_{3} + 23 & 17 \leq t_{3} < 19\\
4 & t_{3} \geq 19
\end{array} \right.
\end{equation*}
The optimal control strategy $\tau^{\circ} (2,1,1, t_{3})$ is illustrated in figure~\ref{fig:esS3_tau_2_1_1}.

\newpage

\begin{figure}[h!]
\centering
\psfrag{f(x)}[Bl][Bl][.8][0]{$\tau^{\circ} (2,1,1, t_{3})$}
\psfrag{x}[bc][Bl][.8][0]{$t_{3}$}
\psfrag{0}[tc][Bl][.8][0]{$0$}
\includegraphics[scale=.2]{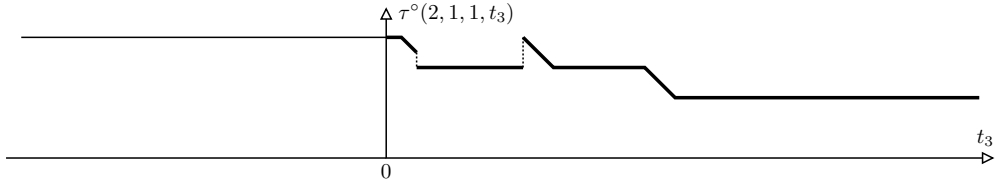}
\caption{Optimal control strategy $\tau^{\circ} (2,1,1, t_{3})$ in state $[ 2 \; 1 \; 1 \; t_{3}]^{T}$.}
\label{fig:esS3_tau_2_1_1}
\end{figure}


{\bf Stage $3$ -- State $\boldsymbol{[3 \; 0 \; 1 \; t_{3}]^{T}}$ ($S7$)}

In state $[3 \; 0 \; 1 \; t_{3}]^{T}$, the cost function to be minimized, with respect to the (continuos) decision variable $\tau$ and to the (binary) decision variables $\delta_{1}$ and $\delta_{2}$ is
\begin{equation*}
\begin{split}
&\delta_{1} \big[ \alpha_{1,4} \, \max \{ t_{3} + st_{1,1} + \tau - dd_{1,4} \, , \, 0 \} + \beta_{1} \, ( pt^{\mathrm{nom}}_{1} - \tau ) + sc_{1,1} + J^{\circ}_{4,0,1} (t_{4}) \big] +\\
&+ \delta_{2} \big[ \alpha_{2,1} \, \max \{ t_{3} + st_{1,2} + \tau - dd_{2,1} \, , \, 0 \} + \beta_{2} \, ( pt^{\mathrm{nom}}_{2} - \tau ) + sc_{1,2} + J^{\circ}_{3,1,2} (t_{4}) \big]
\end{split}
\end{equation*}

{\it Case i)} in which it is assumed $\delta_{1} = 1$ (and $\delta_{2} = 0$).

In this case, it is necessary to minimize, with respect to the (continuos) decision variable $\tau$ which corresponds to the processing time $pt_{1,4}$, the following function
\begin{equation*}
\alpha_{1,4} \, \max \{ t_{3} + st_{1,1} + \tau - dd_{1,4} \, , \, 0 \} + \beta_{1} \, ( pt^{\mathrm{nom}}_{1} - \tau ) + sc_{1,1} + J^{\circ}_{4,0,1} (t_{4})
\end{equation*}
that can be written as $f (pt_{1,4} + t_{3}) + g (pt_{1,4})$ being
\begin{equation*}
f (pt_{1,4} + t_{3}) = 0.5 \cdot \max \{ pt_{1,4} + t_{3} - 41 \, , \, 0 \} + J^{\circ}_{4,0,1} (pt_{1,4} + t_{3})
\end{equation*}
\begin{equation*}
g (pt_{1,4}) = \left\{ \begin{array}{ll}
8 - pt_{1,4} & pt_{1,4} \in [ 4 , 8 )\\
0 & pt_{1,4} \notin [ 4 , 8 )
\end{array} \right.
\end{equation*}
The function $pt^{\circ}_{1,4}(t_{3}) = \arg \min_{pt_{1,4}} \{ f (pt_{1,4} + t_{3}) + g (pt_{1,4}) \} $, with $4 \leq pt_{1,4} \leq 8$, is determined by applying lemma~\ref{lem:xopt}. It is (see figure~\ref{fig:esS3_tau_3_0_1_pt_1_4})
\begin{equation*}
pt^{\circ}_{1,4}(t_{3}) = x_{\mathrm{e}}(t_{3}) \qquad \text{with} \quad x_{\mathrm{e}}(t_{3}) = \left\{ \begin{array}{ll}
8 &  t_{3} < 3\\
-t_{3} + 23 & 3 \leq t_{3} < 7\\
4 & t_{3} \geq 7
\end{array} \right.
\end{equation*}

\begin{figure}[h!]
\centering
\psfrag{f(x)}[Bl][Bl][.8][0]{$pt^{\circ}_{1,4}(t_{3})$}
\psfrag{x}[bc][Bl][.8][0]{$t_{3}$}
\psfrag{0}[tc][Bl][.8][0]{$0$}
\includegraphics[scale=.2]{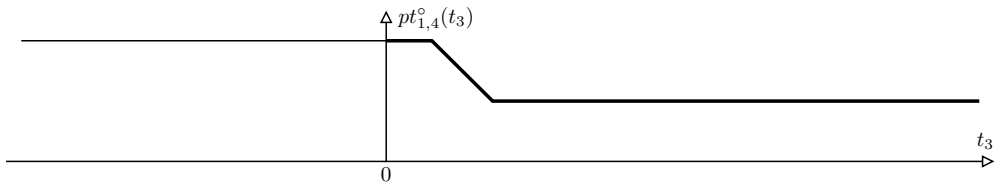}
\caption{Optimal processing time $pt^{\circ}_{1,4}(t_{3})$, under the assumption $\delta_{1} = 1$ in state $[ 3 \; 0 \; 1 \; t_{3}]^{T}$.}
\label{fig:esS3_tau_3_0_1_pt_1_4}
\end{figure}

The conditioned cost-to-go $J^{\circ}_{3,0,1} (t_{3} \mid \delta_{1}=1) = f ( pt^{\circ}_{1,4}(t_{3}) + t_{3} ) + g ( pt^{\circ}_{1,4}(t_{3}) )$, illustrated in figure~\ref{fig:esS3_J_3_0_1_min}, is provided by lemma~\ref{lem:h(t)}. It is specified by the initial value 0.5, by the set \{ 3, 10, 12, 17, 19, 37 \} of abscissae $\gamma_{i}$, $i = 1, \ldots, 6$, at which the slope changes, and by the set \{ 1, 1.5, 3, 3.5, 4, 4.5 \} of slopes $\mu_{i}$, $i = 1, \ldots, 6$, in the various intervals.

{\it Case ii)} in which it is assumed $\delta_{2} = 1$ (and $\delta_{1} = 0$).

In this case, it is necessary to minimize, with respect to the (continuos) decision variable $\tau$ which corresponds to the processing time $pt_{2,1}$, the following function
\begin{equation*}
\alpha_{2,1} \, \max \{ t_{3} + st_{1,2} + \tau - dd_{2,1} \, , \, 0 \} + \beta_{2} \, ( pt^{\mathrm{nom}}_{2} - \tau ) + sc_{1,2} + J^{\circ}_{3,1,2} (t_{4})
\end{equation*}
that can be written as $f (pt_{2,1} + t_{3}) + g (pt_{2,1})$ being
\begin{equation*}
f (pt_{2,1} + t_{3}) = 2 \cdot \max \{ pt_{2,1} + t_{3} - 20 \, , \, 0 \} + 0.5 + J^{\circ}_{3,1,2} (pt_{2,1} + t_{3} + 1)
\end{equation*}
\begin{equation*}
g (pt_{2,1}) = \left\{ \begin{array}{ll}
1.5 \cdot (6 - pt_{2,1}) & pt_{2,1} \in [ 4 , 6 )\\
0 & pt_{2,1} \notin [ 4 , 6 )
\end{array} \right.
\end{equation*}
The function $pt^{\circ}_{2,1}(t_{3}) = \arg \min_{pt_{2,1}} \{ f (pt_{2,1} + t_{3}) + g (pt_{2,1}) \} $, with $4 \leq pt_{2,1} \leq 6$, is determined by applying lemma~\ref{lem:xopt}. It is (see figure~\ref{fig:esS3_tau_3_0_1_pt_2_1})
\begin{equation*}
pt^{\circ}_{2,1}(t_{3}) = x_{\mathrm{e}}(t_{3}) \qquad \text{with} \quad x_{\mathrm{e}}(t_{3}) = \left\{ \begin{array}{ll}
6 &  t_{3} < 13.5\\
-t_{3} + 19.5 & 13.5 \leq t_{3} < 15.5\\
4 & t_{3} \geq 15.5
\end{array} \right.
\end{equation*}

\begin{figure}[h!]
\centering
\psfrag{f(x)}[Bl][Bl][.8][0]{$pt^{\circ}_{2,1}(t_{3})$}
\psfrag{x}[bc][Bl][.8][0]{$t_{3}$}
\psfrag{0}[tc][Bl][.8][0]{$0$}
\includegraphics[scale=.2]{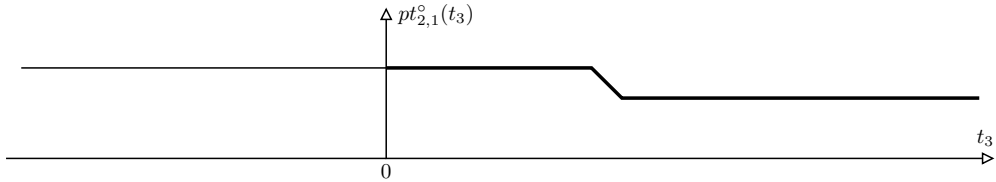}
\caption{Optimal processing time $pt^{\circ}_{2,1}(t_{3})$, under the assumption $\delta_{2} = 1$ in state $[ 3 \; 0 \; 1 \; t_{3}]^{T}$.}
\label{fig:esS3_tau_3_0_1_pt_2_1}
\end{figure}

The conditioned cost-to-go $J^{\circ}_{3,0,1} (t_{3} \mid \delta_{2}=1) = f ( pt^{\circ}_{2,1}(t_{3}) + t_{3} ) + g ( pt^{\circ}_{2,1}(t_{3}) )$, illustrated in figure~\ref{fig:esS3_J_3_0_1_min}, is provided by lemma~\ref{lem:h(t)}. It is specified by the initial value 1.5, by the set \{ 11, 13.5, 16, 23 \} of abscissae $\gamma_{i}$, $i = 1, \ldots, 4$, at which the slope changes, and by the set \{ 1, 1.5, 3.5, 4.5 \} of slopes $\mu_{i}$, $i = 1, \ldots, 4$, in the various intervals.

\begin{figure}[h!]
\centering
\psfrag{J1}[br][Bc][.8][0]{$J^{\circ}_{3,0,1} (t_{3} \mid \delta_{1} = 1)$}
\psfrag{J2}[tl][Bc][.8][0]{$J^{\circ}_{3,0,1} (t_{3} \mid \delta_{2} = 1)$}
\psfrag{0}[cc][tc][.7][0]{$0$}\includegraphics[scale=.6]{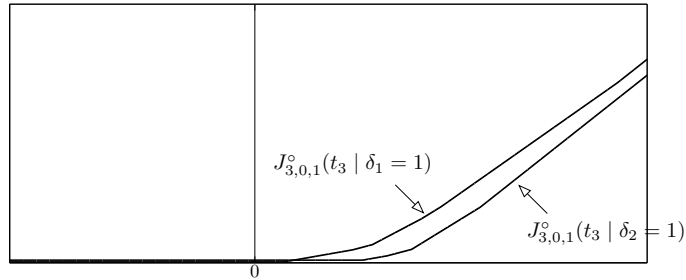}%
\vspace{-12pt}
\caption{Conditioned costs-to-go $J^{\circ}_{3,0,1} (t_{3} \mid \delta_{1} = 1)$ and $J^{\circ}_{3,0,1} (t_{3} \mid \delta_{2} = 1)$ in state $[3 \; 0 \; 1 \; t_{3}]^{T}$.}
\label{fig:esS3_J_3_0_1_min}
\end{figure}

In order to find the optimal cost-to-go $J^{\circ}_{3,0,1} (t_{3})$, it is necessary to carry out the following minimization
\begin{equation*}
J^{\circ}_{3,0,1} (t_{3}) = \min \big\{ J^{\circ}_{3,0,1} (t_{3} \mid \delta_{1} = 1) \, , \, J^{\circ}_{3,0,1} (t_{3} \mid \delta_{2} = 1) \big\}
\end{equation*}
which provides, in accordance with lemma~\ref{lem:min}, the continuous, nondecreasing, piecewise linear function illustrated in figure~\ref{fig:esS3_J_3_0_1}.

\begin{figure}[h!]
\centering
\psfrag{0}[cc][tc][.8][0]{$0$}
\includegraphics[scale=.8]{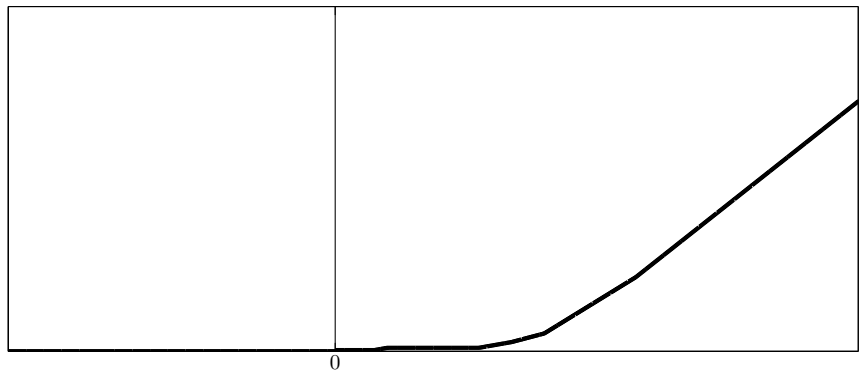}%
\vspace{-12pt}
\caption{Optimal cost-to-go $J^{\circ}_{3,0,1} (t_{3})$ in state $[3 \; 0 \; 1 \; t_{3}]^{T}$.}
\label{fig:esS3_J_3_0_1}
\end{figure}

The function $J^{\circ}_{3,0,1} (t_{3})$ is specified by the initial value 0.5, by the set \{ 3, 4, 11, 13.5, 16, 23 \} of abscissae $\gamma_{i}$, $i = 1, \ldots, 6$, at which the slope changes, and by the set \{ 1, 0, 1, 1.5, 3.5, 4.5 \} of slopes $\mu_{i}$, $i = 1, \ldots, 6$, in the various intervals.

Since $J^{\circ}_{3,0,1} (t_{3} \mid \delta_{1} = 1)$ is the minimum in $(-\infty, 4)$, and $J^{\circ}_{3,0,1} (t_{3} \mid \delta_{2} = 1)$ is the minimum in $[4,+\infty$, the optimal control strategies for this state are
\begin{equation*}
\delta_{1}^{\circ} (3,0,1, t_{3}) = \left\{ \begin{array}{ll}
1 &  t_{3} < 4\\
0 & t_{3} \geq 4
\end{array} \right. \qquad \delta_{2}^{\circ} (3,0,1, t_{3}) = \left\{ \begin{array}{ll}
0 &  t_{3} < 4\\
1 & t_{3} \geq 4
\end{array} \right.
\end{equation*}
\begin{equation*}
\tau^{\circ} (3,0,1, t_{3}) = \left\{ \begin{array}{ll}
8 &  t_{3} < 3\\
-t_{3} + 23 & 3 \leq t_{3} < 4\\
6 &  4 \leq t_{3} < 13.5\\
-t_{3} + 19.5 & 13.5 \leq t_{3} < 15.5\\
4 & t_{3} \geq 15.5
\end{array} \right.
\end{equation*}
The optimal control strategy $\tau^{\circ} (3,0,1, t_{3})$ is illustrated in figure~\ref{fig:esS3_tau_2_1_1}.

\begin{figure}[h!]
\centering
\psfrag{f(x)}[Bl][Bl][.8][0]{$\tau^{\circ} (3,0,1, t_{3})$}
\psfrag{x}[bc][Bl][.8][0]{$t_{3}$}
\psfrag{0}[tc][Bl][.8][0]{$0$}
\includegraphics[scale=.2]{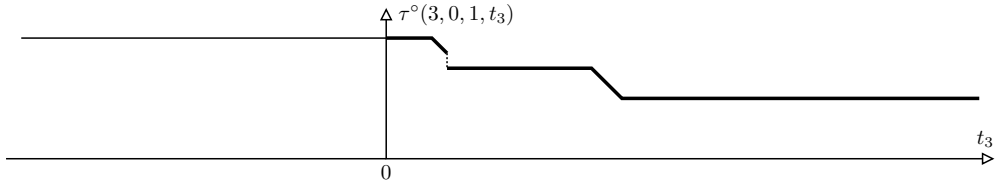}
\caption{Optimal control strategy $\tau^{\circ} (3,0,1, t_{3})$ in state $[ 3 \; 0 \; 1 \; t_{3}]^{T}$.}
\label{fig:esS3_tau_3_0_1}
\end{figure}


{\bf Stage $2$ -- State $\boldsymbol{[0 \; 2 \; 2 \; t_{2}]^{T}}$ ($S6$)}

In state $[0 \; 2 \; 2 \; t_{2}]^{T}$, the cost function to be minimized, with respect to the (continuos) decision variable $\tau$ and to the (binary) decision variables $\delta_{1}$ and $\delta_{2}$ is
\begin{equation*}
\begin{split}
&\delta_{1} \big[ \alpha_{1,1} \, \max \{ t_{2} + st_{2,1} + \tau - dd_{1,1} \, , \, 0 \} + \beta_{1} \, ( pt^{\mathrm{nom}}_{1} - \tau ) + sc_{2,1} + J^{\circ}_{1,2,1} (t_{3}) \big] +\\
&+ \delta_{2} \big[ \alpha_{2,3} \, \max \{ t_{2} + st_{2,2} + \tau - dd_{2,3} \, , \, 0 \} + \beta_{2} \, ( pt^{\mathrm{nom}}_{2} - \tau ) + sc_{2,2} + J^{\circ}_{0,3,2} (t_{3}) \big]
\end{split}
\end{equation*}

{\it Case i)} in which it is assumed $\delta_{1} = 1$ (and $\delta_{2} = 0$).

In this case, it is necessary to minimize, with respect to the (continuos) decision variable $\tau$ which corresponds to the processing time $pt_{1,1}$, the following function
\begin{equation*}
\alpha_{1,1} \, \max \{ t_{2} + st_{2,1} + \tau - dd_{1,1} \, , \, 0 \} + \beta_{1} \, ( pt^{\mathrm{nom}}_{1} - \tau ) + sc_{2,1} + J^{\circ}_{1,2,1} (t_{3})
\end{equation*}
that can be written as $f (pt_{1,1} + t_{2}) + g (pt_{1,1})$ being
\begin{equation*}
f (pt_{1,1} + t_{2}) = 0.75 \cdot \max \{ pt_{1,1} + t_{2} - 18.5 \, , \, 0 \} + 1 + J^{\circ}_{1,2,1} (pt_{1,1} + t_{2} + 0.5)
\end{equation*}
\begin{equation*}
g (pt_{1,1}) = \left\{ \begin{array}{ll}
8 - pt_{1,1} & pt_{1,1} \in [ 4 , 8 )\\
0 & pt_{1,1} \notin [ 4 , 8 )
\end{array} \right.
\end{equation*}
The function $pt^{\circ}_{1,1}(t_{2}) = \arg \min_{pt_{1,1}} \{ f (pt_{1,1} + t_{2}) + g (pt_{1,1}) \} $, with $4 \leq pt_{1,1} \leq 8$, is determined by applying lemma~\ref{lem:xopt}. It is (see figure~\ref{fig:esS3_tau_0_2_2_pt_1_1})
\begin{equation*}
pt^{\circ}_{1,1}(t_{2}) = \left\{ \begin{array}{ll}
x_{\mathrm{s}}(t_{2}) &  t_{2} < -0.5\\
x_{\mathrm{e}}(t_{2}) & t_{2} \geq -0.5
\end{array} \right. \qquad \text{with} \quad x_{\mathrm{s}}(t_{2}) = \left\{ \begin{array}{ll}
8 &  t_{2} < -1.5\\
-t_{2} + 6.5 & -1.5 \leq t_{2} < -0.5
\end{array} \right. \  , 
\end{equation*}
\begin{equation*}
\qquad \text{and} \quad x_{\mathrm{e}}(t_{2}) = \left\{ \begin{array}{ll}
8 &  -0.5 \leq t_{2} < 4.5\\
-t_{2} + 12.5 & 4.5 \leq t_{2} < 8.5\\
4 & t_{2} \geq 8.5
\end{array} \right.
\end{equation*}

The conditioned cost-to-go $J^{\circ}_{0,2,2} (t_{2} \mid \delta_{1}=1) = f ( pt^{\circ}_{1,1}(t_{2}) + t_{2} ) + g ( pt^{\circ}_{1,1}(t_{2}) )$, illustrated in figure~\ref{fig:esS3_J_0_2_2_min}, is provided by lemma~\ref{lem:h(t)}. It is specified by the initial value 1.5, by the set \{ --1.5, --0.5, 1, 4.5, 14.5, 15.5, 16.5, 18.5 \} of abscissae $\gamma_{i}$, $i = 1, \ldots, 8$, at which the slope changes, and by the set \{ 1, 0, 0.5, 1, 1.75, 2.25, 3.25, 4.25 \} of slopes $\mu_{i}$, $i = 1, \ldots, 8$, in the various intervals.

\newpage

\begin{figure}[h!]
\centering
\psfrag{f(x)}[Bl][Bl][.8][0]{$pt^{\circ}_{1,1}(t_{2})$}
\psfrag{x}[bc][Bl][.8][0]{$t_{2}$}
\psfrag{0}[tc][Bl][.8][0]{$0$}
\includegraphics[scale=.2]{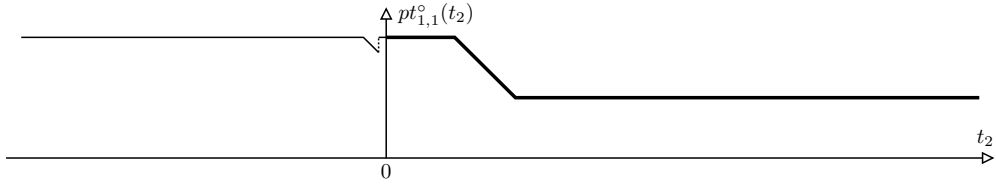}
\caption{Optimal processing time $pt^{\circ}_{1,1}(t_{2})$, under the assumption $\delta_{1} = 1$ in state $[ 0 \; 2 \; 2 \; t_{2}]^{T}$.}
\label{fig:esS3_tau_0_2_2_pt_1_1}
\end{figure}

{\it Case ii)} in which it is assumed $\delta_{2} = 1$ (and $\delta_{1} = 0$).

In this case, it is necessary to minimize, with respect to the (continuos) decision variable $\tau$ which corresponds to the processing time $pt_{2,3}$, the following function
\begin{equation*}
\alpha_{2,3} \, \max \{ t_{2} + st_{2,2} + \tau - dd_{2,3} \, , \, 0 \} + \beta_{2} \, ( pt^{\mathrm{nom}}_{2} - \tau ) + sc_{2,2} + J^{\circ}_{0,3,2} (t_{3})
\end{equation*}
that can be written as $f (pt_{2,3} + t_{2}) + g (pt_{2,3})$ being
\begin{equation*}
f (pt_{2,3} + t_{2}) = \max \{ pt_{2,3} + t_{2} - 38 \, , \, 0 \} + J^{\circ}_{0,3,2} (pt_{2,3} + t_{2})
\end{equation*}
\begin{equation*}
g (pt_{2,3}) = \left\{ \begin{array}{ll}
1.5 \cdot (6 - pt_{2,3}) & pt_{2,3} \in [ 4 , 6 )\\
0 & pt_{2,3} \notin [ 4 , 6 )
\end{array} \right.
\end{equation*}
The function $pt^{\circ}_{2,3}(t_{2}) = \arg \min_{pt_{2,3}} \{ f (pt_{2,3} + t_{2}) + g (pt_{2,3}) \} $, with $4 \leq pt_{2,3} \leq 6$, is determined by applying lemma~\ref{lem:xopt}. It is (see figure~\ref{fig:esS3_tau_0_2_2_pt_2_3})
\begin{equation*}
pt^{\circ}_{2,3}(t_{2}) = x_{\mathrm{e}}(t_{2}) \qquad \text{with} \quad x_{\mathrm{e}}(t_{2}) = \left\{ \begin{array}{ll}
6 &  t_{2} < 8.5\\
-t_{2} + 14.5 & 8.5 \leq t_{2} < 10.5\\
4 & t_{2} \geq 10.5
\end{array} \right.
\end{equation*}

\begin{figure}[h!]
\centering
\psfrag{f(x)}[Bl][Bl][.8][0]{$pt^{\circ}_{2,3}(t_{2})$}
\psfrag{x}[bc][Bl][.8][0]{$t_{2}$}
\psfrag{0}[tc][Bl][.8][0]{$0$}
\includegraphics[scale=.2]{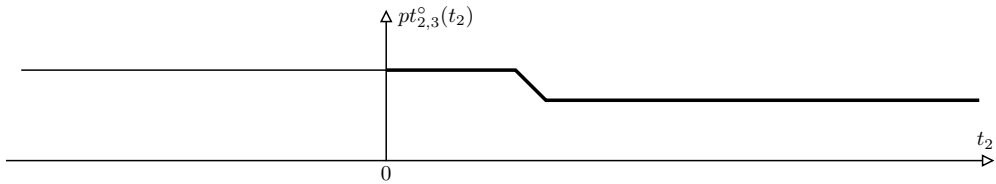}
\caption{Optimal processing time $pt^{\circ}_{2,3}(t_{2})$, under the assumption $\delta_{2} = 1$ in state $[ 0 \; 2 \; 2 \; t_{2}]^{T}$.}
\label{fig:esS3_tau_0_2_2_pt_2_3}
\end{figure}

The conditioned cost-to-go $J^{\circ}_{0,2,2} (t_{2} \mid \delta_{2}=1) = f ( pt^{\circ}_{2,3}(t_{2}) + t_{2} ) + g ( pt^{\circ}_{2,3}(t_{2}) )$, illustrated in figure~\ref{fig:esS3_J_0_2_2_min}, is provided by lemma~\ref{lem:h(t)}. It is specified by the initial value 1, by the set \{ --1.5, 8.5, 10.5, 11.5, 12.5, 16.5, 34 \} of abscissae $\gamma_{i}$, $i = 1, \ldots, 7$, at which the slope changes, and by the set \{ 1, 1.5, 1.75, 2.25, 2.75, 3.25, 4.25 \} of slopes $\mu_{i}$, $i = 1, \ldots, 7$, in the various intervals.

\begin{figure}[h!]
\centering
\psfrag{J1}[tl][Bc][.8][0]{$J^{\circ}_{0,2,2} (t_{2} \mid \delta_{1} = 1)$}
\psfrag{J2}[br][Bc][.8][0]{$J^{\circ}_{0,2,2} (t_{2} \mid \delta_{2} = 1)$}
\psfrag{0}[cc][tc][.7][0]{$0$}\includegraphics[scale=.6]{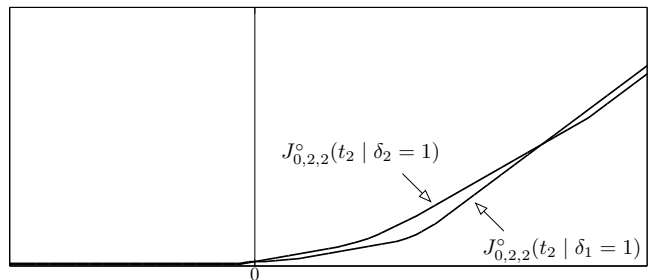}%
\vspace{-12pt}
\caption{Conditioned costs-to-go $J^{\circ}_{0,2,2} (t_{2} \mid \delta_{1} = 1)$ and $J^{\circ}_{0,2,2} (t_{2} \mid \delta_{2} = 1)$ in state $[0 \; 2 \; 2 \; t_{2}]^{T}$.}
\label{fig:esS3_J_0_2_2_min}
\end{figure}

In order to find the optimal cost-to-go $J^{\circ}_{0,2,2} (t_{2})$, it is necessary to carry out the following minimization
\begin{equation*}
J^{\circ}_{0,2,2} (t_{2}) = \min \big\{ J^{\circ}_{0,2,2} (t_{2} \mid \delta_{1} = 1) \, , \, J^{\circ}_{0,2,2} (t_{2} \mid \delta_{2} = 1) \big\}
\end{equation*}
which provides, in accordance with lemma~\ref{lem:min}, the continuous, nondecreasing, piecewise linear function illustrated in figure~\ref{fig:esS3_J_0_2_2}.

\begin{figure}[h!]
\centering
\psfrag{0}[cc][tc][.8][0]{$0$}
\includegraphics[scale=.8]{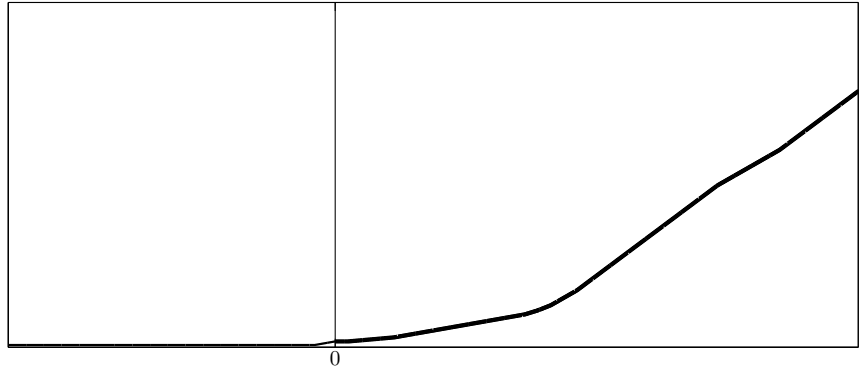}%
\vspace{-12pt}
\caption{Optimal cost-to-go $J^{\circ}_{0,2,2} (t_{2})$ in state $[0 \; 2 \; 2 \; t_{2}]^{T}$.}
\label{fig:esS3_J_0_2_2}
\end{figure}

The function $J^{\circ}_{0,2,2} (t_{2})$ is specified by the initial value 1, by the set \{ --1.5, 0, 1, 4.5, 14.5, 15.5, 16.5, 18.5, 29.25, 34 \} of abscissae $\gamma_{i}$, $i = 1, \ldots, 10$, at which the slope changes, and by the set \{ 1, 0, 0.5, 1, 1.75, 2.25, 3.25, 4.25, 3.25, 4.25 \} of slopes $\mu_{i}$, $i = 1, \ldots, 10$, in the various intervals.

Since $J^{\circ}_{0,2,2} (t_{2} \mid \delta_{1} = 1)$ is the minimum in $[0,29.25)$, and $J^{\circ}_{0,2,2} (t_{2} \mid \delta_{2} = 1)$ is the minimum in $(-\infty, 0)$ and in $[29.25,+\infty)$, the optimal control strategies for this state are
\begin{equation*}
\delta_{1}^{\circ} (0,2,2, t_{2}) = \left\{ \begin{array}{ll}
0 &  t_{2} < 0\\
1 &  0 \leq t_{2} < 29.25\\
0 & t_{2} \geq 29.25
\end{array} \right. \qquad \delta_{2}^{\circ} (0,2,2, t_{2}) = \left\{ \begin{array}{ll}
1 &  t_{2} < 0\\
0 &  0 \leq t_{2} < 29.25\\
1 & t_{2} \geq 29.25
\end{array} \right.
\end{equation*}
\begin{equation*}
\tau^{\circ} (0,2,2, t_{2}) = \left\{ \begin{array}{ll}
6 &  t_{2} < 0\\
8 &  0 \leq t_{2} < 4.5\\
-t_{2} + 12.5 & 4.5 \leq t_{2} < 8.5\\
4 & t_{2} \geq 8.5
\end{array} \right.
\end{equation*}
The optimal control strategy $\tau^{\circ} (0,2,2, t_{2})$ is illustrated in figure~\ref{fig:esS3_tau_0_2_2}.

\begin{figure}[h!]
\centering
\psfrag{f(x)}[Bl][Bl][.8][0]{$\tau^{\circ} (0,2,2, t_{2})$}
\psfrag{x}[bc][Bl][.8][0]{$t_{2}$}
\psfrag{0}[tc][Bl][.8][0]{$0$}
\includegraphics[scale=.2]{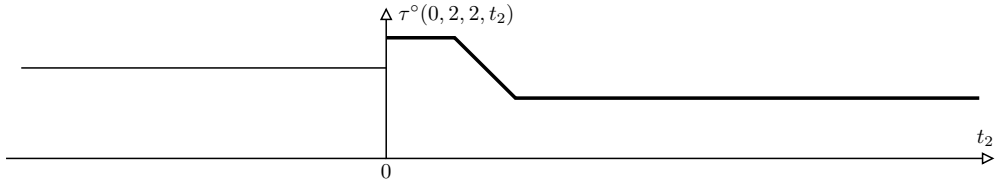}
\caption{Optimal control strategy $\tau^{\circ} (0,2,2, t_{2})$ in state $[ 0 \; 2 \; 2 \; t_{2}]^{T}$.}
\label{fig:esS3_tau_0_2_2}
\end{figure}


{\bf Stage $2$ -- State $\boldsymbol{[1 \; 1 \; 2 \; t_{2}]^{T}}$ ($S5$)}

In state $[1 \; 1 \; 2 \; t_{2}]^{T}$, the cost function to be minimized, with respect to the (continuos) decision variable $\tau$ and to the (binary) decision variables $\delta_{1}$ and $\delta_{2}$ is
\begin{equation*}
\begin{split}
&\delta_{1} \big[ \alpha_{1,2} \, \max \{ t_{2} + st_{2,1} + \tau - dd_{1,2} \, , \, 0 \} + \beta_{1} \, ( pt^{\mathrm{nom}}_{1} - \tau ) + sc_{2,1} + J^{\circ}_{2,1,1} (t_{3}) \big] +\\
&+ \delta_{2} \big[ \alpha_{2,2} \, \max \{ t_{2} + st_{2,2} + \tau - dd_{2,2} \, , \, 0 \} + \beta_{2} \, ( pt^{\mathrm{nom}}_{2} - \tau ) + sc_{2,2} + J^{\circ}_{1,2,2} (t_{3}) \big]
\end{split}
\end{equation*}

{\it Case i)} in which it is assumed $\delta_{1} = 1$ (and $\delta_{2} = 0$).

In this case, it is necessary to minimize, with respect to the (continuos) decision variable $\tau$ which corresponds to the processing time $pt_{1,2}$, the following function
\begin{equation*}
\alpha_{1,2} \, \max \{ t_{2} + st_{2,1} + \tau - dd_{1,2} \, , \, 0 \} + \beta_{1} \, ( pt^{\mathrm{nom}}_{1} - \tau ) + sc_{2,1} + J^{\circ}_{2,1,1} (t_{3})
\end{equation*}
that can be written as $f (pt_{1,2} + t_{2}) + g (pt_{1,2})$ being
\begin{equation*}
f (pt_{1,2} + t_{2}) = 0.5 \cdot \max \{ pt_{1,2} + t_{2} - 23.5 \, , \, 0 \} + 1 + J^{\circ}_{2,1,1} (pt_{1,2} + t_{2} + 0.5)
\end{equation*}
\begin{equation*}
g (pt_{1,2}) = \left\{ \begin{array}{ll}
8 - pt_{1,2} & pt_{1,2} \in [ 4 , 8 )\\
0 & pt_{1,2} \notin [ 4 , 8 )
\end{array} \right.
\end{equation*}
The function $pt^{\circ}_{1,2}(t_{2}) = \arg \min_{pt_{1,2}} \{ f (pt_{1,2} + t_{2}) + g (pt_{1,2}) \} $, with $4 \leq pt_{1,2} \leq 8$, is determined by applying lemma~\ref{lem:xopt}. It is (see figure~\ref{fig:esS3_tau_1_1_2_pt_1_2})
\begin{equation*}
pt^{\circ}_{1,2}(t_{2}) = \left\{ \begin{array}{ll}
x_{\mathrm{s}}(t_{2}) &  t_{2} < -6.5\\
x_{1}(t_{2}) & -6.5 \leq t_{2} < 2.5\\
x_{\mathrm{e}}(t_{2}) & t_{2} \geq 2.5
\end{array} \right. \qquad \text{with} \quad x_{\mathrm{s}}(t_{2}) = \left\{ \begin{array}{ll}
8 &  t_{2} < -7.5\\
-t_{2} +0.5 & -7.5 \leq t_{2} < -6.5
\end{array} \right. \  , 
\end{equation*}
\begin{equation*}
\qquad x_{1}(t_{2}) = \left\{ \begin{array}{ll}
8 &  -6.5 \leq t_{2} < 0.5\\
-t_{2} + 8.5 & 0.5 \leq t_{2} < 2.5
\end{array} \right. \  , \  \text{and} \quad x_{\mathrm{e}}(t_{2}) = \left\{ \begin{array}{ll}
8 &  2.5 \leq t_{2} < 5\\
-t_{2} +13 & 5 \leq t_{2} < 9\\
4 & t_{2} \geq 9
\end{array} \right.
\end{equation*}

\begin{figure}[h!]
\centering
\psfrag{f(x)}[Bl][Bl][.8][0]{$pt^{\circ}_{1,2}(t_{2})$}
\psfrag{x}[bc][Bl][.8][0]{$t_{2}$}
\psfrag{0}[tc][Bl][.8][0]{$0$}
\includegraphics[scale=.2]{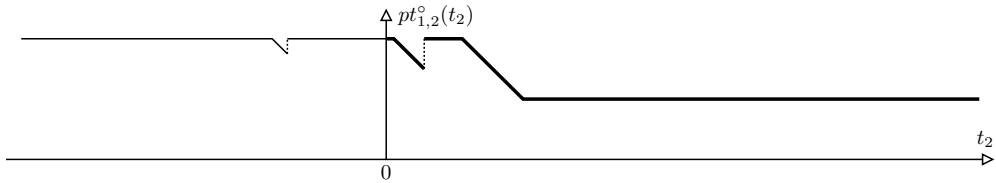}
\caption{Optimal processing time $pt^{\circ}_{1,2}(t_{2})$, under the assumption $\delta_{1} = 1$ in state $[ 1 \; 1 \; 2 \; t_{2}]^{T}$.}
\label{fig:esS3_tau_1_1_2_pt_1_2}
\end{figure}

The conditioned cost-to-go $J^{\circ}_{1,1,2} (t_{2} \mid \delta_{1}=1) = f ( pt^{\circ}_{1,2}(t_{2}) + t_{2} ) + g ( pt^{\circ}_{1,2}(t_{2}) )$, illustrated in figure~\ref{fig:esS3_J_1_1_2_min}, is provided by lemma~\ref{lem:h(t)}. It is specified by the initial value 1.5, by the set \{ --7.5, --6.5, 0.5, 2.5, 5, 12.5, 14.5, 15, 16.1$\overline{6}$, 18.5, 19.5, 20.5 \} of abscissae $\gamma_{i}$, $i = 1, \ldots, 12$, at which the slope changes, and by the set \{ 1, 0, 1, 0.5, 1, 1.5, 2, 3, 1.5, 2.5, 3, 4.5 \} of slopes $\mu_{i}$, $i = 1, \ldots, 12$, in the various intervals.

{\it Case ii)} in which it is assumed $\delta_{2} = 1$ (and $\delta_{1} = 0$).

In this case, it is necessary to minimize, with respect to the (continuos) decision variable $\tau$ which corresponds to the processing time $pt_{2,2}$, the following function
\begin{equation*}
\alpha_{2,2} \, \max \{ t_{2} + st_{2,2} + \tau - dd_{2,2} \, , \, 0 \} + \beta_{2} \, ( pt^{\mathrm{nom}}_{2} - \tau ) + sc_{2,2} + J^{\circ}_{1,2,2} (t_{3})
\end{equation*}
that can be written as $f (pt_{2,2} + t_{2}) + g (pt_{2,2})$ being
\begin{equation*}
f (pt_{2,2} + t_{2}) = \max \{ pt_{2,2} + t_{2} - 24 \, , \, 0 \} + J^{\circ}_{1,2,2} (pt_{2,2} + t_{2})
\end{equation*}
\begin{equation*}
g (pt_{2,2}) = \left\{ \begin{array}{ll}
1.5 \cdot (6 - pt_{2,2}) & pt_{2,2} \in [ 4 , 6 )\\
0 & pt_{2,2} \notin [ 4 , 6 )
\end{array} \right.
\end{equation*}
The function $pt^{\circ}_{2,2}(t_{2}) = \arg \min_{pt_{2,2}} \{ f (pt_{2,2} + t_{2}) + g (pt_{2,2}) \} $, with $4 \leq pt_{2,2} \leq 6$, is determined by applying lemma~\ref{lem:xopt}. It is (see figure~\ref{fig:esS3_tau_1_1_2_pt_2_2})
\begin{equation*}
pt^{\circ}_{2,2}(t_{2}) = x_{\mathrm{e}}(t_{2}) \qquad \text{with} \quad x_{\mathrm{e}}(t_{2}) = \left\{ \begin{array}{ll}
6 &  t_{2} < 13.5\\
-t_{2} + 19.5 & 13.5 \leq t_{2} < 15.5\\
4 & t_{2} \geq 15.5
\end{array} \right.
\end{equation*}

\begin{figure}[h!]
\centering
\psfrag{f(x)}[Bl][Bl][.8][0]{$pt^{\circ}_{2,2}(t_{2})$}
\psfrag{x}[bc][Bl][.8][0]{$t_{2}$}
\psfrag{0}[tc][Bl][.8][0]{$0$}
\includegraphics[scale=.2]{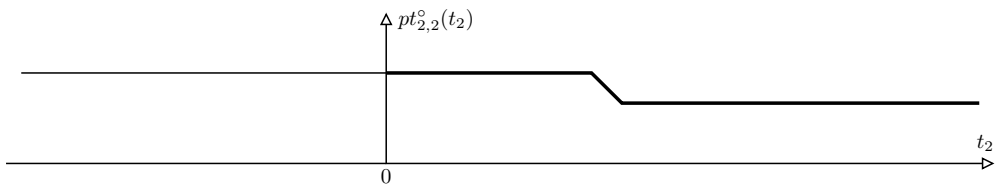}
\caption{Optimal processing time $pt^{\circ}_{2,2}(t_{2})$, under the assumption $\delta_{2} = 1$ in state $[ 1 \; 1 \; 2 \; t_{2}]^{T}$.}
\label{fig:esS3_tau_1_1_2_pt_2_2}
\end{figure}

The conditioned cost-to-go $J^{\circ}_{1,1,2} (t_{2} \mid \delta_{2}=1) = f ( pt^{\circ}_{2,2}(t_{2}) + t_{2} ) + g ( pt^{\circ}_{2,2}(t_{2}) )$, illustrated in figure~\ref{fig:esS3_J_1_1_2_min}, is provided by lemma~\ref{lem:h(t)}. It is specified by the initial value 1, by the set \{ 0.5, 2, 3, 6.5, 13.5, 16.5, 18.5, 20, 26.25, 30~\} of abscissae $\gamma_{i}$, $i = 1, \ldots, 10$, at which the slope changes, and by the set \{ 1, 0, 0.5, 1, 1.5, 2.5, 3.5, 4.5, 3.5, 4.5 \} of slopes $\mu_{i}$, $i = 1, \ldots, 10$, in the various intervals.

\begin{figure}[h!]
\centering
\psfrag{J1}[br][Bc][.8][0]{$J^{\circ}_{1,1,2} (t_{2} \mid \delta_{1} = 1)$}
\psfrag{J2}[tl][Bc][.8][0]{$J^{\circ}_{1,1,2} (t_{2} \mid \delta_{2} = 1)$}
\psfrag{0}[cc][tc][.7][0]{$0$}\includegraphics[scale=.6]{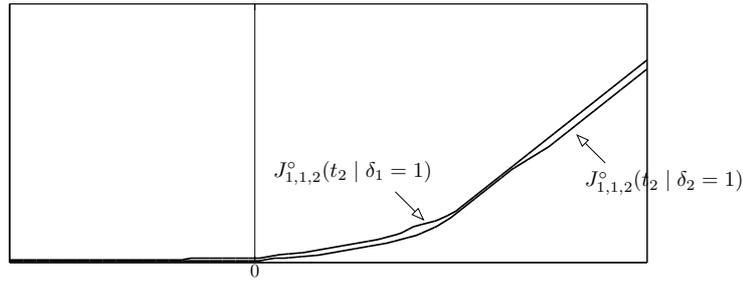}%
\vspace{-12pt}
\caption{Conditioned costs-to-go $J^{\circ}_{1,1,2} (t_{2} \mid \delta_{1} = 1)$ and $J^{\circ}_{1,1,2} (t_{2} \mid \delta_{2} = 1)$ in state $[1 \; 1 \; 2 \; t_{2}]^{T}$.}
\label{fig:esS3_J_1_1_2_min}
\end{figure}

In order to find the optimal cost-to-go $J^{\circ}_{1,1,2} (t_{2})$, it is necessary to carry out the following minimization
\begin{equation*}
J^{\circ}_{1,1,2} (t_{2}) = \min \big\{ J^{\circ}_{1,1,2} (t_{2} \mid \delta_{1} = 1) \, , \, J^{\circ}_{1,1,2} (t_{2} \mid \delta_{2} = 1) \big\}
\end{equation*}
which provides, in accordance with lemma~\ref{lem:min}, the continuous, nondecreasing, piecewise linear function illustrated in figure~\ref{fig:esS3_J_1_1_2}.

\begin{figure}[h!]
\centering
\psfrag{0}[cc][tc][.8][0]{$0$}
\includegraphics[scale=.8]{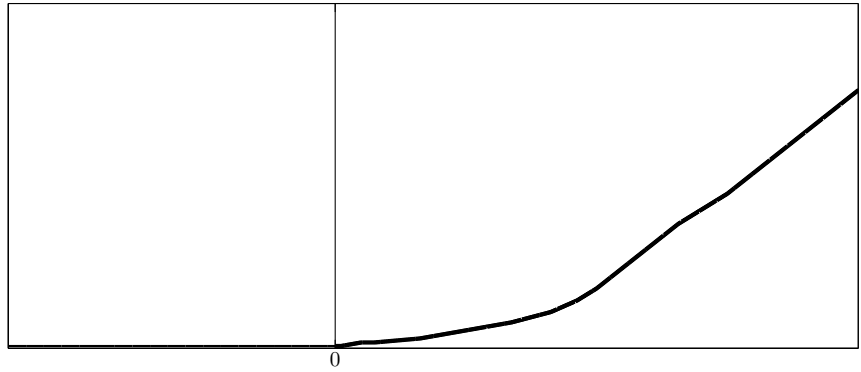}%
\vspace{-12pt}
\caption{Optimal cost-to-go $J^{\circ}_{1,1,2} (t_{2})$ in state $[1 \; 1 \; 2 \; t_{2}]^{T}$.}
\label{fig:esS3_J_1_1_2}
\end{figure}

The function $J^{\circ}_{1,1,2} (t_{2})$ is specified by the initial value 1, by the set \{ 0.5, 2, 3, 6.5, 13.5, 16.5, 18.5, 20, 26.25, 30~\} of abscissae $\gamma_{i}$, $i = 1, \ldots, 10$, at which the slope changes, and by the set \{ 1, 0, 0.5, 1, 1.5, 2.5, 3.5, 4.5, 3.5, 4.5 \} of slopes $\mu_{i}$, $i = 1, \ldots, 10$, in the various intervals.

Since $J^{\circ}_{1,1,2} (t_{2} \mid \delta_{2} = 1)$ is always the minimum (see again figure~\ref{fig:esS3_J_1_1_2_min}), the optimal control strategies for this state are
\begin{equation*}
\delta_{1}^{\circ} (1,1,2, t_{2}) = 0 \quad \forall \, t_{2} \qquad \delta_{2}^{\circ} (1,1,2, t_{2}) = 1 \quad \forall \, t_{2}
\end{equation*}
\begin{equation*}
\tau^{\circ} (1,1,2, t_{2}) = \left\{ \begin{array}{ll}
6 &  t_{2} < 13.5\\
-t_{2} + 19.5 & 13.5 \leq t_{2} < 15.5\\
4 & t_{2} \geq 15.5
\end{array} \right.
\end{equation*}
The optimal control strategy $\tau^{\circ} (1,1,2, t_{2})$ is illustrated in figure~\ref{fig:esS3_tau_1_1_2}.

\begin{figure}[h!]
\centering
\psfrag{f(x)}[Bl][Bl][.8][0]{$\tau^{\circ} (1,1,2, t_{2})$}
\psfrag{x}[bc][Bl][.8][0]{$t_{2}$}
\psfrag{0}[tc][Bl][.8][0]{$0$}
\includegraphics[scale=.2]{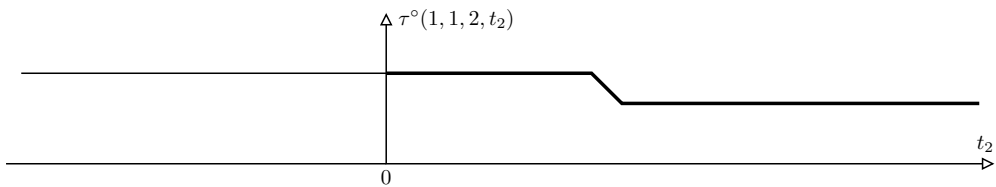}
\caption{Optimal control strategy $\tau^{\circ} (1,1,2, t_{2})$ in state $[ 1 \; 1 \; 2 \; t_{2}]^{T}$.}
\label{fig:esS3_tau_1_1_2}
\end{figure}


{\bf Stage $2$ -- State $\boldsymbol{[1 \; 1 \; 1 \; t_{2}]^{T}}$ ($S4$)}

In state $[1 \; 1 \; 1 \; t_{2}]^{T}$, the cost function to be minimized, with respect to the (continuos) decision variable $\tau$ and to the (binary) decision variables $\delta_{1}$ and $\delta_{2}$ is
\begin{equation*}
\begin{split}
&\delta_{1} \big[ \alpha_{1,2} \, \max \{ t_{2} + st_{1,1} + \tau - dd_{1,2} \, , \, 0 \} + \beta_{1} \, ( pt^{\mathrm{nom}}_{1} - \tau ) + sc_{1,1} + J^{\circ}_{2,1,1} (t_{3}) \big] +\\
&+ \delta_{2} \big[ \alpha_{2,2} \, \max \{ t_{2} + st_{1,2} + \tau - dd_{2,2} \, , \, 0 \} + \beta_{2} \, ( pt^{\mathrm{nom}}_{2} - \tau ) + sc_{1,2} + J^{\circ}_{1,2,2} (t_{3}) \big]
\end{split}
\end{equation*}

{\it Case i)} in which it is assumed $\delta_{1} = 1$ (and $\delta_{2} = 0$).

In this case, it is necessary to minimize, with respect to the (continuos) decision variable $\tau$ which corresponds to the processing time $pt_{1,2}$, the following function
\begin{equation*}
\alpha_{1,2} \, \max \{ t_{2} + st_{1,1} + \tau - dd_{1,2} \, , \, 0 \} + \beta_{1} \, ( pt^{\mathrm{nom}}_{1} - \tau ) + sc_{1,1} + J^{\circ}_{2,1,1} (t_{3})
\end{equation*}
that can be written as $f (pt_{1,2} + t_{2}) + g (pt_{1,2})$ being
\begin{equation*}
f (pt_{1,2} + t_{2}) = 0.5 \cdot \max \{ pt_{1,2} + t_{2} - 24 \, , \, 0 \} + J^{\circ}_{2,1,1} (pt_{1,2} + t_{2})
\end{equation*}
\begin{equation*}
g (pt_{1,2}) = \left\{ \begin{array}{ll}
8 - pt_{1,2} & pt_{1,2} \in [ 4 , 8 )\\
0 & pt_{1,2} \notin [ 4 , 8 )
\end{array} \right.
\end{equation*}
The function $pt^{\circ}_{1,2}(t_{2}) = \arg \min_{pt_{1,2}} \{ f (pt_{1,2} + t_{2}) + g (pt_{1,2}) \} $, with $4 \leq pt_{1,2} \leq 8$, is determined by applying lemma~\ref{lem:xopt}. It is (see figure~\ref{fig:esS3_tau_1_1_1_pt_1_2})
\begin{equation*}
pt^{\circ}_{1,2}(t_{2}) = \left\{ \begin{array}{ll}
x_{\mathrm{s}}(t_{2}) &  t_{2} < -6\\
x_{1}(t_{2}) & -6 \leq t_{2} < 3\\
x_{\mathrm{e}}(t_{2}) & t_{2} \geq 3
\end{array} \right. \qquad \text{with} \quad x_{\mathrm{s}}(t_{2}) = \left\{ \begin{array}{ll}
8 &  t_{2} < -7\\
-t_{2} + 1 & -7 \leq t_{2} < -6
\end{array} \right. \  , 
\end{equation*}
\begin{equation*}
\qquad x_{1}(t_{2}) = \left\{ \begin{array}{ll}
8 &  -6 \leq t_{2} < 1\\
-t_{2} + 9 & 1 \leq t_{2} < 3
\end{array} \right. \  , \  \text{and} \quad x_{\mathrm{e}}(t_{2}) = \left\{ \begin{array}{ll}
8 &  3 \leq t_{2} < 5.5\\
-t_{2} +13.5 & 5.5 \leq t_{2} < 9.5\\
4 & t_{2} \geq 9.5
\end{array} \right.
\end{equation*}

\begin{figure}[h!]
\centering
\psfrag{f(x)}[Bl][Bl][.8][0]{$pt^{\circ}_{1,2}(t_{2})$}
\psfrag{x}[bc][Bl][.8][0]{$t_{2}$}
\psfrag{0}[tc][Bl][.8][0]{$0$}
\includegraphics[scale=.2]{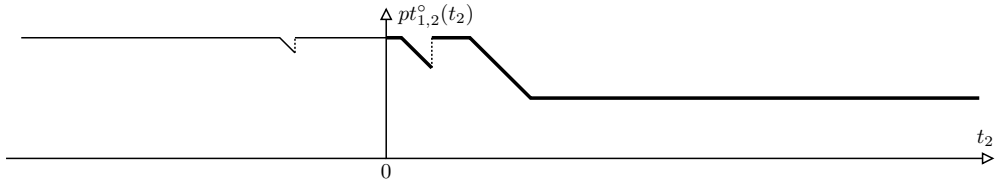}
\caption{Optimal processing time $pt^{\circ}_{1,2}(t_{2})$, under the assumption $\delta_{1} = 1$ in state $[ 1 \; 1 \; 1 \; t_{2}]^{T}$.}
\label{fig:esS3_tau_1_1_1_pt_1_2}
\end{figure}

The conditioned cost-to-go $J^{\circ}_{1,1,1} (t_{2} \mid \delta_{1}=1) = f ( pt^{\circ}_{1,2}(t_{2}) + t_{2} ) + g ( pt^{\circ}_{1,2}(t_{2}) )$, illustrated in figure~\ref{fig:esS3_J_1_1_1_min}, is provided by lemma~\ref{lem:h(t)}. It is specified by the initial value 0.5, by the set \{ --7, --6, 1, 3, 5.5, 13, 15, 15.5, 16.$\overline{6}$, 19, 20, 21 \} of abscissae $\gamma_{i}$, $i = 1, \ldots, 12$, at which the slope changes, and by the set \{ 1, 0, 1, 0.5, 1, 1.5, 2, 3, 1.5, 2.5, 3, 4.5 \} of slopes $\mu_{i}$, $i = 1, \ldots, 12$, in the various intervals.

{\it Case ii)} in which it is assumed $\delta_{2} = 1$ (and $\delta_{1} = 0$).

In this case, it is necessary to minimize, with respect to the (continuos) decision variable $\tau$ which corresponds to the processing time $pt_{2,2}$, the following function
\begin{equation*}
\alpha_{2,2} \, \max \{ t_{2} + st_{1,2} + \tau - dd_{2,2} \, , \, 0 \} + \beta_{2} \, ( pt^{\mathrm{nom}}_{2} - \tau ) + sc_{1,2} + J^{\circ}_{1,2,2} (t_{3})
\end{equation*}
that can be written as $f (pt_{2,2} + t_{2}) + g (pt_{2,2})$ being
\begin{equation*}
f (pt_{2,2} + t_{2}) = \max \{ pt_{2,2} + t_{2} - 23 \, , \, 0 \} + 0.5 + J^{\circ}_{1,2,2} (pt_{2,2} + t_{2} + 1)
\end{equation*}
\begin{equation*}
g (pt_{2,2}) = \left\{ \begin{array}{ll}
1.5 \cdot (6 - pt_{2,2}) & pt_{2,2} \in [ 4 , 6 )\\
0 & pt_{2,2} \notin [ 4 , 6 )
\end{array} \right.
\end{equation*}
The function $pt^{\circ}_{2,2}(t_{2}) = \arg \min_{pt_{2,2}} \{ f (pt_{2,2} + t_{2}) + g (pt_{2,2}) \} $, with $4 \leq pt_{2,2} \leq 6$, is determined by applying lemma~\ref{lem:xopt}. It is (see figure~\ref{fig:esS3_tau_1_1_1_pt_2_2})
\begin{equation*}
pt^{\circ}_{2,2}(t_{2}) = x_{\mathrm{e}}(t_{2}) \qquad \text{with} \quad x_{\mathrm{e}}(t_{2}) = \left\{ \begin{array}{ll}
6 &  t_{2} < 12.5\\
-t_{2} + 18.5 & 12.5 \leq t_{2} < 14.5\\
4 & t_{2} \geq 14.5
\end{array} \right.
\end{equation*}

\begin{figure}[h!]
\centering
\psfrag{f(x)}[Bl][Bl][.8][0]{$pt^{\circ}_{2,2}(t_{2})$}
\psfrag{x}[bc][Bl][.8][0]{$t_{2}$}
\psfrag{0}[tc][Bl][.8][0]{$0$}
\includegraphics[scale=.2]{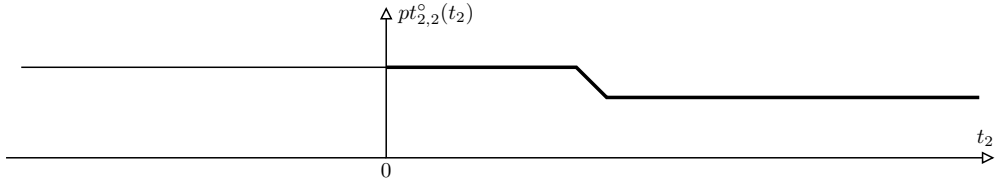}
\caption{Optimal processing time $pt^{\circ}_{2,2}(t_{2})$, under the assumption $\delta_{2} = 1$ in state $[ 1 \; 1 \; 1 \; t_{2}]^{T}$.}
\label{fig:esS3_tau_1_1_1_pt_2_2}
\end{figure}

The conditioned cost-to-go $J^{\circ}_{1,1,1} (t_{2} \mid \delta_{2}=1) = f ( pt^{\circ}_{2,2}(t_{2}) + t_{2} ) + g ( pt^{\circ}_{2,2}(t_{2}) )$, illustrated in figure~\ref{fig:esS3_J_1_1_1_min}, is provided by lemma~\ref{lem:h(t)}. It is specified by the initial value 1.5, by the set \{ --0.5, 1, 2, 5.5, 12.5, 15.5, 17.5, 19, 25.25, 29 \} of abscissae $\gamma_{i}$, $i = 1, \ldots, 10$, at which the slope changes, and by the set \{ 1, 0, 0.5, 1, 1.5, 2.5, 3.5, 4.5, 3.5, 4.5 \} of slopes $\mu_{i}$, $i = 1, \ldots, 10$, in the various intervals.

\begin{figure}[h!]
\centering
\psfrag{J1}[tl][Bc][.8][0]{$J^{\circ}_{1,1,1} (t_{2} \mid \delta_{1} = 1)$}
\psfrag{J2}[br][Bc][.8][0]{$J^{\circ}_{1,1,1} (t_{2} \mid \delta_{2} = 1)$}
\psfrag{0}[cc][tc][.7][0]{$0$}\includegraphics[scale=.6]{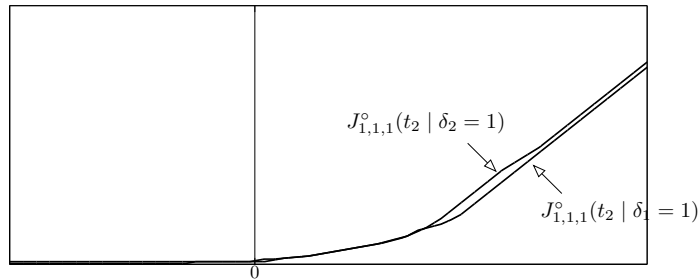}%
\vspace{-12pt}
\caption{Conditioned costs-to-go $J^{\circ}_{1,1,1} (t_{2} \mid \delta_{1} = 1)$ and $J^{\circ}_{1,1,1} (t_{2} \mid \delta_{2} = 1)$ in state $[1 \; 1 \; 1 \; t_{2}]^{T}$.}
\label{fig:esS3_J_1_1_1_min}
\end{figure}

In order to find the optimal cost-to-go $J^{\circ}_{1,1,1} (t_{2})$, it is necessary to carry out the following minimization
\begin{equation*}
J^{\circ}_{1,1,1} (t_{2}) = \min \big\{ J^{\circ}_{1,1,1} (t_{2} \mid \delta_{1} = 1) \, , \, J^{\circ}_{1,1,1} (t_{2} \mid \delta_{2} = 1) \big\}
\end{equation*}
which provides, in accordance with lemma~\ref{lem:min}, the continuous, nondecreasing, piecewise linear function illustrated in figure~\ref{fig:esS3_J_1_1_1}.

\begin{figure}[h!]
\centering
\psfrag{0}[cc][tc][.8][0]{$0$}
\includegraphics[scale=.8]{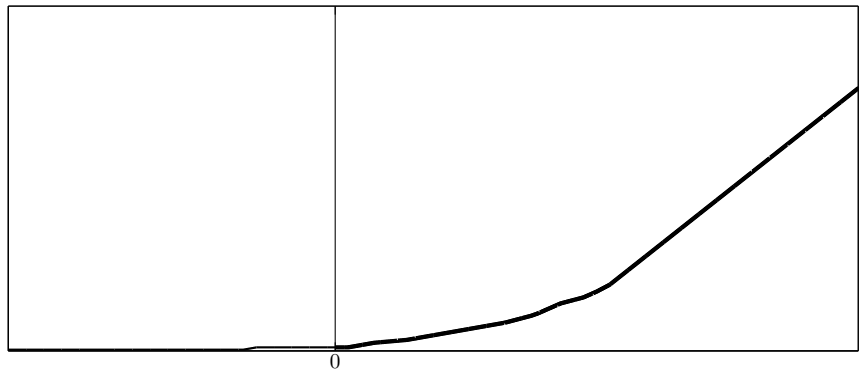}%
\vspace{-12pt}
\caption{Optimal cost-to-go $J^{\circ}_{1,1,1} (t_{2})$ in state $[1 \; 1 \; 1 \; t_{2}]^{T}$.}
\label{fig:esS3_J_1_1_1}
\end{figure}

The function $J^{\circ}_{1,1,1} (t_{2})$ is specified by the initial value 0.5, by the set \{ --7, --6, 1, 3, 5.5, 13, 15, 15.5, 17.25, 19, 20, 21 \} of abscissae $\gamma_{i}$, $i = 1, \ldots, 10$, at which the slope changes, and by the set \{ 1, 0, 1, 0.5, 1, 1.5, 2, 2.5, 1.5, 2.5, 3, 4.5 \} of slopes $\mu_{i}$, $i = 1, \ldots, 10$, in the various intervals.

Since $J^{\circ}_{1,1,1} (t_{2} \mid \delta_{1} = 1)$ is the minimum in $(-\infty, -6)$, in $[1,15.5)$, and in $[17.25,+\infty)$, and $J^{\circ}_{1,1,1} (t_{2} \mid \delta_{2} = 1)$ is the minimum in $[-6,-1)$ and in $[15.5,17.25)$, the optimal control strategies for this state are
\begin{equation*}
\delta_{1}^{\circ} (1,1,1, t_{2}) = \left\{ \begin{array}{ll}
1 &  t_{2} < -6\\
0 &  -6 \leq t_{2} < 1\\
1 &  1 \leq t_{2} < 15.5\\
0 &  15.5 \leq t_{2} < 17.25\\
1 & t_{2} \geq 17.25
\end{array} \right. \qquad \delta_{2}^{\circ} (1,1,1, t_{2}) = \left\{ \begin{array}{ll}
0 &  t_{2} < -6\\
1 &  -6 \leq t_{2} < 1\\
0 &  1 \leq t_{2} < 15.5\\
1 &  15.5 \leq t_{2} < 17.25\\
0 & t_{2} \geq 17.25
\end{array} \right.
\end{equation*}
\begin{equation*}
\tau^{\circ} (1,1,1, t_{2}) = \left\{ \begin{array}{ll}
8 &  t_{2} < -7\\
-t_{2} + 1 & -7 \leq t_{2} < -6\\
6 & -6 \leq t_{2} < 1\\
-t_{2} + 9 & 1 \leq t_{2} < 3\\
8 &  3 \leq t_{2} < 5.5\\
-t_{2} +13.5 & 5.5 \leq t_{2} < 9.5\\
4 & t_{2} \geq 9.5
\end{array} \right.
\end{equation*}
The optimal control strategy $\tau^{\circ} (1,1,1, t_{2})$ is illustrated in figure~\ref{fig:esS3_tau_1_1_1}.

\begin{figure}[h!]
\centering
\psfrag{f(x)}[Bl][Bl][.8][0]{$\tau^{\circ} (1,1,1, t_{2})$}
\psfrag{x}[bc][Bl][.8][0]{$t_{2}$}
\psfrag{0}[tc][Bl][.8][0]{$0$}
\includegraphics[scale=.2]{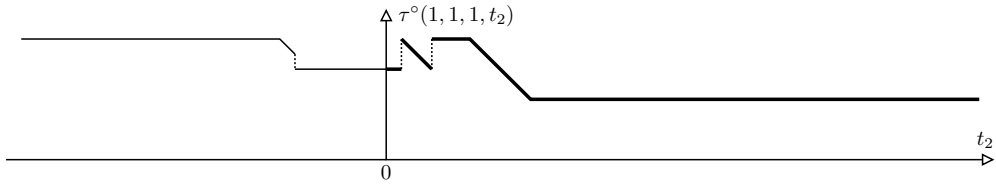}
\caption{Optimal control strategy $\tau^{\circ} (1,1,1, t_{2})$ in state $[ 1 \; 1 \; 1 \; t_{2}]^{T}$.}
\label{fig:esS3_tau_1_1_1}
\end{figure}


{\bf Stage $2$ -- State $\boldsymbol{[2 \; 0 \; 1 \; t_{2}]^{T}}$ ($S3$)}

In state $[2 \; 0 \; 1 \; t_{2}]^{T}$, the cost function to be minimized, with respect to the (continuos) decision variable $\tau$ and to the (binary) decision variables $\delta_{1}$ and $\delta_{2}$ is
\begin{equation*}
\begin{split}
&\delta_{1} \big[ \alpha_{1,3} \, \max \{ t_{2} + st_{1,1} + \tau - dd_{1,3} \, , \, 0 \} + \beta_{1} \, ( pt^{\mathrm{nom}}_{1} - \tau ) + sc_{1,1} + J^{\circ}_{3,0,1} (t_{3}) \big] +\\
&+ \delta_{2} \big[ \alpha_{2,1} \, \max \{ t_{2} + st_{1,2} + \tau - dd_{2,1} \, , \, 0 \} + \beta_{2} \, ( pt^{\mathrm{nom}}_{2} - \tau ) + sc_{1,2} + J^{\circ}_{2,1,2} (t_{3}) \big]
\end{split}
\end{equation*}

{\it Case i)} in which it is assumed $\delta_{1} = 1$ (and $\delta_{2} = 0$).

In this case, it is necessary to minimize, with respect to the (continuos) decision variable $\tau$ which corresponds to the processing time $pt_{1,3}$, the following function
\begin{equation*}
\alpha_{1,3} \, \max \{ t_{2} + st_{1,1} + \tau - dd_{1,3} \, , \, 0 \} + \beta_{1} \, ( pt^{\mathrm{nom}}_{1} - \tau ) + sc_{1,1} + J^{\circ}_{3,0,1} (t_{3})
\end{equation*}
that can be written as $f (pt_{1,3} + t_{2}) + g (pt_{1,3})$ being
\begin{equation*}
f (pt_{1,3} + t_{2}) = 1.5 \cdot \max \{ pt_{1,3} + t_{2} - 29 \, , \, 0 \} + J^{\circ}_{3,0,1} (pt_{1,3} + t_{2})
\end{equation*}
\begin{equation*}
g (pt_{1,3}) = \left\{ \begin{array}{ll}
8 - pt_{1,3} & pt_{1,3} \in [ 4 , 8 )\\
0 & pt_{1,3} \notin [ 4 , 8 )
\end{array} \right.
\end{equation*}
The function $pt^{\circ}_{1,3}(t_{2}) = \arg \min_{pt_{1,3}} \{ f (pt_{1,3} + t_{2}) + g (pt_{1,3}) \} $, with $4 \leq pt_{1,3} \leq 8$, is determined by applying lemma~\ref{lem:xopt}. It is (see figure~\ref{fig:esS3_tau_2_0_1_pt_1_3})
\begin{equation*}
pt^{\circ}_{1,3}(t_{2}) = \left\{ \begin{array}{ll}
x_{\mathrm{s}}(t_{2}) &  t_{2} < -4\\
x_{\mathrm{e}}(t_{2}) & t_{2} \geq -4
\end{array} \right. \qquad \text{with} \quad x_{\mathrm{s}}(t_{2}) = \left\{ \begin{array}{ll}
8 &  t_{2} < -5\\
-t_{2} + 3 & -5 \leq t_{2} < -4
\end{array} \right. \  , 
\end{equation*}
\begin{equation*}
\qquad \text{and} \quad x_{\mathrm{e}}(t_{2}) = \left\{ \begin{array}{ll}
8 &  -4 \leq t_{2} < 3\\
-t_{2} + 11 & 3 \leq t_{2} < 7\\
4 & t_{2} \geq 7
\end{array} \right.
\end{equation*}

\begin{figure}[h!]
\centering
\psfrag{f(x)}[Bl][Bl][.8][0]{$pt^{\circ}_{1,3}(t_{2})$}
\psfrag{x}[bc][Bl][.8][0]{$t_{2}$}
\psfrag{0}[tc][Bl][.8][0]{$0$}
\includegraphics[scale=.2]{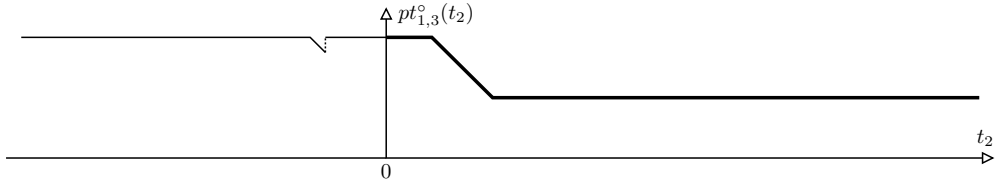}
\caption{Optimal processing time $pt^{\circ}_{1,3}(t_{2})$, under the assumption $\delta_{1} = 1$ in state $[ 2 \; 0 \; 1 \; t_{2}]^{T}$.}
\label{fig:esS3_tau_2_0_1_pt_1_3}
\end{figure}

The conditioned cost-to-go $J^{\circ}_{2,0,1} (t_{2} \mid \delta_{1}=1) = f ( pt^{\circ}_{1,3}(t_{2}) + t_{2} ) + g ( pt^{\circ}_{1,3}(t_{2}) )$, illustrated in figure~\ref{fig:esS3_J_2_0_1_min}, is provided by lemma~\ref{lem:h(t)}. It is specified by the initial value 0.5, by the set \{ --5, --4, 3, 9.5, 12, 19, 25 \} of abscissae $\gamma_{i}$, $i = 1, \ldots, 7$, at which the slope changes, and by the set \{ 1, 0, 1, 1.5, 3.5, 4.5, 6 \} of slopes $\mu_{i}$, $i = 1, \ldots, 7$, in the various intervals.

{\it Case ii)} in which it is assumed $\delta_{2} = 1$ (and $\delta_{1} = 0$).

In this case, it is necessary to minimize, with respect to the (continuos) decision variable $\tau$ which corresponds to the processing time $pt_{2,1}$, the following function
\begin{equation*}
\alpha_{2,1} \, \max \{ t_{2} + st_{1,2} + \tau - dd_{2,1} \, , \, 0 \} + \beta_{2} \, ( pt^{\mathrm{nom}}_{2} - \tau ) + sc_{1,2} + J^{\circ}_{2,1,2} (t_{3})
\end{equation*}
that can be written as $f (pt_{2,1} + t_{2}) + g (pt_{2,1})$ being
\begin{equation*}
f (pt_{2,1} + t_{2}) = 2 \cdot \max \{ pt_{2,1} + t_{2} - 20 \, , \, 0 \} + 0.5 + J^{\circ}_{2,1,2} (pt_{2,1} + t_{2} + 1)
\end{equation*}
\begin{equation*}
g (pt_{2,1}) = \left\{ \begin{array}{ll}
1.5 \cdot (6 - pt_{2,1}) & pt_{2,1} \in [ 4 , 6 )\\
0 & pt_{2,1} \notin [ 4 , 6 )
\end{array} \right.
\end{equation*}
The function $pt^{\circ}_{2,1}(t_{2}) = \arg \min_{pt_{2,1}} \{ f (pt_{2,1} + t_{2}) + g (pt_{2,1}) \} $, with $4 \leq pt_{2,1} \leq 6$, is determined by applying lemma~\ref{lem:xopt}. It is (see figure~\ref{fig:esS3_tau_2_0_1_pt_2_1})
\begin{equation*}
pt^{\circ}_{2,1}(t_{2}) = x_{\mathrm{e}}(t_{2}) \qquad \text{with} \quad x_{\mathrm{e}}(t_{2}) = \left\{ \begin{array}{ll}
6 &  t_{2} < 11\\
-t_{2} + 17 & 11 \leq t_{2} < 13\\
4 & t_{2} \geq 13
\end{array} \right.
\end{equation*}

\begin{figure}[h!]
\centering
\psfrag{f(x)}[Bl][Bl][.8][0]{$pt^{\circ}_{2,1}(t_{2})$}
\psfrag{x}[bc][Bl][.8][0]{$t_{2}$}
\psfrag{0}[tc][Bl][.8][0]{$0$}
\includegraphics[scale=.2]{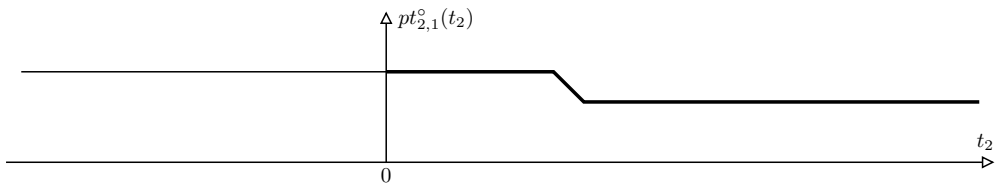}
\caption{Optimal processing time $pt^{\circ}_{2,1}(t_{2})$, under the assumption $\delta_{2} = 1$ in state $[ 2 \; 0 \; 1 \; t_{2}]^{T}$.}
\label{fig:esS3_tau_2_0_1_pt_2_1}
\end{figure}

The conditioned cost-to-go $J^{\circ}_{2,0,1} (t_{2} \mid \delta_{2}=1) = f ( pt^{\circ}_{2,1}(t_{2}) + t_{2} ) + g ( pt^{\circ}_{2,1}(t_{2}) )$, illustrated in figure~\ref{fig:esS3_J_2_0_1_min}, is provided by lemma~\ref{lem:h(t)}. It is specified by the initial value 1.5, by the set \{ 1.5, 3, 4, 7.5, 11, 15, 15.5, 16, 17.5, 19.1$\overline{6}$, 19.5, 23.75, 25 \} of abscissae $\gamma_{i}$, $i = 1, \ldots, 13$, at which the slope changes, and by the set \{ 1, 0, 0.5, 1, 1.5, 2, 3, 5, 6, 4.5, 6, 5, 6 \} of slopes $\mu_{i}$, $i = 1, \ldots, 13$, in the various intervals.

\begin{figure}[h!]
\centering
\psfrag{J1}[tl][Bc][.8][0]{$J^{\circ}_{2,0,1} (t_{2} \mid \delta_{1} = 1)$}
\psfrag{J2}[br][Bc][.8][0]{$J^{\circ}_{2,0,1} (t_{2} \mid \delta_{2} = 1)$}
\psfrag{0}[cc][tc][.7][0]{$0$}\includegraphics[scale=.6]{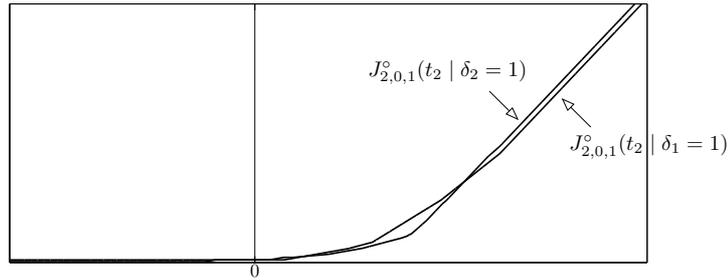}%
\vspace{-12pt}
\caption{Conditioned costs-to-go $J^{\circ}_{2,0,1} (t_{2} \mid \delta_{1} = 1)$ and $J^{\circ}_{2,0,1} (t_{2} \mid \delta_{2} = 1)$ in state $[2 \; 0 \; 1 \; t_{2}]^{T}$.}
\label{fig:esS3_J_2_0_1_min}
\end{figure}

In order to find the optimal cost-to-go $J^{\circ}_{2,0,1} (t_{2})$, it is necessary to carry out the following minimization
\begin{equation*}
J^{\circ}_{2,0,1} (t_{2}) = \min \big\{ J^{\circ}_{2,0,1} (t_{2} \mid \delta_{1} = 1) \, , \, J^{\circ}_{2,0,1} (t_{2} \mid \delta_{2} = 1) \big\}
\end{equation*}
which provides, in accordance with lemma~\ref{lem:min}, the continuous, nondecreasing, piecewise linear function illustrated in figure~\ref{fig:esS3_J_2_0_1}.

\begin{figure}[h!]
\centering
\psfrag{0}[cc][tc][.8][0]{$0$}
\includegraphics[scale=.8]{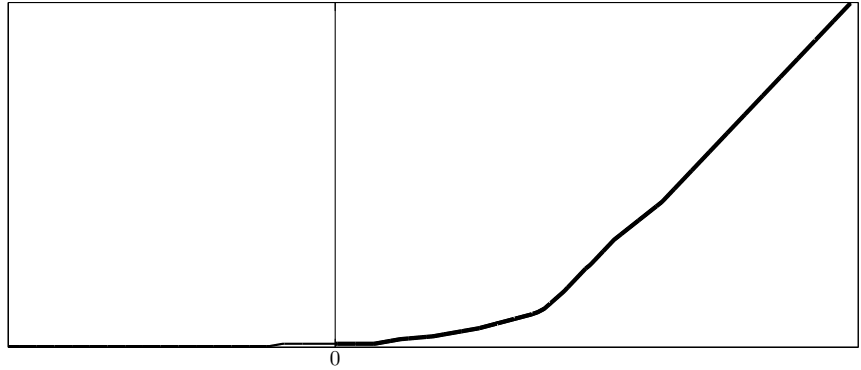}%
\vspace{-12pt}
\caption{Optimal cost-to-go $J^{\circ}_{2,0,1} (t_{2})$ in state $[2 \; 0 \; 1 \; t_{2}]^{T}$.}
\label{fig:esS3_J_2_0_1}
\end{figure}

The function $J^{\circ}_{2,0,1} (t_{2})$ is specified by the initial value 0.5, by the set \{ --5, --4, 3, 5, 7.5, 11, 15, 15.5, 16, 17.5, 19.1$\overline{6}$, 19.5, 21.$\overline{3}$, 25 \} of abscissae $\gamma_{i}$, $i = 1, \ldots, 14$, at which the slope changes, and by the set \{ 1, 0, 1, 0.5, 1, 1.5, 2, 3, 5, 6, 4.5, 6, 4.5, 6 \} of slopes $\mu_{i}$, $i = 1, \ldots, 14$, in the various intervals.

Since $J^{\circ}_{2,0,1} (t_{2} \mid \delta_{1} = 1)$ is the minimum in $(-\infty, -4)$, in $[3,5)$, and in $[21.\overline{3},+\infty)$, and $J^{\circ}_{2,0,1} (t_{2} \mid \delta_{2} = 1)$ is the minimum in $[-4,3)$ and in $[5,21.\overline{3})$, the optimal control strategies for this state are
\begin{equation*}
\delta_{1}^{\circ} (2,0,1, t_{2}) = \left\{ \begin{array}{ll}
1 &  t_{2} < -4\\
0 &  -4 \leq t_{2} < 3\\
1 &  3 \leq t_{2} < 5\\
0 & 5 \leq t_{2} < 21.\overline{3}\\
1 & t_{2} \geq 21.\overline{3}
\end{array} \right. \qquad \delta_{2}^{\circ} (2,0,1, t_{2}) = \left\{ \begin{array}{ll}
0 &  t_{2} < -4\\
1 &  -4 \leq t_{2} < 3\\
0 &  3 \leq t_{2} < 5\\
1 & 5 \leq t_{2} < 21.\overline{3}\\
0 & t_{2} \geq 21.\overline{3}
\end{array} \right.
\end{equation*}
\begin{equation*}
\tau^{\circ} (2,0,1, t_{2}) = \left\{ \begin{array}{ll}
8 &  t_{2} < -5\\
-t_{2} + 3 & -5 \leq t_{2} < -4\\
6 & -4 \leq t_{2} <3\\
-t_{2} + 11 & 3 \leq t_{2} < 5\\
6 &  5 \leq t_{2} < 11\\
-t_{2} + 17 & 11 \leq t_{2} < 13\\
4 & t_{2} \geq 13
\end{array} \right.
\end{equation*}
The optimal control strategy $\tau^{\circ} (2,0,1, t_{2})$ is illustrated in figure~\ref{fig:esS3_tau_2_0_1}.

\begin{figure}[h!]
\centering
\psfrag{f(x)}[Bl][Bl][.8][0]{$\tau^{\circ} (2,0,1, t_{2})$}
\psfrag{x}[bc][Bl][.8][0]{$t_{2}$}
\psfrag{0}[tc][Bl][.8][0]{$0$}
\includegraphics[scale=.2]{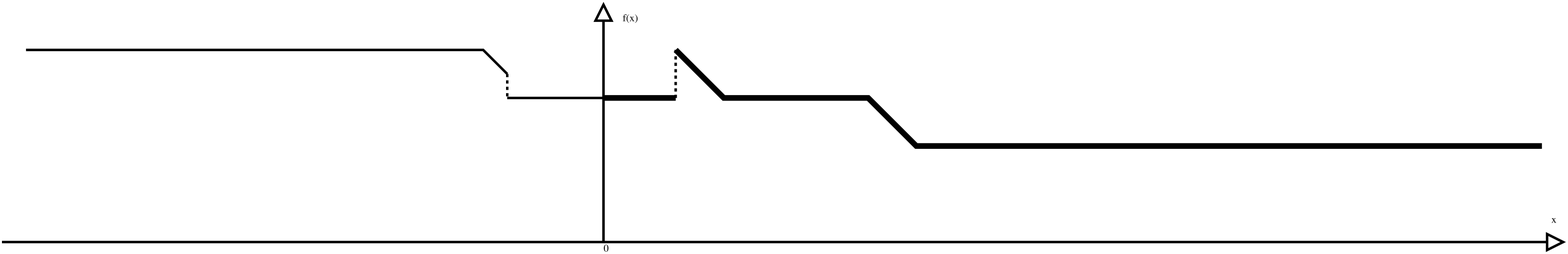}
\caption{Optimal control strategy $\tau^{\circ} (2,0,1, t_{2})$ in state $[ 2 \; 0 \; 1 \; t_{2}]^{T}$.}
\label{fig:esS3_tau_2_0_1}
\end{figure}

\newpage


{\bf Stage $1$ -- State $\boldsymbol{[0 \; 1 \; 2 \; t_{1}]^{T}}$ ($S2$)}

In state $[0 \; 1 \; 2 \; t_{1}]^{T}$, the cost function to be minimized, with respect to the (continuos) decision variable $\tau$ and to the (binary) decision variables $\delta_{1}$ and $\delta_{2}$ is
\begin{equation*}
\begin{split}
&\delta_{1} \big[ \alpha_{1,1} \, \max \{ t_{1} + st_{2,1} + \tau - dd_{1,1} \, , \, 0 \} + \beta_{1} \, ( pt^{\mathrm{nom}}_{1} - \tau ) + sc_{2,1} + J^{\circ}_{1,1,1} (t_{2}) \big] +\\
&+ \delta_{2} \big[ \alpha_{2,2} \, \max \{ t_{1} + st_{2,2} + \tau - dd_{2,2} \, , \, 0 \} + \beta_{2} \, ( pt^{\mathrm{nom}}_{2} - \tau ) + sc_{2,2} + J^{\circ}_{0,2,2} (t_{2}) \big]
\end{split}
\end{equation*}

{\it Case i)} in which it is assumed $\delta_{1} = 1$ (and $\delta_{2} = 0$).

In this case, it is necessary to minimize, with respect to the (continuos) decision variable $\tau$ which corresponds to the processing time $pt_{1,1}$, the following function
\begin{equation*}
\alpha_{1,1} \, \max \{ t_{1} + st_{2,1} + \tau - dd_{1,1} \, , \, 0 \} + \beta_{1} \, ( pt^{\mathrm{nom}}_{1} - \tau ) + sc_{2,1} + J^{\circ}_{1,1,1} (t_{2})
\end{equation*}
that can be written as $f (pt_{1,1} + t_{1}) + g (pt_{1,1})$ being
\begin{equation*}
f (pt_{1,1} + t_{1}) = 0.75 \cdot \max \{ pt_{1,1} + t_{1} - 18.5 \, , \, 0 \} + 1 + J^{\circ}_{1,1,1} (pt_{1,1} + t_{1} + 0.5)
\end{equation*}
\begin{equation*}
g (pt_{1,1}) = \left\{ \begin{array}{ll}
8 - pt_{1,1} & pt_{1,1} \in [ 4 , 8 )\\
0 & pt_{1,1} \notin [ 4 , 8 )
\end{array} \right.
\end{equation*}
The function $pt^{\circ}_{1,1}(t_{1}) = \arg \min_{pt_{1,1}} \{ f (pt_{1,1} + t_{1}) + g (pt_{1,1}) \} $, with $4 \leq pt_{1,1} \leq 8$, is determined by applying lemma~\ref{lem:xopt}. It is (see figure~\ref{fig:esS3_tau_0_1_2_pt_1_1})
\begin{equation*}
pt^{\circ}_{1,1}(t_{1}) = \left\{ \begin{array}{ll}
x_{\mathrm{s}}(t_{1}) &  t_{1} < -14.5\\
x_{1}(t_{1}) & -14.5 \leq t_{1} < -5.5\\
x_{\mathrm{e}}(t_{1}) & t_{1} \geq -5.5
\end{array} \right. \qquad \text{with} \quad x_{\mathrm{s}}(t_{1}) = \left\{ \begin{array}{ll}
8 &  t_{1} < -15.5\\
-t_{1} - 7.5 & -15.5 \leq t_{1} < -14.5
\end{array} \right. \  , 
\end{equation*}
\begin{equation*}
\qquad x_{1}(t_{1}) = \left\{ \begin{array}{ll}
8 &  -14.5 \leq t_{1} < -7.5\\
-t_{1} + 0.5 & -7.5 \leq t_{1} < -5.5
\end{array} \right. \  , \  \text{and} \quad x_{\mathrm{e}}(t_{1}) = \left\{ \begin{array}{ll}
8 &  -5.5 \leq t_{1} < -3\\
-t_{1} + 5 & -3 \leq t_{1} < 1\\
4 & t_{1} \geq 1
\end{array} \right.
\end{equation*}

\begin{figure}[h!]
\centering
\psfrag{f(x)}[Bl][Bl][.8][0]{$pt^{\circ}_{1,1}(t_{1})$}
\psfrag{x}[bc][Bl][.8][0]{$t_{1}$}
\psfrag{0}[tc][Bl][.8][0]{$0$}
\includegraphics[scale=.2]{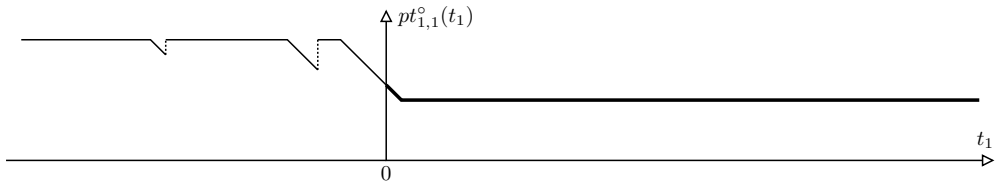}
\caption{Optimal processing time $pt^{\circ}_{1,1}(t_{1})$, under the assumption $\delta_{1} = 1$ in state $[ 0 \; 1 \; 2 \; t_{1}]^{T}$.}
\label{fig:esS3_tau_0_1_2_pt_1_1}
\end{figure}

The conditioned cost-to-go $J^{\circ}_{0,1,2} (t_{1} \mid \delta_{1}=1) = f ( pt^{\circ}_{1,1}(t_{1}) + t_{1} ) + g ( pt^{\circ}_{1,1}(t_{1}) )$, illustrated in figure~\ref{fig:esS3_J_0_1_2_min}, is provided by lemma~\ref{lem:h(t)}. It is specified by the initial value 1.5, by the set \{ --15.5, --14.5, --7.5, --5.5, --3, 8.5, 10.5, 11, 12.75, 14.5, 15.5, 16.5 \} of abscissae $\gamma_{i}$, $i = 1, \ldots, 12$, at which the slope changes, and by the set \{ 1, 0, 1, 0.5, 1, 1.5, 2, 2.5, 1.5, 3.25, 3.75, 5.25 \} of slopes $\mu_{i}$, $i = 1, \ldots, 12$, in the various intervals.

{\it Case ii)} in which it is assumed $\delta_{2} = 1$ (and $\delta_{1} = 0$).

In this case, it is necessary to minimize, with respect to the (continuos) decision variable $\tau$ which corresponds to the processing time $pt_{2,2}$, the following function
\begin{equation*}
\alpha_{2,2} \, \max \{ t_{1} + st_{2,2} + \tau - dd_{2,2} \, , \, 0 \} + \beta_{2} \, ( pt^{\mathrm{nom}}_{2} - \tau ) + sc_{2,2} + J^{\circ}_{0,2,2} (t_{2})
\end{equation*}
that can be written as $f (pt_{2,2} + t_{1}) + g (pt_{2,2})$ being
\begin{equation*}
f (pt_{2,2} + t_{1}) = \max \{ pt_{2,2} + t_{1} - 24 \, , \, 0 \} + J^{\circ}_{0,2,2} (pt_{2,2} + t_{1})
\end{equation*}
\begin{equation*}
g (pt_{2,2}) = \left\{ \begin{array}{ll}
1.5 \cdot (6 - pt_{2,2}) & pt_{2,2} \in [ 4 , 6 )\\
0 & pt_{2,2} \notin [ 4 , 6 )
\end{array} \right.
\end{equation*}
The function $pt^{\circ}_{2,2}(t_{1}) = \arg \min_{pt_{2,2}} \{ f (pt_{2,2} + t_{1}) + g (pt_{2,2}) \} $, with $4 \leq pt_{2,2} \leq 6$, is determined by applying lemma~\ref{lem:xopt}. It is (see figure~\ref{fig:esS3_tau_0_1_2_pt_2_2})
\begin{equation*}
pt^{\circ}_{2,2}(t_{1}) = x_{\mathrm{e}}(t_{1}) \qquad \text{with} \quad x_{\mathrm{e}}(t_{1}) = \left\{ \begin{array}{ll}
6 &  t_{1} < 8.5\\
-t_{1} + 14.5 & 8.5 \leq t_{1} < 10.5\\
4 & t_{1} \geq 10.5
\end{array} \right.
\end{equation*}

\begin{figure}[h!]
\centering
\psfrag{f(x)}[Bl][Bl][.8][0]{$pt^{\circ}_{2,2}(t_{1})$}
\psfrag{x}[bc][Bl][.8][0]{$t_{1}$}
\psfrag{0}[tc][Bl][.8][0]{$0$}
\includegraphics[scale=.2]{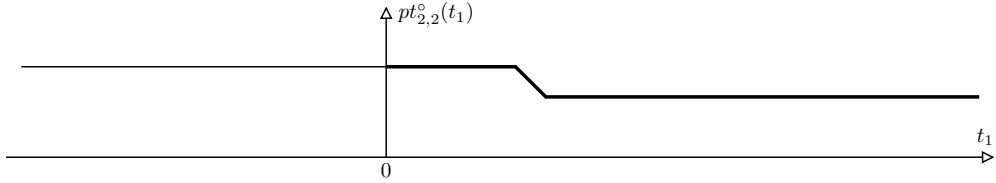}
\caption{Optimal processing time $pt^{\circ}_{2,2}(t_{1})$, under the assumption $\delta_{2} = 1$ in state $[ 0 \; 1 \; 2 \; t_{1}]^{T}$.}
\label{fig:esS3_tau_0_1_2_pt_2_2}
\end{figure}

The conditioned cost-to-go $J^{\circ}_{0,1,2} (t_{1} \mid \delta_{2}=1) = f ( pt^{\circ}_{2,2}(t_{1}) + t_{1} ) + g ( pt^{\circ}_{2,2}(t_{1}) )$, illustrated in figure~\ref{fig:esS3_J_0_1_2_min}, is provided by lemma~\ref{lem:h(t)}. It is specified by the initial value 1, by the set \{ --7.5, --6, --5, --1.5, 8.5, 10.5, 11.5, 12.5, 14.5, 20, 25.25, 30 \} of abscissae $\gamma_{i}$, $i = 1, \ldots, 12$, at which the slope changes, and by the set \{ 1, 0, 0.5, 1, 1.5, 1.75, 2.25, 3.25, 4.25, 5.25, 4.25, 5.25 \} of slopes $\mu_{i}$, $i = 1, \ldots, 12$, in the various intervals.

\begin{figure}[h!]
\centering
\psfrag{J1}[br][Bc][.8][0]{$J^{\circ}_{0,1,2} (t_{1} \mid \delta_{1} = 1)$}
\psfrag{J2}[tl][Bc][.8][0]{$J^{\circ}_{0,1,2} (t_{1} \mid \delta_{2} = 1)$}
\psfrag{0}[cc][tc][.7][0]{$0$}\includegraphics[scale=.6]{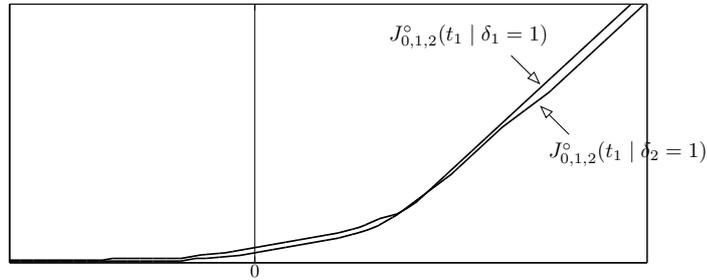}%
\vspace{-12pt}
\caption{Conditioned costs-to-go $J^{\circ}_{0,1,2} (t_{1} \mid \delta_{1} = 1)$ and $J^{\circ}_{0,1,2} (t_{1} \mid \delta_{2} = 1)$ in state $[0 \; 1 \; 2 \; t_{1}]^{T}$.}
\label{fig:esS3_J_0_1_2_min}
\end{figure}

In order to find the optimal cost-to-go $J^{\circ}_{0,1,2} (t_{1})$, it is necessary to carry out the following minimization
\begin{equation*}
J^{\circ}_{0,1,2} (t_{1}) = \min \big\{ J^{\circ}_{0,1,2} (t_{1} \mid \delta_{1} = 1) \, , \, J^{\circ}_{0,1,2} (t_{1} \mid \delta_{2} = 1) \big\}
\end{equation*}
which provides, in accordance with lemma~\ref{lem:min}, the continuous, nondecreasing, piecewise linear function illustrated in figure~\ref{fig:esS3_J_0_1_2}.

\begin{figure}[h!]
\centering
\psfrag{0}[cc][tc][.8][0]{$0$}
\includegraphics[scale=.8]{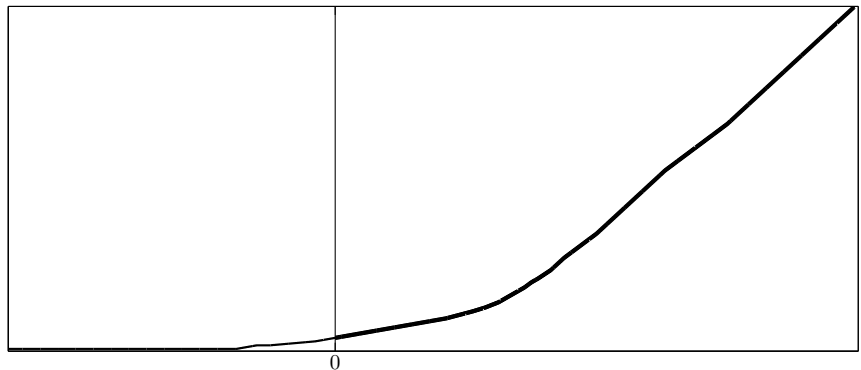}%
\vspace{-12pt}
\caption{Optimal cost-to-go $J^{\circ}_{0,1,2} (t_{1})$ in state $[0 \; 1 \; 2 \; t_{1}]^{T}$.}
\label{fig:esS3_J_0_1_2}
\end{figure}

The function $J^{\circ}_{0,1,2} (t_{1})$ is specified by the initial value 1, by the set \{ --7.5, --6, --5, --1.5, 8.5, 10.5, 11.5, 12.5, 14.5, 15, 15.5, 16.5, 17.5, 20, 25.25, 30 \} of abscissae $\gamma_{i}$, $i = 1, \ldots, 16$, at which the slope changes, and by the set \{ 1, 0, 0.5, 1, 1.5, 1.75, 2.25, 3.25, 4.25, 3.25, 3.75, 5.25, 4.25, 5.25, 4.25, 5.25 \} of slopes $\mu_{i}$, $i = 1, \ldots, 16$, in the various intervals.

Since $J^{\circ}_{0,1,2} (t_{1} \mid \delta_{1} = 1)$ is the minimum in $[15,17.5)$, and $J^{\circ}_{0,1,2} (t_{1} \mid \delta_{2} = 1)$ is the minimum in $(-\infty, 15)$ and in $[17.5,+\infty)$, the optimal control strategies for this state are
\begin{equation*}
\delta_{1}^{\circ} (0,1,2, t_{1}) = \left\{ \begin{array}{ll}
0 &  t_{1} < 15\\
1 &  15 \leq t_{1} < 17.5\\
0 & t_{1} \geq 17.5
\end{array} \right. \qquad \delta_{2}^{\circ} (0,1,2, t_{1}) = \left\{ \begin{array}{ll}
1 &  t_{1} < 15\\
0 &  15 \leq t_{1} < 17.5\\
1 & t_{1} \geq 17.5
\end{array} \right.
\end{equation*}
\begin{equation*}
\tau^{\circ} (0,1,2, t_{1}) = \left\{ \begin{array}{ll}
6 &  t_{1} < 8.5\\
-t_{1} + 14.5 & 8.5 \leq t_{1} < 10.5\\
4 & t_{1} \geq 10.5
\end{array} \right.
\end{equation*}
The optimal control strategy $\tau^{\circ} (0,1,2, t_{1})$ is illustrated in figure~\ref{fig:esS3_tau_0_1_2}.

\begin{figure}[h!]
\centering
\psfrag{f(x)}[Bl][Bl][.8][0]{$\tau^{\circ} (0,1,2, t_{1})$}
\psfrag{x}[bc][Bl][.8][0]{$t_{1}$}
\psfrag{0}[tc][Bl][.8][0]{$0$}
\includegraphics[scale=.2]{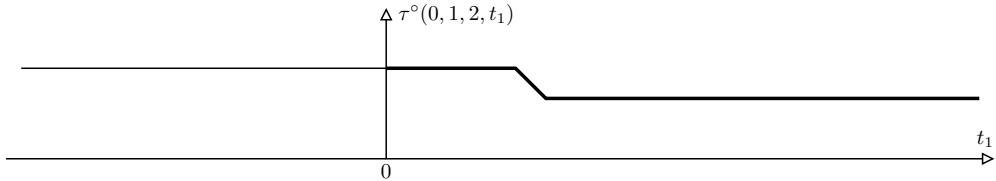}
\caption{Optimal control strategy $\tau^{\circ} (0,1,2, t_{1})$ in state $[ 0 \; 1 \; 2 \; t_{1}]^{T}$.}
\label{fig:esS3_tau_0_1_2}
\end{figure}


{\bf Stage $1$ -- State $\boldsymbol{[1 \; 0 \; 1 \; t_{1}]^{T}}$ ($S1$)}

In state $[1 \; 0 \; 1 \; t_{1}]^{T}$, the cost function to be minimized, with respect to the (continuos) decision variable $\tau$ and to the (binary) decision variables $\delta_{1}$ and $\delta_{2}$ is
\begin{equation*}
\begin{split}
&\delta_{1} \big[ \alpha_{1,2} \, \max \{ t_{1} + st_{1,1} + \tau - dd_{1,2} \, , \, 0 \} + \beta_{1} \, ( pt^{\mathrm{nom}}_{1} - \tau ) + sc_{1,1} + J^{\circ}_{2,0,1} (t_{2}) \big] +\\
&+ \delta_{2} \big[ \alpha_{2,1} \, \max \{ t_{1} + st_{1,2} + \tau - dd_{2,1} \, , \, 0 \} + \beta_{2} \, ( pt^{\mathrm{nom}}_{2} - \tau ) + sc_{1,2} + J^{\circ}_{1,1,2} (t_{2}) \big]
\end{split}
\end{equation*}

{\it Case i)} in which it is assumed $\delta_{1} = 1$ (and $\delta_{2} = 0$).

In this case, it is necessary to minimize, with respect to the (continuos) decision variable $\tau$ which corresponds to the processing time $pt_{1,2}$, the following function
\begin{equation*}
\alpha_{1,2} \, \max \{ t_{1} + st_{1,1} + \tau - dd_{1,2} \, , \, 0 \} + \beta_{1} \, ( pt^{\mathrm{nom}}_{1} - \tau ) + sc_{1,1} + J^{\circ}_{2,0,1} (t_{2})
\end{equation*}
that can be written as $f (pt_{1,2} + t_{1}) + g (pt_{1,2})$ being
\begin{equation*}
f (pt_{1,2} + t_{1}) = 0.5 \cdot \max \{ pt_{1,2} + t_{1} - 24 \, , \, 0 \} + J^{\circ}_{2,0,1} (pt_{1,2} + t_{1})
\end{equation*}
\begin{equation*}
g (pt_{1,2}) = \left\{ \begin{array}{ll}
8 - pt_{1,2} & pt_{1,2} \in [ 4 , 8 )\\
0 & pt_{1,2} \notin [ 4 , 8 )
\end{array} \right.
\end{equation*}
The function $pt^{\circ}_{1,2}(t_{1}) = \arg \min_{pt_{1,2}} \{ f (pt_{1,2} + t_{1}) + g (pt_{1,2}) \} $, with $4 \leq pt_{1,2} \leq 8$, is determined by applying lemma~\ref{lem:xopt}. It is (see figure~\ref{fig:esS3_tau_1_0_1_pt_1_2})
\begin{equation*}
pt^{\circ}_{1,2}(t_{1}) = \left\{ \begin{array}{ll}
x_{\mathrm{s}}(t_{1}) &  t_{1} < -12\\
x_{1}(t_{1}) & -12 \leq t_{1} < -3\\
x_{\mathrm{e}}(t_{1}) & t_{1} \geq -3
\end{array} \right. \qquad \text{with} \quad x_{\mathrm{s}}(t_{1}) = \left\{ \begin{array}{ll}
8 &  t_{1} < -13\\
-t_{1} - 5 & -13 \leq t_{1} < -12
\end{array} \right. \  , 
\end{equation*}
\begin{equation*}
\qquad x_{1}(t_{1}) = \left\{ \begin{array}{ll}
8 &  -12 \leq t_{1} < -5\\
-t_{1} + 3 & -5 \leq t_{1} < -3
\end{array} \right. \  , \  \text{and} \quad x_{\mathrm{e}}(t_{1}) = \left\{ \begin{array}{ll}
8 &  -3 \leq t_{1} < -0.5\\
-t_{1} + 7.5 & -0.5 \leq t_{1} < 3.5\\
4 & t_{1} \geq 3.5
\end{array} \right.
\end{equation*}

The conditioned cost-to-go $J^{\circ}_{1,0,1} (t_{1} \mid \delta_{1}=1) = f ( pt^{\circ}_{1,2}(t_{1}) + t_{1} ) + g ( pt^{\circ}_{1,2}(t_{1}) )$, illustrated in figure~\ref{fig:esS3_J_1_0_1_min}, is provided by lemma~\ref{lem:h(t)}. It is specified by the initial value 0.5, by the set \{ --13, --12, --5, --3, --0.5, 7, 11, 11.5, 12, 13.5, 15.1$\overline{6}$, 15.5, 17.$\overline{3}$, 20, 21 \} of abscissae $\gamma_{i}$, $i = 1, \ldots, 15$, at which the slope changes, and by the set \{ 1, 0, 1, 0.5, 1, 1.5, 2, 3, 5, 6, 4.5, 6, 4.5, 5, 6.5 \} of slopes $\mu_{i}$, $i = 1, \ldots, 15$, in the various intervals.

\newpage

\begin{figure}[h!]
\centering
\psfrag{f(x)}[Bl][Bl][.8][0]{$pt^{\circ}_{1,2}(t_{1})$}
\psfrag{x}[bc][Bl][.8][0]{$t_{1}$}
\psfrag{0}[tc][Bl][.8][0]{$0$}
\includegraphics[scale=.2]{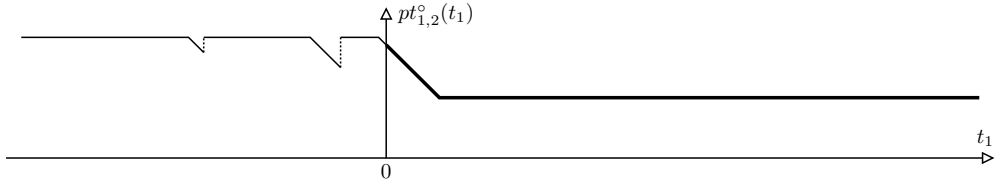}
\caption{Optimal processing time $pt^{\circ}_{1,2}(t_{1})$, under the assumption $\delta_{1} = 1$ in state $[ 1 \; 0 \; 1 \; t_{1}]^{T}$.}
\label{fig:esS3_tau_1_0_1_pt_1_2}
\end{figure}

{\it Case ii)} in which it is assumed $\delta_{2} = 1$ (and $\delta_{1} = 0$).

In this case, it is necessary to minimize, with respect to the (continuos) decision variable $\tau$ which corresponds to the processing time $pt_{2,1}$, the following function
\begin{equation*}
\alpha_{2,1} \, \max \{ t_{1} + st_{1,2} + \tau - dd_{2,1} \, , \, 0 \} + \beta_{2} \, ( pt^{\mathrm{nom}}_{2} - \tau ) + sc_{1,2} + J^{\circ}_{1,1,2} (t_{2})
\end{equation*}
that can be written as $f (pt_{2,1} + t_{1}) + g (pt_{2,1})$ being
\begin{equation*}
f (pt_{2,1} + t_{1}) = 2 \cdot \max \{ pt_{2,1} + t_{1} - 20 \, , \, 0 \} + 0.5 + J^{\circ}_{1,1,2} (pt_{2,1} + t_{1} + 1)
\end{equation*}
\begin{equation*}
g (pt_{2,1}) = \left\{ \begin{array}{ll}
1.5 \cdot (6 - pt_{2,1}) & pt_{2,1} \in [ 4 , 6 )\\
0 & pt_{2,1} \notin [ 4 , 6 )
\end{array} \right.
\end{equation*}
The function $pt^{\circ}_{2,1}(t_{1}) = \arg \min_{pt_{2,1}} \{ f (pt_{2,1} + t_{1}) + g (pt_{2,1}) \} $, with $4 \leq pt_{2,1} \leq 6$, is determined by applying lemma~\ref{lem:xopt}. It is (see figure~\ref{fig:esS3_tau_1_0_1_pt_2_1})
\begin{equation*}
pt^{\circ}_{2,1}(t_{1}) = x_{\mathrm{e}}(t_{1}) \qquad \text{with} \quad x_{\mathrm{e}}(t_{1}) = \left\{ \begin{array}{ll}
6 &  t_{1} < 6.5\\
-t_{1} + 12.5 & 6.5 \leq t_{1} < 8.5\\
4 & t_{1} \geq 8.5
\end{array} \right.
\end{equation*}

\begin{figure}[h!]
\centering
\psfrag{f(x)}[Bl][Bl][.8][0]{$pt^{\circ}_{2,1}(t_{1})$}
\psfrag{x}[bc][Bl][.8][0]{$t_{1}$}
\psfrag{0}[tc][Bl][.8][0]{$0$}
\includegraphics[scale=.2]{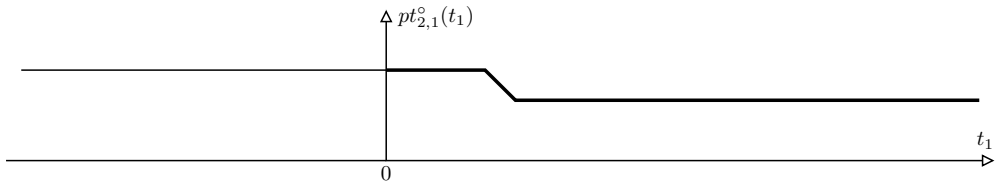}
\caption{Optimal processing time $pt^{\circ}_{2,1}(t_{1})$, under the assumption $\delta_{2} = 1$ in state $[ 1 \; 0 \; 1 \; t_{1}]^{T}$.}
\label{fig:esS3_tau_1_0_1_pt_2_1}
\end{figure}

The conditioned cost-to-go $J^{\circ}_{1,0,1} (t_{1} \mid \delta_{2}=1) = f ( pt^{\circ}_{2,1}(t_{1}) + t_{1} ) + g ( pt^{\circ}_{2,1}(t_{1}) )$, illustrated in figure~\ref{fig:esS3_J_1_0_1_min}, is provided by lemma~\ref{lem:h(t)}. It is specified by the initial value 1.5, by the set \{ --6.5, --5, --4, --0.5, 6.5, 11.5, 13.5, 15, 16, 21.25, 25 \} of abscissae $\gamma_{i}$, $i = 1, \ldots, 11$, at which the slope changes, and by the set \{ 1, 0, 0.5, 1, 1.5, 2.5, 3.5, 4.5, 6.5, 5.5, 6.5 \} of slopes $\mu_{i}$, $i = 1, \ldots, 11$, in the various intervals.

\begin{figure}[h!]
\centering
\psfrag{J1}[br][Bc][.8][0]{$J^{\circ}_{1,0,1} (t_{1} \mid \delta_{1} = 1)$}
\psfrag{J2}[tl][Bc][.8][0]{$J^{\circ}_{1,0,1} (t_{1} \mid \delta_{2} = 1)$}
\psfrag{0}[cc][tc][.7][0]{$0$}\includegraphics[scale=.6]{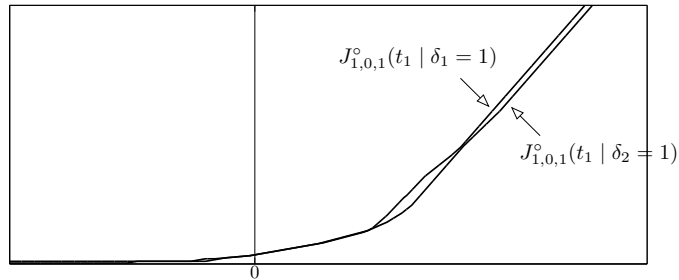}%
\vspace{-12pt}
\caption{Conditioned costs-to-go $J^{\circ}_{1,0,1} (t_{1} \mid \delta_{1} = 1)$ and $J^{\circ}_{1,0,1} (t_{1} \mid \delta_{2} = 1)$ in state $[1 \; 0 \; 1 \; t_{1}]^{T}$.}
\label{fig:esS3_J_1_0_1_min}
\end{figure}

In order to find the optimal cost-to-go $J^{\circ}_{1,0,1} (t_{1})$, it is necessary to carry out the following minimization
\begin{equation*}
J^{\circ}_{1,0,1} (t_{1}) = \min \big\{ J^{\circ}_{1,0,1} (t_{1} \mid \delta_{1} = 1) \, , \, J^{\circ}_{1,0,1} (t_{1} \mid \delta_{2} = 1) \big\}
\end{equation*}
which provides, in accordance with lemma~\ref{lem:min}, the continuous, nondecreasing, piecewise linear function illustrated in figure~\ref{fig:esS3_J_1_0_1}.

\begin{figure}[h!]
\centering
\psfrag{0}[cc][tc][.8][0]{$0$}
\includegraphics[scale=.8]{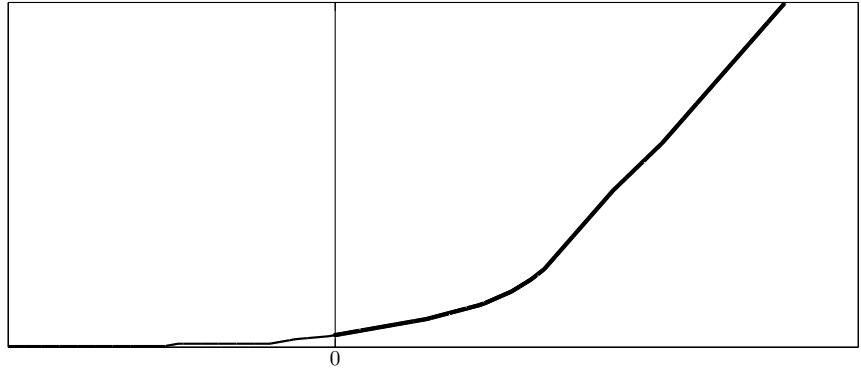}%
\vspace{-12pt}
\caption{Optimal cost-to-go $J^{\circ}_{1,0,1} (t_{1})$ in state $[1 \; 0 \; 1 \; t_{1}]^{T}$.}
\label{fig:esS3_J_1_0_1}
\end{figure}

The function $J^{\circ}_{1,0,1} (t_{1})$ is specified by the initial value 0.5, by the set \{ --13, --12, --5, --3, --0.5, 7, 11, 11.5, 13.5, 15, 16, 21.25, 25 \} of abscissae $\gamma_{i}$, $i = 1, \ldots, 13$, at which the slope changes, and by the set \{ 1, 0, 1, 0.5, 1, 1.5, 2, 2.5, 3.5, 4.5, 6.5, 5.5, 6.5 \} of slopes $\mu_{i}$, $i = 1, \ldots, 13$, in the various intervals.

Since $J^{\circ}_{1,0,1} (t_{1} \mid \delta_{1} = 1)$ is the minimum in $(-\infty, 12)$ and in $[-5,11.5)$, and $J^{\circ}_{1,0,1} (t_{1} \mid \delta_{2} = 1)$ is the minimum in $[-12, -5)$ and in $[11.5,+\infty)$, the optimal control strategies for this state are
\begin{equation*}
\delta_{1}^{\circ} (1,0,1, t_{1}) = \left\{ \begin{array}{ll}
1 &  t_{1} < -12\\
0 &  -12 \leq t_{1} < -5\\
1 &  -5 \leq t_{1} < 11.5\\
0 & t_{1} \geq 11.5
\end{array} \right. \qquad \delta_{2}^{\circ} (1,0,1, t_{1}) = \left\{ \begin{array}{ll}
0 &  t_{1} < -12\\
1 &  -12 \leq t_{1} < -5\\
0 &  -5 \leq t_{1} < 11.5\\
1 & t_{1} \geq 11.5
\end{array} \right.
\end{equation*}
\begin{equation*}
\tau^{\circ} (1,0,1, t_{1}) = \left\{ \begin{array}{ll}
8 &  t_{1} < -13\\
-t_{1} - 5 & -13 \leq t_{1} < -12\\
6 & -12 \leq t_{1} < -5\\
-t_{1} + 3 & -5 \leq t_{1} < -3\\
8 &  -3 \leq t_{1} < -0.5\\
-t_{1} + 7.5 & -0.5 \leq t_{1} < 3.5\\
4 & t_{1} \geq 3.5
\end{array} \right.
\end{equation*}
The optimal control strategy $\tau^{\circ} (1,0,1, t_{1})$ is illustrated in figure~\ref{fig:esS3_tau_1_0_1}.

\begin{figure}[h!]
\centering
\psfrag{f(x)}[Bl][Bl][.8][0]{$\tau^{\circ} (1,0,1, t_{1})$}
\psfrag{x}[bc][Bl][.8][0]{$t_{1}$}
\psfrag{0}[tc][Bl][.8][0]{$0$}
\includegraphics[scale=.2]{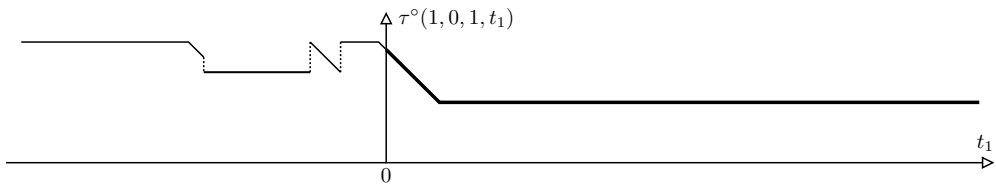}
\caption{Optimal control strategy $\tau^{\circ} (1,0,1, t_{1})$ in state $[ 1 \; 0 \; 1 \; t_{1}]^{T}$.}
\label{fig:esS3_tau_1_0_1}
\end{figure}


{\bf Stage $0$ -- State $\boldsymbol{[0 \; 0 \; 0 \; t_{0}]^{T}}$ ($S0$)}

In the initial state $[0 \; 0 \; 0 \; t_{0}]^{T}$, the cost function to be minimized, with respect to the (continuos) decision variable $\tau$ and to the (binary) decision variables $\delta_{1}$ and $\delta_{2}$ is
\begin{equation*}
\begin{split}
&\delta_{1} \big[ \alpha_{1,1} \, \max \{ t_{0} + st_{0,1} + \tau - dd_{1,1} \, , \, 0 \} + \beta_{1} \, ( pt^{\mathrm{nom}}_{1} - \tau ) + sc_{0,1} + J^{\circ}_{1,0,1} (t_{1}) \big] +\\
&+ \delta_{2} \big[ \alpha_{2,1} \, \max \{ t_{0} + st_{0,2} + \tau - dd_{2,1} \, , \, 0 \} + \beta_{2} \, ( pt^{\mathrm{nom}}_{2} - \tau ) + sc_{0,2} + J^{\circ}_{0,1,2} (t_{1}) \big]
\end{split}
\end{equation*}

{\it Case i)} in which it is assumed $\delta_{1} = 1$ (and $\delta_{2} = 0$).

In this case, it is necessary to minimize, with respect to the (continuos) decision variable $\tau$ which corresponds to the processing time $pt_{1,1}$, the following function
\begin{equation*}
\alpha_{1,1} \, \max \{ t_{0} + st_{0,1} + \tau - dd_{1,1} \, , \, 0 \} + \beta_{1} \, ( pt^{\mathrm{nom}}_{1} - \tau ) + sc_{0,1} + J^{\circ}_{1,0,1} (t_{1})
\end{equation*}
that can be written as $f (pt_{1,1} + t_{0}) + g (pt_{1,1})$ being
\begin{equation*}
f (pt_{1,1} + t_{0}) = 0.75 \cdot \max \{ pt_{1,1} + t_{0} - 19 \, , \, 0 \} + J^{\circ}_{1,0,1} (pt_{1,1} + t_{0})
\end{equation*}
\begin{equation*}
g (pt_{1,1}) = \left\{ \begin{array}{ll}
8 - pt_{1,1} & pt_{1,1} \in [ 4 , 8 )\\
0 & pt_{1,1} \notin [ 4 , 8 )
\end{array} \right.
\end{equation*}
The function $pt^{\circ}_{1,1}(t_{0}) = \arg \min_{pt_{1,1}} \{ f (pt_{1,1} + t_{0}) + g (pt_{1,1}) \} $, with $4 \leq pt_{1,1} \leq 8$, is determined by applying lemma~\ref{lem:xopt}. It is (see figure~\ref{fig:esS3_tau_0_0_0_pt_1_1})
\begin{equation*}
pt^{\circ}_{1,1}(t_{0}) = \left\{ \begin{array}{ll}
x_{\mathrm{s}}(t_{0}) &  t_{0} < -20\\
x_{1}(t_{0}) & -20 \leq t_{0} < -11\\
x_{\mathrm{e}}(t_{0}) & t_{0} \geq -11
\end{array} \right. \qquad \text{with} \quad x_{\mathrm{s}}(t_{0}) = \left\{ \begin{array}{ll}
8 &  t_{0} < -21\\
-t_{0} - 13 & -21 \leq t_{0} < -20
\end{array} \right. \  , 
\end{equation*}
\begin{equation*}
\qquad x_{1}(t_{0}) = \left\{ \begin{array}{ll}
8 &  -20 \leq t_{0} < -13\\
-t_{0} - 5 & -13 \leq t_{0} < -11
\end{array} \right. \  , \  \text{and} \quad x_{\mathrm{e}}(t_{0}) = \left\{ \begin{array}{ll}
8 &  -11 \leq t_{0} < -8.5\\
-t_{0} -0.5 & -8.5 \leq t_{0} < -4.5\\
4 & t_{0} \geq -4.5
\end{array} \right.
\end{equation*}

\begin{figure}[h!]
\centering
\psfrag{f(x)}[Bl][Bl][.8][0]{$pt^{\circ}_{1,1}(t_{0})$}
\psfrag{x}[bc][Bl][.8][0]{$t_{0}$}
\psfrag{0}[tc][Bl][.8][0]{$0$}
\includegraphics[scale=.2]{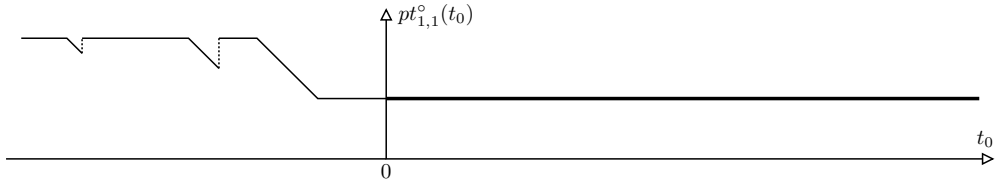}
\caption{Optimal processing time $pt^{\circ}_{1,1}(t_{0})$, under the assumption $\delta_{1} = 1$ in the initial state $[ 0 \; 0 \; 0 \; t_{0}]^{T}$.}
\label{fig:esS3_tau_0_0_0_pt_1_1}
\end{figure}

The conditioned cost-to-go $J^{\circ}_{0,0,0} (t_{0} \mid \delta_{1} = 1) = f ( pt^{\circ}_{1,1}(t_{0}) + t_{0} ) + g ( pt^{\circ}_{1,1}(t_{0}) )$, illustrated in figure~\ref{fig:esS3_J_0_0_3_min}, is provided by lemma~\ref{lem:h(t)}. It is specified by the initial value 0.5, by the set \{ --21, --20, --13, --11, --8.5, 3, 7, 7.5, 9.5, 11, 12, 15, 17.25, 21 \} of abscissae $\gamma_{i}$, $i = 1, \ldots, 14$, at which the slope changes, and by the set \{ 1, 0, 1, 0.5, 1, 1.5, 2, 2.5, 3.5, 4.5, 6.5, 7.25, 6.25, 7.25 \} of slopes $\mu_{i}$, $i = 1, \ldots, 14$, in the various intervals.

{\it Case ii)} in which it is assumed $\delta_{2} = 1$ (and $\delta_{1} = 0$).

In this case, it is necessary to minimize, with respect to the (continuos) decision variable $\tau$ which corresponds to the processing time $pt_{2,1}$, the following function
\begin{equation*}
\alpha_{2,1} \, \max \{ t_{0} + st_{0,2} + \tau - dd_{2,1} \, , \, 0 \} + \beta_{2} \, ( pt^{\mathrm{nom}}_{2} - \tau ) + sc_{0,2} + J^{\circ}_{0,1,2} (t_{1})
\end{equation*}
that can be written as $f (pt_{2,1} + t_{0}) + g (pt_{2,1})$ being
\begin{equation*}
f (pt_{2,1} + t_{0}) = 2 \cdot \max \{ pt_{2,1} + t_{0} - 21 \, , \, 0 \} + J^{\circ}_{0,1,2} (pt_{2,1} + t_{0})
\end{equation*}
\begin{equation*}
g (pt_{2,1}) = \left\{ \begin{array}{ll}
1.5 \cdot (6 - pt_{2,1}) & pt_{2,1} \in [ 4 , 6 )\\
0 & pt_{2,1} \notin [ 4 , 6 )
\end{array} \right.
\end{equation*}
The function $pt^{\circ}_{2,1}(t_{0}) = \arg \min_{pt_{2,1}} \{ f (pt_{2,1} + t_{0}) + g (pt_{2,1}) \} $, with $4 \leq pt_{2,1} \leq 6$, is determined by applying lemma~\ref{lem:xopt}. It is (see figure~\ref{fig:esS3_tau_0_0_0_pt_2_1})
\begin{equation*}
pt^{\circ}_{2,1}(t_{0}) = x_{\mathrm{e}}(t_{0}) \qquad \text{with} \quad x_{\mathrm{e}}(t_{0}) = \left\{ \begin{array}{ll}
6 & t_{0} < 2.5\\
-t_{0} + 8.5 & 2.5 \leq t_{0} < 4.5\\
4 & t_{0} \geq 4.5
\end{array} \right.
\end{equation*}

\begin{figure}[h!]
\centering
\psfrag{f(x)}[Bl][Bl][.8][0]{$pt^{\circ}_{2,1}(t_{0})$}
\psfrag{x}[bc][Bl][.8][0]{$t_{0}$}
\psfrag{0}[tc][Bl][.8][0]{$0$}
\includegraphics[scale=.2]{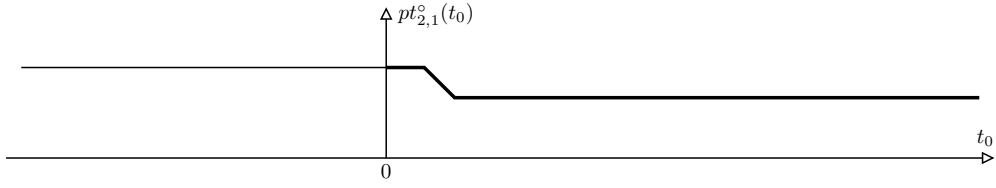}
\caption{Optimal processing time $pt^{\circ}_{2,1}(t_{0})$, under the assumption $\delta_{2} = 1$ in the initial state $[ 0 \; 0 \; 0 \; t_{0}]^{T}$.}
\label{fig:esS3_tau_0_0_0_pt_2_1}
\end{figure}

The conditioned cost-to-go $J^{\circ}_{0,0,0} (t_{0} \mid \delta_{2} = 1) = f ( pt^{\circ}_{2,1}(t_{0}) + t_{0} ) + g ( pt^{\circ}_{2,1}(t_{0}) )$, illustrated in figure~\ref{fig:esS3_J_0_0_3_min}, is provided by lemma~\ref{lem:h(t)}. It is specified by the initial value 1, by the set \{ --13.5, --12, --11, --7.5, 2.5, 6.5, 7.5, 8.5, 10.5, 11, 11.5, 12.5, 13.5, 16, 17, 21.25, 26 \} of abscissae $\gamma_{i}$, $i = 1, \ldots, 17$, at which the slope changes, and by the set \{ 1, 0, 0.5, 1, 1.5, 1.75, 2.25, 3.25, 4.25, 3.25, 3.75, 5.25, 4.25, 5.25, 7.25, 6.25, 7.25 \} of slopes $\mu_{i}$, $i = 1, \ldots, 17$, in the various intervals.

\begin{figure}[h!]
\centering
\psfrag{J1}[br][Bc][.8][0]{$J^{\circ}_{0,0,0} (t_{0} \mid \delta_{1} = 1)$}
\psfrag{J2}[tl][Bc][.8][0]{$J^{\circ}_{0,0,0} (t_{0} \mid \delta_{2} = 1)$}
\psfrag{0}[cc][tc][.7][0]{$0$}
\includegraphics[scale=.6]{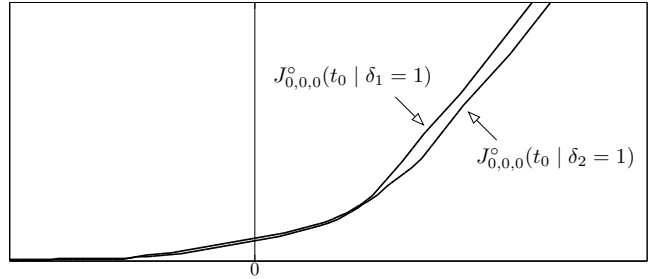}%
\vspace{-12pt}
\caption{Conditioned costs-to-go $J^{\circ}_{0,0,0} (t_{0} \mid \delta_{1} = 1)$ and $J^{\circ}_{0,0,0} (t_{0} \mid \delta_{2} = 1)$ in the initial state $[0 \; 0 \; 0 \; t_{0}]^{T}$.}
\label{fig:esS3_J_0_0_3_min}
\end{figure}

In order to find the optimal cost-to-go $J^{\circ}_{0,0,0} (t_{0})$, it is necessary to carry out the following minimization
\begin{equation*}
J^{\circ}_{0,0,0} (t_{0}) = \min \big\{ J^{\circ}_{0,0,0} (t_{0} \mid \delta_{1} = 1) \, , \, J^{\circ}_{0,0,0} (t_{0} \mid \delta_{2} = 1) \big\}
\end{equation*}
which provides, in accordance with lemma~\ref{lem:min}, the continuous, nondecreasing, piecewise linear function illustrated in figure~\ref{fig:esS3_J_0_0_3}.

\begin{figure}[h!]
\centering
\psfrag{0}[cc][tc][.8][0]{$0$}
\includegraphics[scale=.8]{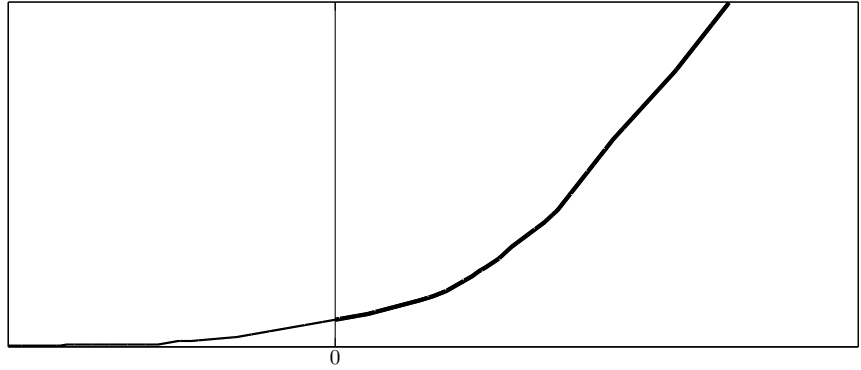}%
\vspace{-12pt}
\caption{Optimal cost-to-go $J^{\circ}_{0,0,0} (t_{0})$ in the initial state $[0 \; 0 \; 0 \; t_{0}]^{T}$.}
\label{fig:esS3_J_0_0_3}
\end{figure}

The function $J^{\circ}_{0,0,0} (t_{0})$ is specified by the initial value 0.5, by the set \{ --21, --20.5, --13.5, --12, --11, --7.5, 2.5, 6.5, 7.5, 8.5, 10.5, 11, 11.5, 12.5, 13.5, 16, 17, 21.25, 26 \} of abscissae $\gamma_{i}$, $i = 1, \ldots, 19$, at which the slope changes, and by the set \{ 1, 0, 1, 0, 0.5, 1, 1.5, 1.75, 2.25, 3.25, 4.25, 3.25, 3.75, 5.25, 4.25, 5.25, 7.25, 6.25, 7.25 \} of slopes $\mu_{i}$, $i = 1, \ldots, 19$, in the various intervals.

Since $J^{\circ}_{0,0,0} (t_{0} \mid \delta_{1} = 1)$ is the minimum in $(-\infty,-20.5)$, and $J^{\circ}_{0,0,0} (t_{0} \mid \delta_{2} = 1)$ is the minimum in $[-20.5,+\infty)$ (see again figure~\ref{fig:esS3_J_0_0_3_min}), the optimal control strategies for the initial state are
\begin{equation*}
\delta_{1}^{\circ} (0,0,0, t_{0}) = \left\{ \begin{array}{ll}
1 & t_{0} < -20.5\\
0 & t_{0} \geq -20.5
\end{array} \right. \qquad \delta_{2}^{\circ} (0,0,0, t_{0}) = \left\{ \begin{array}{ll}
0 & t_{0} < -20.5\\
1 & t_{0} \geq -20.5
\end{array} \right.
\end{equation*}
\begin{equation*}
\tau^{\circ} (0, 0, 0, t_{0}) = \left\{ \begin{array}{ll}
8 &  t_{0} < -21\\
-t_{0} - 13 & -21 \leq t_{0} < -20.5\\
6 & -20.5 \leq t_{0} < 2.5\\
-t_{0} + 8.5 & 2.5 \leq t_{0} < 4.5\\
4 & t_{0} \geq 4.5
\end{array} \right.
\end{equation*}

The optimal control strategy $\tau^{\circ} (0, 0, 0, t_{0})$ is illustrated in figure~\ref{fig:esS3_tau_0_0_0}.

\begin{figure}[h!]
\centering
\psfrag{f(x)}[Bl][Bl][.8][0]{$\tau^{\circ} (0, 0, 0, t_{0})$}
\psfrag{x}[bc][Bl][.8][0]{$t_{0}$}
\psfrag{0}[tc][Bl][.8][0]{$0$}
\includegraphics[scale=.2]{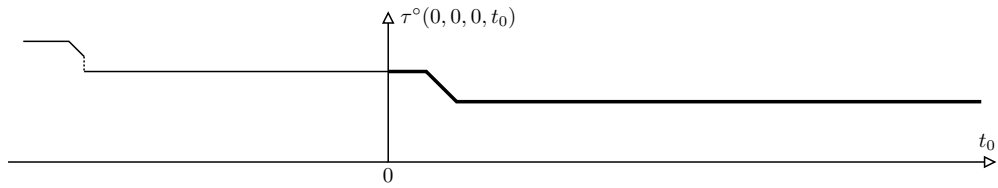}
\caption{Optimal control strategy $\tau^{\circ} (0, 0, 0, t_{0})$ in the initial state $[ 0 \; 0 \; 0 \; t_{0}]^{T}$.}
\label{fig:esS3_tau_0_0_0}
\end{figure}

\vspace{80pt}
$ $

\bibliographystyle{plain}
\bibliography{PaperScheduling}

\end{document}